\newtheorem{lemma}{Lemma}[section]
\newtheorem{prop}{Proposition}[section]
\newtheorem{thm}{Theorem}[section]
\newtheorem{cor}{Corollary}[section]
\DeclareMathOperator{\Hom}{Hom}
\DeclareMathOperator{\Ext}{Ext}
\DeclareMathOperator{\Tor}{Tor}
\DeclareMathOperator{\Spec}{Spec}
\DeclareMathOperator{\id}{id}
\DeclareMathOperator{\Gal}{Gal}
\DeclareMathOperator{\crys}{crys}
\DeclareMathOperator{\cris}{cris}
\DeclareMathOperator{\et}{\acute{e}t}
\DeclareMathOperator{\logcrys}{log-crys}
\DeclareMathOperator{\Inf}{inf}
\DeclareMathOperator{\gr}{gr}
\DeclareMathOperator{\gp}{gp}
\DeclareMathOperator{\DP}{DP}
\DeclareMathOperator{\dlog}{dlog}
\begin{document}

\title[Almost \'etale extensions of Fontaine rings]{Almost \'etale extensions of Fontaine rings and log-crystalline cohomology in the semi-stable reduction case}

\author{R\'emi Shankar Lodh}
\email{remi.shankar@gmail.com}
\subjclass[2000]{14F30}
\keywords{$p$-adic Hodge theory, almost \'etale extensions, crystalline cohomology, log structures}

\begin{abstract}
Let $K$ be a field of characteristic zero complete for a discrete valuation, with perfect residue field of characteristic $p>0$, and let $K^+$ be the valuation ring of $K$. We relate the log-crystalline cohomology of the special fibre of certain affine $K^+$-schemes $X=\Spec(R)$ with semi-stable reduction to the Galois cohomology of the fundamental group of the geometric generic fibre $\pi_1(X_{\bar{K}})$ with coefficients in a Fontaine ring constructed from $R$. This is based on Faltings' theory of almost \'etale extensions.\end{abstract}

\maketitle

\tableofcontents

\section*{Introduction}
The purpose of this article is to make precise the relationship between crystalline cohomology and Galois cohomology of certain Fontaine rings occuring in Faltings' approach to $p$-adic Hodge theory (\cite{fa3}, \cite{fa4}). Let us very briefly recall this approach. Let $K^+$ be a complete discrete valuation ring of fraction field $K$ of characteristic zero and perfect residue field $k$ of characteristic $p>0$. Let $X$ be a proper smooth $K^+$-scheme. One constructs a site (the ``Faltings site"), usually denoted $\mathscr{X}$, whose cohomology formalizes the idea of glueing $\pi_1(X_{\bar{K}})$-cohomology locally on $X$. One sheafifies a construction of Fontaine to obtain a sheaf of rings $\mathscr{A}_{\crys,n}$ on $\mathscr{X}$ together with transformations
\[ H^\ast(\mathscr{X},\mathbb{Z}/p^n\mathbb{Z})\otimes_{\mathbb{Z}_p}A_{\cris}\to H^\ast(\mathscr{X},\mathscr{A}_{\crys,n})\leftarrow H^\ast_{\crys}(X_k|W_n(k),\mathscr{O})\otimes_{W(k)}A_{\cris} \]
where $A_{\cris}$ is the ring of $p$-adic periods constructed by Fontaine \cite{font}, and the group on the right denotes the crystalline cohomology of the special fibre $X_k$. Then one uses Faltings' theory of almost \'etale extensions to show that the intermediate cohomology theory $H^\ast(\mathscr{X},\mathscr{A}_{\crys,n})$ almost satisfies Poincar\'e duality and K\"unneth formula, hence by standard arguments is almost isomorphic to crystalline cohomology (here the term `almost' is used in the sense of almost ring theory (\cite{fa4}, \cite{almost})). Since $X$ is smooth, the group on the left is canonically isomorphic to \'etale cohomology of $X_{\bar{K}}$ tensored with $A_{\cris}$ and compatibility with Poincar\'e duality gives a one-sided inverse to the almost defined transformation to crystalline cohomology, up to a power of an element $t\in A_{\cris}$. So after taking the projective limit and inverting $t$ we obtain almost isomorphisms, which are in fact isomorphisms.

In this article we study closely the map
\[ H^\ast_{\crys}(X_k|W_n(k),\mathscr{O})\otimes_{W(k)}A_{\cris}\to H^\ast(\mathscr{X},\mathscr{A}_{\crys,n}) \]
locally on $X$. Our main result is that this map is \emph{locally} an almost isomorphism up to $t^d$-torsion, where $d=\text{dim}(X_K)$ and $t\in A_{\cris}$ is an element which plays a role analogous to that of $2\pi i$ in the transcendental theory of periods. A similar result also holds in the case $X$ has semi-stable reduction and in the paper we usually work in this context because it is the common one in applications. See the overview below for more details.

Finally, let us mention that F. Andreatta and O. Brinon \cite{ab} have independently found similar results in the good reduction case. Their proofs, although technically different, are based on the same idea.

\subsection*{Overview}
\subsubsection*{\S 1:} We begin by reviewing the (log-) crystalline site. This is mainly to fix notation. Afterwards, we review the construction by Fontaine \cite{font} of the final object of the crystalline site of a ring of characteristic $p$ with surjective (absolute) Frobenius. Such final objects are called \emph{Fontaine rings}. We give the proof for the more general log-crystalline site. Then we give some examples of Fontaine rings due to Fontaine and Kato.

\subsubsection*{\S 2:} We first recall the almost ring theory which we will use, the key input being Faltings' Almost Purity Theorem \cite{fa4}. Afterwards we apply this theorem to certain Fontaine rings, constructed as follows. Let $\Spec(R)$ be a integral $K^+$-scheme with semi-stable reduction. Up to localizing on $\Spec(R)$ we may assume that it is \'etale over a ring of the form $K^+[T_1,...,T_{d+1}]/(T_1\cdots T_r-\pi)$, where $\pi\in K^+$ is a uniformizer. In this case, one says that $R$ is \emph{small}. Let $\Sigma_n=W_n(k)[u]\langle u^e\rangle$, where $e$ is the absolute ramification index of $K$ and the angled brackets mean that we have added divided powers of $u^e$. Then there is a surjection $\Sigma_n\to K^+/pK^+$ whose kernel is a DP-ideal. Let $\bar{R}$ be the normalization of $R$ in the maximal profinite connected \'etale covering of $R[1/p]$. Then via the theory of almost \'etale extensions one can show that the ring $\bar{R}/p\bar{R}$ has surjective Frobenius, hence by Fontaine's theorem recalled in \S 1, we may construct the Fontaine ring
\[ A^+_{\log}:=\varprojlim_n H^0_{\logcrys}(\bar{R}/p\bar{R}|\Sigma_n,\mathscr{O}) \]
(log-crystalline cohomology). Also we can construct another Fontaine ring $A^+_{\log,\infty}$ as follows. Let $\bar{K}$ denote the algebraic closure of $K$ and $\bar{K}^+$ its valuation ring, i.e. the normalization of $K^+$ in $\bar{K}$. Let $\tilde{R}=R\otimes_{K^+}\bar{K}^+$ and let $\tilde{R}_{\infty}$ denote the ring obtained from $\tilde{R}$ by adding all $p$-power roots of the the $T_i$. Define
\[ A^+_{\log,\infty}:=\varprojlim_n H^0_{\logcrys}(\tilde{R}_{\infty}/p\tilde{R}_{\infty}|\Sigma_n,\mathscr{O}). \]
Then the theory of almost \'etale extensions applied to these Fontaine rings implies that the canonical homomorphism
\[ A^+_{\log,\infty}/p^nA^+_{\log,\infty}\to A^+_{\log}/p^nA^+_{\log} \]
is the filtering inductive limit of almost Galois coverings and there are canonical almost isomorphisms for each $i$
\[ \begin{CD}
H^i(\Delta_\infty,A^+_{\log,\infty}/p^nA^+_{\log,\infty}) @>{\thickapprox}>> H^i(\Delta,A^+_{\log}/p^nA^+_{\log})
\end{CD} \]
where $\Delta:=\Gal(\bar{R}[1/p]/\tilde{R}[1/p])$ with quotient $\Delta_\infty:=\Gal(\tilde{R}_\infty[1/p]/\tilde{R}[1/p])\cong\mathbb{Z}_p(1)^d$ (see Corollaries \ref{almostgalois}, \ref{invariants0}). This also applies to $p$-adic divided power bases other than $\Sigma:=\varprojlim_n\Sigma_n$.

\subsubsection*{\S 3:} We construct, via the formalism of log-crystalline cohomology, a canonical logarithmic de Rham resolution of the ring $A^+_{\log}$. We then (almost) compute the $\Delta$-cohomology of the components of this resolution, by reducing to the case of $A^+_{\log,\infty}$ and making an explicit computation there. The result is the following (Corollary \ref{final})

\begin{thm}\label{intromain}
Let $\mathcal{R}_n$ be a log-smooth $\Sigma_n$-lift of $R/pR$, and let $\omega^\bullet_{\mathcal{R}_n/\Sigma_n}$ be the logarithmic de Rham complex of $\mathcal{R}_n/\Sigma_n$. There is a canonical morphism of complexes in the derived category
\[ B^+_{\log}\otimes_{\Sigma}\omega^\bullet_{\mathcal{R}_n/\Sigma_n}\to C^{\bullet}(\Delta,A^+_{\log}/p^nA^+_{\log}) \]
which is an almost quasi-isomorphism up to $t^dx$-torsion, where $x\in B^+_{\log}$ is independent of $n$ and $R$.
\end{thm}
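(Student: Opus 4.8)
\emph{The map and the first reduction.} The plan is to reduce the assertion to an explicit computation of the continuous cohomology of $\Delta_\infty\cong\mathbb Z_p(1)^d$ on $A^+_{\log,\infty}/p^nA^+_{\log,\infty}$ and then to carry it out with a Koszul complex. I would take the morphism to be the comparison morphism constructed above: it arises by combining the log-crystalline Poincar\'e lemma---which presents $B^+_{\log}\otimes_\Sigma\omega^\bullet_{\mathcal R_n/\Sigma_n}$ as a logarithmic de Rham resolution of $A^+_{\log}/p^nA^+_{\log}$---with the natural comparison from log-crystalline to continuous $\Delta$-cohomology (the latter being represented by $C^\bullet(\Delta,A^+_{\log}/p^nA^+_{\log})$). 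It then suffices to show that on each cohomology group this morphism is an almost isomorphism whose kernel and cokernel are killed by $t^dx$. By Corollary \ref{invariants0} the inflation maps $H^i(\Delta_\infty,A^+_{\log,\infty}/p^n)\to H^i(\Delta,A^+_{\log}/p^n)$ are almost isomorphisms for every $i$, and the morphism is compatible with the analogous one built from $\tilde R_\infty$; so one may replace $(\Delta,A^+_{\log})$ by $(\Delta_\infty,A^+_{\log,\infty})$.

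\emph{The explicit picture and the Koszul computation.} Here everything is concrete: since $\tilde R_\infty$ is obtained by adjoining the $p$-power roots of $T_1,\dots,T_{d+1}$ subject only to $T_1\cdots T_r=\pi$, one has, up to almost isomorphism, a $\Delta_\infty$-equivariant decomposition
\[ A^+_{\log,\infty}/p^nA^+_{\log,\infty}\ \approx\ \widehat{\bigoplus}_{\underline a}\ (B^+_{\log}/p^nB^+_{\log})\cdot\underline T^{\underline a}, \]
the sum over the rank-$d$ group of exponents $\underline a\in(\mathbb Z[1/p]/\mathbb Z)^d$, on which a topological generator $\gamma_j$ of the $j$-th factor of $\Delta_\infty$ acts by $\gamma_j(\underline T^{\underline a})=\varepsilon^{a_j}\underline T^{\underline a}$ for a fixed compatible system $\varepsilon=(\zeta_{p^m})_m$; the summand $\underline a=0$ is $B^+_{\log}/p^nB^+_{\log}$ with trivial action. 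I would then compute $H^\bullet(\mathbb Z_p(1)^d,-)$ on each summand with the Koszul complex on $(\gamma_1-1,\dots,\gamma_d-1)$. On $\underline a=0$ the operators vanish, so $H^i=\Lambda^i\bigl((\mathbb Z_p(1)^d)^\vee\bigr)\otimes_{\mathbb Z_p}B^+_{\log}/p^nB^+_{\log}$, which I would match with the $i$-th term $B^+_{\log}\otimes_\Sigma\omega^i_{\mathcal R_n/\Sigma_n}$ (free of the same rank on the $\dlog T_j$ modulo $\dlog T_1+\cdots+\dlog T_r=0$): trivialising the Tate twist $\mathbb Z_p(1)$ in each of the $i$ Koszul directions costs one factor of $t$, and this is the source of the factor $t^d$. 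On a summand with $\underline a\neq0$, I would pick $j$ with $a_j\neq0$, so that $\gamma_j-1$ is multiplication by $\varepsilon^{a_j}-1$; since $t$ lies in the ideal generated by $\varepsilon^{a_j}-1$ up to a unit of $A_{\cris}$ and a bounded power of $p$ (uniformly in $\underline a$), that Koszul factor is acyclic up to $t$-torsion, and hence the cohomology of every such summand, and of their completed sum, is killed by $t$. Reassembling, using $(t^d)\subseteq(t)$, gives an almost quasi-isomorphism up to $t^d$-torsion, apart from (i) the displayed decomposition being only an almost isomorphism onto an honestly free module and (ii) bounded denominators from the almost-\'etale descent of Corollary \ref{invariants0} and from passing between the standard chart and the \'etale algebra $R$; both are to be absorbed into a single element $x$ in the image of $A_{\cris}$ in $B^+_{\log}$, depending only on the base $\Sigma$ and on $d$, hence on neither $n$ nor $R$.

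\emph{The hard part.} The main obstacle will be the uniformity underlying the last step: one must bound, independently of $n$ and $R$, the failure of exactness of the character decomposition, the contribution of the infinitely many nontrivial characters (whose denominators $p^m$ are unbounded), and the almost-\'etale descent denominators, and show that a single factor $t^dx$ absorbs all three. For the characters this will rest on the divisibility of $t$ by $\varepsilon^{a_j}-1$ being essentially independent of $m$ modulo $p^n$. Everything upstream---the log-crystalline Poincar\'e lemma of \S3 and the comparison results of \S2---is used as a black box.
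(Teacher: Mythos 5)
Your high-level strategy---reduce via Corollary \ref{invariants0} to $\Delta_\infty$ and compute with a Koszul complex---is the paper's starting point. But the decomposition you build the computation on does not hold as stated, and the technique that replaces it in the paper is essentially different.

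The claimed decomposition $A^+_{\log,\infty}/p^n\approx\widehat{\bigoplus}_{\underline a\in(\mathbb Z[1/p]/\mathbb Z)^d}(B^+_{\log}/p^n)\cdot\underline T^{\underline a}$ has the wrong index set and the wrong coefficient ring. On the Fontaine ring the eigenvalues of $\sigma_j$ are $[\underline1]^\alpha$ with $\alpha$ ranging over $\mathbb Z[1/p]$, not $\mathbb Z[1/p]/\mathbb Z$: for $\alpha\in\mathbb Z\setminus\{0\}$ one has $[\underline1]^\alpha=1+\alpha t u_\alpha\ne1$ by Proposition \ref{constants}, so integral exponents are genuine eigenvectors with nontrivial eigenvalue (a $1$-unit, not $1$). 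Their contribution to the invariants is not killed after multiplying by a single $t$; it lands in the integral-exponent Laurent subring and is exactly what reassembles into $\Theta(c)/p^n\Theta(c)$ via the $p^n$-divisibility estimate of Corollary \ref{invariants1} and Corollary \ref{invariants1'}. Moreover the only eigenspace decomposition the paper establishes (Proposition \ref{direct}) is for one $\sigma_i$ at a time, with rank-one pieces over the large ring $A_{\cris,\infty}^{(i)}/p^n$, not over $B^+_{\log}/p^n$; a simultaneous rank-one decomposition over $B^+_{\log}/p^n$ is never proved and would need serious work. Finally, the object one actually has to control is $M^+_\infty/p^n=A_{\cris,\infty}/p^n\langle X,X_2,\dots,X_{d+1}\rangle$ (Proposition \ref{dppolynomialalgebra}), because the morphism of Corollary \ref{final} factors through the Poincar\'e resolution $A^+/p^n\xrightarrow{\sim}M^+/p^n\otimes_{\mathcal R_n}\omega^\bullet_{\mathcal R_n/\Sigma_n}$; a direct character computation of the cohomology of $A^+_{\log,\infty}$ would not by itself identify the comparison map nor produce the de Rham differential.

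The paper does not use a global character decomposition at all. Theorems \ref{invariants2} and \ref{vanishing} proceed by an induction over the divided-power degree in the $X_i$, using the derivations $\partial_i$ and their one-sided inverses $\int_i$, combined with the single-variable Proposition \ref{direct} and the torsion estimate $t\cdot(A_{\cris,\infty}/p^n)^{\sigma_i=1}\subset\bigoplus_{\alpha\in\mathbb Z,\,v_p(\alpha)>0}p^{\max\{0,n-v_p(\alpha)\}}F_\alpha$ of Corollary \ref{invariants1}. The divisor $t^d[\underline c]$ is accumulated one $t$ per index $i$ in this induction (with an extra $[\underline c]$ from the semi-stable relation), not by ``trivialising Tate twists.'' The uniformity issue that you flag under ``the hard part'' is not an afterthought: the unbounded $p$-powers $p^{v_p(\alpha)}$ for $p$-divisible integral $\alpha$ are precisely what Corollary \ref{invariants1} controls by trading them against $p^n$-divisibility in $F_\alpha$, and your ``uniform bound'' on them is false. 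As written, the proposal has a genuine gap at the decomposition step and at the comparison with the de Rham complex; the missing content is essentially Propositions \ref{dppolynomialalgebra} and \ref{direct} together with Theorems \ref{invariants2} and \ref{vanishing}.
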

Here
\[ B^+_{\log}=\varprojlim_n H^0_{\logcrys}(\bar{K}^+/p\bar{K}^+|\Sigma_n,\mathscr{O}) \]
is a ring of $p$-adic periods constructed by Kato, $t\in A_{\cris}\subset B^+_{\log}$ is the element alluded to above, and we write $C^{\bullet}(\Delta,-)=\Hom_{\text{cont.},\Delta}(\Delta^{\times\bullet},-)$ for the usual functorial complex computing continuous group cohomology. The proofs of the main results of this section are straightforward, although they involve some lengthy computations.

\subsection*{Remarks on notation}
\begin{itemize}
\item For any subset $R\subset\mathbb{R}$ we write $R_+$ (resp. $R_{>0}$) for the set of elements of $R$ which are greater than or equal to zero (resp. greater than zero); $\mathbb{N}:=\mathbb{Z}_+$ is the set of natural numbers.
\item By \emph{ring} we mean a commutative ring with unity. For any ring $A$, we denote by $A^{\ast}$ its group of units. For any integral domain $A$, we denote by $Q(A)$ its fraction field.
\item All monoids considered will be assumed to be with commutative. To a monoid $M$ we write $M^{\gp}$ for the associated group.
\item If $A$ is a ring and $M$ is an $A$-module, then we denote by $\Gamma_A(M)$ the divided power polynomial $A$-algebra defined by $M$ (see \cite{berthelot} or \cite{bertogus} for a construction of this algebra). If $I\subset A$ is an ideal then we denote by $D_A(I)$ the divided power hull of $A$ for the ideal $I$ (\emph{loc.cit.}). If $A$ is a ring and $I\subset A$ is an ideal with a divided power structure $(\gamma_n:I\to A)_{n\in\mathbb{N}}$, then we will often write $x^{[n]}:=\gamma_n(x)$ when it is clear which divided power structure is meant. Finally, if $X_1,...,X_d$ denotes indeterminates, then we write $A\left\langle X_1,...,X_d\right\rangle $ for the divided power polynomial $A$-algebra in the variables $X_1,...,X_d$. It is the divided power hull of  $A[X_1,...,X_d]$ for the ideal generated by $X_1,...,X_d$.
\item If $G$ is a profinite group and $M$ is a discrete $G$-module, then we write $H^i(G,M)$ for the Galois cohomology groups (i.e. continuous group cohomology).
\end{itemize}

\section{Crystalline cohomology of rings with surjective Frobenius}
In this section we recall Fontaine's construction of the final object of the crystalline site of a ring of characteristic $p>0$ with surjective Frobenius endomorphism. This will play the role of substitute for Poincar\'e's lemma in our approach.
\subsection{Reminder on crystalline sites}
\subsubsection{}
Let $Z$ be a scheme and let $Z_0\hookrightarrow Z$ be a closed immersion, such that the ideal defining the image of $Z_0$ in $Z$ is nilpotent and has a divided power structure (we say that $Z_0\hookrightarrow Z$ is a \emph{divided power thickening}). We usually use the abbreviation DP for ``divided power". Suppose that $T\to Z$ (resp. $T'\to Z$) is a morphism and $U\hookrightarrow T$ (resp. $U'\hookrightarrow T'$) is a DP-thickening such that the morphism $T\to Z$ (resp. $T'\to Z$) is a DP-morphism (\cite[1.9.4]{berthelot}) for the given DP-structures. A morphism of $Z$-schemes $T\to T'$ is called a \emph{$Z$-DP-morphism} if it is a DP-morphism (for the given DP-structures) and such that the diagram
\[ \begin{CD}
T @>>> T' \\
@VVV @VVV \\
Z @= Z
\end{CD} \]
is a commutative diagram of DP-morphisms.

\subsubsection{}\label{logstrdef}
All log-structures in this paper will be in the \'etale topology. Let $(X,M)\to (Z_0,N_0)$ be a morphism of log-schemes (see \cite{log}). If $f:X\to Z_0$ is the underlying morphism, then we write $f^\ast N_0$ for the inverse image log-structure, in contrast with the inverse image sheaf $f^{-1}N_0$. If $M$ is a pre-log-structure on $X$ then we denote by $M^a$ the associated log-structure. If $X$ is a log-scheme and $U\to X$ is an \'etale morphism of schemes, then the restriction $M_U$ of the log-structure $M$ of $X$ to $U$ defines a natural log-structure on $U$ and we will always consider $U$ as a log-scheme for this log-structure.

\subsubsection{} Let $(X,M)\to (Z_0,N_0)$ be a morphism of log-schemes and assume that the log-structure $M$ is integral. Let $(Z_0,N_0)\hookrightarrow (Z,N)$ be an exact closed immersion such that $Z_0\hookrightarrow Z$ is a DP-thickening. The \emph{logarithmic crystalline site} of $(X,M)$ over the DP-log-base $(Z,N)$ (see \cite[2.4]{kato}) is the site whose underlying category has for objects exact closed immersions $(U,M_U)\hookrightarrow T$ of log-schemes, where:
\begin{itemize}
\item $U\to X$ is an \'etale morphism of schemes and the log-structure $M_U$ is as in (\ref{logstrdef})
\item $T$ is a log-$(Z,N)$-scheme with integral log-structure such that the morphism of schemes underlying the structure morphism $T\to Z$ is a DP-morphism
\item the morphism of schemes underlying $(U,M_U)\hookrightarrow T$ is a DP-thickening.
\end{itemize}
We usually write $U\hookrightarrow T$ instead of $(U,M_U)\hookrightarrow T$. A morphism of this category is a commutative diagram of log-schemes
\[ \begin{CD}
U' @>>> T' \\
@VVV @VVV \\
U @>>> T
\end{CD} \]
where morphism on the left is a morphism of log-$(X,M)$-schemes, and the morphism on the right is a morphism of log-$(Z,N)$-schemes such that the underlying morphism of schemes is a $Z$-DP-morphism. The pretopology on this category is given by defining covering families to be families of morphisms
\[ (U_\alpha\hookrightarrow T_\alpha)_\alpha\rightarrow (U\hookrightarrow T) \]
such that the morphisms of schemes underlying $(U_\alpha\to U)_\alpha$ and $(T_\alpha\to T)_\alpha$ are coverings for the \'etale topology, and such that the squares
\[ \begin{CD}
U_\alpha @>>> T_\alpha \\
@VVV @VVV \\
U @>>> T
\end{CD} \]
are cartesian. Given such a covering, for any morphism $(U'\hookrightarrow T')\to (U\hookrightarrow T) $, note that the diagram
\[ \begin{CD}
U'\times_U U_\alpha @>>> T'\times_T T_\alpha \\
@VVV @VVV \\
U' @>>> T'
\end{CD} \]
is cartesian, hence $(U'\times_U U_\alpha\hookrightarrow T'\times_T T_\alpha)$ is an object of the category underlying the logarithmic crystalline site. This shows that the \emph{log-crystalline site of $X$ over the DP-base $Z$} is indeed a site denoted $((X,M)|(Z,N))_{\crys}$ or simply by $(X|Z)_{\logcrys}$ when it is clear which log-structures are meant for $X$ and $Z$.

\subsubsection{}
To give a sheaf $\mathscr{F}$ on $(X|Z)_{\logcrys}$ is the same as giving for all $(U\hookrightarrow T)\in\text{ob}(X|Z)_{\logcrys}$ a sheaf $\mathscr{F}_T$ on the \'etale site of $T$, together with a morphism
\[ g^{\ast}_{\mathscr{F}}:g^{-1}\mathscr{F}_T\rightarrow \mathscr{F}_{T'} \]
for any morphism $g:(U'\hookrightarrow T')\to (U\hookrightarrow T)$, such that the natural transitivity condition holds for morphisms $(U''\hookrightarrow T'')\to (U'\hookrightarrow T') \to (U\hookrightarrow T)$ and moreover $g^{\ast}_{\mathscr{F}}$ is an isomorphism if the square defined by $g$ is cartesian.

In this way we see that the presheaf defined
\[ \mathscr{O}(U\hookrightarrow T):=\mathscr{O}_T(T) \]
is in fact a sheaf, called the \emph{structure sheaf} of $(X|Z)_{\logcrys}$. A \emph{quasi-coherent} $\mathscr{O}$-module on $(X|Z)_{\logcrys}$ is an $\mathscr{O}$-module $\mathscr{E}$ such that for each object $U\hookrightarrow T$ of $(X|Z)_{\logcrys}$, the restriction $\mathscr{E}_T$ is a quasi-coherent $\mathscr{O}_T$-module in the usual sense.

\subsubsection{}
If $X=\Spec(R)$ and $Z=\Spec(S)$ are affine schemes, then we will usually write $(R|S)_{\logcrys}$ instead of $(X|Z)_{\logcrys}$.

\subsubsection{}
Assume in the sequel that $Z$ is annihilated by a power of $p$ and that $(Z,N)$ is a fine log-scheme. By \cite[2.4.2]{kato}, if $f:(X,M)\to (X',M')$ is a morphism of log-$(Z_0,N_0)$-schemes with $M$ integral and $M'$ fine, then $f$ induces a morphism of log-crystalline topoi over the DP log-base $(Z,N)$.

\subsubsection{}\label{limits}
Given a projective system of fine log-$(Z_0,N_0)$-schemes $\left\lbrace (X_i,M_i)\right\rbrace_{i\in I} $ with affine transition morphisms and with projective limit $(X,M)$, then by \cite[2.4.3]{kato} we have
\[ H^j_{\logcrys}(X|Z,\mathscr{O})\cong\varinjlim_{i\in I}H^j_{\logcrys}(X_i|Z,\mathscr{O}) \]
for all $j$.

\subsection{Fontaine's theorem}

\subsubsection{}
Assume $S$ is a ring such that $p^mS=0$ for some $m\in\mathbb{N}$, and $J\subset S$ an ideal containing $p$ with a divided power structure $\gamma:J\to S$. Let $R$ be a $S/J$-algebra. Let $N$ be an integral monoid defining a log-structure on $\Spec(S)$ and $M$ an integral monoid defining a log structure on $\Spec(R)$ and a map $N\to M$ such that we have a morphism of log-schemes $(\Spec(R),M^a)\to (\Spec(S),N^a)$. Then we may consider the log-crystalline site $(R|S)_{\logcrys}$.

For any monoid $M$, the endomorphism
\[ M\to M:m\mapsto m^p \]
is called the \emph{Frobenius} of $M$.

\begin{thm}\label{fontaine}
With the above notation and assumptions. Assume that the (absolute) Frobenius is surjective on $R$ and on $M$. If $M$ is a saturated log-structure, then the site $(R|S)_{\logcrys}$ has a final object.
\end{thm}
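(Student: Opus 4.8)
The plan is to construct the final object by hand, following Fontaine's original argument and grafting on the log structure. Since \'etale morphisms lift uniquely along nilpotent (hence divided power) thickenings and log structures are \'etale-local, it suffices to treat the case $U=X=\Spec R$; for a general \'etale $U\to X$ the required object is then obtained by taking the unique \'etale lift, using that $U$ again has surjective Frobenius because $U\to X$ is \'etale.

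For the construction I would form the perfections $R^\flat:=\varprojlim(\cdots\to R\xrightarrow{x\mapsto x^p}R)$ and $M^\flat:=\varprojlim(\cdots\to M\xrightarrow{m\mapsto m^p}M)$, which by hypothesis surject onto $R$ and $M$; here $R^\flat$ is perfect, $M^\flat$ is integral with bijective Frobenius, and --- using that $M$ is saturated --- $M^\flat$ is saturated with $\overline{M^\flat}\cong\overline M$. Since $p^mS=0$, the length-$m$ Witt ring $W_m(R^\flat)$ is defined and Witt functoriality gives a surjection $\theta\colon W_m(R^\flat)\to R$ with $p\in\ker\theta$. Put $\mathcal B:=W_m(R^\flat)\otimes_{\mathbb Z/p^m\mathbb Z}S$, equipped with the divided power structure combining the canonical one on $(p)\subset W_m(R^\flat)$ with $\gamma$ on $J\subset S$; it maps to $R$ via $\theta$ and $S\to S/J\to R$, and we set $\mathcal A:=D_{\mathcal B}(I)$, the divided power hull of $I:=\ker(\mathcal B\to R)$ compatible with these structures. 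Each element of the divided power ideal $\bar I\subset\mathcal A$ is nilpotent (its $p^m$-th power lies in $p^m\mathcal A=0$), so $\Spec R\hookrightarrow\Spec\mathcal A$ is a divided power thickening; I would give $\Spec\mathcal A$ the log structure $\mathcal M$ associated to the Teichm\"uller pre-log structure $M^\flat\xrightarrow{[\,\cdot\,]}W_m(R^\flat)\to\mathcal A$. Since $\overline{M^\flat}\cong\overline M$, the closed immersion $(\Spec R,M^a)\hookrightarrow(\Spec\mathcal A,\mathcal M)$ is exact, and $(\Spec\mathcal A,\mathcal M)$ carries a structure morphism over $(\Spec S,N^a)$ lifting the given one (using surjectivity of $M^\flat\to M$ and of $\mathcal A^*\to R^*$). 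This is the candidate $E=(\Spec R\hookrightarrow(\Spec\mathcal A,\mathcal M))$.

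To check universality, let $(\Spec R\hookrightarrow T)$ be any object, with log structure $M_T$; write $T$ also for its ring of global sections and set $\mathfrak a:=\ker(T\to R)$, a divided power ideal with $p\in\mathfrak a$, so $p^mT=0$. The key point is that there is a \emph{unique} multiplicative section $R^\flat\to T$ of $R^\flat\to R$: lifting the components $x_i$ of $x=(x_i)\in R^\flat$ arbitrarily to $\tilde x_i\in T$, the powers $\tilde x_i^{p^i}$ stabilize because $a\equiv b\pmod{\mathfrak a}$ forces $a^p\equiv b^p\pmod{p\mathfrak a}$ (as $c^p=p!\,\gamma_p(c)\in p\mathfrak a$), so $\tilde x_{i+1}^{p^{i+1}}\equiv\tilde x_i^{p^i}\pmod{p^i\mathfrak a}$ and $p^i\mathfrak a=0$ for $i\ge m$. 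This section extends uniquely to a ring homomorphism $W_m(R^\flat)\to T$ over $\theta$, and the same stabilization argument carried out inside $M_T(T)$ (where two lifts of an element of $M$ differ by a factor in $1+\mathfrak a$) yields a unique monoid homomorphism $M^\flat\to M_T(T)$ lifting $M^\flat\to M$ and compatible with the Teichm\"uller map into $T$. Combining the ring homomorphism with the given $S\to T$ yields $\mathcal B\to T$, which sends $I$ into $\mathfrak a$ and respects all divided power structures; by the universal property of the divided power hull it extends uniquely to a divided power $S$-morphism $\mathcal A\to T$, which together with the monoid homomorphism is the unique morphism $(\Spec R\hookrightarrow T)\to E$. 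Hence $E$ is final.

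The ring-theoretic core --- perfection, Witt vectors, the unique multiplicative lift to a divided power thickening, the divided power hull --- is a routine adaptation of Fontaine's argument, so I expect the real difficulty to be purely logarithmic: constructing $\mathcal M$ and verifying that the closed immersion $(\Spec R,M^a)\hookrightarrow(\Spec\mathcal A,\mathcal M)$ is exact and that $(\Spec\mathcal A,\mathcal M)$ sits compatibly over the divided power log base $(\Spec S,N^a)$; and proving that the monoid lift $M^\flat\to M_T(T)$ exists, is unique, and is compatible with the log structure of $T$ --- all while keeping careful track of integrality of the log structures involved. It is here, rather than in the Witt-vector bookkeeping, that the saturation hypothesis on $M$ is needed.
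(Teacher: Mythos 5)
Your ring-theoretic construction ($R^\flat$, $W_m(R^\flat)$, the divided power hull $\mathcal A$, the uniqueness of the multiplicative lift $R^\flat\to T$ via stabilization of $p^i$-th powers) is essentially the paper's, as is the observation that the uniqueness of $M^\flat\to M_T(T)$ follows from the same stabilization argument run inside the sheaf of monoids. But your candidate object has a genuine gap precisely in the logarithmic part you flagged at the end, and the gap is fatal: with the log structure $\mathcal M$ associated to $M^\flat\xrightarrow{[\cdot]}\mathcal A$ alone, $(\Spec R\hookrightarrow(\Spec\mathcal A,\mathcal M))$ is in general \emph{not an object of the site} $(R|S)_{\logcrys}$, because no morphism of log schemes $(\Spec\mathcal A,\mathcal M)\to(\Spec S,N^a)$ compatible with the given ring map $S\to\mathcal A$ exists. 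To define such a morphism you need, for every $n\in N$, a section of $\mathcal M$ over $\mathcal A$ mapping to $1\otimes\alpha_N(n)\in\mathcal A$; since $\mathcal M$ is generated by $M^\flat$ and units of $\mathcal A$, this amounts to finding $\tilde m\in M^\flat$ lifting the image of $n$ in $M$ and a \emph{unit} $u\in\mathcal A^*$ with $[\tilde m]\cdot u=1\otimes\alpha_N(n)$. There is no reason for such a unit to exist: the elements $[\tilde m]\otimes1$ and $1\otimes\alpha_N(n)$ of $\mathcal A=W_m(R^\flat)\otimes_{\mathbb Z}S$ (DP-completed) agree after mapping to $R$, but neither divides the other in $\mathcal A$. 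The basic example of the paper already exhibits this: for $R=\bar K^+/p\bar K^+$ with $M^{\gp}\cong\mathbb Q$ and $S=\Sigma_n$ with $N=\mathbb N$ sending $1\mapsto u$, one must compare $[\underline\pi]\otimes1$ with $1\otimes u$, and their ``ratio'' $[\underline\pi]\otimes u^{-1}$ simply does not live in your $\mathcal A$. (The surjectivity of $\mathcal A^*\to R^*$ you invoke is true but irrelevant here; the obstruction is not about lifting units but about producing a unit equal to a quotient of two non-units.)

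The paper handles exactly this by enlarging the ring before taking the divided power hull: one sets $Q:=P(M)\oplus N$, $L:=\ker(Q^{\gp}\to M^{\gp})$, tensors with the group algebra $\mathbb Z[L]$ so that a formal unit $l$ is adjoined for every such relation, and then quotients by the DP-ideal $K$ generated by divided powers of $q_l\otimes l-p_l\otimes1$ for $l=p_l/q_l\in L$. The log structure on the result is associated to the submonoid $QL\subset Q^{\gp}$ (which maps both $P(M)$ and $N$ into the ring), so the structure morphism over $(\Spec S,N^a)$ is there by construction, and Step 2 of the paper's first proof shows the closed immersion is exact. In your example this is exactly what produces the extra divided power variable $X=[\underline\pi]\otimes u^{-1}-1$ in $B^+_{\log}/p^n\cong A_{\cris}/p^n\langle X\rangle$, which your $\mathcal A$ misses. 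In short: the universality check you sketch is fine, but the object you would be checking it against is not in the category; the missing idea is to formally invert the relations between the Teichm\"uller lift of $M$ and the given chart $N\to S$, which is the role of $\mathbb Z[L]$ and of the ideal $K$.
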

We give two proofs, the first generalizing that of Fontaine in the classical case and the second following Breuil \cite{breuil}.
\begin{proof}[First proof]
The proof is constructive and proceeds in several steps.

\underline{Step 1:} We construct a candidate for the final object. First note that for any \'etale map $U\to \Spec(R)$, the (absolute) Frobenius is surjective on $\mathscr{O}_U$. Indeed, since the map is \'etale, its relative Frobenius is an isomorphism, so this follows from the factorization of the absolute Frobenius of $U$ as the relative Frobenius followed by the pullback of the absolute Frobenius of $\Spec(R)$. We first define a perfect ring $P(R)$ as being the projective limit of the diagram
\[ \begin{CD}
\cdots @>F>> R @>F>> R @>F>> R
\end{CD} \]
where $F$ denotes the (absolute) Frobenius of $R$. An element of $P(R)$ is given by a sequence $r=(r^{(n)})$ of elements of $R$ indexed by the natural numbers, such that $r^{(n+1)p}=r^{(n)}$ for all $n$. $P(R)$ is a perfect ring of characteristic $p$, so its ring of Witt vectors $W(P(R))$ is a flat $\mathbb{Z}_p$-algebra. We write $(r_0,r_1,r_2,...)\in W(P(R))$ and $r_i=(r_i^{(n)})$ for each $i=0,1,2,...$. Let $\Spec(R')\hookrightarrow\Spec(A)$ be an object of the site $(R|S)_{\logcrys}$. If $r=(r^{(n)})\in P(R)$ and $g:R\to R'$ is structure homomorphism, then we choose arbitrary lifts $\hat{r}^{(n)}\in A$ of $g(r^{(n)})$ for all $n$ and set
\[ \tilde{r}^{(m)}:=\lim_{n\to\infty}\hat{r}^{(m+n)p^n}\qquad\text{for all}\quad m. \]
Since $p$ is nilpotent on $A$ and $I:=\ker(A\to R')$ has a DP-structure, one sees easily that $ \tilde{r}^{(m)}$ is a lift of $g(r^{(m)})$ which is independent of the choices made. Define a map
\[ \theta_A: W(P(R))\rightarrow A \]
by sending $(r_0,r_1,...)$ to $\sum_{i=0}^{\infty}p^i\tilde{r}_i^{(i)}$. Since $p$ is nilpotent on $A$, these are just the usual Witt polynomials, so the map is indeed a homomorphism of rings. In the case $A=R$, the map $\theta_R$ is none other than the projection $(r_0,...)\mapsto r_0^{(0)}$ and in this way we obtain a commutative diagram
\[ \begin{CD}
W(P(R)) @>{\theta_A}>> A \\
@VVV  @VVV \\
R @>{g}>> R'.
\end{CD} \]
We claim that the map $\theta_A$ is unique for the maps $W(P(R))\to A$ making the above diagram commute. Indeed, any map $\alpha:W(P(R))\to A$ is determined by it values on $[r]$, where $[\cdot]$ denotes the Teichm\"uller lift. If $r=(r^{(n)})$, then write $r(m):=(r^{(m+n)})$ as the sequence ``shifted" by $m$. So for all $m$ we have
\[ \alpha([r])=\alpha([r(m)])^{p^m} \]
and $\alpha([r(m)])=\theta_A([r(m)])+a$ for some $a\in\ker(A\to R)$. If $p^mA=0$, then 
\begin{eqnarray*}
\alpha([r])=\alpha([r(m)])^{p^m}=(\theta_A([r(m)])+a)^{p^m}&=&\sum_{i=0}^{p^m}{p^m\choose i}i!a^{[i]}\theta_A([r(m)])^{p^m-i}\\&=&\theta_A([r(m)])^{p^m}=\theta_A([r])
\end{eqnarray*}
thus proving the uniqueness claim.

We may extend this map to a unique homomorphism of $S$-algebras
\[ \theta_{A,S}:W(P(R))\otimes_\mathbb{Z}S\rightarrow A \]
thereby obtaining a commutative diagram
\[ \begin{CD}
W(P(R))\otimes_\mathbb{Z}S @>{\theta_{A,S}}>> A \\
@VVV  @VVV \\
R @>{g}>> R'.
\end{CD} \]
Define an integral monoid $P(M)$ by
\[ P(M):=\left\lbrace (m^{(n)})\in M^{\mathbb{N}}\: :\: m^{(n+1)p}=m^{(n)},\:\forall n\in\mathbb{N}\right\rbrace . \]
The Teichm\"uller lift defines a map $P(M)\to W(P(R))$. By assumption, the natural map $P(M)\oplus N\to M$ induced from the projection $P(M)\to M$ sending $(m^{(n)})$ to $m^{(0)}$, is surjective. Let $Q:=P(M)\oplus N$ and define
\[ L:=\ker(Q^{\gp}\rightarrow M^{\gp}). \]
Define
\[ \left( W(P(R))\otimes_\mathbb{Z}S\right)_{\log}:= W(P(R))\otimes_\mathbb{Z}S\otimes_{\mathbb{Z}}\mathbb{Z}[L] . \]
Let us show how to extend the map $\theta_{A,S}$ to a unique map
\[ \theta_{A,S}^{\log}:\left( W(P(R))\otimes_\mathbb{Z}S\right)_{\log}\rightarrow A. \]
Let $M_A$ (resp. $M_{R'}$) denote the log-structure of $\Spec(A)$ (resp. $\Spec(R')$). By Lemma \ref{nilexact} (ii) we have
\[ M_{R'}(R')=M_A(A)/(1+I) \]
where $I=\ker(A\to R')$. Define a map $P(M)\to M_A(A)$ by sending $(m^{(n)})_{n\in\mathbb{N}}$ to $\hat{m}^{(n)p^n}$, where $\hat{m}^{(n)}$ is a global section of $M_A$ lifting the image of $m^{(n)}$ in $M_{R'}$ and $n$ is any integer such that $p^nA=0$. Because of the divided power structure of $I$ it is easy to check that this section is independent of all choices. The map $P(M)\to M_A(A)$ extends uniquely to a map $\lambda_A:Q=P(M)\oplus N\to M_A(A)$. Now if $l\in L$, consider $\lambda_A(l)\in M_A(A)^{\gp}$. Since the image of $\lambda_A(l)$ in $M_{R'}(R')^{\gp}$ is the identity element, by exactness we deduce that $\lambda_A(l)\in A^\ast$. So $\lambda_A$ extends uniquely to a map $L\to M_A(A)$. This defines the map
\[ \theta_{A,S}^{\log}:\left( W(P(R))\otimes_\mathbb{Z}S\right)_{\log}\rightarrow A. \]
In the special case $R'=R=A$ we see that the map $\theta_{R,S}^{\log}$ is none other that induced by the ``projection" map $\theta_{R,S}$ and the natural map $L\to M:l\mapsto 1$. It follows from the above that the map $\alpha=\theta_{A,S}^{\log}$ is the unique map for which the diagram
\[ \begin{CD}
\left( W(P(R))\otimes_\mathbb{Z}S\right)_{\log} @>{\alpha}>> A \\
@V{\theta_{R,S}^{\log}}VV  @VVV \\
R @>>> R'.
\end{CD} \]
commutes in a way compatible with the commutative diagram of monoids
\[ \begin{CD}
Q=P(M)\oplus N @>{\lambda_A}>> M_A(A) \\
@VVV @VVV \\
M @>>> M_{R'}(R').
\end{CD} \]
By the universal property of divided power hulls, $\theta_{A,S}^{\log}$ factors over a homomorphism
\[ \left( W(P(R))\otimes_\mathbb{Z}S\right)_{\log}^{\text{DP}}\to A \]
where $\left( W(P(R))\otimes_\mathbb{Z}S \right)_{\log}^{\text{DP}}$ is the divided power hull of $\left( W(P(R))\otimes_\mathbb{Z}S\right) _{\log}$ for the ideal $\ker(\theta_{R,S}^{\log})$ defined above, compatible with $\gamma:J\to S$ (for the construction of divided power hulls see \cite[Ch. I, \S2.3]{berthelot} or \cite[3.19 Theorem]{bertogus}).

We can view $W(P(R))$ canonically as a $\mathbb{Z}[P(M)]$-algebra via the Teichm\"uller lift of the map $P(M)\to P(R)$. Hence $W(P(R))\otimes_{\mathbb{Z}}S$ is canonically a $\mathbb{Z}[Q]=\mathbb{Z}[P(M)]\otimes_{\mathbb{Z}}\mathbb{Z}[N]$-algebra. Thus $\left(W(P(R))\otimes_\mathbb{Z}S\right)_{\log}$ is canonically a $\mathbb{Z}[Q\oplus L]=\mathbb{Z}[Q]\otimes_{\mathbb{Z}}\mathbb{Z}[L]$-algebra, and so is $\left( W(P(R))\otimes_\mathbb{Z}S\right)_{\log}^{\text{DP}}$. Let $\overline{\ker(\theta_{R,S}^{\log})}$ be the DP-ideal of $\left( W(P(R))\otimes_\mathbb{Z}S\right)_{\log}^{\text{DP}}$ generated by the divided powers of all elements of $\ker(\theta_{R,S}^{\log})$, and $K\subset\overline{\ker(\theta_{R,S}^{\log})}$ be the sub-DP-ideal generated by the divided powers of all elements of the form $q_l\otimes l-p_l\otimes 1$, where $p_l,q_l\in Q$ and $l=p_l/q_l\in L$. Define
\[ B:=\left( W(P(R))\otimes_\mathbb{Z}S\right)_{\log}^{\text{DP}}/K. \]
Note that by \cite[Ch. I, Prop. 1.6.2]{berthelot} the ideal $\ker(B\to R)$ has a canonical DP-structure, compatible with $\gamma:J\to S$. Moreover, the map $\theta_{A,S}^{\log}$ factors uniquely over $B$, since the ideal $K$ maps to zero in $A$. The closed immersion
\[ \Spec(R)\hookrightarrow \Spec(B) \]
is our candidate for final object.

\underline{Step 2:} We define the log-structure on $\Spec(B)$ as follows. Let $QL$ be the submonoid of $Q^{\gp}$ consisting of elements of the form $ql$ with $q\in Q$, $l\in L$. The map factors over a map $QL\to B$. We define the log-structure on $B$ as that associated to $QL\to B$.

We claim that this log-structure makes the closed immersion
\[ \Spec(R)\hookrightarrow \Spec(B) \]
exact. To verify this claim we use the criterion of Lemma \ref{nilexact} (i). This criterion is local for the \'etale topology, so let $\bar{x}\to\Spec(R)$ be a geometric point, and $R_{\bar{x}}$, $B_{\bar{x}}$  the strict localizations of $R$ and $B$ respectively at $\bar{x}$. In this case the claim will follow from Lemma \ref{app:exactlogstr} if we can show:
\begin{enumerate}[(a)]
\item the image of $L$ in $((QL)^a)^{\gp}$ lies in $B^{\ast}$
\item the map $(QL)^{\gp}\cap B^{\ast}_{\bar{x}}\to M^{\gp}\cap R^{\ast}_{\bar{x}}$ is surjective.
\end{enumerate}
Since (a) is clear by construction, let us show (b). Suppose that $m=\dfrac{m_1}{m_2}\in M^{\gp}\cap R^{\ast}_{\bar{x}}$ with $m_i\in M$ for $i=1,2$. Since the Frobenius is surjective on $M$, for $i=1,2$ we can choose a sequence $\underline{m_i}:=(m_i=m_i^{(0)},m_i^{(1)},m_i^{(2)},...,m_i^{(n)},...)$ of elements of $M$ such that $m_i^{(n+1)p}=m_i^{(n)}$ for all $n\in\mathbb{N}$. If we define $m^{(n)}:=\dfrac{m_1^{(n)}}{m_2^{(n)}}$, then we have $m^{(n)p^n}=m$ for all $n\in\mathbb{N}$. Since the log structure $M^a$ is saturated, this implies that $m^{(n)}\in M^a_{\bar{x}}$ for all $n\in\mathbb{N}$. But then if $\alpha:M^a\to R$ is the structure morphism of the log structure, we have $\alpha(m^{(n)})^{p^n}\in R^{\ast}_{\bar{x}}$, whence $\alpha(m^{(n)})\in R^{\ast}_{\bar{x}}$, so $m^{(n)}\in R^{\ast}_{\bar{x}}\subset M^a_{\bar{x}}$ for all $n\in\mathbb{N}$. So we can naturally identify $\underline{m}:=\underline{m}_1/\underline{m}_2$ with an element of $P(R_{\bar{x}})^{\ast}$ and, via the Teichm\"uller map, with an element of $W(P(R_{\bar{x}}))^{\ast}$. Now let $\theta_{B_{\bar{x}}}:W(P(R_{\bar{x}}))\to B_{\bar{x}}$ be the homomorphism constructed in Step 1. We claim that the map $P(M)\to B_{\bar{x}}$ factors $P(M)\to W(P(R_{\bar{x}}))\to B_{\bar{x}}$. This follows from the commutative diagram
\[ \begin{CD}
W(P(R)) @>{\theta_B}>>  B \\
@VVV @VVV \\
W(P(R_{\bar{x}})) @>{\theta_{B_{\bar{x}}}}>>  B_{\bar{x}}.
\end{CD} \]
So in $\left(QL\right)^a$ we have $\underline{m}_1=\theta_{B_{\bar{x}}}([\underline{m}])\cdot\underline{m}_2$, i.e. $\underline{m_1}/\underline{m_2}\in P(M)^{\gp}\cap B^{\ast}_{\bar{x}}$, which implies (b). Thus, the closed immersion $\Spec(R)\hookrightarrow\Spec(B)$ is exact, hence $\Spec(R)\hookrightarrow\Spec(B)$ is an object of the site $(R|S)_{\logcrys}$.

\underline{Step 3:} We have shown that if $\Spec(R')\hookrightarrow\Spec(A)$ is an $S$-DP-thickening we have unique morphisms of $(R|S)_{\logcrys}$
\[ \begin{CD}
\left( \Spec(R')\hookrightarrow \Spec(A)\right) \\
@VVV \\
\left( \Spec(R)\hookrightarrow \Spec(B)\right)
\end{CD} \]
and hence for any object $U\hookrightarrow T$ and any affine covering $(U_\alpha\hookrightarrow T_\alpha)_\alpha \to (U\hookrightarrow T)$ we may define unique morphisms
\[ (U_\alpha \hookrightarrow T_\alpha)\to \left( \Spec(R)\hookrightarrow \Spec(B)\right)  \]
and since the uniqueness allows us to glue, we are done.
\end{proof}

Now we sketch the second proof, \emph{cf.} \cite[\S 4-5]{breuil}.
\begin{proof}[Second proof (sketch)]
Suppose $p^mS=0$. For any $S$-DP-thickening $\Spec(R)\hookrightarrow \Spec(A)$, define a map
\[ \theta_A:W_m(R)\to A \]
by sending $(r_0,...,r_{m-1})$ to $\sum_{i=0}^{m-1}p^i\hat{r}_i^{p^{m-i}}$, where $\hat{r}_i$ denotes an arbitrary lift of $r_i\in R$ to $A$. Because of the divided power structure of $\ker(A\to R)$, this is a well-defined homomorphism of rings. In this way we obtain a commutative square, and to check the uniqueness of $\theta_A$ for such commutative squares, we first reduce to checking it for Teichm\"uller lifts and then use the fact that the Frobenius is surjective on $R$. Let $Q:=M\oplus N$, $L=\ker(\lambda^{\gp}:Q^{\gp}\to M^{\gp})$, where the map $\lambda:Q\to M$ is induced by the maps $M\to M:m\mapsto m^{p^n}$ and the structure map $N\to M$. We view $W_m(R)\otimes_\mathbb{Z}S$ as a $\mathbb{Z}[Q]$-algebra via the map
\[ M\oplus N\to W_m(R)\otimes_\mathbb{Z}S:(r,n)\mapsto\hat{r}^{p^m}\otimes n. \]
Let $QL$ be the submonoid of $Q^{\gp}$ consisting of elements of the form $ql$ with $q\in Q$, $l\in L$. Then the map $\lambda$ extends uniquely to $\lambda':QL\to M:L\ni l\mapsto 1$. Set
\[ \left( W_m(R)\otimes_\mathbb{Z}S\right)_{\log}:=(W_m(R)\otimes_\mathbb{Z}S)\otimes_{\mathbb{Z}}\mathbb{Z}[L]. \]
Take the divided power hull $\left( W_m(R)\otimes_\mathbb{Z}S\right)_{\log}^{\text{DP}}$ for the kernel of the unique surjection onto $R$ extending the $S$-linearization $\theta_{R,S}$ of $\theta_R$ by $\lambda'$. Define
\[ B=\left( W_m(R)\otimes_\mathbb{Z}S\right)_{\log}^{\text{DP}}/\mathscr{K} \]
where $\mathscr{K}$ is the DP-ideal generated by the divided powers of $q_l\otimes l-p_l$, $l\in L$, where $q_l,p_l\in Q$ and $l=p_l/q_l\in L$. Then there is a map $QL\to B$ and we endowing $B$ with the log-structure associated to this map, thereby obtaining an object
\[ \Spec(R)\hookrightarrow \Spec(B) \]
of the site $(R|S)_{\logcrys}$. Then we claim that this is the final object: one can verify the universal property as in the first proof.
\end{proof}

\subsubsection{Remarks.}\label{remarks}
\begin{enumerate}[(a)]
\item Note that in the construction of the final object, we may replace the group $L$ by any subgroup $L'\subset L$ such that for all $l\in L$ we have $l'u=l$ for some $l'\in L'$ and $u\in P(R)^{\ast}$ in the first proof (resp. $u\in R^{\ast}$ in the second proof), and obtain a ring $B'$. Then exactly the same argument as in Step 2 of the proof of Theorem \ref{fontaine} shows that the closed immersion
\[ \Spec(R)\hookrightarrow \Spec(B') \]
is exact, hence defines an object of $(R|S)_{\logcrys}$, and it is easy to see that it is the final object.
\item The main advantage of the first proof is the uniformity of the construction. If $(S,J)\to (S/p^nS,J\cdot S/p^nS)$ is a DP-homomorphism, then it follows from \cite[3.20, Remark 8]{bertogus} that the final object of the crystalline site $(R|S/p^nS)_{\logcrys}$ is given by the $S/p^nS$-DP-thickening
\[ \Spec(R)\hookrightarrow \Spec(B)\otimes_\mathbb{Z}\mathbb{Z}/p^n\mathbb{Z}. \]
In particular, if $S$ is a $p$-adically complete ring (we say that $S$ is \textit{$p$-adic base}), then as in \cite{bertogus} one may define a crystalline site $(R|S)_{\logcrys}$ whose cohomology automatically computes the derived projective limit of the cohomology of each $(R|S/p^nS)_{\logcrys}$. Then the proof given in the theorem also works for such $S$ and shows that
\[ \Spec(R)\hookrightarrow \text{Spf}(\hat{B}) \]
is the final object of the site $(R|S)_{\logcrys}$, where the hat denotes the $p$-adic completion.
\item The existence of the final object of the site $(R|S)_{\logcrys}$ implies that the cohomology of any sheaf is canonically isomorphic to the cohomology of its restriction to the \'etale site of the final object. In particular, the crystalline cohomology of any quasi-coherent $\mathscr{O}$-module vanishes in non-zero degree and we have
\[ H^0_{\logcrys}(R|S,\mathscr{O})=B. \]
In the case $S$ is a $p$-adic base, using the previous remark we see that
\[ \varprojlim_nH^0_{\logcrys}(R|S/p^{n+1}S,\mathscr{O})=\hat{B}. \]
\item If one compares the 1st and 2nd proofs of Theorem \ref{fontaine}, then it is not difficult to see that the canonical isomorphism
\[ (W_m(P(R))\otimes_{\mathbb{Z}}S)_{\log}^{\DP}/K\cong (W_m(R)\otimes_{\mathbb{Z}}S)_{\log}^{\DP}/\mathscr{K} \]
sends the image of an element $[x]\in W_n(P(R))$, where $x=(x^{(0)},x^{(1)},...)\in P(R)$, to the image of the element $[x^{(m)}]\in W_m(R)$.
\end{enumerate}

\subsubsection{} Let $R$, $M$, and $S$ be as in Theorem \ref{fontaine}. Let $\Spec(A)$ be a fine saturated log-$S/J$-scheme and let $h:\Spec(R)\to\Spec(A)$ be a homomorphism of logarithmic $S/J$-schemes. Denote also by $h$ the associated morphism of log-crystalline sites
\[ h:(R|S)_{\logcrys}\to (A|S)_{\logcrys}. \]
Then for any sheaf of abelian groups $\mathscr{F}$ on $(R|S)_{\logcrys}$, the $i$th direct image sheaf $R^ih_\ast\mathscr{F}$ is the sheaf on $(A|S)_{\logcrys}$ associated to the presheaf
\[ (U\hookrightarrow T)\rightsquigarrow H^i_{\logcrys}(\Spec(R)\times_{\Spec(A)}U|T,\mathscr{F}). \]
If $\mathscr{E}$ is a quasi-coherent sheaf of $\mathscr{O}$-modules on $(R|S)_{\logcrys}$ then it follows immediately from Theorem \ref{fontaine} that for $i\neq 0$
\[ R^ih_\ast\mathscr{E}=0 \]
and hence for all $i$
\[ H^i_{\logcrys}(R|S,\mathscr{E})\cong H^i_{\logcrys}(A|S,h_\ast\mathscr{E}) \]
which is again zero for $i\neq 0$.

\begin{prop}\label{qcohcrystal}
With the above notation and assumptions, $h_\ast\mathscr{O}$ is a quasi-coherent crystal of $\mathscr{O}$-modules on $(A|S)_{\logcrys}$.
\end{prop}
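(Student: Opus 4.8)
Suppose we are given the data $(R,M,S,J,\gamma)$ of Theorem~\ref{fontaine}, a fine saturated log-$S/J$-scheme $\Spec(A)$, and $h\colon\Spec(R)\to\Spec(A)$. The plan is to compute the restriction of $h_\ast\mathscr{O}$ to each object of $(A|S)_{\logcrys}$ explicitly, by applying Theorem~\ref{fontaine} fibrewise, and then to deduce both quasi-coherence and the crystal property from the fact that the final-object construction of that theorem commutes with base change of the divided power base. Fix an object $(V\hookrightarrow T)$, which we may take affine, and put $R_V:=R\otimes_A\mathscr{O}(V)$. Since $V\to\Spec(A)$ is \'etale, $R_V$ is \'etale over $R$, so by the argument in Step~1 of the first proof of Theorem~\ref{fontaine} its absolute Frobenius is surjective; the log-structure on $\Spec(R_V)$ is the pullback of $M^a$ along an \'etale map, hence saturated and, \'etale-locally, carried by the chart $M$, on which Frobenius is surjective. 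As $T$ is a DP-thickening over $S$, the ring $\mathscr{O}_T(T)$ is killed by a power of $p$ and $\ker(\mathscr{O}_T(T)\to\mathscr{O}(V))$ is a DP-ideal containing $p$. Hence Theorem~\ref{fontaine} (whose proof is \'etale-local and glues, Steps~2--3) applies to $(R_V|T)_{\logcrys}$, which therefore has a final object $\Spec(R_V)\hookrightarrow\Spec(B_{V,T})$, and by Remark~\ref{remarks}(c) one gets $H^0_{\logcrys}(\Spec(R)\times_{\Spec(A)}V\,|\,T,\mathscr{O})=B_{V,T}$. By the direct image formula recalled above, $(h_\ast\mathscr{O})_T$ is thus the sheaf on $T_{\et}$ associated to the presheaf $W\mapsto B_{V\times_T W,\,W}$, its transition maps being induced by functoriality of the construction of $B$.

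I claim the whole statement reduces to showing that for every morphism $g\colon(V'\hookrightarrow T')\to(V\hookrightarrow T)$ of $(A|S)_{\logcrys}$ the natural map
\[ B_{V,T}\otimes_{\mathscr{O}_T(T)}\mathscr{O}_{T'}(T')\longrightarrow B_{V',T'} \]
is an isomorphism. Indeed, applying this to the (cartesian) morphisms $(V\times_T W\hookrightarrow W)\to(V\hookrightarrow T)$ with $W\to T$ \'etale shows that the presheaf $W\mapsto B_{V\times_T W,\,W}$ is the restriction to $T_{\et}$ of the quasi-coherent $\mathscr{O}_T$-module associated to $B_{V,T}$; hence $(h_\ast\mathscr{O})_T$ is that quasi-coherent module and $h_\ast\mathscr{O}$ is quasi-coherent. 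For general $g$ the displayed isomorphism is precisely the assertion that the transition map $g^\ast(h_\ast\mathscr{O})_T\to(h_\ast\mathscr{O})_{T'}$ is an isomorphism, i.e. that $h_\ast\mathscr{O}$ is a crystal.

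To prove the displayed isomorphism I would unwind the construction $B_{V,T}=(W(P(R_V))\otimes_{\mathbb{Z}}\mathscr{O}_T(T))^{\DP}_{\log}/K$ of the first proof of Theorem~\ref{fontaine}. First, $V'\to V$ is \'etale, being a morphism between \'etale $\Spec(A)$-schemes, so $R_{V'}$ is \'etale over $R_V$; by the invariance of the small \'etale site under Frobenius, $P(R_{V'})$ is the \'etale base change of $P(R_V)$, and since $p^mS=0$ the ring $W(P(R_{V'}))$ is correspondingly the \'etale $W(P(R_V))$-algebra lifting $R_{V'}/R_V$. The monoid data $Q=P(M)\oplus N$, $L=\ker(Q^{\gp}\to M^{\gp})$, the submonoid $QL$, and hence the relation ideal $K$, depend only on $M$ and $N$ and are unchanged. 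So the content is that the divided power hull in the formula commutes with the base changes $W(P(R_V))\to W(P(R_{V'}))$ and $\mathscr{O}_T(T)\to\mathscr{O}_{T'}(T')$: for the former this is the ((flat) \'etale) base change of DP-envelopes, and for the latter one invokes the base-change theorem for divided power envelopes along a DP-morphism (\cite[Ch.~I,~\S2]{berthelot}, \cite{bertogus}), the very compatibility already used implicitly in Remark~\ref{remarks}(b). This gives the isomorphism when the square defining $g$ is cartesian, and a general $g$ factors through a cartesian morphism followed by an infinitesimal thickening, reducing to that case.

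The heart of the proof is this base-change verification. On the coefficient side one must check carefully that $P$ and the truncated Witt vectors really carry the \'etale morphism $R_V\to R_{V'}$ to an \'etale morphism, which uses that Frobenius is a universal homeomorphism (so the small \'etale sites agree) together with the unique lifting of \'etale algebras along the Witt vectors; on the base side one must apply the precise base-change statement for DP-envelopes along the not-necessarily-flat DP-morphism $\mathscr{O}_T(T)\to\mathscr{O}_{T'}(T')$ and check that both the enveloped ideal and the relation ideal $K$ are carried to the correct ideals. That the square of a morphism of $(A|S)_{\logcrys}$ need not be cartesian is not a serious obstacle, since $R_V$ depends only on $V$; but making the reduction to the cartesian case precise is the one genuinely delicate point.
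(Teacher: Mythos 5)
Your overall plan matches the paper's: show that the transition map on the explicit final objects is a base-change isomorphism, and deduce both quasi-coherence and the crystal property from it, after factoring a general morphism $g$ into an infinitesimal thickening (identity on the $U$-side) followed by an \'etale morphism (cartesian square). Where you diverge, and where real gaps appear, is in how you handle the two factors.

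For the \'etale factor you attempt an explicit computation, asserting that $P(R_{V'})$ is the \'etale base change of $P(R_V)$ and that $W(P(R_V))\to W(P(R_{V'}))$ is the \'etale lift, so that flat base change of DP-envelopes applies. This is the one genuinely nontrivial input you leave unproved: $P(R)\to R$ is not a nilpotent surjection in general (its kernel consists of sequences of nilpotents but need not itself be nilpotent), so the ``invariance of the small \'etale site'' does not immediately lift \'etale $R$-algebras to \'etale $P(R)$-algebras, and the statement that perfection/tilting preserves \'etaleness of algebras with surjective Frobenius requires an argument you do not supply. The paper avoids this entirely: for the \'etale factor it uses the referee's abstract observation that the pullback (fibre product over $T$ with $T'$) of the final object of $(\Spec(R)\times_{\Spec(A)}U\,|\,T)_{\logcrys}$ is again a final object of the site over $T'$, by checking directly that any object of the latter maps uniquely through it. That argument never opens up the construction of $B$ and so never needs to know that $P(\cdot)$ or $W(\cdot)$ preserves \'etaleness.

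For the thickening factor your sketch is the right idea but elides the hypotheses under which DP-envelope base change actually holds. Berthelot's base-change statement is not a formal tensor identity; one must know that the sub-DP-ideal $I_E=J_E\cdot B_E$ really is a DP-ideal of $B_E$ (the paper gets this from flatness of $E\to B_E$, which in turn rests on $\mathbb{Z}$-flatness of $W(P(\cdot))$), that $(B_E,I_E)\to(B_F,I_F)$ is a DP-morphism, and that $B_E/\mathscr{I}_E\cong B_F/\mathscr{I}_F$ (which uses precisely that $U'\to g(U')$ is an isomorphism in this factor, so the target ring $R\otimes_A E_0$ is unchanged). Only then does \cite[Ch.~I, Prop.~2.8.2]{berthelot} give $D_{B_F}(\mathscr{I}_F)\cong D_{B_E}(\mathscr{I}_E)\otimes_EF$, and one must still check that the image of the ideal $K$ is again the expected relation ideal and remains a DP-ideal. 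You name these as ``delicate'' but do not establish them; without them the displayed isomorphism is unproved. Finally, the reduction to the two factors requires constructing the intermediate $C$ (the flat lift of $U'$ to a $g(T')$-scheme, with its induced DP-structure) and the canonical map $T'\to C$; this is a short argument but you assert rather than carry it out, and it is where the factorization lives.
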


\begin{proof}
Assume $p^nS=0$. Let $\mathcal{P}$ be the presheaf on $(A|S)_{\logcrys}$ defined
\[ \mathcal{P}(U\hookrightarrow T):=H^0_{\logcrys}(\Spec(R)\times_{\Spec(A)}U|T,\mathscr{O}). \]
Consider a morphism $g:(U'\hookrightarrow T')\to (U\hookrightarrow T)$ of $(A|S)_{\logcrys}$. Then $g:T'\to T$ is an open map, and hence
\[ \left( g^{-1}\mathcal{P}|_T\otimes_{g^{-1}\mathscr{O}_T}\mathscr{O}_{T'}\right) (T')=\mathcal{P}(g(T'))\otimes_{\mathscr{O}_T(g(T))}\mathscr{O}_{T'}(T'). \]
We claim that, up to localizing on $U'$, we have
\[ g^{-1}\mathcal{P}|_T\otimes_{g^{-1}\mathscr{O}_T}\mathscr{O}_{T'}\cong\mathcal{P}|_{T'}.  \]
We have a commutative square
\[ \begin{CD}
U' @>>> g(U') \\
@VVV @VVV \\
T' @>>> g(T')
\end{CD} \]
Let $C$ be the unique flat $g(T')$-scheme making the following square cartesian
\[ \begin{CD}
U' @>>> g(U') \\
@VVV @VVV \\
C @>>> g(T').
\end{CD} \]
Then $C\to g(T')$ is \'etale and therefore the morphism $U'\hookrightarrow C$ has a unique structure of a DP thickening which is compatible with that of $g(U')\hookrightarrow g(T')$. Moreover, we have a commutative square
\[ \begin{CD}
U' @>>> C \\
@VVV @VVV \\
T' @>>> g(T').
\end{CD} \]
Since the left vertical arrow is a nilpotent thickening, there exists a unique morphism $T'\to C$, necessarily a DP-morphism, making the resulting diagram commute. Hence we have a commutative diagram
\[ \begin{CD}
U' @= U' @>>> g(U') \\
@VVV @VVV @VVV \\
T' @>>> C @>>> g(T')
\end{CD} \]
and so we reduce to proving the claim in the following two cases
\begin{enumerate}[I.]
\item $g:T'\to g(T')$ is \'etale
\item $g:U'\to g(U')$ is an isomorphism.
\end{enumerate}
In case I, we must show the following: if $(U'\hookrightarrow T')\to (U\hookrightarrow T)$ is an \'etale map with $U$ and $U'$ affine and $U\times_TT'\cong U'$, then the canonical map
\begin{equation}\label{qcoheq1} \mathcal{P}(U\hookrightarrow T)\otimes_{\mathscr{O}(T)}\mathscr{O}(T')\to\mathcal{P}(U'\hookrightarrow T') 
\end{equation}
is an isomorphism. I thank the referee for the following argument (my original argument was a direct verification). To simplify, let $\mathcal{P}=\mathcal{P}(U\hookrightarrow T)$. Note that by Theorem \ref{fontaine}, there is a closed immersion $\left(\Spec(R)\hookrightarrow\Spec(\mathcal{P})\right)$ which is in fact the final object of the site $(\Spec(R)\times_{\Spec(A)}U|T)_{\logcrys}$. It suffices to show that the closed immersion $X:=(\Spec(R)\times_{\Spec(S)}U'\hookrightarrow \Spec(\mathcal{P})\times_TT')$ is the final object of the site $(\Spec(R)\times_{\Spec(S)}U'|T')_{\logcrys}$, where $\Spec(R)\times_{\Spec(S)}U'$ is endowed with the inverse image log-structure under the structure morphism to $\Spec(R)$ and $T'$ the inverse image log-structure under the structure morphism to $T$. We endow $\Spec(\mathcal{P})\times_TT'$ with the inverse image log-structure under the structure morphism to $\Spec(\mathcal{P})$. Note that by flatness of $T'\to T$, the divided power structure of the closed immersion $\Spec(R)\times_{\Spec(A)}T\hookrightarrow\Spec(\mathcal{P})$ extends uniquely to $X$, so that $X$ is indeed an object of the site in question. Since any object $Y$ of this site can be naturally considered as an object of the site $(\Spec(R)\times_{\Spec(A)}U|T)_{\logcrys}$, there is a unique morphism $Y\to \left(\Spec(R)\hookrightarrow\Spec(\mathcal{P})\right)$. This morphism factors uniquely $Y\to X\to \left(\Spec(R)\hookrightarrow\Spec(\mathcal{P})\right)$, thus proving that $X$ is the final object.

In case II, we can assume $U'$ is affine, and hence so are $T'$ and $g(T')$. Let $U'=\Spec(E_0)$, $g(T')=\Spec(E)$ and $T'=\Spec(F)$. Let $M\to\mathscr{O}_{U'}$ be the log-structure of $U'$. Up to localizing further we may assume that, in a neighbourhood of a geometric point $\bar{x}\to U'$, the log-structure of $U'$ is given by the fine saturated monoid $\Lambda:=M_{\bar{x}}/\mathscr{O}_{U',\bar{x}}^{\ast}$. Since $U'\hookrightarrow g(T')$ is an exact closed immersion, it follows from Lemma \ref{nilexact} (iii) that the there is a map $\Lambda\to E$ defining the log-structure of $E$, and moreover the map $\Lambda\to E\to F$ also defines the log-structure of $F$. We will construct $g^{-1}\mathcal{P}|_T\otimes_{g^{-1}\mathscr{O}_T}\mathscr{O}_{T'}$ and $\mathcal{P}|_{T'}$ as in the proof of Theorem \ref{fontaine} and prove that they are canonically isomorphic. Let $Q:=P(M)\oplus\Lambda$. As in the proof of Theorem \ref{fontaine}, there is a canonical map $Q\to M$. Define
\[ L:=\ker(Q^{\gp}\to M^{\gp}). \]
Let $J_F$ (resp. $J_E$) denote the ideal sheaf of $U'$ in $T'$ (resp. $g(U')$ in $g(T')$).  Define the pairs
\begin{eqnarray*}
(B_E,I_E) &=& \left( (W(P(R\otimes_{A}E_0))\otimes_{\mathbb{Z}}E)\otimes_{\mathbb{Z}}\mathbb{Z}[L], J_E\cdot (W(P(R\otimes_{A}E_0))\otimes_\mathbb{Z}E)\otimes_{\mathbb{Z}}\mathbb{Z}[L]\right) \\
(B_F,I_F) &=& \left( (W(P(R\otimes_{A}E_0))\otimes_{\mathbb{Z}}F)\otimes_{\mathbb{Z}}\mathbb{Z}[L],J_F\cdot (W(P(R\otimes_{A}E_0))\otimes_\mathbb{Z}F)\otimes_{\mathbb{Z}}\mathbb{Z}[L]\right)
\end{eqnarray*}
Note that since the ring of Witt vectors of a perfect ring of characteristic $p$ is $\mathbb{Z}$-torsion free, hence flat over $\mathbb{Z}$, it follows that $E\to B_E$ (resp. $F\to B_F$) is flat. Hence the ideal $I_E$ (resp. $I_F$) is a DP-ideal. Since $(E,J_E)\to (F,J_F)$ is a DP-morphism, so isthe canonical map $(B_E,I_E)\to (B_F,I_F)$. Moreover we have
\[ B_E/I_E\cong B_F/I_F. \]
Define
\begin{eqnarray*}
\mathscr{I}_E &=& \ker(B_E\to R\otimes_{A}E_0) \\
\mathscr{I}_F &=& \ker(B_F\to R\otimes_{A}E_0)
\end{eqnarray*}
for the canonical maps. Then $I_E\subset \mathscr{I}_E$ and $I_F\subset \mathscr{I}_F$, and via the map $B_E\to B_F$, $\mathscr{I}_E$ maps to $\mathscr{I}_F$, and by definition we have $B_E/\mathscr{I}_E\cong B_F/\mathscr{I}_F$. So by \cite[Ch. I, Prop. 2.8.2]{berthelot}, we have a canonical isomorphism
\[ D_{B_F}(\mathscr{I}_F)\cong D_{B_E}(\mathscr{I}_E)\otimes_{B_E}B_F=D_{B_E}(\mathscr{I}_E)\otimes_EF\]
where $D_{B_F}(\mathscr{I}_F)$ (resp. $D_{B_E}(\mathscr{I}_E)$) denotes the divided power hull of $B_F$ (resp. $B_E$) for the ideal $\mathscr{I}_F$ (resp. $\mathscr{I}_E$) compatible with the divided power structure on $J_F$ (resp. $J_E$). Let $K\subset D_{B_E}(\mathscr{I}_E)$ be the DP-ideal generated by the divided powers of all elements of the form $q_l\otimes l-p_l\otimes 1$ as in Theorem \ref{fontaine}. Then $\text{Im}(K\otimes_EF)\subset D_{B_F}(\mathscr{I}_F)$ is the ideal generated by the divided powers of all elements of the form $q_l\otimes l-p_l\otimes 1$ (where $l=p_l/q_l\in L\subset Q^{\gp}$), hence in particular is a DP-ideal. Taking the quotient by $K$ we obtain an isomorphism
\[ (D_{B_E}(\mathscr{I}_E)/K)\otimes_EF \cong D_{B_F}(\mathscr{I}_F)/\text{Im}(K\otimes_EF) \]
which is precisely
\[ g^{-1}\mathcal{P}|_T\otimes_{g^{-1}\mathscr{O}_T}\mathscr{O}_{T'}\cong\mathcal{P}|_{T'} \]
and hence we have shown the claim. Sheafifying we see that
\[ h_\ast\mathscr{O}|_{T'}\cong g^\ast\left( h_\ast\mathscr{O}|_{T}\right)  \]
so $h_\ast\mathscr{O}$ is a crystal of $\mathscr{O}$-modules. Finally, the quasi-coherence of $\mathcal{P}$ follows from the fact that the map (\ref{qcoheq1}) is an isomorphism.
\end{proof}

\subsection{Some Fontaine rings}

\subsubsection{}
Let $K^+$ be a complete discrete valuation ring of characteristic zero and of perfect residue field $k$ of characteristic $p>0$. Denote by $K$ its fraction field. Let $\bar{K}$ be an algebraic closure of $K$ and let $\bar{K}^+$ be its valuation ring. Following Fontaine we set
\[ A_{\cris}:=A_ {\cris}(K^+):=\varprojlim_n H^0_{\crys}(\bar{K}^+/p\bar{K}^+|W_{n+1}(k),\mathscr{O}). \]
This ring may be constructed as in the proof of Thm. \ref{fontaine}. Let $P(\bar{K}^+/p\bar{K}^+)$ be as in the proof of \emph{loc. cit.} Consider the element
\[ \underline{1}:=(1,\zeta_p,\zeta_{p^2},...)\in P(\bar{K}^+/p\bar{K}^+) \]
where $\zeta_{p^n}$ denotes a primitive $p^n$th root of unity and $\zeta_{p^{n+1}}^p=\zeta_{p^n}$ for all $n$. Define
\[ A_{\inf}(K^+):=W(P(\bar{K}^+/p\bar{K}^+)) \]
so that $[\underline{1}]\in A_{\inf}(K^+)$. Define an element of $A_{\cris}$ by
\[ t:=\log([\underline{1}])=-\sum_{n>0}(n-1)!(1-[\underline{1}])^{[n]}. \]
This element plays a very important role in $p$-adic Hodge theory. By functoriality of crystalline cohomology, $A_{\cris}$ has a Frobenius endomorphism $\Phi$.

\begin{lemma}\label{flat}
$A_{\cris}(K^+)$ is a flat $W(k)$-algebra.
\end{lemma}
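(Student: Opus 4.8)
The plan is to reduce the statement to the assertion that multiplication by $p$ is injective on $A_{\cris}(K^+)$. Indeed, $W(k)$ is a (complete, Noetherian) discrete valuation ring with uniformizer $p$, so a $W(k)$-module is flat if and only if it is torsion-free; and since every non-zero element of the maximal ideal of $W(k)$ is a unit times a power of $p$, being torsion-free is equivalent to being $p$-torsion-free. Hence it suffices to show that $A_{\cris}(K^+)$ has no $p$-torsion.

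To this end I would first recall the explicit form of the construction. Applying Theorem \ref{fontaine} (with trivial log-structures) to $R=\bar{K}^+/p\bar{K}^+$ over the $p$-adic base $W(k)$ and using Remarks \ref{remarks}(b),(c), one sees that $A_{\cris}(K^+)$ is the $p$-adic completion of the divided power envelope $D$ of $A_{\inf}(K^+)=W(P(\bar{K}^+/p\bar{K}^+))$ along the ideal $\mathfrak{a}:=\ker(\theta)$, where $\theta\colon A_{\inf}(K^+)\to\widehat{\bar{K}^+}$ is the canonical surjection onto the $p$-adic completion of $\bar{K}^+$ (the map $\theta_A$ of Theorem \ref{fontaine} for $A=\widehat{\bar{K}^+}$; the auxiliary tensor factor $\otimes_{\mathbb{Z}}W(k)$ and the divided powers of $p$ that appear in that construction are already present in $A_{\inf}(K^+)$, which is $\mathbb{Z}$-flat and contains $p^n/n!$ for all $n$, so they may be absorbed — compare \cite{font}).

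The key input is the classical fact, due to Fontaine \cite{font}, that $\mathfrak{a}$ is principal, generated by a non-zero-divisor (one may take $\xi=[\underline{p}]-p$, where $\underline{p}=(p^{1/p^n}\bmod p)_{n\ge 0}\in P(\bar{K}^+/p\bar{K}^+)$ is a compatible system of $p$-power roots of $p$). Granting this, and using that $A_{\inf}(K^+)$ is $\mathbb{Z}$-flat — which holds because $P(\bar{K}^+/p\bar{K}^+)$ is a perfect, hence reduced, ring of characteristic $p$ — the divided power envelope of the principal ideal $(\xi)$ is, by the standard description (see \cite{berthelot}, \cite{bertogus}), the $A_{\inf}(K^+)$-subalgebra of $A_{\inf}(K^+)[1/p]$ generated by the elements $\xi^n/n!$, $n\ge 1$. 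In particular $D$ embeds into the $\mathbb{Q}_p$-algebra $A_{\inf}(K^+)[1/p]$, hence $D$ is $p$-torsion-free.

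It remains to observe that the $p$-adic completion $A_{\cris}(K^+)=\varprojlim_n D/p^nD$ of the $p$-torsion-free ring $D$ is again $p$-torsion-free: if $x=(x_n)_n$ lies in the inverse limit with $px=0$, then a lift $\widetilde{x}_{n+1}\in D$ of $x_{n+1}$ satisfies $p\widetilde{x}_{n+1}\in p^{n+1}D$, so $\widetilde{x}_{n+1}\in p^nD$ by $p$-torsion-freeness of $D$, whence $x_n=0$ for all $n$ and thus $x=0$. Together with the first paragraph this proves that $A_{\cris}(K^+)$ is flat over $W(k)$. I expect the only genuinely non-formal ingredient to be the description of $\ker(\theta)$ quoted above (together with the identification of the crystalline-site construction of Theorem \ref{fontaine} with Fontaine's explicit rings $A_{\inf}$ and $A_{\cris}$); everything else is bookkeeping.
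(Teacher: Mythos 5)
Your overall strategy agrees with the paper's: reduce flatness over the DVR $W(k)$ to $p$-torsion-freeness, identify $A_{\cris}(K^+)$ as the $p$-adic completion of the divided power hull $D$ of $W(P)$ (with $P:=P(\bar{K}^+/p\bar{K}^+)$) along the principal ideal $(\xi)$, $\xi=[\underline{p}]-p$, and observe that $p$-adic completion preserves $p$-torsion-freeness. The gap lies in the appeal to a ``standard description''. The assertion that the PD envelope of a principal ideal $(\xi)$ in a $\mathbb{Z}$-flat ring $A$ embeds into $A[1/p]$ as the subring generated by the $\xi^n/n!$ is not a formal consequence of $\xi$ being a non-zero-divisor; since $D[1/p]\cong A[1/p]$, injectivity of $D\to A[1/p]$ is \emph{equivalent} to the $p$-torsion-freeness you are trying to prove, so as stated the reasoning is circular. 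Moreover the claim genuinely fails without a further hypothesis: take $A=\mathbb{Z}_{(p)}[x,y]/(xy-p^2)$, a $\mathbb{Z}_{(p)}$-flat domain, and $\xi=x+p$ (a non-zero-divisor with $\xi\notin pA$ and $p\notin(\xi)$). In $A\langle X\rangle/(\xi-X)$ the element $f:=(y+p)X^{[p]}-(2p+x+y)X^{[p-1]}$ satisfies $pf=(X-\xi)\cdot(y+p)X^{[p-1]}$, using $\xi(y+p)=p(2p+x+y)$; yet $f\notin(X-\xi)$, since comparing coefficients of $X^{[p]}$ would force $(y+p)/p\in A$, and $y+p\notin pA$. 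So this PD hull has $p$-torsion even though $\xi$ is a non-zero-divisor and $A$ is $\mathbb{Z}$-flat.

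What is missing is the hypothesis that $\xi$ remain a non-zero-divisor modulo $p$ — equivalently, that $W(P)/(\xi)$ be $p$-torsion-free, or that $(\xi)$ be a prime ideal of the domain $W(P)$ with $p\notin(\xi)$. In the case at hand this holds: $W(P)/(\xi)\cong\widehat{\bar{K}^+}$ is a domain of characteristic $0$, and $\xi\bmod p=\underline{p}$ is a nonzero element of the valuation ring $P$. The paper's proof invokes exactly this input (via \cite[5.1.2]{font}, $P$ is a valuation ring, hence $W(P)$ is a domain and $(\xi)$ is prime, $p\notin(\xi)$) and then carries out a direct coefficient computation in $W(P)\langle X\rangle$; the decisive step $\xi f_0=p^ng_0\Rightarrow p^n\mid f_0$ uses precisely primality of $(\xi)$ and $p\notin(\xi)$. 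To repair your argument, either state and verify the stronger regular-sequence/primality hypothesis before citing the flatness of PD envelopes, or reproduce the computation as the paper does; quoting only that $\xi$ is a non-zero-divisor in a $\mathbb{Z}$-flat ring is insufficient.
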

\begin{proof}
Let $P:=P(\bar{K}^+/p\bar{K}^+)$. By \cite[5.1.2]{font} $P$ is a valuation ring, in particular an integral domain. Since $P$ is perfect, it follows that $W(P)$ is a flat and $p$-adically separated $W(k)$-algebra, hence is also an integral domain. As is well-known, $A_{\cris}(K^+)$ is the $p$-adic completion of $W(P)^{\DP}=$ the divided power hull of $W(P)$ for the ideal $(\xi)$, where $\xi=[\underline{p}]-p$ and
\[ \underline{p}=(p,p^{1/p},p^{1/p^2},...)\in P(\bar{K}^+/p\bar{K}^+) \]
is a compatible system of $p$-power roots of $p$ in $\bar{K}^+/p\bar{K}^+$. So it suffices to show that $W(P)^{\DP}$ is $p$-torsion free. Now by construction of divided power hulls, we have an exact sequence
\[ 0\to\mathcal{I}\to\Gamma_{W(P)}((\xi))\to W(P)^{\DP}\to 0 \]
where $\Gamma_{W(P)}((\xi))=W(P)\left\langle X\right\rangle $ is the divided power polynomial algebra on one variable $X$ and $\mathcal{I}$ is the principal ideal generated by $\xi-X$. Since $\Gamma_{W(P)}((\xi))$ is flat over $W(k)$, it suffices to show that the map $\mathcal{I}\otimes_{W(k)}W_n(k)\to\Gamma_{W(P)}((\xi))\otimes_{W(k)}W_n(k)$ is injective for all $n$. Suppose $(\xi-X)f\equiv 0\mod p^n$ for some $f\in \Gamma_{W(P)}((\xi))$. Then $(\xi-X)f=p^ng$ for some $g\in \Gamma_{W(P)}((\xi))$. Write $f=\sum_{i\geq 0}f_iX^{[i]}$ and $g=\sum_{i\geq 0}g_iX^{[i]}$ for some uniquely determined $f_i,g_i\in W(P)$. Then we have $\xi f_i-(i+1)f_{i-1}=p^ng_i$ for all $i$. In particular, for $i=0$ we get $\xi f_0=p^ng_0$. Since $\xi$ generates a prime ideal which does not contain $p$, the identity $\xi f_0=p^ng_0$ implies that $g_0\in (\xi)$, whence $f_0\equiv 0\mod p^n$ since $W(P)$ is an integral domain. For $i=1$ we have $\xi f_1-2p^nf_0'=p^n g_1$, hence again by the same argument $f_1=p^nf'_1$. Continuing in the manner we see that $f=p^nf'$, which implies the claim.
\end{proof}

\subsubsection{}
Because of the lack of a lifting of the Frobenius endomorphism of $K^+/pK^+$ to $K^+/p^nK^+$, we will need to consider a formal divided power lifting $\Sigma$ of $K^+/pK^+$, defined as follows. Firstly, making a choice of uniformizer $\pi$ of $K^+$ determines a presentation $K^+=W(k)[u]/(E(u))$, where $E(u)$ is the minimal equation of $\pi$ over $W(k)$, i.e. $E(u)$ is an Eisenstein equation of degree $e$, where $e$ is the ramification index of $K^+$ over $W(k)$. So $W_n(k)[u]$ is a smooth $W_n(k)$-lift of $K^+/pK^+=k[u]/(u^e)$. Taking the divided power hull for the kernel of the surjection $W_n(k)[u]\to K^+/pK^+$, we obtain $\Sigma_{n}=W_n(k)[u]\left\langle u^e\right\rangle $. It has a lifting of the absolute Frobenius of $K^+/pK^+$ defined as the unique homomorphism sending $u$ to $u^p$ and restricting to the canonical Frobenius on $W_n(k)$. Finally we define
\[ \Sigma:=\lim_n \Sigma_{n+1}. \] 

\subsubsection{}
Let us consider the log-crystalline cohomology of $\bar{K}^+/p\bar{K}^+$ over the logarithmic base $\Sigma_n$, where the log-structures are as follows. We endow $\Sigma_n$ with the \emph{canonical log-structure} on $\Sigma_{n}$, i.e. the fine log-structure associated to the pre-log-structure
\[ \mathcal{L}(u):\mathbb{N}\to \Sigma_{n}:1\mapsto u. \]
Composing this with the canonical map $\Sigma_{n}\to K^+/p^nK^+$ defines a pre-log-structure on the latter, making it into a log-$\Sigma_{n}$-scheme. The associated log-structure is also the inverse image of the canonical log-structure on $K^+$ defined
\[ K^+-\left\lbrace 0\right\rbrace \to K^+.\]
Similarly, the canonical log-structure on $\bar{K}^+$ is given by
\[ \bar{K}^+-\left\lbrace 0\right\rbrace \to \bar{K}^+ \]
and we endow $\bar{K}^+/p\bar{K}^+$ with the inverse image log-structure. If we fix roots of $\pi$, then it is the log-structure associated to the pre-log-structure
\[ M:\mathbb{Q}_+\to \bar{K}^+/p\bar{K}^+:\alpha\mapsto\pi^{\alpha}.  \]
(Note that although this pre-log-structure depends on choices of roots of $\pi$, its associated log-structure does not, as two choices of roots of $\pi$ will differ by a unit of $\bar{K}^+/p\bar{K}^+$.) Let
\[ L:=\ker(P(M)^{\gp}\oplus\mathcal{L}(u)^{\gp}\to M^{\gp}). \]
Since $P(M)$ consists of sequences $(x_n)$ of non-negative rational numbers such that $p\cdot x_{n+1}=x_n$ for all $n$, we see that $P(M)^{\gp}$ consists of sequences $(x_n)$ of rational numbers such that $p\cdot x_{n+1}=x_n$ for all $n$, i.e. $P(M)^{\gp}\cong\mathbb{Q}$. So we see that $L$ is the kernel of the map
\[ \mathbb{Q}\oplus\mathbb{Z}\to\mathbb{Q}:(\alpha,m)\mapsto \alpha+m \]
i.e. $L$ consists of pairs $(m,-m)\in\mathbb{Z}^2$. Note that under the map to $A_{\text{inf}}(K^+)\otimes_\mathbb{Z}\Sigma$, $(m\cdot p^{-n})_n\in P(M)$ maps to $[\underline{\pi}]^{m}\otimes 1$, where $\underline{\pi}=(\pi,\pi^{1/p},\pi^{1/p^2},...)\in P(\bar{K}^+/p\bar{K}^+)$ is the sequence of $p$-power roots of $\pi$ determined by $M$, and $m\in\mathcal{L}(u)$ maps to $1\otimes u^m$. So one should think of $(m,-m)$ as $[\underline{\pi}]^{m}\otimes u^{-m}$.

Define
\[ B^+_{\log}:=\lim_n H^0_{\logcrys}(\bar{K}^+/p\bar{K}^+|\Sigma_{n},\mathscr{O}). \]
By construction (\emph{cf}. proof of Thm. \ref{fontaine}), $B^+_{\log}$ is a quotient of the $p$-adic completion of the divided power hull of $A_{\text{inf}}(K^+)\otimes_\mathbb{Z}\Sigma\otimes_{\mathbb{Z}}\mathbb{Z}[L]$ for the kernel of the canonical surjection onto $\hat{\bar{K}}^+$. In fact, by \cite[Prop. 3.3]{kato} we know that every choice of sequence of $p$-power roots of $\pi$ determines an isomorphism
\[ B^+_{\log}/p^nB^+_{\log}\simeq A_{\cris}/p^nA_{\cris}\left\langle X\right\rangle \]
where $A_{\cris}/p^nA_{\cris}\left\langle X\right\rangle $ is a divided power polynomial ring in one indeterminate $X$ over the ring $A_{\cris}/p^nA_{\cris}$. One should think of $X$ as $[\underline{\pi}]\otimes u^{-1}-1$. $B^+_{\log}$ is also endowed with a Frobenius endomorphism.

Now, let us fix a sequence $(c_0=\pi,c_1,c_2,...)$ of $p$-power roots of $\pi$ in $\bar{K}^+$ such that $c_{n+1}^p=c_n$ for all $n\in\mathbb{N}$. Consider the log-structure associated to
\[ \mathbb{N}[1/p]\to \bar{K}^+/p\bar{K}^+:\dfrac{n}{p^m}\mapsto c_m^n. \]
If we define
\[ L':=\ker\left(P(\mathbb{N}[1/p])^{\gp}\oplus\mathcal{L}(u)^{\gp}\to\mathbb{N}[1/p]^{\gp}\right) \]
then $L'=L$ if the $p$-power roots of $\pi$ which we fixed in order to define $M$ are the same as $(c_0,c_1,...)$. In fact, the log-crystalline cohomology of $\bar{K}^+/p\bar{K}^+$ is the same with this log-structure as with $M$ (this can be checked as in the proof of Prop. \ref{dppolynomialalgebra}). So in the sequel we will use this latter log-structure, rather than the canonical one (\emph{cf.} \S\ref{logstr}).

\subsubsection{}
We now make a computation, due to Fontaine, which will be crucial to us.

\begin{prop}[Fontaine]\label{constants}
Let $\alpha\in\mathbb{Q}$. We have
\begin{enumerate}[(i)]
\item $t^{p-1}\in p\cdot A_{\cris}(K^+)$
\item if $\alpha\in\mathbb{Z}$ then $[\underline{1}]^{\alpha}-1=\alpha\cdot t\cdot u_\alpha$, where $u_\alpha$ is a unit of $A_{\cris}(K^+)$
\item $\frac{t\cdot p^{\max\left\lbrace v_p(\alpha),0\right\rbrace }}{[\underline{1}]^{\alpha}-1}\in A_{\cris}(K^+).$
\end{enumerate}
\end{prop}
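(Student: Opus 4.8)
The plan is to reduce the whole proposition to a single divisibility estimate inside $A_{\cris}:=A_{\cris}(K^+)$ and then read off (i), (ii), (iii) by elementary manipulations with the logarithm series and with factorisations of $[\underline 1]^\alpha-1$. Write $\theta\colon A_{\cris}\to\hat{\bar K}^+$ for the canonical surjection and set $\pi_0:=[\underline 1]-1$ (unrelated to the uniformiser $\pi$). Since the $0$-th component of $\underline 1$ is $1$ one has $\theta([\underline 1])=1$, so $\pi_0\in\ker\theta$; recall that $\ker\theta$ carries divided powers and that every element $x$ of it is topologically nilpotent (indeed $x^p=p!\,x^{[p]}\in pA_{\cris}$ and $A_{\cris}$ is $p$-adically complete), so $1+\ker\theta\subset A_{\cris}^\ast$.

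The key step, which I expect to be the only real obstacle, is the claim
\[ \pi_0^{\,m-1}\in p^{\,v_p(m)}A_{\cris}\quad\text{for every integer }m\ge 1,\qquad\text{in particular}\quad\pi_0^{\,p-1}\in pA_{\cris}. \]
For $m=p$ I would argue as in the proof of Lemma~\ref{flat}, where $A_{\cris}$ is exhibited as the $p$-adic completion of the divided power hull of $A_{\inf}(K^+)=W(P)$ along the ideal $(\xi)$, $\xi=[\underline p]-p$, with $\xi$ a nonzerodivisor and $P$ a valuation ring by \cite[5.1.2]{font}; since $\theta(\pi_0)=0$ we may write $\pi_0=\xi b$ with $b\in W(P)$. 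Reducing modulo $p$ into $P$ (normalising $v_P$ by $v_P(\underline p)=1$) one computes $v_P(\bar\xi)=v_P(\underline p)=1$ and $v_P(\bar\pi_0)=v_P(\underline 1-1)=\tfrac{p}{p-1}$ (using $v_p(\zeta_{p^n}-1)=\tfrac1{p^{n-1}(p-1)}$), hence $v_P(\overline{b^{\,p-1}})=(p-1)\bigl(\tfrac{p}{p-1}-1\bigr)=1=v_P(\bar\xi)$; as $P$ is a valuation ring this gives $b^{\,p-1}=\xi v+pc$ with $v\in W(P)^\ast$ and $c\in W(P)$. Then
\[ \pi_0^{\,p-1}=\xi^{\,p-1}b^{\,p-1}=\xi^{\,p}v+p\,\xi^{\,p-1}c\in pA_{\cris}, \]
because $\xi^{\,p}=p!\,\xi^{[p]}\in pA_{\cris}$. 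For general $m$ one has $m-1\ge p^{v_p(m)}-1\ge(p-1)v_p(m)$, so $\pi_0^{\,m-1}$ is a multiple of $(\pi_0^{\,p-1})^{v_p(m)}\in p^{v_p(m)}A_{\cris}$.

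Granting the key step, (i) and (ii) are immediate. Since $(k-1)!\,\pi_0^{[k]}=\pi_0^{\,k}/k=\pi_0\cdot(\pi_0^{\,k-1}/k)$, the defining series of $t$ gives $t=\log[\underline 1]=\pi_0\,g$ with $g:=\sum_{k\ge 1}(-1)^{k-1}\pi_0^{\,k-1}/k$: by the key step each summand lies in $A_{\cris}$ (as $\pi_0^{\,k-1}\in p^{v_p(k)}A_{\cris}\subset kA_{\cris}$), the summands tend to $0$, and $g\equiv 1\pmod{\ker\theta}$, so $g\in A_{\cris}^\ast$. Hence $t^{\,p-1}=\pi_0^{\,p-1}g^{\,p-1}\in pA_{\cris}$, which is (i); and for $\alpha\in\mathbb Z$ the convergent binomial expansion $[\underline 1]^\alpha-1=\sum_{k\ge 1}\binom{\alpha}{k}\pi_0^{\,k}=\alpha\pi_0\sum_{k\ge 1}\binom{\alpha-1}{k-1}(\pi_0^{\,k-1}/k)=\alpha\pi_0\,h$ with $h\in A_{\cris}^\ast$ (same reasoning, $h\equiv 1\bmod\ker\theta$) gives $[\underline 1]^\alpha-1=\alpha\,t\,u_\alpha$ with $u_\alpha:=g^{-1}h$, proving (ii) (for $\alpha=0$ take $u_0=1$).

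For (iii) I would write $\alpha=a/b$ in lowest terms with $b>0$ and, after replacing $\underline 1$ by $\underline 1^{-1}$ if needed, assume $a>0$. Fix a $b$-th root $\delta\in P^\ast$ of $\underline 1$, put $\eta:=[\delta]-1$ and $\Phi_m(T):=1+T+\dots+T^{m-1}$; then $\pi_0=\eta\,\Phi_b([\delta])$ and $[\underline 1]^\alpha-1=[\delta]^{\,a}-1=\eta\,\Phi_a([\delta])$. Since $t=\pi_0g=\eta\,\Phi_b([\delta])g$, with $n:=\max\{v_p(\alpha),0\}$ it suffices to show that $\Phi_a([\delta])$ divides $p^{\,n}$ in $A_{\cris}$. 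Put $c:=\theta([\delta])$, a $b$-th root of unity; note $c=1$ can occur only when $v_p(\alpha)\ge 0$ (if $p\mid b$ then $\mu_p(P)=1$ forces $c\ne 1$), and then $p\nmid b$. If $c\ne 1$ then, since $\gcd(a,b)=1$, $c^a$ has the same order as $c$, so $\Phi_a(c)=(c^{\,a}-1)/(c-1)$ is a unit of $\bar K^+$, hence $\Phi_a([\delta])\in A_{\cris}^\ast$ and divides $p^{\,n}$ trivially. If $c=1$ then $\eta\in\ker\theta$ and $\Phi_b([\delta])$ is a unit of $A_{\cris}$ (its $\theta$-image is $b$, prime to $p$), so $\eta=\pi_0\cdot(\text{unit})$; the hockey-stick identity gives $\Phi_a([\delta])=a+\sum_{k\ge 1}a\binom{a-1}{k}(\eta^{\,k}/(k+1))$, each $\eta^{\,k}/(k+1)=(\text{unit})\cdot\pi_0^{\,k}/(k+1)$ lies in $\ker\theta\subset A_{\cris}$ by the key step (with $m=k+1$), and thus $p^{-n}\Phi_a([\delta])=\tfrac{a}{p^{\,n}}\bigl(1+\sum_{k\ge 1}\binom{a-1}{k}(\eta^{\,k}/(k+1))\bigr)$ is a unit of $A_{\cris}$ (here $n=v_p(a)$), i.e.\ $\Phi_a([\delta])=p^{\,n}\cdot(\text{unit})$. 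This completes (iii). All the genuine content sits in the key step, specifically the case $m=p$, which is the only place where one uses the fine structure of $A_{\cris}$: the principal generator $\xi$ of $\ker\theta|_{A_{\inf}}$, the valuation of $P$, and the divided power relation $\xi^{\,p}\in pA_{\cris}$.
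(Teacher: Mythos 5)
Your proof is correct, and it takes a route that differs from the paper's in several places, most substantially in part (iii). For the key divisibility $\pi_0^{p-1}\in pA_{\cris}$ (where $\pi_0=[\underline{1}]-1$), the paper produces an explicit element $\zeta=\Phi_p([\underline{1}]^{1/p})\in\ker\theta\cap W(P)$ with $\zeta^p\equiv\pi_0^{p-1}\bmod p$, then uses $\zeta^p=p!\,\zeta^{[p]}$; you instead factor $\pi_0=\xi b$ in $W(P)$ and use the value group of $P$ to see that $\bar b^{\,p-1}$ and $\bar\xi$ are associate in $P$, arriving at the same conclusion. Both are short, but yours leans on $P$ being a valuation ring (which the paper anyway invokes for Lemma~\ref{flat}) rather than on a lucky explicit witness. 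Having done so, you extract the cleaner structural fact $t=\pi_0\cdot(\text{unit})$ via the $\log$ series, which makes (i) immediate and also powers your proof of (ii) via the binomial expansion of $(1+\pi_0)^\alpha$; the paper instead proves (i) by a direct multinomial estimate of the truncated $\log$ series and proves (ii) via the $\exp$ series $[\underline{1}]^\alpha-1=\exp(\alpha t)-1$, which is essentially the inverse substitution. The real divergence is in (iii): the paper reduces to integer exponents by factoring $[\underline{1}]^{\alpha y}-1=([\underline{1}]^\alpha-1)\Phi_y([\underline{1}]^\alpha)$ for a $p$-free denominator $y$, and treats $v_p(\alpha)<0$ by one more application of the same trick; you introduce a $b$-th root $\delta$ of $\underline{1}$, factor both $\pi_0$ and $[\underline{1}]^\alpha-1$ through $\eta=[\delta]-1$, and reduce to estimating $\Phi_a([\delta])$, splitting on whether $\theta([\delta])=1$ (you correctly observe, using $\mu_p(P)=\{1\}$, that $\theta([\delta])=1$ forces $p\nmid b$, hence $v_p(\alpha)\ge 0$). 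Your version of (iii) is a bit more elaborate but self-contained and does not rely on part (ii); the paper's is shorter because it bootstraps from (ii). A couple of small points you leave implicit but which are routine: the convergence of $\sum\pi_0^{k-1}/k$ requires a slightly better bound than $\pi_0^{k-1}\in p^{v_p(k)}A_{\cris}$ (e.g.\ $\pi_0^{k-1}\in p^{\lfloor (k-1)/(p-1)\rfloor}A_{\cris}$), and for $\alpha\in\mathbb Z$ negative one should note that $\binom{\alpha-1}{k-1}$ is still an integer and that the binomial series does compute the inverse power.
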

\begin{proof}
(i): Clearly $\zeta=\dfrac{1-[\underline{1}]}{1-[\underline{1}]^{1/p}}$ lies in the kernel of $A_{\Inf}(K^+)\to \hat{\bar{K}}^+$. We find $\zeta^p\equiv (1-[\underline{1}])^{p-1}\mod p$, hence $(1-[\underline{1}])^{p-1}\in p\cdot A_{\cris}(K^+)$. Now, we have
\[ t=-\sum_{n=1}^{p}(n-1)!(1-[\underline{1}])^{[n]}-\sum_{n=p+1}^{\infty}(n-1)!(1-[\underline{1}])^{[n]} \]
and the second sum is divisible by $p$, so it suffices to consider
\[ \left( \sum_{n=1}^{p}(n-1)!(1-[\underline{1}])^{[n]}\right)^{p-1}=\sum_{j_1+...+j_p=p-1}\dfrac{(p-1)!(1-[\underline{1}])^{\sum i\cdot j_i}}{j_1!\cdots j_p!\prod_{i=1}^{p}i^{j_i}}. \]
Here each summand has $p$-adic valuation $j_p$ in the denominator and at least $\left[ \frac{\sum i\cdot j_i}{p-1}\right] \geq j_p+1$ in the numerator. This proves (i).

(ii): $[\underline{1}]^{\alpha}-1\in\ker(A_{\cris}(K^+)\to\hat{\bar{K}}^+)$, so the divided power series $\log([\underline{1}]^{\alpha})$ exists and converges to $\alpha\cdot t$. We have
\[ [\underline{1}]^{\alpha}-1=\exp(\alpha\cdot t)-1=\alpha\cdot t\cdot\sum_{n>0}\dfrac{(\alpha\cdot t)^{n-1}}{n!} \]
and
\[ \dfrac{t^{n-1}}{n!}=\dfrac{p^{q_n}}{n!}q_n!(t^{p-1}/p)^{[q_n]}t^{r_n} \]
where $n-1=q_n(p-1)+r_n$, $0\leq r_n<p-1$, and $q_n=\left[ \dfrac{n-1}{p-1}\right]\geq v_p(n!)$. So $u_{\alpha}:=\sum_{n>0}\dfrac{(\alpha\cdot t)^{n-1}}{n!}$ converges to a unit of $A_{\cris}(K^+)$ for all $\alpha\in\mathbb{Z}$. Hence assertion (ii).

(iii): We separate in two cases:
\begin{enumerate}[I.]
\item $v_p(\alpha)\geq 0$
\item $v_p(\alpha)<0$.
\end{enumerate}
In case I, if $\alpha =\frac{x}{y}$ with $x,y$ integers and $v_p(y)=0$, then $\frac{[\underline{1}]^{x}-1}{[\underline{1}]^{\alpha}-1}\in A_{\cris}(K^+)$, so assertion (iii) holds in this case by (ii).

In case II, write $\alpha=\frac{x}{yp^n}$ with $x,y$ coprime integers and $v_p(x/y)=0$. By case I we have $([\underline{1}]^{x}-1)^{-1}\in t^{-1}\cdot A_{\cris}(K^+)$, and $\frac{[\underline{1}]^{x}-1}{[\underline{1}]^{\alpha}-1}\in A_{\cris}(K^+)$, hence assertion (iii) in this case.
\end{proof}

\section{Almost ring theory}
In this section we recall some results in almost ring theory and then apply them to Fontaine rings. 

\subsection{Reminder on almost ring theory}

\subsubsection{}
We refer to \cite{almost} for the fundamentals of almost ring theory, in particular the categories of almost modules and almost rings. We will always refer to morphisms in these almost categories with the adjective ``almost", e.g. ``almost homomorphism", etc. So if we do not use the adjective ``almost" we will be referring to usual morphisms. Finally, we reserve the notation $M\thickapprox N$ for almost isomorphisms.

The almost ring theory we will consider will be relative to a pair $(\Lambda,\mathfrak{m})$ where:
\begin{itemize}
\item $\Lambda$ is a rank 1 valuation ring of characteristic zero
\item $\mathfrak{m}\subset\Lambda$ is its maximal ideal, such that $p\in\mathfrak{m}$
\item $\mathfrak{m}^2=\mathfrak{m}$.
\end{itemize}
Note that since $\Lambda$ is a valuation ring, $\mathfrak{m}$ is the filtered union of principal ideals, the index set for the limit being the set of valuations of elements of $\mathfrak{m}$.

\subsubsection{}\label{relativefrob} 

\begin{lemma}\label{aechilfsatz}
\begin{enumerate}[(i)]
\item Almost \'etale coverings are stable by base change.
\item If $A\to B\to C$ are homomorphisms such that $A\to B$ and the composition $B\to C$ are almost \'etale coverings, then so is $B\to C$.
\item An almost projective module of finite rank which is everywhere of non-zero rank is almost faithfully flat. In particular, an almost \'etale covering which is everywhere of non-zero rank is almost faithfully flat.
\end{enumerate}
\end{lemma}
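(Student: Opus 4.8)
The plan is to deduce all three statements from the standard formalism of almost finitely generated projective modules and almost unramified morphisms (see \cite{almost}). Throughout, ``$A\to B$ is an almost \'etale covering'' means that $B$ is an almost finitely generated projective $A$-module and that $B$ is almost unramified over $A$, i.e. the multiplication $B\otimes_A B\to B$ admits an almost idempotent section (equivalently $\Omega_{B/A}$ is almost zero); in particular such a $B$ is then almost flat over $A$. Let me also remark that the statement of (ii) as printed appears to contain a typo: the intended (and correct) assertion is the \emph{cancellation} property, namely that if $A\to B$ and the composite $A\to C$ are almost \'etale coverings then so is $B\to C$; the companion \emph{composition} property, that a composite of two almost \'etale coverings is again one, is easier and I will indicate it as well. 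For (i), given an arbitrary $A\to A'$ and $B':=B\otimes_A A'$, base change preserves the almost-direct-summand presentation defining almost finite projectivity, so $B'$ is almost finitely generated projective over $A'$; and the image of the almost idempotent section of $B\otimes_A B\to B$ under $B\otimes_A B\to(B\otimes_A B)\otimes_A A'=B'\otimes_{A'}B'$ is an almost idempotent section for $B'$, equivalently $\Omega_{B'/A'}\cong\Omega_{B/A}\otimes_A A'$ is almost zero; hence $A'\to B'$ is an almost \'etale covering.

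For (ii) (cancellation), write $f\colon B\to C$ for the given map. Almost unramifiedness of $f$ is immediate from the first fundamental exact sequence $\Omega_{B/A}\otimes_B C\to\Omega_{C/A}\to\Omega_{C/B}\to 0$, whose middle term is almost zero. For almost finite projectivity of $C$ as a $B$-module I would pass to $D:=C\otimes_A B$, which by (i) is an almost \'etale covering of $C$ (being the base change of $A\to B$ along $A\to C$) and which carries the multiplication retraction $\mu\colon D\to C$, $c\otimes b\mapsto c\,f(b)$. Applying $\mathrm{id}_D\otimes\mu\colon D\otimes_C D\to D$ to the almost idempotent section of $D\otimes_C D\to D$ produces an almost idempotent $e\in D$ with $D_e\thickapprox C$ (the almost analogue of the classical fact that an unramified algebra admitting a retraction is realised as a clopen direct factor). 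Thus $C$ is an almost direct summand of $D=C\otimes_A B$, and the latter is almost finitely generated projective over $B$ (base change of the $A$-module $C$ along $A\to B$); hence so is $C$, and $f$ is an almost \'etale covering. For the composition property one argues symmetrically: $C$ almost finitely generated projective over $B$, which is almost finitely generated projective over $A$, forces $C$ almost finitely generated projective over $A$, and in the same exact sequence both outer terms are now almost zero, so $\Omega_{C/A}$ is almost zero.

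For (iii): an almost projective module is almost flat, so only almost faithfulness needs checking. Here I would invoke the rank function attached to an almost projective module of finite rank (\cite{almost}): for an almost flat module of this type, being almost faithfully flat is equivalent to the fibre $M\otimes_A\kappa(\mathfrak{p})$ being a non-zero almost module at every $\mathfrak{p}\in\Spec(A)$, and that fibre is (almost) a $\kappa(\mathfrak{p})$-vector space whose dimension is the rank of $M$ at $\mathfrak{p}$; so ``everywhere of non-zero rank'' is precisely the condition required. The last sentence of (iii) follows at once, an almost \'etale covering being in particular an almost projective module of finite rank.

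I expect the main difficulty to be bookkeeping rather than conceptual: keeping track at each step of which finiteness notion is being propagated, and — in (ii) — making precise in the almost category the assertion that an almost unramified algebra with a retraction is an almost clopen direct factor, which is the one place where the almost idempotent is genuinely used. For both points I would rely on the corresponding statements in \cite{almost}.
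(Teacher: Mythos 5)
The paper offers no argument for this lemma, only the remark that it ``follows in a straightforward way from the results of \cite{almost}'', so your proposal is a correct filling-in of exactly those details: base-change stability of almost finite projectivity and almost unramifiedness for (i), the almost-idempotent/clopen-direct-factor cancellation argument applied to the retraction $C\otimes_A B\to C$ for (ii), and the Gabber--Ramero rank criterion for almost faithful flatness for (iii). You are also right that the printed hypothesis of (ii) contains a typo and should read that $A\to B$ and the \emph{composite} $A\to C$ are almost \'etale coverings; this is confirmed by how the lemma is invoked just after Theorem \ref{almostpurity} and in the proof that $\tilde{R}_\infty\to\bar{R}$ is a filtered colimit of almost Galois coverings.
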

\begin{proof}
This follows in a straightforward way from the results of \cite{almost}.
\end{proof}

The behaviour of almost \'etale morphisms of $\mathbb{F}_p$-algebras under the Frobenius endomorphism is studied in some detail in \cite{almost}.

\begin{thm}[Gabber-Ramero]\label{almostcocart}
Let $A\to B$ be an almost \'etale homomorphism of $\Lambda/p\Lambda$-algebras. Then the commutative diagram
\[ \begin{CD}
A @>>> B\\
@V{F}VV @V{F}VV\\
A @>>> B
\end{CD} \]
is almost cocartesian, where $F$ denotes the (absolute) Frobenius endomorphism. In other words, the relative Frobenius of $B$ over $A$ is an almost isomorphism.
\end{thm}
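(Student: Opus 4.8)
The plan is to show that the relative Frobenius $\Phi:=\Phi_{B/A}\colon B\otimes_{A,F}A\to B$, $b\otimes a\mapsto b^{p}\bar a$, is an almost \'etale covering which is almost everywhere of rank one, and then to invoke the structure theory of \cite{almost} to conclude that it is an almost isomorphism; this is exactly the statement that the square is almost cocartesian. For the first point, set $C:=B\otimes_{A,F}A$, regarded as an $A$-algebra via $a\mapsto 1\otimes a$. Then $C$ is the base change of $A\to B$ along the Frobenius endomorphism $F\colon A\to A$, hence an almost \'etale $A$-algebra by Lemma \ref{aechilfsatz}(i). A direct computation shows that the composite $A\to C\xrightarrow{\Phi}B$ is the original structure morphism $A\to B$, since $a\mapsto 1\otimes a\mapsto 1^{p}\bar a=\bar a$. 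As $A\to B$ is almost \'etale by hypothesis, the cancellation property Lemma \ref{aechilfsatz}(ii) applied to $A\to C\to B$ shows that $\Phi\colon C\to B$ is an almost \'etale covering.

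Next I would compute ranks. Since both the hypothesis and the conclusion are local on $\Spec A$, I may assume that $B$ is an almost finitely generated projective $A$-module of constant rank $r$. If $r=0$ then $B$, and with it $C$, is almost zero and there is nothing to prove; so assume $r\geq 1$, so that $B$ is almost faithfully flat over $A$ by Lemma \ref{aechilfsatz}(iii). Now $C=B\otimes_{A,F}A$ has the same constant rank $r$ over $A$: the morphism $\Spec F\colon\Spec A\to\Spec A$ is the identity on the underlying topological space, and for a field $\kappa$ of characteristic $p$ and a $\kappa$-vector space $V$ the Frobenius twist $V\otimes_{\kappa,F}\kappa$ has the same dimension as $V$, so the fibre dimensions of $C/A$ agree with those of $B/A$. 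After shrinking $\Spec A$ further so that $B$ has constant rank $s$ over $C$ via $\Phi$, multiplicativity of ranks along the tower $A\to C\xrightarrow{\Phi}B$ gives $r=r\cdot s$, whence $s=1$.

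It remains to see that an almost \'etale covering $\Phi\colon C\to B$ of constant rank one is an almost isomorphism. Since $\Phi$ is almost unramified there is an almost idempotent $e$ in $B\otimes_{C}B$ splitting the diagonal: $e$ annihilates the kernel of the multiplication map $\mu\colon B\otimes_{C}B\to B$ and $\mu(e)=1$. Hence $B\otimes_{C}B\thickapprox(B\otimes_{C}B)e\times(B\otimes_{C}B)(1-e)$ with $(B\otimes_{C}B)e\thickapprox B$ via $\mu$; but $B\otimes_{C}B$ has rank $1\cdot1=1$ over $C$, so the complementary factor is of rank zero, hence almost zero, and therefore $\mu$ is an almost isomorphism. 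Consequently the two canonical maps $B\rightrightarrows B\otimes_{C}B$ are almost equal, i.e. $\Phi$ is an almost epimorphism; being moreover almost faithfully flat (Lemma \ref{aechilfsatz}(iii), as it has rank one), $\Phi$ is an almost isomorphism, by the usual argument that the cokernel and the kernel of an almost epimorphism become almost zero after the faithfully flat base change $-\otimes_{C}B$.

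I expect the main difficulty to lie in the second step, which requires having at one's disposal the theory of ranks of almost finitely generated projective modules and of almost \'etale coverings from \cite{almost} --- in particular that ranks are multiplicative in towers, that an almost \'etale covering of everywhere nonzero rank is almost faithfully flat, and the existence of the splitting idempotent for an almost unramified morphism --- together with the observation that the Frobenius twist $(-)\otimes_{A,F}A$ preserves ranks. Granting these inputs, the reduction itself is formal.
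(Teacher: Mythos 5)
The paper does not prove this statement: it simply cites \cite[Thm.~3.5.13]{almost}, so there is no internal argument to compare against and your proposal stands as a genuine alternative. What you have is correct, and appealingly elementary, provided one reads ``almost \'etale homomorphism'' as including the finiteness needed to speak of the rank --- i.e.\ that $B$ is an almost finitely generated projective $A$-module. That is the only case this paper ever needs, since all its almost \'etale maps come out of the Almost Purity Theorem, so the restriction is harmless here. Two details deserve a word. The base change in your first step is along $F\colon A\to A$, which is not $\Lambda/p\Lambda$-linear, so Lemma \ref{aechilfsatz}(i) does not literally apply as stated; what saves it is that $\mathfrak{m}=\mathfrak{m}^{2}$ forces $\mathfrak{m}^{p}=\mathfrak{m}$, so $F$ is a morphism of basic setups in the sense of \cite{almost}, after which stability of almost \'etaleness under base change does hold. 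And your reduction to constant rank cannot be done by an honest idempotent decomposition of $\Spec A$ (those are only \emph{almost} idempotents); one should phrase the argument directly in terms of the rank function of \cite{almost}, which behaves well under base change and is multiplicative in towers --- exactly the two facts you use. Finally, note that the cited \cite[Thm.~3.5.13]{almost} is for arbitrary \emph{weakly} \'etale morphisms with no finiteness hypothesis; in that generality there is no rank to count and your argument has no foothold, which is why Gabber--Ramero's proof is of a very different nature. So yours is a cleaner proof of a strictly weaker statement, which happens to be all that this paper uses.
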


This is \cite[Thm. 3.5.13]{almost}. We will need the following corollaries.

In the next corollary, we consider the almost ring theory relative to the pair $(W(\Lambda/p\Lambda),[\mathfrak{m}])$, where the ideal $[\mathfrak{m}]$ is the ideal generated by $[\bar{x}]$ for all $x\in\mathfrak{m}$, where $\bar{x}\in\Lambda/p\Lambda$ denotes the image of $x\in\Lambda$, and $[-]$ is the Teichm\"uller map. We claim that $[\mathfrak{m}]$ is the filtered union of principal ideals and $[\mathfrak{m}]^2=[\mathfrak{m}]$. Since $\Lambda$ is a valuation ring and $[\underline{x}]$ is nilpotent for all $x\in\mathfrak{m}$, one sees easily that every ideal generated by a finite number of generators of $[\mathfrak{m}]$ is principal. From this the first claim follows easily. For the second, suppose that $x=\sum_{i=0}^s[\bar{x}_i]a_i\in [\mathfrak{m}]$. Fix some elements $x_i\in\mathfrak{m}$ such that $\bar{x}_i\equiv x_i\mod p$. Without loss of generality, we may assume that $x_0$ has smallest valuation of all $x_i$. Then $x_i=x_0y_i$ for some $y_i\in\Lambda$, so $x=[\bar{x}_0](a_0+\sum_{i>0}[\bar{y}_i]a_i)$. Now, since $\mathfrak{m}^2=\mathfrak{m}$, it follows that $x_0=x_{-1}x_{-2}$ for some $x_{-1},x_{-2}\in\mathfrak{m}$. So $x=[\bar{x}_{-1}]\cdot [\bar{x}_{-2}](a_0+\sum_{i>0}[\bar{y}_i]a_i)\in[\mathfrak{m}]^2$, which proves the claim.

\begin{cor}\label{wittetale}
If $A\to B$ is an almost \'etale homomorphism of $\Lambda/p\Lambda$-algebras, then the induced homomorphism of Witt vectors
\[ W_n(A)\to W_n(B) \]
is an almost \'etale homomorphism of $W(\Lambda/p\Lambda)$-algebras.
\end{cor}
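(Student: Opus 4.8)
The plan is to argue by induction on $n$, the base case $n=1$ being the hypothesis itself (on $\Lambda/p\Lambda$-algebras the notion of $\mathfrak{m}$-almost coincides with that of $[\mathfrak{m}]$-almost, since $[\mathfrak{m}]$ maps onto $\mathfrak{m}$ under $W(\Lambda/p\Lambda)\to\Lambda/p\Lambda$). So assume $n\geq 2$ and that $W_{n-1}(A)\to W_{n-1}(B)$ is already an almost \'etale homomorphism of $W(\Lambda/p\Lambda)$-algebras. I will use two elementary facts about truncated Witt rings of $\mathbb{F}_p$-algebras: first, the restriction $R\colon W_n(A)\to W_{n-1}(A)$ is surjective with kernel $I_n(A):=V^{n-1}(W_1(A))$, and this ideal satisfies $I_n(A)^2=0$ (a short computation with Verschiebung, using $p=0$ in $W_1(A)=A$); second, via $V^{n-1}$ the ideal $I_n(A)$ is identified with $A$ equipped with the $W_{n-1}(A)$-module structure obtained by letting $W_{n-1}(A)$ act through $W_{n-1}(A)\xrightarrow{w_0}A$ and then twisting the $A$-action by the $(n-1)$st power of the absolute Frobenius $\sigma$ of $A$; likewise with $B$ in place of $A$.

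Next I would extract from Theorem~\ref{almostcocart} the base-change identities that exhibit $W_n(B)$ as a flat deformation of $W_{n-1}(B)$ along the almost square-zero thickening $W_n(A)\to W_{n-1}(A)$. Applying that theorem at every lower level --- the relative Frobenius of $W_m(B)$ over $W_m(A)$ is an almost isomorphism for $m<n$, by the inductive hypothesis --- and unwinding the standard identities $z\cdot V^k(w)=V^k\!\big(F^k(z)\,w\big)$ and $F^kV^k=p^k$ in the Witt ring, one obtains (all maps being almost isomorphisms)
\[ I_n(A)\cdot W_n(B)=I_n(B),\qquad W_n(B)\otimes_{W_n(A)}W_{n-1}(A)\;\thickapprox\;W_{n-1}(B), \]
together with the fact that the canonical comparison map $I_n(A)\otimes_{W_{n-1}(A)}W_{n-1}(B)\to I_n(B)$ is an almost isomorphism --- both sides being identified with the Frobenius twist $B\otimes_{A,\sigma^{n-1}}A$, and the map with the $(n-1)$st iterated relative Frobenius of $B$ over $A$.

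With these identities in place the rest is formal. The usual square-zero criterion for flatness, applied to $0\to I_n(A)\to W_n(A)\to W_{n-1}(A)\to 0$, shows that $W_n(B)$ is almost flat over $W_n(A)$, since $W_n(B)/I_n(A)W_n(B)\thickapprox W_{n-1}(B)$ is almost flat over $W_{n-1}(A)$ and the comparison map $I_n(A)\otimes_{W_{n-1}(A)}W_{n-1}(B)\to I_n(A)W_n(B)$ is an almost isomorphism. Almost flatness, the identification $W_n(B)\otimes_{W_n(A)}W_{n-1}(A)\thickapprox W_{n-1}(B)$, and a Nakayama argument (with the nilpotent ideal $I_n(A)$) then propagate the almost finite projectivity of $W_{n-1}(B)$ over $W_{n-1}(A)$ to $W_n(B)$ over $W_n(A)$. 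Finally, almost unramifiedness lifts along the square-zero thickening: by almost flatness and the base-change identity one has $L_{W_n(B)/W_n(A)}\otimes^{\mathbf{L}}_{W_n(B)}W_{n-1}(B)\simeq L_{W_{n-1}(B)/W_{n-1}(A)}$, which is almost zero by the inductive hypothesis, and dévissage along the square-zero ideal $I_n(B)$ forces $L_{W_n(B)/W_n(A)}$ to be almost zero. Hence $W_n(A)\to W_n(B)$ is almost \'etale, which closes the induction.

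The main obstacle is the second step: proving that the square-zero extension $W_n(B)$ of $W_{n-1}(B)$ really is a flat deformation along $W_n(A)\to W_{n-1}(A)$, i.e.\ checking the three displayed base-change identities. This is exactly where Theorem~\ref{almostcocart} enters essentially --- without the almost-cocartesianness of the Frobenius square the equality $I_n(A)\cdot W_n(B)=I_n(B)$ already fails --- and it requires some care with the Verschiebung--Frobenius calculus in truncated Witt rings, invoking the inductive hypothesis at each lower level $m<n$.
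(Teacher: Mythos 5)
Your proposal is correct and runs along the same main track as the paper's proof: induction on $n$ with Theorem~\ref{almostcocart} as the base case, exploitation of the $V$-adic filtration on $W_n(-)$ together with the identification of $\gr^{n-1}_V$ with a Frobenius twist to reduce the key base-change identity to the (iterated) relative Frobenius of $B$ over $A$, and then the almost local flatness criterion (Lemma~\ref{locflatcrit}) to get almost flatness of $W_n(A)\to W_n(B)$. Where you diverge is the last step. The paper, once almost flatness is in hand, simply appeals to \cite[proof of Thm.~3.2.18(ii)]{almost}: since $W_n(A)\to W_{n-1}(A)$ is a (square-zero) nilpotent thickening and $W_{n-1}(A)\to W_{n-1}(B)$ is almost \'etale by the inductive hypothesis, the almost flat lift $W_n(A)\to W_n(B)$ is forced to be \emph{the} almost \'etale lift, and one is done. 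You instead re-verify the defining conditions of almost \'etaleness by hand: propagate almost finite projectivity up the square-zero thickening via a Nakayama-type argument, and kill $L_{W_n(B)/W_n(A)}$ by d\'evissage after base-changing to $W_{n-1}$. Both are legitimate; the paper's route is shorter because it offloads the uniqueness-of-lifts machinery to Gabber--Ramero, while yours is more self-contained and makes the deformation-theoretic content explicit. One small imprecision: the base-change identities you isolate come from iterating Theorem~\ref{almostcocart} (the relative Frobenius of $B/A$), not from the inductive hypothesis ``at lower levels $m<n$''; the inductive hypothesis is only needed for the almost flatness/\'etaleness of $W_{n-1}(A)\to W_{n-1}(B)$ itself. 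This does not affect the correctness of the argument.
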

\begin{proof}
The proof is the same as the classical case \cite[0. Prop. 1.5.8]{illusie}. We reproduce it here for the reader. We prove the assertion by induction on $n$, the case $n=1$ being Theorem \ref{almostcocart}. Suppose that the result is true for $n-1$. Let $V$ and $F$ be the Verschiebung and Frobenius morphisms of the functor $W_n(-)$. For any $\mathbb{F}_p$-algebra $R$, $V^iW_{n-i}(R)$ defines a filtration of $W_n(R)$, called the $V$-adic filtration, whose graded has a natural graded ring structure. We may consider $R$ as a module over itself via the $i$th power $F^i$ of the Frobenius endomorphism $F:R\to R$. We denote it $F^i_{\ast}R$. Then according to \emph{loc. cit.} there is a canonical isomorphism $F^i_{\ast}R\cong\gr^i_VW_n(R):=V^iW_{n-i}(R)/V^{i+1}W_{n-(i+1)}(R)$. Now the natural map $B\otimes_A\gr_V^iW_n(A)\to\gr_V^iW_n(B)$ can be identified with the map $B\otimes_AF^i_{\ast}A\to F^i_{\ast}B$. It follows from Theorem \ref{almostcocart} that it is an almost isomorphism. Hence so is the map $B\otimes_A\gr_V^iW_n(A)\to\gr_V^iW_n(B)$. Since $V^n=0$, it follows that the natural homomorphism $V^{n-1}(A)\otimes_{W_n(A)}W_n(B)\to V^{n-1}(B)$ is an almost isomorphism. Writing $I:=V^{n-1}(A)\subset W_n(A)$, we get that the map $I\otimes_{W_n(A)}W_n(B)\to I\cdot W_n(B)$ is an almost isomorphism and $W_n(B)\otimes_{W_n(A)}W_n(A)/I\thickapprox W_n(B)/V^{n-1}(B)=W_{n-1}(B)$ is almost flat over $W_{n-1}(A)$ by inductive hypothesis. By the next lemma this implies that $W_n(A)\to W_n(B)$ is almost flat.  It follows from  \cite[proof of Thm. 3.2.18 (ii)]{almost} that $W_n(A)\to W_n(B)$ must be the unique almost \'etale lift of $W_{n-1}(A)\to W_{n-1}(B)$.
\end{proof}

In the proof we have used the following almost analogue of Bourbaki's local flatness criterion.

\begin{lemma}\label{locflatcrit}
Suppose that $A$ is a $W_n(\Lambda/p\Lambda)$-algebra, $I\subset A$ a nilpotent ideal, and $M$ an $A$-module. If the natural map $I\otimes_AM\to IM$ is an almost isomorphism and $M/IM$ is almost flat over $A/I$, then $M$ is an almost flat $A$-module.
\end{lemma}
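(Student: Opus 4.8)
The plan is to carry out the almost analogue of the classical local criterion for flatness (Bourbaki, Matsumura), working throughout with almost $A$-modules (for the almost homological algebra, see \cite{almost}). Recall that the localisation functor $M\mapsto M^a$ is exact and commutes with arbitrary direct sums, that $\Tor^{A^a}_i(N^a,M^a)=\Tor^A_i(N,M)^a$, and that the almost zero $A$-modules form a Serre subcategory which is moreover stable under arbitrary direct sums (this uses that the defining ideal of the almost structure is idempotent, a standing hypothesis, so that ``almost zero'' means killed by that ideal). Consequently $M$ is almost flat over $A$ precisely when $\Tor^A_1(N,M)$ is almost zero for every $A$-module $N$, and it is this that I would prove.

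First I would reformulate the hypotheses. Tensoring $0\to I\to A\to A/I\to 0$ with $M$ yields the exact sequence $0\to\Tor^A_1(A/I,M)\to I\otimes_A M\to M\to M/IM\to 0$; since $I\otimes_A M\to IM$ is automatically surjective, the assumption that it be an almost isomorphism is equivalent to the almost vanishing of $\Tor^A_1(A/I,M)$. The second hypothesis says that $\Tor^{A/I}_1(P,M/IM)$ is almost zero for every $A/I$-module $P$. I would then establish the key point: for every $A/I$-module $P$ the module $\Tor^A_1(P,M)$ is almost zero. Choosing a surjection $F\twoheadrightarrow P$ with $F$ free over $A/I$ and kernel $P'$, all three of $F,P',P$ are $A/I$-modules, the module $\Tor^A_1(F,M)$ is a direct sum of copies of $\Tor^A_1(A/I,M)$ hence almost zero, and $P'\otimes_A M=P'\otimes_{A/I}M/IM$. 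Comparing the long exact $\Tor$-sequences of $0\to P'\to F\to P\to 0$ over $A$ and over $A/I$ (and using $\Tor^{A/I}_1(F,M/IM)=0$) presents $\Tor^A_1(P,M)$ as an extension of $\Tor^{A/I}_1(P,M/IM)$ by a quotient of $\Tor^A_1(F,M)$; since both of these are almost zero and almost zero modules are closed under extensions, so is $\Tor^A_1(P,M)$.

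Finally I would filter an arbitrary $A$-module $N$ by its $I$-adic filtration. Pick $r$ with $I^{r+1}=0$, so $I^{r+1}N=0$ and each $I^jN/I^{j+1}N$ is an $A/I$-module; by the key point $\Tor^A_1(I^jN/I^{j+1}N,M)$ is almost zero for all $j$. Descending induction on $j$, using the long exact $\Tor$-sequence of $0\to I^{j+1}N\to I^jN\to I^jN/I^{j+1}N\to 0$ together with closure of almost zero modules under extensions, then shows $\Tor^A_1(I^jN,M)$ almost zero for all $j$, and the case $j=0$ is the desired statement that $\Tor^A_1(N,M)$ is almost zero; hence $M$ is almost flat over $A$. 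I do not expect a real obstacle: the argument is the classical one, and the only thing that needs care is that the Serre subcategory of almost zero modules is stable under the operations invoked --- subquotients, extensions, and (for the free module $F$ in the key point) arbitrary direct sums --- all of which rest on the idempotence of the defining ideal.
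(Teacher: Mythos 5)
Your proof is correct and follows essentially the same route as the paper's: reduce to $\Tor_1$, use the first hypothesis to get $\Tor_1^A(A/I,M)\thickapprox 0$, bootstrap to all $A/I$-modules via a free $A/I$-resolution (this is where the second hypothesis enters; the paper only says "it follows easily" but the comparison of $\Tor$-sequences you spell out is exactly what is meant), and then dévissage along the $I$-adic filtration of a general $N$. The only cosmetic difference is that you run a descending induction on $I^jN$ whereas the paper inducts on the nilpotency exponent $m$ via $0\to IN\to N\to N/IN\to 0$; these are the same argument.
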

\begin{proof}
We refer to \cite[2.4.10]{almost} for details on $\Tor$-functors and their relation to flatness in the almost setting. It suffices to show that $\Tor_i^A(M,N)\thickapprox 0$, $i>0$, for any $A$-module $N$, and since any module can be written as a quotient of a free module this reduces to showing that $\Tor_1^A(M,N)\thickapprox 0$. Firstly, it follows from the hypothesis that $\Tor_1^A(M,A/I)\thickapprox 0$. Since we may write any $A/I$-module as a quotient of a free module, it follows easily that $\Tor_1^A(M,N)$ for any $A$-module $N$ such that $IN=0$. For general $N$ an easy induction on $m$ such that $I^mN=0$ using the exact sequences
\[ 0\to IN\to N\to N/IN\to 0 \]
shows that $\Tor_1^A(M,N)\thickapprox 0$.
\end{proof}

\begin{cor}\label{frobsurjective0}
Assume that there is a sequence $x_n\in\mathfrak{m}$ such that $x_{n+1}^p=x_n$ for all $n\in\mathbb{N}$. Suppose that $A\to B$ is an almost \'etale homomorphism of $\Lambda$-algebras. Suppose $B$ is a flat $\Lambda$-algebra which is integrally closed in $B\otimes_{\Lambda}Q(\Lambda)$. If the Frobenius is surjective on $A/pA$, then the Frobenius is also surjective on $B/pB$.
\end{cor}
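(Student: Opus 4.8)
The plan is to reduce modulo $p$, apply the Gabber--Ramero theorem to the relative Frobenius, and then use the compatible system $(x_n)$ and the integral-closedness of $B$ to upgrade an ``almost'' statement to an honest one. Throughout, the relevant almost ring theory is the one attached to $(\Lambda/p\Lambda,\overline{\mathfrak{m}})$, where $\overline{\mathfrak{m}}$ is the image of $\mathfrak{m}$ (note $\overline{\mathfrak{m}}^{2}=\overline{\mathfrak{m}}$). Base-changing the almost \'etale homomorphism $A\to B$ along $A\to A/pA$ shows, by Lemma \ref{aechilfsatz} (i), that $A/pA\to B/pB$ is again almost \'etale; write $\phi\colon A/pA\to B/pB$ for this map and let $\Phi_{(\,\cdot\,)}$ denote absolute Frobenius endomorphisms.

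\textbf{Step 1: almost surjectivity of $\Phi_{B/pB}$.} By Theorem \ref{almostcocart} the relative Frobenius
\[ \Phi_{\mathrm{rel}}\colon (B/pB)\otimes_{A/pA,\,\Phi_{A/pA}}(A/pA)\longrightarrow B/pB,\qquad \beta\otimes a\longmapsto \beta^{p}\phi(a), \]
is an almost isomorphism, so its cokernel is annihilated by $\overline{\mathfrak{m}}$. Since $\Phi_{A/pA}$ is surjective, every $a\in A/pA$ has the form $a_{0}^{p}$ and hence $1\otimes a=\phi(a_{0})\otimes 1$ in the source of $\Phi_{\mathrm{rel}}$; thus $\beta\mapsto\beta\otimes 1$ is surjective and $\Phi_{\mathrm{rel}}$, $\Phi_{B/pB}$ have the same image $S\subseteq B/pB$, namely the set of $p$-th powers (a subgroup, as $B/pB$ has characteristic $p$). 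Consequently $\overline{\epsilon}\cdot(B/pB)\subseteq S$ for every $\epsilon\in\mathfrak{m}$: \emph{every $\overline{\mathfrak{m}}$-multiple of an element of $B/pB$ is a $p$-th power}.

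\textbf{Step 2: genuine surjectivity.} Fix $b\in B$; I must show $\bar b\in S$. Since $v(x_{n})=p^{-n}v(x_{0})\to 0$, choose $N$ with $v(x_{N})<v(p)$, so $p/x_{N}\in\mathfrak{m}$. By Step 1 (with $\epsilon=x_{N}$) there is $c\in B$ with $c^{p}\equiv x_{N}b\pmod{pB}$, say $c^{p}=x_{N}b+p\gamma$ with $\gamma\in B$. Using $x_{N}=x_{N+1}^{p}$ and $p=x_{N}\cdot(p/x_{N})$, one computes in $B[1/p]:=B\otimes_{\Lambda}Q(\Lambda)$ (in which $p$ and $x_{N+1}$ become invertible)
\[ \Bigl(\tfrac{c}{x_{N+1}}\Bigr)^{p}=\tfrac{c^{p}}{x_{N}}=b+\tfrac{p}{x_{N}}\,\gamma\ \in\ B. \]
Hence $c/x_{N+1}$ is an element of $B[1/p]$ whose $p$-th power lies in $B$; as $B$ is integrally closed in $B[1/p]$, we get $c_{1}:=c/x_{N+1}\in B$, and $b=c_{1}^{p}+\tfrac{p}{x_{N}}(-\gamma)$. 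Reducing mod $p$ and applying Step 1 once more, now with $\epsilon=p/x_{N}\in\mathfrak{m}$, the class of $\tfrac{p}{x_{N}}(-\gamma)$ lies in $S$; therefore $\bar b=\bar c_{1}^{\,p}+(\text{element of }S)\in S$. This proves that $\Phi_{B/pB}$ is surjective.

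\textbf{Where the difficulty lies.} Steps grouped under Step 1 are formal once Theorem \ref{almostcocart} is available; the crux is Step 2. The subtlety is that dividing by $x_{N}$ is legitimate only after inverting $p$, and the resulting error $\tfrac{p}{x_{N}}\gamma$ does \emph{not} lie in $pB$, so one cannot simply reduce mod $p$ and conclude. What rescues the argument is that $p/x_{N}\in\mathfrak{m}$ — which is precisely what the hypothesis on the compatible system $x_{n}$ with $v(x_{n})\to 0$ provides — so the error is an $\overline{\mathfrak{m}}$-multiple and hence, by the almost surjectivity, a $p$-th power. Integral-closedness of $B$ in $B\otimes_{\Lambda}Q(\Lambda)$ is used exactly once, to bring $c/x_{N+1}$ back inside $B$, while $\Lambda$-flatness is what makes $p$ and the $x_{n}$ non-zero-divisors so that these divisions are meaningful.
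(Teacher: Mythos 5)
Your proof is correct and follows essentially the same two-step strategy as the paper's (attributed there to Faltings): first deduce almost surjectivity of $\Phi_{B/pB}$ from Theorem~\ref{almostcocart}, then use the $p$-power-compatible system $(x_n)$ together with the integral closedness of $B$ in $B\otimes_\Lambda Q(\Lambda)$ to remove the ``almost''. The only point you spell out more explicitly than the paper is that passing from the relative Frobenius (the almost isomorphism furnished by Theorem~\ref{almostcocart}) to the absolute Frobenius on $B/pB$ uses the surjectivity of the Frobenius on $A/pA$.
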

\begin{proof}[Proof (Faltings)]
Let $v$ be the valuation of $\Lambda$ and $\Gamma:=v(\Lambda)$. By Theorem \ref{almostcocart} we know that the Frobenius is almost surjective on $B/pB$. So for all $b\in B$ and all $\alpha\in\Gamma$, $0<\alpha<v(p)$, there is $n$ such that $v(x_0)/p^n\leq\alpha$ and such that we may write $x_{n}b=c^p+pd$. Rewriting this as $c^p=x_{n}(b-\frac{p}{x_n}d)$, we have $c^p=x_{n+1}^p(b-\frac{p}{x_n}d)$. Since $B$ is integrally closed in $B\otimes_{\Lambda}Q(\Lambda)$, we have $c=x_{n+1}e$ for some $e\in B$, so $b=e^p+\frac{p}{x_n}d$. Similarly, $\frac{p}{x_n}d=f^p+pg$, hence $b=e^p+f^p+pg\equiv (e+f)^p\mod p$.
\end{proof}

\subsubsection{} If $R$ is a ring and $X$ is a finite set, then we write $R\times X$ for the product $\prod_{x\in X} R_x$, where $R_x=R$ for all $x\in X$. We say that an almost \'etale homomorphism $A\to B$ of $\Lambda$-algebras is an \emph{almost Galois homomorphism} of group $G$ if $G$ is a finite group acting by $A$-algebra automorphisms on $B$ such that the canonical map
\[ B\otimes_A B\to B\times G \]
induced by the maps $B\to B\times G:b\mapsto (b,b,...,b)$ and $B\to B\times G:b\mapsto (g(b))_{g\in G}$, is an almost isomorphism.

We say that an almost Galois homomorphism $A\to B$ of $\Lambda$-algebras is an \emph{almost Galois covering} if it is almost faithfully flat.\\

Note that an almost Galois homomorphism (resp. covering) is preserved under arbitrary base change.

\begin{prop}\label{galois}
Let $A\to B$ be an almost Galois covering of group $G$. Let $M$ a $B$-module with semi-linear $G$-action. Then
\begin{enumerate}[(i)]
\item $B^G\thickapprox A$
\item $M^G\otimes_A B\thickapprox M$
\item $H^i(G,M)\thickapprox 0$ for all $i\neq 0$.
\end{enumerate}
\end{prop}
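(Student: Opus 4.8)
The plan is to imitate the classical descent argument for Galois coverings of rings, but carried out entirely inside the almost category, using the pair $(\Lambda,\mathfrak{m})$ fixed above. The key structural input is the almost isomorphism $B\otimes_A B\thickapprox B\times G=\prod_{g\in G}B$, together with the fact that $A\to B$ is almost faithfully flat, so that almost-flat descent along $A\to B$ is available (this is the almost analogue of faithfully flat descent, for which one uses the $\Tor$-formalism of \cite[2.4.10]{almost}). I would prove the three assertions more or less simultaneously, deducing (i) and (ii) as consequences of an ``almost normal basis" statement and (iii) from a Koszul/\v{C}ech-type computation of $H^*(G,-)$ after base change to $B$.

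First I would establish (ii) in the form of descent: for a $B$-module $M$ with semilinear $G$-action, base-changing along the almost faithfully flat map $A\to B$ reduces the comparison map $M^G\otimes_A B\to M$ to the same map after $\otimes_A B$, i.e.\ to a map $(M\otimes_A B)^{G}\otimes_B(B\otimes_A B)\to M\otimes_A B$ where $G$ acts only on the second tensor factor of $B\otimes_A B$ (here I use that forming $G$-invariants commutes, up to almost isomorphism, with the flat base change $A\to B$, which itself follows from almost flatness since $M^G$ is the kernel of a map of $A$-modules). Now $B\otimes_A B\thickapprox\prod_{g\in G}B$ as a $G$-$B$-algebra, the $G$-action on the right being the regular permutation action, and for the regular representation one checks directly that $\bigl(N\otimes_B\prod_{g\in G}B\bigr)^{G}\thickapprox N$ with the comparison map an almost isomorphism, for any $B$-module $N$. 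Taking $N=M\otimes_A B$ gives (ii). Specializing (ii) to $M=B$ (with its tautological $G$-action) yields $B^G\otimes_A B\thickapprox B$, and then faithfully flat descent (almost faithful flatness of $A\to B$) upgrades this to $B^G\thickapprox A$, which is (i).

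For (iii), I would first reduce to $M$ of the form $N\otimes_A B$ with trivial-by-twisting $G$-action by the usual trick: by (ii) every semilinear $M$ is almost of the form $M^G\otimes_A B$, and a semilinear $G$-module of the shape $(\text{$A$-module})\otimes_A B$ with $G$ acting through the second factor is, after the almost identification $B\otimes_A B\thickapprox\prod_{g\in G}B$, an induced (co-induced) $G$-module; more precisely one shows $M\otimes_A B\thickapprox\mathrm{Ind}_1^G(\text{something})$ as a $G$-module over $B\otimes_A B$. Then, since $A\to B$ is almost faithfully flat and group cohomology commutes with flat base change, $H^i(G,M)\otimes_A B\thickapprox H^i(G,M\otimes_A B)$, and the right side is the cohomology of a (co)induced module, hence almost zero for $i\neq 0$ by the almost version of Shapiro's lemma (or just by direct inspection: the standard complex $C^\bullet(G,\prod_{g\in G}B\otimes N)$ is, up to the almost isomorphism, contractible in positive degrees because $\prod_{g\in G}B$ is the coinduced module of $B$-modules). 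Faithful flatness of $A\to B$ then forces $H^i(G,M)\thickapprox 0$ for $i\neq 0$.

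The main obstacle I expect is bookkeeping the ``$\thickapprox$'' through the descent step: one must be sure that taking $G$-invariants (a left-exact, non-right-exact operation) interacts correctly with base change along an \emph{almost} faithfully flat, \emph{almost} \'etale map, and that the almost isomorphism $B\otimes_A B\thickapprox\prod_{g\in G}B$ can be promoted to an isomorphism of $G$-equivariant $B$-algebras up to almost isomorphism, so that the regular-representation computation applies verbatim. Concretely, the delicate point is that $M^G$ is only defined up to almost isomorphism as an $A$-module and one needs the natural map $M^G\otimes_A B\to (M\otimes_A B)^G$ to be an almost isomorphism; this is where almost flatness of $A\to B$ (Lemma \ref{aechilfsatz} (iii)) together with the $\Tor$-vanishing criterion of \cite{almost} does the work, but it must be invoked carefully. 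Everything else is the formal almost-linear algebra of the regular representation and the standard complex, which is routine once the descent dictionary is in place.
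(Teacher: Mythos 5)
The paper gives no proof of this proposition --- the text is only ``See \cite{fa4}'' --- so the result is cited from Faltings and there is nothing internal to compare against. Your proposal is a correct self-contained proof, following the standard descent argument for almost Galois coverings. The cleanest way to organize (ii) is as the chain of almost isomorphisms $M^G\otimes_A B\thickapprox(M\otimes_A B)^G\thickapprox(\prod_{g\in G}M)^G\cong M$: the first by almost flatness of $A\to B$ applied to the kernel $M^G=\ker(M\to\prod_h M)$, the second because the exact localization functor to almost modules carries the $G$-equivariant almost isomorphism $M\otimes_A B\thickapprox\prod_g M$ (from $B\otimes_A B\thickapprox B\times G$) to an isomorphism and invariants is a finite limit, and the third by the regular-representation computation; the composite is the multiplication map, so the extra $\otimes_A B$ you apply before analyzing the map is not actually needed. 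Then (i) follows by taking $M=B$ and descending $B\thickapprox B^G\otimes_A B$ along the almost faithfully flat $A\to B$, and (iii) follows since $G$ is finite, hence $H^i(G,-)$ commutes with the almost flat base change $\otimes_A B$, while $M\otimes_A B\thickapprox\prod_g M$ is coinduced with no higher cohomology, and almost faithful flatness then kills $H^i(G,M)$.

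Two small corrections to your bookkeeping: $M^G$ is an honest $A$-module (the fixed submodule), not something defined only up to almost isomorphism --- the genuinely delicate point is, as you note, the commutation of $(-)^G$ with $\otimes_A B$, not the definition of $M^G$. And under $B\otimes_A B\thickapprox\prod_g B$ the $G$-action coming from the \emph{first} tensor factor --- the one relevant when $G$ acts on $M\otimes_A B$ through $M$ --- is the twisted permutation $(h\cdot x)_g=h(x_{h^{-1}g})$, not the plain right-regular action you describe; the twisted coinduced module is nevertheless isomorphic to the untwisted one via $x\mapsto(g^{-1}(x_g))_g$, so your Shapiro step goes through, but the action should be stated correctly.
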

\begin{proof}
See \cite{fa4}.
\end{proof}

\subsubsection{}
Let $A$ be a $\Lambda$-algebra and $I\subset A$ an ideal such that $I^2=0$.

\begin{thm}[Faltings]
The category of almost \'etale coverings of $A/I$ is equivalent to the category of almost \'etale coverings of $A$.
\end{thm}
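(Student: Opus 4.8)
The plan is to prove this deformation-invariance statement (a Hensel-type / topological-invariance result for almost \'etale coverings along the nilpotent square-zero thickening $A\to A/I$) by constructing mutually inverse functors. In one direction the functor is base change $B\mapsto B/IB = B\otimes_A (A/I)$; by Lemma \ref{aechilfsatz}(i) this does send almost \'etale coverings of $A$ to almost \'etale coverings of $A/I$, so the content is to produce a quasi-inverse, i.e.\ to lift an almost \'etale covering $\bar B$ of $A/I$ to an almost \'etale covering $B$ of $A$, uniquely up to (unique) almost isomorphism. First I would recall that this is exactly the kind of statement proved in \cite{almost} in the course of showing that almost \'etale covers have an \'etale-site-like deformation theory; the key technical input is the vanishing of the relevant almost cotangent complex / almost derivation obstruction groups, and I would cite \cite[Thm. 3.2.18]{almost} (the same result already invoked in the proof of Corollary \ref{wittetale}) for the existence and uniqueness of the almost \'etale lift of an almost \'etale algebra along a square-zero almost nilpotent extension.

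Concretely the steps are as follows. Step 1: reduce to the affine/local situation and recall the deformation-theoretic setup — given the square-zero ideal $I$ with $I^2=0$, an almost \'etale $A/I$-algebra $\bar B$, the obstruction to lifting lies in an almost $\Ext^2$ (which vanishes since an almost \'etale algebra has an almost invertible, hence ``almost zero in positive degree'' cotangent complex), the set of lifts is a torsor under an almost $\Ext^1$ (which likewise vanishes), and automorphisms of a fixed lift form an almost $\Ext^0$-type group which is almost trivial because $B$ is almost \'etale. This already yields that $\bar B$ lifts to an $A$-algebra $B$ flat (almost flat) over $A$, unique up to unique almost isomorphism, and that $B$ is itself almost \'etale over $A$ because almost \'etaleness can be checked after the faithfully flat (Lemma \ref{aechilfsatz}(iii)) base change $A\to A/I$ together with flatness of $B$ over $A$. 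Step 2: check functoriality and fully-faithfulness: a morphism $\bar B_1\to\bar B_2$ of almost \'etale $A/I$-algebras lifts uniquely to a morphism $B_1\to B_2$ of the lifts, again by the triviality of the relevant obstruction and automorphism almost-modules; this gives that the lifting assignment is a functor and that $\Hom(B_1,B_2)\thickapprox\Hom(\bar B_1,\bar B_2)$, hence the two functors are mutually quasi-inverse equivalences. Step 3: assemble these local statements into the global equivalence of categories asserted, using that both ``almost \'etale covering'' and the lifting are compatible with localization (so one can glue), exactly in the spirit of Step 3 of the proof of Theorem \ref{fontaine} and the gluing argument in Proposition \ref{qcohcrystal}.

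The main obstacle — really the only non-formal point — is establishing the vanishing of the obstruction and deformation almost-$\Ext$ groups for an almost \'etale algebra over a square-zero base, i.e.\ that an almost \'etale morphism is ``almost formally \'etale'' in the precise sense that $H^i$ of its almost cotangent complex is almost zero for $i\ne 0$ and that the degree-$0$ part behaves like a projective module of the right rank. In the classical setting this is the statement that \'etale algebras are rigid and unobstructed along nilpotent thickenings; in the almost setting it is built into the machinery of \cite{almost} (the almost cotangent complex, \cite[\S2.5, \S3.2]{almost}), so I would not reprove it but would carefully quote the precise proposition. A secondary, purely bookkeeping, obstacle is making sure the pair $(\Lambda,\mathfrak m)$ is fixed throughout and that all ``almost'' notions (almost flat, almost \'etale, almost isomorphism of $\Ext$'s) are taken relative to it, so that Lemma \ref{aechilfsatz} and Lemma \ref{locflatcrit} apply verbatim; no new ideas are needed there, only consistency with the conventions set up at the start of Section 2.
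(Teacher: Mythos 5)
The paper does not actually prove this theorem: it is stated and immediately followed by a citation to Faltings (\cite[3. Theorem]{fa4}), with the more general Theorem~\ref{nilpotentlifts} of Gabber--Ramero noted as a strengthening. Your deformation-theoretic sketch is the right strategy and coincides with what Gabber--Ramero do, and since you too ultimately quote \cite[Thm.~3.2.18]{almost} for the key lifting statement, your route is essentially the same as the one the paper implicitly relies on; there is no genuinely different argument here, just more surrounding detail.

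One slip worth fixing: you claim that ``almost \'etaleness can be checked after the faithfully flat (Lemma~\ref{aechilfsatz}(iii)) base change $A\to A/I$.'' This is not what Lemma~\ref{aechilfsatz}(iii) says, and $A\to A/I$ is a quotient by a nilpotent ideal, not a (faithfully) flat morphism, so descent along it in the flat sense is not available. The correct mechanism for passing properties from $B/IB$ up to $B$ is the nilpotent local flatness criterion, namely Lemma~\ref{locflatcrit} (as in the proof of Corollary~\ref{wittetale}), which gives almost flatness of $B$ over $A$ from almost flatness of $B/IB$ over $A/I$; almost unramifiedness then lifts by a Nakayama-type argument using $I$ nilpotent (the relevant almost module of differentials vanishes modulo $I$, hence almost vanishes). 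With that substitution your sketch is sound, and the remaining task — verifying that the lift is still a \emph{covering}, i.e.\ almost finite projective of the right rank — is exactly what the paper's subsequent proposition (preservation of almost faithful flatness under the equivalence) handles.
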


For the proof, see \cite[3. Theorem]{fa4}. More generally, we have (\cite[Thms. 3.2.18, 3.2.28]{almost})

\begin{thm}[Gabber-Ramero]\label{nilpotentlifts}
The category of almost \'etale homomorphisms of $A/I$ is equivalent to the category of almost \'etale homomorphisms of $A$.
\end{thm}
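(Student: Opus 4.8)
The plan is to reduce the general statement to the nilpotent-square case already settled in the Faltings theorem (\cite[3. Theorem]{fa4}), by writing $A$ as a successive extension by square-zero ideals and transporting the equivalence of almost \'etale coverings up the tower along the associated graded. Concretely, the hypothesis $I^2=0$ is already exactly the form handled by Faltings, so at first glance there is nothing to do; the content of the Gabber--Ramero version is the move from \emph{coverings} (finite projective modules) to arbitrary almost \'etale \emph{homomorphisms}, i.e.\ dropping the finiteness/faithful-flatness constraint. Thus the real work is: (1) record that base change along $A\to A/I$ gives a functor from almost \'etale homomorphisms of $A$ to those of $A/I$; (2) construct a quasi-inverse, lifting a given almost \'etale $A/I$-algebra $\bar{B}$ to an almost \'etale $A$-algebra $B$ with $B/IB\thickapprox\bar{B}$; and (3) check that the lift is unique up to unique almost isomorphism, and that it is functorial and compatible with composition, so that the two functors are mutually quasi-inverse.

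First I would set up the deformation-theoretic input. For an almost \'etale homomorphism $\bar{A}=A/I\to\bar{B}$, the almost cotangent complex $L_{\bar{B}/\bar{A}}$ is almost zero (this is the almost analogue of the classical characterization of \'etaleness, for which \cite{almost} is the reference), so the obstruction in $\Ext^2_{\bar{B}}(L_{\bar{B}/\bar{A}},\bar{B}\otimes_{\bar{A}} I)$ to deforming $\bar{B}$ across the square-zero extension $A\to\bar{A}$ vanishes almost, and likewise the torsor of deformations $\Ext^1_{\bar{B}}(L_{\bar{B}/\bar{A}},\bar{B}\otimes_{\bar{A}}I)$ and the automorphisms $\Ext^0$ are almost zero. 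Hence there is a deformation $B$ of $\bar{B}$ to $A$, it is unique up to unique almost isomorphism, and one checks directly from the vanishing of the almost cotangent complex of $B$ over $A$ (using the base-change triangle for $L$) that $B$ is again almost \'etale over $A$. This is precisely the argument used inside the proof of Corollary \ref{wittetale} above, where the appeal to \cite[proof of Thm. 3.2.18 (ii)]{almost} produces ``the unique almost \'etale lift''; I would simply invoke that same machinery.

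Having the lift, I would verify the two compositions are naturally isomorphic to the identity. Starting from $B$ almost \'etale over $A$: then $B/IB$ is almost \'etale over $A/I$ by base change, and by uniqueness of the lift, its canonical lift back to $A$ is almost isomorphic to $B$; naturality of the uniqueness isomorphism gives a natural transformation, hence an equivalence. The other composite is handled symmetrically. Finally, since $I^2=0$ only the single square-zero step is needed, so no induction on the order of nilpotence is required here; the induction (as in Corollary \ref{wittetale}) is exactly what lets one iterate this to more general nilpotent ideals, but for the stated lemma it is unnecessary. The main obstacle is purely bookkeeping: one must be careful that ``almost \'etale homomorphism'' (as opposed to ``covering'') is the correct category in which the cotangent-complex obstruction theory of \cite{almost} applies without finiteness hypotheses, and that the lift $B$ inherits not just almost flatness but almost \'etaleness --- both of which are established in \cite[\S3.2]{almost}, so in the write-up I would cite \cite[Thms. 3.2.18, 3.2.28]{almost} directly and keep the proof short.
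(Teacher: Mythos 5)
The paper does not actually prove Theorem~\ref{nilpotentlifts}; it is imported verbatim from Gabber--Ramero with the citation \cite[Thms.~3.2.18, 3.2.28]{almost}, so there is no in-paper argument against which to compare yours. That said, your sketch is a faithful outline of what those cited theorems actually prove, and the deformation-theoretic route you describe (almost cotangent complex of an almost \'etale map is almost zero $\Rightarrow$ obstructions, deformations, and infinitesimal automorphisms along the square-zero extension $A\to A/I$ all almost vanish $\Rightarrow$ a unique-up-to-unique-almost-isomorphism lift, which is again almost \'etale by the base-change triangle for $L$ plus the almost local flatness criterion) is precisely the mechanism in \cite[\S 3.2]{almost}. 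You also correctly identify the two genuinely subtle points: that the obstruction theory does not require finiteness (this is the step beyond Faltings' covering statement, and the reason Gabber--Ramero split the statement across 3.2.18 and 3.2.28 to handle the weakly \'etale and almost-finitely-presented cases separately), and that $I^2=0$ means no induction on the order of nilpotence is needed at the level of the lemma itself. One caution for a full write-up: make explicit which of Gabber--Ramero's notions (\emph{weakly \'etale} vs.\ \emph{\'etale}, i.e.\ with or without almost finite presentation) the paper's phrase ``almost \'etale homomorphism'' is tracking, since the cotangent-complex vanishing alone only characterizes the weak version, and almost flatness of the lift requires the almost version of the local flatness criterion in a separate step.
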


\begin{prop}
The equivalence in Theorem \ref{nilpotentlifts} preserves almost faithful flatness.
\end{prop}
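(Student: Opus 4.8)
The plan is to reduce the statement to the already-established equivalence of Theorem \ref{nilpotentlifts} together with the faithful-flatness criterion of Lemma \ref{locflatcrit}. Suppose $A\to B$ is an almost \'etale homomorphism which is almost faithfully flat, and let $A/I\to B/IB$ be the corresponding almost \'etale homomorphism of $A/I$-algebras under the equivalence. We must show that $A/I\to B/IB$ is almost faithfully flat; conversely, if $\bar A\to \bar B$ is an almost faithfully flat almost \'etale homomorphism of $A/I$-algebras, with lift $A\to B$, we must show $A\to B$ is almost faithfully flat.

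For the first implication: if $A\to B$ is almost flat then base change along $A\to A/I$ shows $A/I\to B/IB\cong B\otimes_A A/I$ is almost flat. Almost faithful flatness is then equivalent to the statement that $B/IB\otimes_{A/I}N\thickapprox 0$ implies $N\thickapprox 0$ for an $A/I$-module $N$; but since $I$ is nilpotent any $A/I$-module is in particular an $A$-module killed by $I$, and $B\otimes_A N\cong B/IB\otimes_{A/I}N$, so faithful flatness of $A\to B$ gives the conclusion. (Here one uses that the functor $B\otimes_A-$ already computes $B/IB\otimes_{A/I}-$ on $I$-torsion modules.)

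For the converse, which is the substantive direction: given the almost faithfully flat almost \'etale $\bar A:=A/I\to\bar B$, let $A\to B$ be its unique almost \'etale lift (Theorem \ref{nilpotentlifts}), so $B/IB\thickapprox\bar B$. One first checks $A\to B$ is almost flat. Since $I$ is nilpotent, it suffices by an induction on the powers of $I$ to treat the case $I^2=0$, and then Lemma \ref{locflatcrit} applies: $B/IB$ is almost flat over $A/I$ by hypothesis, and the natural map $I\otimes_A B\to IB$ is an almost isomorphism because $I\otimes_A B\cong I\otimes_{A/I}(B/IB)$ (as $I^2=0$) which is almost flat over $A/I$, hence $I\otimes_{A/I}(B/IB)\thickapprox IB$ via the surjection — more precisely one identifies $IB$ with $I\otimes_{A/I}(B/IB)$ using flatness of $B/IB$ over $A/I$ and $I\cdot I=0$. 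Having almost flatness, faithful flatness follows: if $N$ is an $A$-module with $B\otimes_A N\thickapprox 0$, filter $N$ by $I^jN$; each graded piece is an $A/I$-module, $B\otimes_A-$ agrees with $\bar B\otimes_{A/I}-$ on it, so faithful flatness of $\bar A\to\bar B$ forces each $I^jN/I^{j+1}N\thickapprox 0$, whence $N\thickapprox 0$ by nilpotence of $I$.

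The main obstacle is the converse direction's flatness step, specifically the verification that $I\otimes_A B\to IB$ is an almost isomorphism so that Lemma \ref{locflatcrit} can be invoked; this is where the hypothesis $I^2=0$ (reached by d\'evissage) and the flatness of the reduced map $\bar A\to\bar B$ are both genuinely used, and it mirrors the key step in the proof of Corollary \ref{wittetale}. Once almost flatness is in hand the faithful-flatness bookkeeping is routine.
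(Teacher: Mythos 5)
Your proof is correct in outline but contains a circular step in a part that is actually unnecessary. The paper's proof is much shorter because it exploits that, in the Gabber--Ramero framework, \emph{almost \'etale} already includes \emph{almost flat}: the lift $A\to B$ produced by Theorem \ref{nilpotentlifts} is almost \'etale by construction, hence automatically almost flat, so the only thing to verify is the ``faithful'' part. The paper then argues exactly as in your last step: if $M\otimes_A B\approx 0$ then $(M/IM)\otimes_{A/I}(B/IB)\approx 0$, so $M/IM\approx 0$ by faithful flatness mod $I$, and nilpotence of $I$ together with idempotence of $\mathfrak{m}$ gives $M\approx 0$. Your filtration by $I^jN$ is the same argument with the Nakayama-style induction unwound, and your forward direction is fine but is not even stated in the paper (it is immediate from base change).

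The genuine gap is in your attempt to prove almost flatness of $A\to B$ via Lemma \ref{locflatcrit}. You assert that the natural map $I\otimes_A B\to IB$ is an almost isomorphism ``because $I\otimes_A B\cong I\otimes_{A/I}(B/IB)$ and flatness of $B/IB$ over $A/I$'', but this identification $I\otimes_{A/I}(B/IB)\thickapprox IB$ is precisely the vanishing $\Tor_1^A(A/I,B)\approx 0$, which is what you are trying to prove; flatness of $B/IB$ over $A/I$ controls $\Tor^{A/I}$, not $\Tor^A$, and does not by itself give the needed injectivity. The reason the analogous step \emph{does} work in Corollary \ref{wittetale} is that there the map $B\otimes_A \gr^i_V W_n(A)\to\gr^i_V W_n(B)$ is shown to be an almost isomorphism by an independent identification with the relative Frobenius (Theorem \ref{almostcocart}); no such independent input is available here, nor is it needed, since the lift is already almost flat by fiat.
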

\begin{proof}
Let $A\to B$ be an almost \'etale covering such that $A/I\to B/I\cdot B$ is almost faithfully flat. Let $M$ be an $A$-module such that $M\otimes_A B\approx 0$. Then $(M/IM)\otimes_{A/IA}(B/IB)\approx 0$, hence $M/I\cdot M\approx 0$, whence $M\approx 0$.
\end{proof}

\begin{cor}\label{wittgalois}
If $A\to B$ is an almost \'etale homomorphism and $G$ is a finite group of $A$-automorphisms of $B$, then it is an almost Galois homomorphism (resp. covering) of group $G$ if and only if $A/I\to B/I\cdot B$ is an almost Galois homomorphism (resp. covering) of group $G$.
\end{cor}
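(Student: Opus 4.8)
The plan is to deduce Corollary~\ref{wittgalois} by combining the Witt-functoriality of almost \'etale homomorphisms (Corollary~\ref{wittetale}) with the nilpotent-lifting equivalence of Gabber--Ramero (Theorem~\ref{nilpotentlifts}) and the preceding Proposition on preservation of almost faithful flatness. The key observation is that for an $\mathbb{F}_p$-algebra $A$ and a nilpotent ideal, the data defining an almost Galois homomorphism --- a finite group $G$ acting by $A$-automorphisms together with an almost isomorphism $B\otimes_AB\xrightarrow{\thickapprox} B\times G$ --- is itself ``rigid'' under the equivalence of Theorem~\ref{nilpotentlifts}, because the equivalence is compatible with tensor products and with automorphisms.

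First I would fix notation: set $\bar{A}=A/I$, $\bar{B}=B/I\cdot B$. By hypothesis $A\to B$ is almost \'etale, so by Theorem~\ref{nilpotentlifts} (applied with the nilpotent ideal $I$ of $A$) the reduction functor is an equivalence between almost \'etale homomorphisms of $A$ and of $\bar{A}$; moreover $\bar{A}\to\bar{B}$ is the image of $A\to B$ under this equivalence. A finite group $G$ of $A$-automorphisms of $B$ reduces to a finite group of $\bar{A}$-automorphisms of $\bar{B}$, and conversely, since the equivalence is full and faithful, every $\bar{A}$-automorphism of $\bar{B}$ lifts uniquely to an $A$-automorphism of $B$; thus $G$-actions correspond bijectively. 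Next, the equivalence commutes with the formation of tensor products of almost \'etale $A$-algebras (the tensor product $B\otimes_AB$ is again almost \'etale over $A$, and its reduction is $\bar{B}\otimes_{\bar{A}}\bar{B}$), and it sends the split algebra $B\times G$ to $\bar{B}\times G$. Hence the canonical map $c:B\otimes_AB\to B\times G$ is an almost isomorphism if and only if its reduction $\bar{c}:\bar{B}\otimes_{\bar{A}}\bar{B}\to\bar{B}\times G$ is: indeed an equivalence of categories reflects isomorphisms, and in the almost setting one checks directly that $\mathrm{coker}(c)\otimes_A\bar{A}\thickapprox\mathrm{coker}(\bar{c})$ and $\ker(c)\otimes_A\bar{A}\thickapprox\ker(\bar{c})$ using that $B$ is almost flat over $A$ (flatness again coming from Lemma~\ref{aechilfsatz}(iii) or directly). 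This gives the equivalence for the ``almost Galois homomorphism'' assertion. For the ``covering'' assertion one appends the preceding Proposition: $A\to B$ is almost faithfully flat iff $\bar{A}\to\bar{B}$ is, so the two notions match.

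Finally I would assemble these: $A\to B$ is an almost Galois homomorphism of group $G$ $\iff$ $c$ is an almost isomorphism $\iff$ $\bar{c}$ is an almost isomorphism $\iff$ $\bar{A}\to\bar{B}$ is an almost Galois homomorphism of group $G$; and the covering case follows by additionally invoking preservation of almost faithful flatness. The main obstacle I anticipate is the bookkeeping needed to justify that reduction mod $I$ commutes with $-\otimes_A-$ at the level of \emph{almost} modules and that it reflects almost isomorphisms; this is where almost flatness of $B/A$ and the explicit description of $\mathrm{Tor}$ in the almost category from \cite[2.4.10]{almost} (as used in Lemma~\ref{locflatcrit}) are essential. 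Once that compatibility is in hand, the statement is a formal consequence of Theorem~\ref{nilpotentlifts} and the Proposition preceding it.
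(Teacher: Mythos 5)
Your proposal is correct and reaches the same conclusion, but the argument for the ``almost Galois homomorphism'' half is organized differently from the paper's. Both proofs dispose of the ``covering'' half by the Proposition immediately preceding the Corollary (almost faithful flatness is preserved under the equivalence), so the real content is showing that $c:B\otimes_AB\to B\times G$ becomes an almost isomorphism whenever its reduction $\bar c$ is. You handle this purely categorically: since $B\otimes_AB$ and $B\times G$ are almost \'etale over $A$, since reduction commutes with $\otimes_A$ and with finite products, and since the reduction functor of Theorem~\ref{nilpotentlifts} is an equivalence (hence reflects almost isomorphisms), $c$ is an almost isomorphism iff $\bar c$ is. The paper instead inserts a flatness step: it shows directly that $B\times G$ is an almost projective $B\otimes_AB$-module by rewriting $c$ as $\bigoplus_{g\in G}(B\otimes_AB\xrightarrow{1\otimes g}B\otimes_AB\xrightarrow{\text{mult}}B)$ and using that $B$ is almost projective over $B\otimes_AB$ (this is exactly the almost \'etaleness of $A\to B$), then quotes the proof of \cite[Thm.\ 3.2.18(ii)]{almost} to conclude that $c$ is almost \'etale and hence an almost isomorphism. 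Your route is shorter and buys you independence from the explicit module decomposition, but it leans slightly harder on the equivalence being fully faithful for \emph{arbitrary} algebra morphisms, not just \'etale ones --- which is indeed what Gabber--Ramero prove, so there is no gap. Two minor remarks: your opening mention of Corollary~\ref{wittetale} (Witt functoriality) is a red herring, as neither proof uses it; and your parenthetical kernel/cokernel verification is under-justified as written --- the cokernel direction needs a Nakayama argument over the nilpotent ideal $I$, and the kernel direction needs $\operatorname{Tor}_1^A(B\times G,\bar A)\thickapprox 0$ from almost flatness of $B\times G$ over $A$ before the Nakayama step applies --- but since your primary argument (reflection of isomorphisms by the equivalence) already carries the proof, this does not affect the conclusion.
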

\begin{proof}
Since almost faithful flatness is preserved under the equivalence, it suffices to check that almost Galois homomorphisms are preserved. It suffices to prove that the map $B\otimes_A B\to B\times G$ is almost flat for then it must be almost \'etale (see the proof of \cite[Thm. 3.2.18 (ii)]{almost}), hence an almost isomorphism. Note that
\[ \begin{CD}
(B\otimes_AB @>>> B\times G)=\oplus_{g\in G}( B\otimes_AB @>{1\otimes g}>> B\otimes_AB @>{x\otimes y\mapsto xy}>> B).
\end{CD} \]
Since $A\to B$ is almost \'etale, $B$ is an almost projective $B\otimes_AB$-module, hence so is $B\times G$.
\end{proof}

\subsection{Almost purity}
\subsubsection{}\label{small}
Fix $c\in\left\lbrace 1, \pi\right\rbrace \subset K^+$ (where $\pi\in K^+$ was our choice of uniformizer) and define
\[ O(c):=K^+[T_1,...,T_r,T_{r+1}^{\pm 1},...,T_{d+1}^{\pm 1}]/(T_1\cdots T_r -c). \]
Following Faltings, we say that a $K^+$-algebra $R$ is \emph{small} if there is an \'etale map
\[ O(c)\to R. \]
Every smooth (resp. semi-stable) $K^+$-scheme has an \'etale covering by small affines with $c=1$ (resp. $c=\pi$). Note that with these choices of $c$, $R$ is a regular ring; in particular, it is a finite product of integrally closed domains, and we shall assume that $R$ is an integrally closed domain. For any $c$, we endow $R$ with the canonical log-structure (see \S\ref{logstr}). If $c=\pi$, there is a natural chart for this log-structure, namely the map
\[ \mathbb{N}^r\to R:(n_1,...,n_r)\mapsto \prod_{i=1}^r T_i^{n_i} \]
which makes $\Spec(R)\to\Spec(K^+)$ into a a log-smooth morphism, where $\Spec(K^+)$ is given the canonical log-structure (see \S\ref{logstr}).

\subsubsection{}\label{smalln}
Fix $c\in\left\lbrace 1,\pi\right\rbrace $. Fix a sequence $(c=c_0,c_1,c_2,...)$ of elements of $\bar{K}$ satisfying $c_{n+1}^p=c_n$. If $c=1$ we take the $c_n$ to be primitive roots of unity. For any $n\in\mathbb{N}$, let $K_n^+$ be the normalization of $K^+$ in the extension $K_n$ of $K$ generated by $c_n$. Define
\[ O(c)_n:=K^+_n[T_1^{(n)},T_2^{(n)},...,T_r^{(n)},T_{r+1}^{(n)\pm 1},...,T_{d+1}^{(n)\pm 1}]/(T_1^{(n)}\cdots T_r^{(n)}-c_n). \]
Of course $O(c)_0=O(c)$. There is a canonical homomorphism $K_n^+\to K_{n+1}^+$ sending $c_n$ to $c_{n+1}^p$, which extends to a canonical homomorphism
\[ O(c)_n\to O(c)_{n+1}:T_i^{(n)}\mapsto T_i^{(n+1)p}. \]
Also note that there is a canonical isomorphism
\begin{eqnarray*}
O(c)_n[1/p] &\cong & K_n[X_2^{\pm 1},...,X_{d+1}^{\pm 1}] \\
T_1^{(n)} &\mapsto & \dfrac{c_n}{X_2\cdots X_r} \\
T_i^{(n)} &\mapsto & X_i\quad\text{for}\quad i\neq 1
\end{eqnarray*}
and hence $O(c)_n[1/p]$ is finite \'etale over $O(c)[1/p]$. Moreover, since $O(c)_n$ is regular (see Appendix) hence normal, it follows that we may identify $O(c)_n$ with the normalization of $O(c)$ in $K_n[T_2^{\pm p^{-n}},...,T_{d+1}^{\pm p^{-n}}]$ via this map.

If $R$ is a small $K^+$-algebra, then we define
\[ R_n:=R\otimes_{O(c)}O(c)_n. \]
If $K\subset L$ is a finite extension, then write $L_n=L\cdot K_n$ and let $L_n^+$ be the normalization of $K^+$ in $L_n$. Define
\[ O(c)_{n,L}:=O(c)_n\otimes_{K^+_n}L_n^{+}. \]

\subsubsection{}
Define
\[ K_{\infty}^+:=\varinjlim_{n\in\mathbb{N}}K^+_n. \]
We will now consider the almost ring theory of the pair $(K_{\infty}^+,\mathfrak{m}_{\infty})$, where $\mathfrak{m}_{\infty}\subset K^+_{\infty}$ is the maximal ideal. Note that there is a sequence of rational numbers $\varepsilon_n$ occuring as $p$-adic valuations of elements of $K^+_{\infty}$, and tending to zero with $n$. If $c=\pi$ then this is clear, and if $c=1$ then $\varepsilon_n=v_p(\zeta_{p^{n+1}}-1)=\frac{1}{p^n(p-1)}$, where $\zeta_{p^{n+1}}$ denotes a primitive $p^{n+1}$th root of unity. Since $K_{\infty}^+$ is a valuation ring we see that we indeed have $\mathfrak{m}_\infty^2=\mathfrak{m}_\infty$.

\subsubsection{}
Let $S$ be a finite integral $R$-algebra. We say that $S$ is \emph{\'etale in characteristic zero} if $R_K\to S_K$ is \'etale. For all $n$, let $S_\infty$ be the normalization of $R_\infty\otimes_R S$, where
\[ R_\infty:=\varinjlim_{n\in\mathbb{N}}R_n. \]
The key result we will need is Faltings' Almost Purity Theorem:

\begin{thm}[Faltings \cite{fa4}]\label{almostpurity}
If $S$ is a finite integral normal $R$-algebra, flat over $K^+$ and \'etale in characteristic zero, then the canonical homomorphism
\[ R_\infty\to S_\infty \]
is an almost \'etale covering of $K^+_\infty$-algebras.
\end{thm}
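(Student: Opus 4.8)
The plan is to follow Faltings' argument. What must be established is twofold: (1) $S_\infty$ is an almost finitely generated projective $R_\infty$-module; and (2) the trace pairing $S_\infty\times S_\infty\to R_\infty$, $(x,y)\mapsto\Tr_{S_\infty/R_\infty}(xy)$, is almost perfect, i.e.\ induces an almost isomorphism $S_\infty\to\Hom_{R_\infty}(S_\infty,R_\infty)$. Granting (1) and (2), the homomorphism $R_\infty\to S_\infty$ is almost unramified with almost finite projective underlying module, hence an almost \'etale covering, the faithful flatness being automatic by Lemma \ref{aechilfsatz} (iii) since the rank is everywhere non-zero. Before making any estimate I would pass to a Galois closure: replacing $S$ by the normalization of $R$ in a Galois closure of $S[1/p]$ over $R[1/p]$ and invoking Proposition \ref{galois}, one reduces to the case that $S[1/p]/R[1/p]$ is finite Galois.

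I would then prove (1) and (2) by induction on $d=\dim R_K$, the mechanism being to absorb ramification successively into $K^+_\infty$ and into the $p$-power roots of the coordinates $T_1,\dots,T_{d+1}$, which are available in $R$ through the \'etale map $O(c)\to R$. The base case $d=0$ is $R=K^+$: one must show that for $S$ finite normal over $K^+$, flat and \'etale in characteristic zero, the normalization $S_\infty$ of $K^+_\infty\otimes_{K^+}S$ is almost \'etale over $K^+_\infty$. This is Tate's computation of the different: along the tower $K^+\subset K^+_1\subset K^+_2\subset\cdots$ the valuation of the discriminant of $S_n/K^+_n$ is divided by (essentially) $p$ at each step, so the different of $S_\infty/K^+_\infty$ becomes divisible by elements of arbitrarily small positive valuation; together with $\mathfrak m_\infty^2=\mathfrak m_\infty$ and the sequence $\varepsilon_n\to 0$ of \S\ref{smalln}, this annihilates the cokernel of the trace pairing up to $\mathfrak m_\infty$, and a parallel estimate gives almost finite projectivity. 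For the inductive step I would peel off one coordinate, say $T=T_{d+1}$, writing $R$ as \'etale over $A[T^{\pm 1}]$ with $A$ toric of smaller dimension: away from the special fibre and the divisor $T=0$, a cover that is \'etale in characteristic zero decomposes into a ``horizontal'' part controlled by the inductive hypothesis applied to $A$ and a ``Kummer'' part ramified along $T$, and adjoining the roots $T^{1/p^n}$ absorbs the latter by an almost version of Abhyankar's lemma, the ramification valuation being again divided by $p^n$. Iterating over the remaining coordinates and over $K^+$ yields (1) and (2).

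I expect the main obstacle to be exactly this inductive absorption of ramification: it requires an ``almost'' Abhyankar lemma valid when $p$ divides the ramification indices and the base is the non-noetherian ring $A_\infty=\varinjlim_n A_n$, together with $p$-adic discriminant estimates that are \emph{uniform in $n$}, so that after passing to the colimit $R_\infty$ the relevant cokernels are genuinely annihilated by every element of $\mathfrak m_\infty$. A secondary technical point is that $S_\infty$ is defined as a normalization, so one must first check that it is module-finite and reflexive over $R_\infty$ and compatible with the finite-level normalizations $S_n$; this uses flatness over $K^+$, normality of the $R_n$, Serre's criterion, and faithfully flat descent along $R_n\to R_{n+1}$. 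Finally, Theorem \ref{almostcocart} provides a consistency check on the whole scheme: the reduction modulo $p$ of an almost \'etale $R_\infty$-algebra must have almost bijective relative Frobenius, which is precisely the behaviour that makes all the estimates possible.
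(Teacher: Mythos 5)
The paper does not prove this statement: it is cited as Faltings' Almost Purity Theorem from \cite{fa4} and used as a black box, so there is no proof in the paper against which to compare your sketch.

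Your outline is a fair description of the overall shape of Faltings' argument --- reduction to a Galois cover, the characterization of almost \'etale coverings via almost finite projectivity together with an almost perfect trace pairing, induction on the dimension $d$ with Tate's classical different computation as the base case, and absorption of ramification by adjoining $p$-power roots of the coordinates --- but, as you yourself note, the two essential ingredients are exactly the ones left unsupplied: an ``almost'' Abhyankar lemma valid over the non-noetherian tower when $p$ divides the ramification indices, and different/discriminant estimates that are uniform in $n$ so they survive the colimit. These occupy the bulk of Faltings' paper and are where the actual mathematics lives; an outline that defers them has not proved the theorem. One further caution specific to the semi-stable case treated in \cite{fa4}: the relation $T_1\cdots T_r=\pi$ couples the ``horizontal'' ramification along the $T_i$ to the ramification of $K_n^+/K^+$, so the clean decomposition you propose --- a piece controlled by the inductive hypothesis on a smaller toric ring $A$ plus a Kummer piece absorbed by $T^{1/p^n}$ --- is more delicate than in the good reduction setting, and this is one of the places where Faltings' semi-stable argument departs substantively from the earlier smooth-case proof that your sketch most closely resembles.
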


We define
\[ \tilde{R}:=R\otimes_{K^+}\bar{K}^+ \]
and
\[ \tilde{R}_\infty:=R_\infty\otimes_{K^+_\infty}\bar{K}^+. \]
If $K^+$ is integrally closed in $R$ and $p$ is not a unit in $R$, then by Proposition \ref{integral} these are integrally closed domains. If $S$ is a finite integral normal $R$-algebra which is \'etale in characteristic zero, then we define $\tilde{S}_\infty$ to be the normalization of $S_\infty\otimes_{K^+_\infty}\bar{K}^+$. Since $R_\infty\to\tilde{S}_\infty$ factors over $\tilde{R}_\infty\to \tilde{S}_\infty$, then by Lemma \ref{aechilfsatz} we deduce that $\tilde{R}_\infty\to \tilde{S}_\infty$ is the filtering inductive limit of almost \'etale coverings, in fact an almost \'etale covering.

\begin{cor}\label{frobsurjective}
\begin{enumerate}[(i)]
\item If $c=\pi$ (resp. $c=1$), then the Frobenius is surjective on $R_\infty/pR_\infty$ (resp. $\tilde{R}_\infty/p\tilde{R}_\infty$).
\item If $c=\pi$ (resp. $c=1$) and $S$ is a finite integral normal $R$-algebra (resp. $\tilde{R}$-algebra), \'etale in characteristic zero and flat over $K^+$, then the Frobenius is surjective on $S_\infty/pS_\infty$ (resp. $\tilde{S}_\infty/p\tilde{S}_\infty$).
\end{enumerate}
\end{cor}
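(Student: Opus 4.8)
The plan is to deduce everything from Corollary~\ref{frobsurjective0}: in each case we exhibit a ring $A$ with surjective Frobenius on $A/pA$ together with an almost \'etale homomorphism $A\to B$, where $B$ is the ring whose mod-$p$ reduction interests us, satisfying the flatness and integral-closedness hypotheses of that corollary, after which the conclusion is automatic. The compatible sequence of $p$-power roots required by Corollary~\ref{frobsurjective0} will be $c_n=\pi^{1/p^n}\in\mathfrak{m}_\infty$ when $c=\pi$, and the system $\pi^{1/p^n}$ inside the maximal ideal $\mathfrak{m}_{\bar K}$ of $\bar K^+$ when $c=1$; note that in the latter case we carry out the almost ring theory relative to $(\bar K^+,\mathfrak{m}_{\bar K})$ rather than $(K_\infty^+,\mathfrak{m}_\infty)$, which is legitimate because an honestly \'etale map is almost \'etale for either pair.

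For part (i) the key point is that the Frobenius is surjective on $O(\pi)_\infty/pO(\pi)_\infty$ when $c=\pi$, resp.\ on $(O(1)_\infty\otimes_{K_\infty^+}\bar K^+)/p$ when $c=1$. First I would record that $K_\infty^+/pK_\infty^+$ has surjective Frobenius: since $X^{p^n}-\pi$ is Eisenstein over $K$, one has $K_n^+\cong W(k)[u]/(E_n)$ with $E_n$ Eisenstein of degree $ep^n$, whence $K_n^+/pK_n^+\cong k[u]/(u^{ep^n})$ with $u=\pi^{1/p^n}$; because $k$ is perfect and the transition map carries $\pi^{1/p^n}$ to $(\pi^{1/p^{n+1}})^p$ \emph{exactly}, every element of $K_n^+/pK_n^+$ becomes a $p$-th power at the next stage. (When $c=1$ one uses instead that $\bar K^+/p\bar K^+$ has surjective Frobenius, which is trivial since $\bar K$ is algebraically closed.) Writing an element of $O(c)_\infty/p$ (resp.\ after $\otimes_{K_\infty^+}\bar K^+$) as a finite $K_\infty^+/p$-linear (resp.\ $\bar K^+/p$-linear) combination of Laurent monomials in the $T_i^{(n)}$, and using $T_i^{(n)}=(T_i^{(n+1)})^p$, one extracts a $p$-th root at a sufficiently late stage by taking $p$-th roots of the coefficients and halving the exponents, using Frobenius-additivity of the ambient $\mathbb{F}_p$-algebra. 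Then, since $O(c)\to R$ is \'etale, so is the base change $O(c)_\infty\to R_\infty$ (resp.\ $O(1)_\infty\otimes_{K_\infty^+}\bar K^+\to\tilde R_\infty$), hence it is almost \'etale; and $R_\infty$ (resp.\ $\tilde R_\infty$, by Proposition~\ref{integral}) is a normal domain, hence flat over the valuation ring $K_\infty^+$ (resp.\ $\bar K^+$) since torsion-free, and integrally closed in $R_\infty[1/p]$ (resp.\ $\tilde R_\infty[1/p]$). Corollary~\ref{frobsurjective0} now applies.

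For part (ii), by Faltings' Almost Purity Theorem~\ref{almostpurity} the homomorphism $R_\infty\to S_\infty$ is an almost \'etale covering of $K_\infty^+$-algebras when $c=\pi$; when $c=1$, the discussion following Theorem~\ref{almostpurity} shows $\tilde R_\infty\to\tilde S_\infty$ is an almost \'etale covering. By construction $S_\infty$ (resp.\ $\tilde S_\infty$) is normal, hence flat over $K_\infty^+$ (resp.\ $\bar K^+$) and integrally closed in $S_\infty[1/p]$ (resp.\ $\tilde S_\infty[1/p]$). Applying Corollary~\ref{frobsurjective0} with $A=R_\infty$ (resp.\ $\tilde R_\infty$) and $B=S_\infty$ (resp.\ $\tilde S_\infty$), and invoking part (i) for the surjectivity of Frobenius on $A/pA$, we conclude.

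The only genuinely non-formal step, and the one I expect to be the main obstacle to write out cleanly, is the surjectivity of Frobenius on $K_\infty^+/pK_\infty^+$ (equivalently on $O(\pi)_\infty/pO(\pi)_\infty$) in the case $c=\pi$: it rests on the explicit Eisenstein description of the tower $K(\pi^{1/p^n})$ and on perfectness of $k$, and one must keep careful track of the exact compatibility $\pi^{1/p^n}=(\pi^{1/p^{n+1}})^p$ so that the hypotheses of Corollary~\ref{frobsurjective0} (which needs an exact $p$-power sequence in $\mathfrak{m}$) are met. Everything else is a mechanical application of Corollary~\ref{frobsurjective0}, Almost Purity, and the normality and flatness facts already at our disposal.
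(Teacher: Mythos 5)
Your proposal is correct and follows essentially the same route as the paper: reduce (ii) to (i) via Almost Purity and Corollary~\ref{frobsurjective0}, and for (i) reduce to $O(c)_\infty$ modulo $p$, where one first checks Frobenius-surjectivity on $K^+_\infty/pK^+_\infty$ and then extracts a $p$-th root of a general monomial sum by halving exponents and taking roots of coefficients.

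Two small remarks on the comparison. First, for part (i) the paper does not invoke Corollary~\ref{frobsurjective0} at all: once Frobenius-surjectivity on $O(c)_\infty/p$ is established, the \emph{honestly} \'etale map $O(c)_\infty\to R_\infty$ has a relative Frobenius which is an isomorphism, so absolute Frobenius surjectivity passes directly to $R_\infty/p$ (this is the same factorization argument used in Step~1 of the proof of Theorem~\ref{fontaine}); you instead route (i) through Corollary~\ref{frobsurjective0}, which also works but is a heavier tool than needed, and forces you to verify flatness and normality of $R_\infty$ that the lighter argument avoids. Second, your explicit observation that for $c=1$ one should run the almost ring theory relative to $(\bar K^+,\mathfrak{m}_{\bar K})$ rather than $(K^+_\infty,\mathfrak{m}_\infty)$ is a genuine clarification of a point the paper leaves implicit behind ``the case $c=1$ being similar'' — when $c=1$ the tower $K^+_n$ is cyclotomic and $\mathfrak{m}_\infty$ does not obviously contain an exact $p$-power-compatible sequence, whereas $\bar K^+$ does, and the corollary is in any case stated for $\tilde R_\infty$ which already lives over $\bar K^+$.
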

\begin{proof} We do the case $c=\pi$, the case $c=1$ being similar. By the Almost Purity Theorem and Corollary \ref{frobsurjective0} it suffices to show (i). In this case, by \'etaleness over
\[ O(c)_{\infty}:=\varinjlim_{n}O(c)_{n} \]
it suffices to show that the Frobenius is surjective on the latter modulo $p$. First note that the Frobenius is surjective on $K^+_\infty/pK^+_\infty$: we have $K^+/pK^+=k[u]/(u^e)$ where $e$ is the ramification index of $K$ and $K^+_n=K^+[X]/(X^{p^n}-\pi)$, so $K^+_n/pK^+_n=K^+/pK^+[x]$ with $x^{p^n}=u$ and so $K^+_\infty/pK^+_\infty$ has surjective Frobenius. Now every element of $O(c)_{\infty}/pO(c)_{\infty}$ has the form
\[ \sum v_NT_1^{(m_1)n_1}\cdots T_{d+1}^{(m_{d+1})n_{d+1}}=\left( \sum v_N^{1/p}T_1^{(m_1+1)n_1}\cdots T_{d+1}^{(m_{d+1}+1)n_{d+1}}\right)^p \]
for some $v_N\in K^+_\infty/pK^+_\infty$ so the claim in this case is clear.
\end{proof}

\subsubsection{}\label{logstr}
Given a flat $K^+$-algebra $S$, we define the \emph{canonical log-structure} $\mathcal{L}_S$ on $S$ as the sheaf which associates to each \'etale $S$-algebra $S'$ the monoid
\[ (S'[1/p])^{\ast}\cap S'. \]
Note that this is an integral and saturated log-structure if $S$ is normal. For example, if $R$ is a \emph{small} $K^+$-algebra, then $
\mathcal{L}_R$ is the usual fine log-structure of the scheme $\Spec(R)$. Now if $S$ is a finite integral $R$-algebra, \'etale in characteristic zero, let $f:\Spec(S)\to\Spec(R)$ be the structure morphism and let $M_S$ be the log-structure on $\Spec(S)$ associated to the sheaf of monoids
\[ \{ x\in \mathcal{L}_S| \exists n\in\mathbb{N}: x^{p^n}\in f^{\ast}\mathcal{L}_R \}. \]
We call $M_S$ the \emph{$p$-saturation} of $\mathcal{L}_R$ in $\mathcal{L}_S$.

\begin{lemma}\label{logstrmonoid}
With notation as above.
\begin{enumerate}[(i)]
\item If $c=\pi$, then there is a saturated integral monoid with surjective Frobenius inducing a chart for the log-structure $M_{S_{\infty}}:=\varinjlim_n M_{S_n}$ on $\Spec(S_{\infty})$. In fact, we can take the same monoid as for $M_{R_{\infty}}$.
\item If $c=1$, then there is a saturated integral monoid with surjective Frobenius inducing a chart for $M_{\tilde{S}_{\infty}}:=\varinjlim_nM_{\tilde{S}_n}$ on $\Spec(\tilde{S}_{\infty})$, where $M_{\tilde{S}_n}:=\varinjlim_{K_n\subset L}M_{S_n\otimes_{K_n^+}L^+}$, the limit being over all finite extensions $L$ of $K_n$ in $\bar{K}$.
\end{enumerate}
\end{lemma}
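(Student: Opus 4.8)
\emph{Plan.} I would produce the required monoid explicitly: it is simply the monoid already known to chart $M_{R_\infty}$ (resp.\ $M_{\tilde R_\infty}$), and the content of the lemma is that this very monoid also charts $M_{S_\infty}$ (resp.\ $M_{\tilde S_\infty}$). The only substantive input will be the normality of $S_\infty$; the rest is bookkeeping with the $p$-saturation defining $M_S$ and with filtered colimits of log-schemes.

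\emph{The monoid.} In case (i), recall from \S\ref{small} and \S\ref{smalln} that $\mathcal{L}_R$ is the fine log-structure with chart $\mathbb{N}^r\to R$, $e_i\mapsto T_i$ (the $T_j$ with $j>r$ are units and do not contribute), and that $R_n$ is obtained from $R$ by adjoining a compatible system of $p$-power roots $T_i^{(n)}=T_i^{1/p^n}$ of each $T_i$. Hence $M_{R_\infty}$ is charted by
\[ Q:=\mathbb{N}[1/p]^r,\qquad \beta\colon Q\longrightarrow R_\infty,\quad (a_1,\dots,a_r)\longmapsto\prod_{i=1}^r T_i^{a_i}. \]
Here $Q$ is integral (it is a submonoid of $Q^{\gp}=\mathbb{Z}[1/p]^r$) and saturated, and its Frobenius $v\mapsto pv$ is a bijection, in particular surjective. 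In case (ii) the relation $T_1\cdots T_r=1$ makes every $T_i$ a unit, so $\mathcal{L}_R$ is the inverse image of the canonical log-structure of $\Spec(K^+)$ (see \S\ref{logstr}), charted by $\mathbb{N}\to R$, $1\mapsto\pi$; adjoining $p$-power roots of $\pi$ (which lie in $\bar K^+\subset\tilde R_\infty$) yields the chart $Q:=\mathbb{N}[1/p]\to\tilde R_\infty$, $a\mapsto\pi^{a}$, which is again integral, saturated and has surjective Frobenius.

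\emph{$Q$ charts $M_{S_\infty}$.} I treat case (i); case (ii) is the same argument with $\mathbb{N}^r$ and the $T_i$-monomials replaced by $\mathbb{N}$ and powers of $\pi^{1/p^\infty}$, the extra colimits over $n$ and over finite subextensions $L/K_n$ being harmless since formation of charts commutes with filtered colimits of log-schemes. Form $\beta\colon Q\to R_\infty\to S_\infty$; its image lies in $M_{S_\infty}$, since $\beta(e_i/p^n)^{p^n}=T_i\in f^\ast\mathcal{L}_R$ and since units are always sections of a log-structure. It remains to show that, \'etale-locally on $\Spec(S_\infty)$, every section of $M_{S_\infty}$ is a unit times $\beta(q)$ for some $q\in Q$, and that $\beta^{-1}(\mathcal{O}^\ast_{S_\infty})=\{0\}$; together these say exactly that $M_{S_\infty}$ is the log-structure associated to $\beta$. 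So let $x$ be an \'etale-local section of $M_{S_\infty}=\varinjlim_n M_{S_n}$. Since each $M_{S_n}$ is the log-structure associated to the pre-log-structure $\{y\in\mathcal{L}_{S_n}:y^{p^N}\in f^\ast\mathcal{L}_R\text{ for some }N\}$, the section $x$ is locally the product of a unit of $S_\infty$ with the image of some $x_n\in\mathcal{L}_{S_n}$ satisfying $x_n^{p^N}\in f^\ast\mathcal{L}_R$. Writing the latter through the chart of $\mathcal{L}_R$ gives $x_n^{p^N}=u_0\prod_{i=1}^r T_i^{a_i}$ with $u_0\in R^\ast$ and $a_i\in\mathbb{N}$. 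Put $z:=\beta(a_1p^{-N},\dots,a_rp^{-N})\in S_\infty$, so that $z^{p^N}=\prod_i T_i^{a_i}$ and $(x_n/z)^{p^N}=u_0\in S_\infty^\ast$. As $S_\infty$ is a normalization, hence normal, and $S_\infty[1/p]$ has the same total fraction ring, the elements $x_n/z$ and $z/x_n$ — roots of $W^{p^N}-u_0$ and $W^{p^N}-u_0^{-1}$ — both lie in $S_\infty$, so $x_n/z\in S_\infty^\ast$ and $x_n$ is a unit times $\beta(q)$ with $q=(a_ip^{-N})_i\in Q$. Finally, if $\beta(q)=\prod_{i\le r}T_i^{q_i}$ with some $q_i\neq0$ were a unit of $S_\infty$, then clearing denominators $\prod_{i\le r}T_i^{m_i}$ ($m_i\in\mathbb{N}$, not all zero) would be a unit; but $\prod_{i\le r}T_i=\pi$ divides $p$ up to a unit, so this element lies in $\sqrt{(p)}$, contradicting that $p$ is a non-unit in $S_\infty$ (the special fibre being non-empty). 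Hence $\beta^{-1}(\mathcal{O}^\ast_{S_\infty})=\{0\}$, and $\beta$ is a chart of $M_{S_\infty}$, visibly the same monoid as for $M_{R_\infty}$.

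\emph{The main obstacle.} There is essentially no deep difficulty: the clever point is that $M_S$ was \emph{defined} as the $p$-saturation of $\mathcal{L}_R$ inside $\mathcal{L}_S$, which makes the comparison elementary and — in contrast with the ring-level statement — does not require the Almost Purity Theorem \ref{almostpurity}. The one genuine input is the normality of $S_\infty$ (resp.\ $\tilde S_\infty$), used to extract the $p$-power root $x_n/z$; this is immediate from the construction. The remaining care is in verifying that the pointwise description of $M_{S_\infty}$ actually produces a chart, i.e.\ the computation $\beta^{-1}(\mathcal{O}^\ast)=\{0\}$, and in threading the various colimits. Combined with Corollary \ref{frobsurjective}, this lemma furnishes the hypotheses (saturated $M$, surjective Frobenius on $M$ and on the ring) needed to apply Fontaine's Theorem \ref{fontaine} to the rings $\bar R/p\bar R$, $\tilde R_\infty/p\tilde R_\infty$, and their finite covers.
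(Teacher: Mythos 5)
Your chart is the right one and the extraction of $p$-power roots via normality of $S_\infty$ (resp.\ $\tilde S_\infty$) is exactly the paper's argument; so the surjectivity half of the chart property — every local section of $M_{S_\infty}$ is a unit times $\beta(q)$ — is done correctly and in the same way. The gap is in what you claim remains to be checked and in how you check it.

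You assert that surjectivity together with $\beta^{-1}(\mathcal{O}^*_{S_\infty})=\{0\}$ characterizes a chart. This is wrong on two counts. First, the condition $\beta^{-1}(\mathcal{O}^*)=\{0\}$ is neither necessary (a chart is a map $\beta\colon Q\to\mathcal{O}$ with $Q^a\cong M$, and nothing forbids $\beta$ from hitting units; the associated log-structure just collapses those elements) nor, together with surjectivity, sufficient — you would still have to show that the induced map $Q\to M_{S_\infty,\bar x}/\mathcal{O}^*_{\bar x}$ is \emph{injective} at every geometric point, which you never address. Second, the claim $\beta^{-1}(\mathcal{O}^*)=\{0\}$ is simply false in general in case (i): $R$ is only \'etale over $O(c)$, and after \'etale localization some of the $T_i$ with $1\le i\le r$ may become units. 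The paper explicitly allows for this (``assuming $T_i$ is not a unit in $R$ ($0\le s\le r$)''). Your argument via $T_1\cdots T_r=\pi\in\sqrt{(p)}$ does not rescue the claim: a partial product $\prod_i T_i^{m_i}$ with some $m_i=0$ need not lie in $\sqrt{(p)}$ (e.g.\ $\pi=T_1T_2$ with $T_1,T_2$ distinct primes shows $T_1\notin\sqrt{(\pi)}$).

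What is actually needed, and what the paper supplies, is the step you omitted: reducing to strict henselizations and using that $T_i^{(n)}$ generates a prime divisor of $p$ (because $O(c)_n/(T_i^{(n)})$ is regular and $R_n$ is \'etale over $O(c)_n$), one identifies $\mathcal{L}_{R_n}(R_n)/R_n^\ast\cong\frac{1}{p^n}\mathbb{N}^s$ via the valuation map $x\mapsto(v_{T_i}(x))$, compatibly in $n$. This identifies $\mathcal{L}_{R_\infty}/\mathcal{O}^\ast$ with $\mathbb{N}[1/p]^s$, the map from $\mathbb{N}[1/p]^r$ being the projection, which establishes the chart property for $\mathcal{L}_{R_\infty}$ — both surjectivity \emph{and} injectivity — and then your normality argument transfers it to $M_{S_\infty}$. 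Without the valuation computation your proof has a genuine hole: it does not rule out that distinct monomials $\prod T_i^{(m)n_i}$ become unit-equivalent in $S_\infty$.
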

\begin{proof}
(i): We first prove the assertion for $R_{\infty}$. Consider the map $\mathbb{N}[1/p]^r\to M_{R_{\infty}}:(\alpha_1,...,\alpha_r)\mapsto\prod_{i=1}^rT_i^{(n_i)m_i}$, where $\alpha_i=\dfrac{m_i}{p^{n_i}}$. We claim that it is a chart for $\mathcal{L}_{R_\infty}$, which will clearly imply the result in this case. To see this, up to replacing $R_{\infty}$ by its strict henselization at a point, we may assume that $R_{\infty}$ is a strictly henselian local ring. Then it is the limit of the strict henselizations of the $R_n$ at the image points, so we can assume that $R$ and $R_n$ are strictly henselian local rings for each $n\in\mathbb{N}$. Since $O(c)_n/(T_i^{(n)})$ is regular, then by \'etaleness it follows that $R_n/(T_i^{(n)})$ is a regular local ring, if $T_i$ is not a unit in $R$. So in this case $T_i^{(n)}$ generates a prime ideal of $R_n$. Moreover, since $\prod_{i=1}^rT_i^{(n)}=c_n$ every prime divisor of $c_n$ in $R_n$ is generated by one of the $T_i^{(n)}$. These are also the prime divisors of $p$ in $R_n$. It follows from this that for each $n\in\mathbb{N}$ there is a canonical isomorphism
\[ \mathcal{L}_{R_n}(R_n)/R_n^{\ast}\cong \dfrac{1}{p^n}\mathbb{N}^s: x\mapsto (v_{T_i}(x)) \]
where $v_{T_i}(-)$ denotes the valuation at the prime ideal generated by $T_i^{(n)}$ normalized by $v_{T_i}(T_i)=1$, assuming $T_i$ is not a unit in $R$ ($0\leq s\leq r$). These maps are compatible as $n$ varies, so taking the limit over $n$ to get an isomorphism
\[ \mathcal{L}_{R_\infty}(R_\infty)/R_\infty^{\ast}\cong \mathbb{N}[1/p]^s. \]
Since the natural map $\mathbb{N}[1/p]^r\to\mathcal{L}_{R_\infty}(R_\infty)/R_\infty^{\ast}\cong \mathbb{N}[1/p]^s$ is none other than the projection, this proves the claim.

We now show that this chart induces the log-structure $M_{S_{\infty}}$. As above for $R_{\infty}$, to see this we may assume $R$, $S_n$, and $S_{\infty}$ are a strictly henselian local rings. Suppose $x\in\mathcal{L}_{S_n}(S_n)$ such that $x^{p^m}\in\mathcal{L}_{R}(R)$, say $x^{p^m}=u\prod_{i=1}^rT_i^{n_i}$ for some $u\in R^{\ast}$. Up to increasing $n$ we may assume that $m\leq n$. Then $y:=\dfrac{x}{\prod_{i=1}^rT_i^{(m)n_i}}$ satisfies $y^{p^m}=u$, whence $y\in S_n^{\ast}$, since $S_n$ is an integrally closed domain (being a normal local ring). So $x$ and $\prod_{i=1}^rT_i^{(m)n_i}$ differ by a unit. Since this holds for all $x$, this implies the assertion.

(ii): Recall that since $c=1$, the log-structure for small $R$ can be given by the map
\[ \mathbb{N}\to R: n\mapsto\pi^n. \]
Let us fix a sequence $\{\pi_n\}$ of elements of $\bar{K}^+$ satisfying $\pi_0=\pi$ and $\pi_{n+1}^p=\pi_n$ for all $n\in\mathbb{N}$. We claim that the map
\[ \mathbb{N}[1/p]\to\tilde{S}_{\infty}: \dfrac{m}{p^n}\mapsto \pi_n^m \]
is a chart for the log-structure $M_{\tilde{S}_{\infty}}$. To see this we may (as above) reduce to the case $R$ and $\tilde{S}_{\infty}$ are strictly henselian local ring (and $\tilde{S}_{\infty}$ is an integrally closed domain, being the limit of normal local rings). If $x\in\mathcal{L}_{\tilde{S}_{\infty}}(\tilde{S}_{\infty})$ is such that $x^{p^m}\in\mathcal{L}_R(R)$, then we may write $x^{p^m}=u\pi^{n}$ for some $u\in R^{\ast}$ and $n\in\mathbb{N}$. Then $y:=\dfrac{x}{\pi_m^n}$ satisfies $y^{p^m}=u$, whence $y\in\tilde{S}_{\infty}^{\ast}$, since $\tilde{S}_{\infty}$ is an integrally closed domain. So $x$ differs from an element of the image of $\mathbb{N}[1/p]$ up to a unit, and this implies (ii).

\end{proof}

\subsection{Almost \'etale coverings of Fontaine rings}\label{aecfontaine}
By Corollary \ref{frobsurjective} we may, following Fontaine, construct the Fontaine rings
\begin{eqnarray*}
A^+_\infty &:=& \varprojlim_n H^0_{\crys}(\tilde{R}_\infty/p\tilde{R}_\infty|\Sigma_{n},\mathscr{O})\\
A^+_\infty(S) &:=& \varprojlim_n H^0_{\crys}(\tilde{S}_\infty/p\tilde{S}_\infty|\Sigma_{n},\mathscr{O}) 
\end{eqnarray*}
where $S$ is a finite integral normal $R$-algebra, \'etale in characteristic zero. Our aim in this section is to show that
\[ A^+_\infty/p^nA^+_\infty\to A^+_\infty(S)/p^nA^+_\infty(S) \]
is an almost \'etale homomorphism and from this deduce some consequences. We first set up the almost ring theory in this context.

\subsubsection{}
We will consider the almost ring theory of the ring $A_{\Inf}(K^+)=W(P(\bar{K}^+/p\bar{K^+}))$ with respect to the ideal $\mathfrak{a}$, union of the principal ideal $[\underline{p}]^\varepsilon$ for all positive rational exponents $\varepsilon>0$. Note that $[\underline{p}]^\varepsilon$ is not a zero divisor: this is true modulo $p$ since $P(\bar{K}^+/p\bar{K^+})$ is a valuation ring (hence an integral domain) and so in general since $A_{\Inf}(K^+)$ is $p$-adically separated and torsion free, being the ring of Witt vectors of a perfect ring of characteristic $p$. We view $W(\bar{K}^+/p\bar{K}^+)$ as a $A_{\Inf}(K^+)$-algebra under the homomorphism induced by the canonical homomorphism $P(\bar{K}^+/p\bar{K^+})\to \bar{K}^+/p\bar{K^+}$. Then the pair $(W(\bar{K}^+/p\bar{K}^+),\mathfrak{a}\cdot W(\bar{K}^+/p\bar{K}^+))$ is an example of the pairs $(W(\Lambda/p\Lambda),[\mathfrak{m}])$ considered in \S\ref{relativefrob}.

\subsubsection{Remark}\label{diffalmoststr}
The ring $A_{\cris}(K^+)/p^nA_{\cris}(K^+)$ can be viewed as an $A_{\Inf}(K^+)$-algebra in 2 different ways, the first is via the canonical map $A_{\Inf}(K^+)\to A_{\cris}(K^+)$ arising from the construction of $A_{\cris}(K^+)$ in the 1st proof of Theorem \ref{fontaine}, and the second is via the map $A_{\Inf}(K^+)\to W(\bar{K}^+/p\bar{K}^+)\to W_n(\bar{K}^+/p\bar{K}^+)\to A_{\cris}(K^+)/p^nA_{\cris}(K^+)$, where the last map is the canonical one arising the construction of $A_{\cris}(K^+)/p^nA_{\cris}(K^+)$ in the 2nd proof of Theorem \ref{fontaine}. We claim that the image of the ideal $\mathfrak{a}$ under these 2 maps is the same. To see this it suffices to note that by Remark (\ref{remarks}) (d) we have $[\underline{p}^{\varepsilon}]=[p^{\varepsilon/p^n}]$ in $A_{\cris}(K^+)/p^nA_{\cris}(K^+)$.

\subsubsection{} Let $R$ be a small integral $K^+$-algebra and let $T$ be a $p$-adically complete logarithmic $\Sigma$-algebra such that $R/pR$ is a $T$-algebra. In practice we will have $T=\Sigma$ or $T$ will be a formal $\Sigma$-lift of $R/pR$. For all $n\geq 1$, write $T_n:=T/p^nT$.

We assume that the log-structure of $T$ is fine and saturated, defined by a fine saturated monoid $\Lambda_T$. Let $S$ be a finite integral normal $R$-algebra \'etale in characteristic zero. We endow it with the log-structure $M_S$ of \S\ref{logstr} and endow $\Spec(S/pS)$ with the inverse image log structure. By taking limits this then defines a log-structure on $\tilde{S}_\infty/p
\tilde{S}_{\infty}$. Define
\begin{eqnarray*}
A^+_{\log,\infty,T} &:=& \varprojlim_n H^0_{\logcrys}(\tilde{R}_\infty/p\tilde{R}_\infty|T_{n+1},\mathscr{O})\\
A^+_{\log,\infty,T}(S) &:=& \varprojlim_n H^0_{\logcrys}(\tilde{S}_\infty/p\tilde{S}_\infty|T_{n+1},\mathscr{O}).
\end{eqnarray*}

\begin{prop}\label{etalefontainering}
The canonical homomorphism
\[ A^+_{\log,\infty,T}/p^nA^+_{\log,\infty,T}\to A^+_{\log,\infty,T}(S)/p^nA^+_{\log,\infty,T}(S) \]
is an almost \'etale homomorphism of $A_{\Inf}(K^+)$-algebras.
\end{prop}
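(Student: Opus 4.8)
The plan is to compute both Fontaine rings explicitly via Theorem \ref{fontaine} and thereby reduce the assertion to the almost \'etaleness of a homomorphism of rings of Witt vectors, which Corollary \ref{wittetale} supplies.

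\textbf{Reduction to Witt vectors.} By Faltings' Almost Purity Theorem \ref{almostpurity} and the discussion following it, $\tilde{R}_\infty\to\tilde{S}_\infty$ is an almost \'etale covering. Both rings are torsion-free, hence flat, over the valuation ring $\bar{K}^+$, so reduction modulo $p$ is a base change, and almost \'etaleness is stable under base change (Lemma \ref{aechilfsatz}(i)); hence $\tilde{R}_\infty/p\tilde{R}_\infty\to\tilde{S}_\infty/p\tilde{S}_\infty$ is an almost \'etale homomorphism of $\bar{K}^+/p\bar{K}^+$-algebras for the almost structure given by the maximal ideal. Applying Corollary \ref{wittetale} (with $\Lambda=\bar{K}^+$) shows that
\[ W_n(\tilde{R}_\infty/p\tilde{R}_\infty)\to W_n(\tilde{S}_\infty/p\tilde{S}_\infty) \]
is an almost \'etale homomorphism; since the almost structures on rings of Witt vectors of $\bar{K}^+/p\bar{K}^+$-algebras induced by $\mathfrak{a}\subset A_{\Inf}(K^+)$ and by the Teichm\"uller ideal of $\bar{K}^+$ coincide (as in Remark \ref{diffalmoststr}, using that $[\underline{p}^{\varepsilon}]=[p^{\varepsilon/p^n}]$ modulo $p^n$), this is an almost \'etale homomorphism of $A_{\Inf}(K^+)$-algebras.

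\textbf{Expressing the Fontaine rings.} Write $T_n:=T/p^nT$. By Theorem \ref{fontaine} (second proof) together with Remark \ref{remarks} we have, with $L$ the group built from the log-monoids of $\tilde{R}_\infty$,
\[ A^+_{\log,\infty,T}/p^nA^+_{\log,\infty,T}\;\cong\;\left(W_n(\tilde{R}_\infty/p\tilde{R}_\infty)\otimes_{\mathbb{Z}}T_n\right)_{\log}^{\DP}/\mathscr{K}, \]
and likewise for $S$. By Lemma \ref{logstrmonoid} the chart monoid for the log-structure $M_{\tilde{S}_\infty}$ may be taken equal to the one for $M_{\tilde{R}_\infty}$; hence the group $L$, the generators $q_l\otimes l-p_l$ of $\mathscr{K}$, and the kernel of the surjection onto the characteristic $p$ ring are literally the same on the $S$-side as on the $R$-side. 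Therefore the $S$-side ring is obtained from the $R$-side ring by applying the base change $-\otimes_{W_n(\tilde{R}_\infty/p\tilde{R}_\infty)}W_n(\tilde{S}_\infty/p\tilde{S}_\infty)$ successively to $\otimes_{\mathbb{Z}}T_n$, to $\otimes_{\mathbb{Z}}\mathbb{Z}[L]$, to the divided power hull, and to the quotient by $\mathscr{K}$ --- provided each of these operations commutes with that base change. For the two tensor operations and the quotient this is immediate; for the divided power hull it follows from \cite[Ch. I, Prop. 2.8.2]{berthelot} (as in the base-change argument in the proof of Proposition \ref{qcohcrystal}), the point being that $W_n(\tilde{R}_\infty/p\tilde{R}_\infty)\to W_n(\tilde{S}_\infty/p\tilde{S}_\infty)$ is almost flat (Corollary \ref{wittetale}, Lemma \ref{aechilfsatz}(iii)), which suffices in the almost category once one checks, exactly as in the proof of Lemma \ref{flat}, that the exact sequence $0\to\mathcal{I}\to\Gamma(-)\to(-)^{\DP}\to 0$ defining the divided power hull remains almost exact under an almost flat base change.

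\textbf{Conclusion and main obstacle.} Combining the two steps, there is a commutative square identifying, up to almost isomorphism and compatibly with the structure maps, the canonical homomorphism $A^+_{\log,\infty,T}/p^n\to A^+_{\log,\infty,T}(S)/p^n$ with the base change of $W_n(\tilde{R}_\infty/p\tilde{R}_\infty)\to W_n(\tilde{S}_\infty/p\tilde{S}_\infty)$ along $W_n(\tilde{R}_\infty/p\tilde{R}_\infty)\to A^+_{\log,\infty,T}/p^n$. By the first step the latter is almost \'etale over $A_{\Inf}(K^+)$, so by Lemma \ref{aechilfsatz}(i) so is its base change, and hence so is the canonical homomorphism. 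I expect the main obstacle to be the bookkeeping in the second step: one must verify that every construction intervening between the Witt vectors of the characteristic $p$ ring and the Fontaine ring genuinely is a base change along the Witt-vector map --- which is where the matching of log-monoids from Lemma \ref{logstrmonoid} is essential --- and that formation of the divided power hull commutes with this base change even though only almost flatness, rather than flatness, is available.
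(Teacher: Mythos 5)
Your proof follows essentially the same route as the paper: reduce to Witt vectors via Almost Purity and Corollary \ref{wittetale}, note that Lemma \ref{logstrmonoid} lets one use the same chart monoid (hence the same $L$ and the same elements $q_l\otimes l-p_l$) on both sides, and then observe that the construction of the Fontaine ring — tensoring with $T_n$ and $\mathbb{Z}[L]$, taking the divided power hull, and quotienting by $\mathscr{K}$ — commutes with the almost flat base change along $W_n(\tilde{R}_\infty/p\tilde{R}_\infty)\to W_n(\tilde{S}_\infty/p\tilde{S}_\infty)$, the DP hull step requiring the almost analogue of the flat-base-change compatibility (which the paper isolates as a short lemma immediately after the proposition). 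One small imprecision: the kernel of the surjection onto the characteristic-$p$ ring is not ``literally the same'' on the $S$-side as on the $R$-side (they are ideals in different rings); what is true, and what the paper makes explicit via a diagram of almost cofibred squares, is that the $S$-side kernel is the image, up to almost isomorphism, of the base change of the $R$-side kernel — this is exactly why the whole construction is an almost base change.
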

\begin{proof}
Firstly, by Corollary \ref{wittetale} the canonical homomorphism
\[ W_n(\tilde{R}_\infty/p\tilde{R}_\infty)\to W_n(\tilde{S}_\infty/p\tilde{S}_\infty) \]
is almost \'etale of $W_n(\bar{K}^+/p\bar{K}^+)$-algebras. Now, by Lemma \ref{logstrmonoid} there is a saturated integral monoid $M$ with surjective Frobenius giving a chart for the log-structures of both $\tilde{R}_{\infty}$ and $\tilde{S}_{\infty}$, so by Corollary \ref{frobsurjective} we may construct $A^+_{\log,\infty,T}(S)$ as in the second proof of Theorem \ref{fontaine}. Let $M$ be the saturated integral monoid defining the log-structure on $\tilde{R}_{\infty}$, hence also on $\tilde{S}_{\infty}$ (see Lemma \ref{logstrmonoid}), and set $Q:=M\oplus\Lambda_T$. Let $L:=\ker(Q^{\gp}\to M^{\gp})$, where the map $Q\to M$ is induced by the maps $M\to M:m\mapsto m^{p^n}$ and the structure map $\Lambda_T\to M$. By construction, $A^+_{\log,\infty,T}(S)/p^nA^+_{\log,\infty,T}(S)$ is a quotient of a divided power hull of $\left( W_n(\tilde{S}_\infty/p\tilde{S}_\infty)\otimes_{\mathbb{Z}}T_{n}\right) _{\log}$, where by definition (\emph{cf.} the second proof of Theorem \ref{fontaine})
\[ \left( W_n(\tilde{S}_\infty/p\tilde{S}_\infty)\otimes_{\mathbb{Z}}T_{n}\right) _{\log} =
(W_n(\tilde{S}_\infty/p\tilde{S}_\infty)\otimes_{\mathbb{Z}}T_{n})\otimes_{\mathbb{Z}}\mathbb{Z}[L]. \]
Now, the canonical homomorphism
\[ (W_n(\tilde{R}_\infty/p\tilde{R}_\infty)\otimes_{\mathbb{Z}}T_{n})\otimes_{\mathbb{Z}}\mathbb{Z}[L]\to  (W_n(\tilde{S}_\infty/p\tilde{S}_\infty)\otimes_{\mathbb{Z}}T_{n})\otimes_{\mathbb{Z}}\mathbb{Z}[L] \]
is almost \'etale (Cor. \ref{wittetale}). This implies that we have a commutative diagram
\[ \begin{CD}
(W_n(\tilde{R}_\infty/p\tilde{R}_\infty)\otimes_{\mathbb{Z}}T_{n})\otimes_{\mathbb{Z}}\mathbb{Z}[L] @>>>  (W_n(\tilde{S}_\infty/p\tilde{S}_\infty)\otimes_{\mathbb{Z}}T_{n})\otimes_{\mathbb{Z}}\mathbb{Z}[L] \\
@V{\Phi^n\otimes 1\otimes 1}VV @V{\Phi^n\otimes 1\otimes 1}VV \\
(\tilde{R}_\infty/p\tilde{R}_\infty\otimes_{\mathbb{Z}}T_{n})\otimes_{\mathbb{Z}}\mathbb{Z}[L] @>>> (\tilde{S}_\infty/p\tilde{S}_\infty\otimes_{\mathbb{Z}}T_{n})\otimes_{\mathbb{Z}}\mathbb{Z}[L] \\
@V{L\mapsto 1}VV @V{L\mapsto 1}VV \\
\tilde{R}_\infty/p\tilde{R}_\infty\otimes_{\mathbb{Z}}T_{n} @>>> \tilde{S}_\infty/p\tilde{S}_\infty\otimes_{\mathbb{Z}}T_{n} \\
@VVV @VVV \\
\tilde{R}_\infty/p\tilde{R}_\infty @>>> \tilde{S}_\infty/p\tilde{S}_\infty
\end{CD} \]
in which all three squares are almost cofibred, where we denote by $\Phi^n:W_n(A)\to A:(a_0,...,a_{n-1})\mapsto a_0^{p^n}$. In particular, the commutative diagram
\[ \begin{CD}
(W_n(\tilde{R}_\infty/p\tilde{R}_\infty)\otimes_{\mathbb{Z}}T_{n})\otimes_{\mathbb{Z}}\mathbb{Z}[L] @>>>  (W_n(\tilde{S}_\infty/p\tilde{S}_\infty)\otimes_{\mathbb{Z}}T_{n})\otimes_{\mathbb{Z}}\mathbb{Z}[L] \\
@VVV @VVV \\
\tilde{R}_\infty/p\tilde{R}_\infty @>>> \tilde{S}_\infty/p\tilde{S}_\infty
\end{CD} \]
is almost cofibred. Taking divided power hulls for the kernels of the vertical maps and then taking the quotient by the ideal generated by all divided powers of elements of the form $q_l\otimes l-p_l\otimes 1$ (where $q_l,p_l\in Q$ satisfy $l=p_l/q_l\in L\subset Q^{\gp}$), it follows from the next lemma that the canonical map
\[ \begin{CD}
A^+_{\log,\infty,T}/p^nA^+_{\log,\infty,T}\otimes_{(W_n(\tilde{R}_\infty/p\tilde{R}_\infty)\otimes_{\mathbb{Z}}T_{n})\otimes_{\mathbb{Z}}\mathbb{Z}[L]}(W_n(\tilde{S}_\infty/p\tilde{S}_\infty)\otimes_{\mathbb{Z}}T_{n})\otimes_{\mathbb{Z}}\mathbb{Z}[L] \\
@VVV \\
A^+_{\log,\infty,T}(S)/p^nA^+_{\log,\infty,T}(S)
\end{CD} \]
is an almost isomorphism of $W_n(\bar{K}^+/p\bar{K}^+)$-algebras. This proves that the homomorphism of the statement is an almost \'etale morphism of $W_n(\bar{K}^+/p\bar{K}^+)$-algebras. By Remark (\ref{diffalmoststr}), it follows that it is also an almost \'etale morphism of $A_{\Inf}(K^+)$-algebras. \end{proof}

In the proof we have made use of the following almost analogue of a well-known result on divided power hulls.

\begin{lemma}
Suppose there is a homomorphism of rings $A\to B$ making $B$ into an almost flat $A$-algebra. Then for any ideal $I\subset A$ the canonical map
\[ D_A(I)\otimes_A B\to D_B(I\cdot B) \]
is an almost isomorphism, where $D_A(I)$ denotes the divided power hull of $A$ for the ideal $I$.
\end{lemma}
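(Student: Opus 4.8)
The plan is to reduce the statement to the classical result on compatibility of divided power hulls with flat base change (\cite[Ch.~I, Prop.~2.8.2]{berthelot}), exactly as one proves that result over a flat base, but keeping track of the almost-error throughout. First I would recall the construction of the divided power hull: if $\Gamma_A(I)$ denotes the divided power polynomial algebra on the module $I$ (more precisely, one uses a presentation of $I$ by generators, say $I$ as the image of a free module $F\twoheadrightarrow I$, and forms $\Gamma_A(F)$), then $D_A(I)$ is a canonical quotient of $\Gamma_A(F)$ by the relations identifying $X^{[n]}$ with $\gamma_n(x)$ whenever $x\in F$ maps to an element already divided-power in a given sub-ideal, together with the relations coming from the kernel of $F\to I$. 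I would then observe that $\Gamma_A(F)\otimes_A B\cong\Gamma_B(F\otimes_A B)$ canonically, since $\Gamma_A(-)$ commutes with base change on the level of the underlying module functor (it is defined by a universal property that is stable under base change). Now $F\otimes_A B\to I\cdot B$ is the natural map, and its kernel is $\ker(F\to I)\otimes_A B$ up to an almost-isomorphism: this is where almost flatness of $B$ over $A$ enters, via $\Tor_1^A(B,-)\thickapprox 0$ (cf.~\cite[2.4.10]{almost}), applied to the short exact sequence $0\to\ker(F\to I)\to F\to I\to 0$.

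The key steps, in order, are: (1) choose a presentation $0\to N\to F\to I\to 0$ with $F$ free, and tensor with $B$; since $B$ is almost flat over $A$, the sequence $0\to N\otimes_A B\to F\otimes_A B\to I\otimes_A B\to 0$ is almost exact, and moreover the natural map $I\otimes_A B\to I\cdot B$ is an almost isomorphism (again because $\Tor_1^A(B,A/I)\thickapprox 0$, or directly from almost flatness applied to $0\to I\to A\to A/I\to 0$). (2) Form $\Gamma_B(F\otimes_A B)\cong\Gamma_A(F)\otimes_A B$ and note that the ideal of relations defining $D_B(I\cdot B)$ as a quotient of $\Gamma_B(F\otimes_A B)$ is generated by the image of the ideal of relations defining $D_A(I)$ as a quotient of $\Gamma_A(F)$ — this uses that the relations are built from $N$ and from the chosen divided-power compatibility, both of which base-change correctly up to almost-isomorphism by step (1). (3) Taking the quotient, conclude that $D_A(I)\otimes_A B\to D_B(I\cdot B)$ is an almost isomorphism, since a base change of a quotient map along an almost-faithfully... more simply: quotienting $\Gamma_A(F)\otimes_A B$ by (the image of) the relation ideal gives $D_A(I)\otimes_A B$ on one side and $D_B(I\cdot B)$ on the other, and these two relation ideals agree up to almost-isomorphism by step (2).

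I expect the main obstacle to be step (2): verifying that the \emph{ideal of relations} — not just the module $I$ — base-changes correctly in the almost sense. In the classical flat case one argues that $D_A(I)$ is computed by a complex or an explicit colimit of free modules, so that flatness of $B$ pushes through; in the almost setting one must be careful that the divided-power relations (which involve the $p$-torsion structure of $I$ and the compatibility with a given sub-DP-ideal $J$) do not introduce uncontrolled error terms. The cleanest route is probably to invoke the description of $D_A(I)$ via the exact sequence used in the proof of Lemma~\ref{flat} above: write $D_A(I)$ as the cokernel of an explicit map between free-ish divided-power algebras, tensor that whole presentation with $B$, and use almost flatness ($\Tor_1^A(B,-)\thickapprox 0$) to see that exactness is preserved up to almost-isomorphism at each spot. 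Since all the functors involved (free modules, $\Gamma$, quotients) commute with $\otimes_A B$ strictly, and the only failure is in exactness of the defining sequences — which almost flatness repairs — the result follows. The rest is routine bookkeeping in the almost category.
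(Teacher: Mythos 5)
Your high-level plan (base-change the divided power algebra, then track the defining ideal) is the same as the paper's, but the obstacle you flag at step (2) is precisely the gap, and you do not close it. The paper's proof is short because of two ingredients your proposal lacks. It works with $\Gamma_A(I)$, the DP-algebra built directly on the module $I$ (no free presentation $F\twoheadrightarrow I$), for which $\Gamma_A(I)\otimes_A B\cong\Gamma_B(I\otimes_A B)$ holds strictly; and it invokes \cite[Lemma 8.1.13]{almost}, which says that $\Gamma_B(-)$ carries almost isomorphisms of $B$-modules to almost isomorphisms of $B$-algebras. Combined with $I\otimes_A B\thickapprox I\cdot B$ this yields $\Gamma_A(I)\otimes_A B\thickapprox\Gamma_B(I\cdot B)$, whence $\Gamma_B(I\cdot B)$ is \emph{almost flat over} $\Gamma_A(I)$. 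That almost flatness is what controls the relations: with $D_A(I)=\Gamma_A(I)/J$, one gets $J\otimes_A B\cong J\otimes_{\Gamma_A(I)}\Gamma_B(I\otimes_A B)\thickapprox J\cdot\Gamma_B(I\cdot B)$, and taking quotients finishes.

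Your fallback of adapting the exact sequence from Lemma~\ref{flat} does not transfer: that sequence is tied to a principal ideal $(\xi)$ and a single-variable DP-polynomial ring. More fundamentally, even after tensoring a presentation with $B$ you must verify that the \emph{DP-ideal} of relations (not merely the ordinary ideal its generators span) is almost preserved; an ordinary ideal generated by the image of a DP-ideal under a DP-morphism is a sub-DP-ideal only under flatness hypotheses, and it is exactly the almost flatness of $\Gamma_B(I\cdot B)$ over $\Gamma_A(I)$ --- which you never establish --- that makes this go through.
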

\begin{proof}
For any ring $A$ and any module $M$ let $\Gamma_A(M)$ denote the divided power algebra generated by $M$ (see e.g. \cite[3.9]{bertogus}). Since $B$ is an almost flat $A$-algebra, the canonical map $I\otimes_A B\to I\cdot B$ is an almost isomorphism, hence $\Gamma_B(I\otimes_A B)\thickapprox \Gamma_B(I\cdot B)$ by \cite[Lemma 8.1.13]{almost}. But
\[ \Gamma_B(I\otimes_A B)\cong\Gamma_A(I)\otimes_AB \]
so $\Gamma_B(I\cdot B)$ is an almost flat $\Gamma_A(I)$-algebra. Hence if $J\subset\Gamma_A(I)$ denotes the ideal defining $D_A(I)$, then we have $J\otimes_AB\cong J\otimes_{\Gamma_A(I)}\Gamma_B(I\otimes_AB)\thickapprox J\cdot \Gamma_B(I\cdot B)$, and therefore
\[ D_A(I)\otimes_AB=\dfrac{\Gamma_A(I)\otimes_AB}{J\otimes_A B}\thickapprox \dfrac{\Gamma_B(I\cdot B)}{J\cdot\Gamma_B(I\cdot B)}=D_B(I\cdot B). \]
\end{proof}

\subsubsection{} Assume that $K^+$ is integrally closed in $R$, so that $\tilde{R}_{\infty}$ is an integral domain (Prop. \ref{integral}). Define $\bar{R}$ to be the normalization of $R$ in the maximal profinite connected \'etale covering of $R[1/p]$ (taking for base point an algebraic closure of $Q(\tilde{R}_{\infty})$) . Then we have
\[ \pi_1(\Spec(\tilde{R}[1/p]))=\Gal(\bar{R}[1/p]/\tilde{R}[1/p]) \]
and $\tilde{R}_\infty[1/p]\to\bar{R}[1/p]$ is the inductive limit of Galois coverings of $\tilde{R}_\infty[1/p]$. Define
\[ \Delta:=\Gal(\bar{R}[1/p]/\tilde{R}[1/p]) \]
and
\[ \Delta_\infty:=\Gal(\tilde{R}_\infty[1/p]/\tilde{R}[1/p]). \]
Then $\Delta_\infty$ is a quotient of $\Delta$ and let $\Delta_0:=\ker(\Delta\to\Delta_\infty)$. With the log-structure of \S\ref{logstr} on $\bar{R}$, define
\[ A^+_{\log,T}:=\varprojlim_n H^0_{\logcrys}(\bar{R}/p\bar{R}|T_{n+1},\mathscr{O}).\]

\begin{lemma}
If $K^+$ is integrally closed in $R$, then the homomorphism $\tilde{R}_\infty\to\bar{R}$ is the filtering inductive limit of almost Galois coverings.
\end{lemma}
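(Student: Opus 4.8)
\emph{The plan.} By the description of $\bar R$ given above, $\bar R[1/p]$ is the filtered colimit of the finite \'etale Galois subcoverings of $\tilde R_\infty[1/p]$ it contains, with Galois group $\Delta_0=\Gal(\bar R[1/p]/\tilde R_\infty[1/p])$. So for an open normal subgroup $H\trianglelefteq\Delta_0$ set $\bar R_H:=(\bar R[1/p])^H$ (a finite \'etale Galois covering of $\tilde R_\infty[1/p]$ of group $G_H:=\Delta_0/H$, and a domain, being a subring of $\bar R[1/p]$) and let $T_H$ be the normalization of $\tilde R_\infty$ in $\bar R_H$; normalizing in stages gives $\bar R=\varinjlim_H T_H$, a filtered colimit over the $H$. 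Thus it suffices to show that each $\tilde R_\infty\to T_H$ is an almost Galois covering of group $G_H$ (any $g\in G_H$ preserves $T_H$ since it fixes $\tilde R_\infty$, so $G_H$ acts by $\tilde R_\infty$-algebra automorphisms).

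\emph{Almost \'etaleness and faithful flatness.} First I would descend: by a standard limit argument $\bar R_H$ comes by base change from a finite \'etale covering $N'$ of $(R_m\otimes_{K_m^+}L^+)[1/p]$ for suitable $m$ and a finite extension $L/K_m$ inside $\bar K$; taking $S$ to be the normalization of $R_m\otimes_{K_m^+}L^+$ in $N'$ — a finite integral normal $R$-algebra, flat over $K^+$ and \'etale in characteristic zero — a direct check identifies $T_H$ with $\tilde S_\infty$. Then Theorem \ref{almostpurity}, in the form recorded immediately after its statement, shows that $\tilde R_\infty\to T_H$ is an almost \'etale covering. Its rank is $|G_H|\neq 0$, and this rank is constant on $\Spec(\tilde R_\infty)$ since $\tilde R_\infty$ is a domain, so by Lemma \ref{aechilfsatz} (iii) the covering is almost faithfully flat.

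\emph{The Galois condition.} It remains to prove that the canonical map
\[ c\colon T_H\otimes_{\tilde R_\infty}T_H\longrightarrow T_H\times G_H,\qquad x\otimes y\mapsto (x\cdot g(y))_{g\in G_H}, \]
is an almost isomorphism. After inverting $p$ it becomes the usual isomorphism $\bar R_H\otimes_{\tilde R_\infty[1/p]}\bar R_H\xrightarrow{\sim}\bar R_H\times G_H$ attached to the finite \'etale Galois extension $\bar R_H/\tilde R_\infty[1/p]$. To transport this to the integral level I would use the diagonal (almost) idempotent $\mathbf e$ of the almost \'etale $\tilde R_\infty$-algebra $T_H$ (from \cite{almost}) and its translates $\mathbf e_g:=(\id\otimes g)(\mathbf e)$, $g\in G_H$. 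Since the formation of $\mathbf e$ commutes with base change and $\tilde R_\infty[1/p]\to\bar R_H$ is genuinely \'etale, $\mathbf e$ specializes to the honest diagonal idempotent after inverting $p$, so the identities $\mathbf e_g\mathbf e_h=\delta_{g,h}\mathbf e_g$ and $\sum_{g\in G_H}\mathbf e_g=1$ hold after inverting $p$; and because $\tilde R_\infty\to T_H$ is almost flat while $T_H$ is $p$-torsion free (flat over $K^+$), the ring $T_H\otimes_{\tilde R_\infty}T_H$ is almost $p$-torsion free, so these identities already hold almost. The family $(\mathbf e_g)_g$ then gives an almost decomposition of $T_H\otimes_{\tilde R_\infty}T_H$ into $|G_H|$ pieces each carried isomorphically onto a factor of $T_H\times G_H$ by $c$, so $c$ is an almost isomorphism; hence $\tilde R_\infty\to T_H$ is an almost Galois covering and $\tilde R_\infty\to\bar R=\varinjlim_H T_H$ is a filtered colimit of such.

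\emph{Main obstacle.} The delicate point is this last step: a homomorphism of $\tilde R_\infty$-algebras that becomes an isomorphism after inverting $p$ need not be one integrally, so one cannot conclude by a naive rank count; the Galois relations must be moved from the generic fibre to $T_H\otimes_{\tilde R_\infty}T_H$ using the almost-\'etale structure (the idempotent $\mathbf e$) together with $p$-torsion-freeness. The descent identification $T_H=\tilde S_\infty$ and the limit argument producing $(m,L,N')$ are routine but need care to keep $S$ integral and normal.
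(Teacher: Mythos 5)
Your proposal is correct and follows essentially the same route as the paper's proof: both identify the intermediate normalizations (your $T_H$ are exactly the paper's $\tilde S_\infty$ for finite integral normal $S$ with $S[1/p]/R[1/p]$ Galois), invoke the Almost Purity Theorem to get almost \'etaleness, and deduce almost faithful flatness from the fact that $\tilde R_\infty$ has no nontrivial almost idempotents (your "constant rank since $\tilde R_\infty$ is a domain" is the same point, which the paper verifies directly via $(\tilde R_\infty)_*$). Where the paper simply asserts that the resulting almost \'etale covering "must also be an almost Galois covering", you supply a concrete idempotent-transport argument, using almost $p$-torsion-freeness of $T_H\otimes_{\tilde R_\infty}T_H$ to move the Galois relations from the generic fibre to the integral level — a reasonable way to discharge the claim the paper leaves implicit.
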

\begin{proof}
Let us first show that every non-zero almost \'etale covering of $\tilde{R}_\infty$ is almost faithfully flat. Consider the ring $(\tilde{R}_{\infty})_{\ast}:=\Hom_{K^+_{\infty}}(\mathfrak{m}_{\infty},\tilde{R}_{\infty})$ defined in \cite[\S 2.2.9]{almost}. If $\varphi\in\Hom_{K^+_{\infty}}(\mathfrak{m}_{\infty},\tilde{R}_{\infty})$ is an idempotent, then for all $x,y\in\mathfrak{m}_{\infty}$
\[ x\varphi(y)=\varphi(xy)=\varphi(x)\cdot\varphi(y) \]
so $x=\varphi(x)$, whence $\varphi=\id$ in $(\tilde{R}_{\infty})_{\ast}$. So $\tilde{R}_{\infty}$ has no almost idempotents, hence every almost projective $\tilde{R}_{\infty}$-module of finite rank is either almost zero or everywhere of constant non-zero rank, hence almost faithfully flat.

Now, let $S$ be a finite integral normal $R$-algebra, such that $R[1/p]\to S[1/p]$ is a Galois covering. Let $L^+$ be the integral closure of $K^+$ in $S$, and write $L=Q(L^+)$. Then by the almost purity theorem, the homomorphism $R_{\infty}\to S_\infty$ is an almost \'etale covering and factors over the almost \'etale covering $R_\infty\to R_{\infty,L}$. By Lemma \ref{aechilfsatz} it follows that $R_{\infty,L}\to S_\infty$ is an almost \'etale covering. It is easy to see that it must also be an almost Galois covering. Let $G$ be the Galois group of $R_{\infty,L}[1/p]\to S_\infty[1/p]$. The same argument with $S$ replaced by $S\otimes_{L^+}E^+$ for $E^+$ the normalization of $L^+$ in a finite Galois extension $L\subset E$ proves that
\[ R_{\infty,E}\to S_{\infty,E} \]
is an almost Galois covering, where $S_{\infty,E}$ is the normalization of $R_{\infty}\otimes_RS\otimes_{L^+}E^+$. Let $\tilde{S}_{\infty}:=\varinjlim_ES_{\infty,E}$. Taking the limit over $E$ we get a homomorphism
\[ \tilde{R}_\infty\to \tilde{S}_{\infty} \]
which is an almost Galois covering: there is a canonical almost isomorphism
\[ \varinjlim_ES_{\infty,E}\otimes_{R_{\infty,E}}S_{\infty,E}\approx \varinjlim_ES_{\infty,E}\times G.\]
\end{proof}

In particular, $\tilde{R}_\infty\to\bar{R}$ is almost faithfully flat.

\begin{cor}\label{almostgalois}
The canonical homomorphism
\[ A^+_{\log,\infty,T}/p^nA^+_{\log,\infty,T}\to A^+_{\log,T}/p^nA^+_{\log,T} \]
is the filtering inductive limit of almost Galois coverings.
\end{cor}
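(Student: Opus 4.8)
The plan is to reduce to the corresponding statement for each member of a directed system of finite coverings. In the notation of the proof of the preceding lemma one has $\bar R=\varinjlim_S\tilde S_\infty$, a filtering inductive limit over finite integral normal $R$-algebras $S$ that are \'etale in characteristic zero, with each $\tilde R_\infty\to\tilde S_\infty$ an almost Galois covering of a finite group $G=G_S$. Each such $G$ acts on $\tilde S_\infty$ over $\tilde R_\infty$, hence, by functoriality of log-crystalline cohomology, on $A^+_{\log,\infty,T}(S)/p^nA^+_{\log,\infty,T}(S)$ over $A^+_{\log,\infty,T}/p^nA^+_{\log,\infty,T}$. I would first show that each of these homomorphisms is an almost Galois covering of group $G$, and then pass to the colimit over $S$ using that log-crystalline cohomology over a fixed base commutes with filtering limits (\S\ref{limits}).

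For the first step I would use the base-change description established in the proof of Proposition \ref{etalefontainering}: that proof exhibits a $G$-equivariant almost isomorphism
\[ \left(A^+_{\log,\infty,T}/p^nA^+_{\log,\infty,T}\right)\otimes_{W_n(\tilde R_\infty/p\tilde R_\infty)}W_n(\tilde S_\infty/p\tilde S_\infty)\;\approx\;A^+_{\log,\infty,T}(S)/p^nA^+_{\log,\infty,T}(S), \]
where the left-hand $G$-action is inherited by base change from the functorial $G$-action on $W_n(\tilde S_\infty/p\tilde S_\infty)$, and where the almost structures over $A_{\Inf}(K^+)$ and over $W_n(\bar K^+/p\bar K^+)$ are identified as in Remark \ref{diffalmoststr}. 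Since almost Galois coverings are preserved under arbitrary base change, it then suffices to prove that $W_n(\tilde R_\infty/p\tilde R_\infty)\to W_n(\tilde S_\infty/p\tilde S_\infty)$ is an almost Galois covering of group $G$.

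This is a Galois analogue of Corollary \ref{wittetale}, which I would prove by the same induction on $n$, substituting Corollary \ref{wittgalois} for the uniqueness of almost \'etale lifts. For $n=1$ it is the reduction modulo $p$ of the almost Galois covering $\tilde R_\infty\to\tilde S_\infty$ of the preceding lemma. For the inductive step, consider the ideal $I:=V^{n-1}W_1(\tilde R_\infty/p\tilde R_\infty)\subset W_n(\tilde R_\infty/p\tilde R_\infty)$, which satisfies $I^2=0$ and has quotient $W_{n-1}(\tilde R_\infty/p\tilde R_\infty)$; exactly as in the proof of Corollary \ref{wittetale} one has $I\cdot W_n(\tilde S_\infty/p\tilde S_\infty)\approx V^{n-1}W_1(\tilde S_\infty/p\tilde S_\infty)$, so that reducing $W_n(\tilde R_\infty/p\tilde R_\infty)\to W_n(\tilde S_\infty/p\tilde S_\infty)$ modulo $I$ yields the map $W_{n-1}(\tilde R_\infty/p\tilde R_\infty)\to W_{n-1}(\tilde S_\infty/p\tilde S_\infty)$, an almost Galois covering of $G$ by the inductive hypothesis. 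Since the map on $W_n$ is almost \'etale by Corollary \ref{wittetale} and $G$-equivariant by functoriality, Corollary \ref{wittgalois}, applied with the square-zero ideal $I$, shows that it is an almost Galois covering of $G$, completing the induction. I expect this induction — specifically keeping track of the $G$-equivariance and identifying the reduction of the target modulo $I$ — to be the only step requiring real care; everything else is formal.

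Finally, since $\bar R/p\bar R=\varinjlim_S\tilde S_\infty/p\tilde S_\infty$ and log-crystalline cohomology over a fixed base commutes with such filtering limits (\S\ref{limits}), we obtain $A^+_{\log,T}/p^nA^+_{\log,T}=\varinjlim_S A^+_{\log,\infty,T}(S)/p^nA^+_{\log,\infty,T}(S)$; hence the homomorphism in the statement is the filtering inductive limit of the almost Galois coverings $A^+_{\log,\infty,T}/p^nA^+_{\log,\infty,T}\to A^+_{\log,\infty,T}(S)/p^nA^+_{\log,\infty,T}(S)$ produced above.
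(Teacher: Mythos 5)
Your proposal is correct and follows essentially the same route as the paper: both reduce to showing $W_n(\tilde R_\infty/p\tilde R_\infty)\to W_n(\tilde S_\infty/p\tilde S_\infty)$ is an almost Galois covering, use the almost cofibred square from the proof of Proposition~\ref{etalefontainering} to transfer this to the Fontaine rings, and pass to the colimit over $S$. The only genuine difference is in presentation. The paper cites Corollary~\ref{wittgalois} and leaves implicit the induction you write out (filtering $W_n$ by the square-zero ideal $I=V^{n-1}W_1$ and invoking the almost isomorphism $I\cdot W_n(B)\approx V^{n-1}W_1(B)$ from the proof of Corollary~\ref{wittetale}); your explicit induction is a faithful unfolding of that citation, and you are right to flag the identification of the reduction of the target modulo $I$ with $W_{n-1}$ as the point needing care, since it is only an almost isomorphism. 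One small inaccuracy: your final step appeals to \S\ref{limits}, but that statement is formulated only for \emph{fine} log-schemes, whereas $\tilde S_\infty$ and $\bar R$ carry non-finitely-generated log-structures ($\mathbb{N}[1/p]^r$). The paper sidesteps this by arguing directly that the tensor product over $(W_n(\tilde R_\infty/p\tilde R_\infty)\otimes T_n)\otimes\mathbb{Z}[L]$ commutes with the filtering colimit of the $W_n(\tilde S_\infty/p\tilde S_\infty)$ — i.e.\ by using the explicit construction of the Fontaine ring rather than a general crystalline-cohomology limit theorem. The underlying fact you want (that the Fontaine ring construction commutes with filtering colimits of the coefficient ring) is true and clear from the construction, but the justification should be phrased in those terms rather than via \S\ref{limits}.
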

\begin{proof}
The proof is similar to that of Proposition \ref{etalefontainering}, so we refer there for details and point out the differences in this case. First note that since $\tilde{R}_\infty/p\tilde{R}_\infty\to\bar{R}/p\bar{R}$ is the filtering inductive limit of almost Galois coverings, by Cor. \ref{wittgalois} so is
\[ W_n(\tilde{R}_\infty/p\tilde{R}_\infty)\to W_n(\bar{R}/p\bar{R}). \]
As in the proof of Proposition \ref{etalefontainering} define $Q:=M\oplus\Lambda_T$ and
\[ L:=\ker(Q^{\gp}\to M^{\gp}). \]
Looking at the proof of Proposition \ref{etalefontainering} we note that we have an almost cofibred square
\[ \begin{CD}
(W_n(\tilde{R}_\infty/p\tilde{R}_\infty)\otimes_{\mathbb{Z}}T_{n})\otimes_{\mathbb{Z}}\mathbb{Z}[L] @>>>  (W_n(\bar{R}/p\bar{R})\otimes_{\mathbb{Z}}T_{n})\otimes_{\mathbb{Z}}\mathbb{Z}[L] \\
@VVV @VVV \\
\tilde{R}_\infty/p\tilde{R}_\infty @>>> \bar{R}/p\bar{R}.
\end{CD} \]
Hence we deduce that the canonical map
\[ A^+_{\log,\infty,T}/p^nA^+_{\log,\infty,T}\otimes_{(W_n(\tilde{R}_\infty/p\tilde{R}_\infty)\otimes_{\mathbb{Z}}T_{n})\otimes_{\mathbb{Z}}\mathbb{Z}[L]}(W_n(\bar{R}/p\bar{R})\otimes_{\mathbb{Z}}T_{n})\otimes_{\mathbb{Z}}\mathbb{Z}[L]\to A^+_{\log,T}/p^nA^+_{\log,T} \]
is an almost isomorphism, and since tensor product commutes with inductive limits, we are done.
\end{proof}

\begin{cor}\label{invariants0}
\begin{enumerate}[(i)]
\item The canonical map
\[ A^+_{\log,\infty,T}/p^nA^+_{\log,\infty,T}\to \left( A^+_{\log,T}/p^nA^+_{\log,T}\right)^{\Delta_0} \]
is an almost isomorphism.
\item $A^+_{\log,T}/p^nA^+_{\log,T}$ is a discrete $\Delta$-module and for all $i\neq 0$ we have
\[ H^i(\Delta_0,A^+_{\log,T}/p^nA^+_{\log,T})\thickapprox 0. \]
\end{enumerate}
\end{cor}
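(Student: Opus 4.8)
The plan is to deduce both assertions from the finite-level results of Proposition \ref{galois}, using Corollary \ref{almostgalois} to present $A^+_{\log,T}/p^nA^+_{\log,T}$ as a filtering inductive limit of almost Galois coverings of $A^+_{\log,\infty,T}/p^nA^+_{\log,\infty,T}$, and then passing to the limit.

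Set $A_0:=A^+_{\log,\infty,T}/p^nA^+_{\log,\infty,T}$. Recalling that $\bar R[1/p]$ is a filtered limit of finite Galois coverings of $\tilde R_\infty[1/p]$ and unwinding Corollary \ref{almostgalois}, I would first record a presentation $A^+_{\log,T}/p^nA^+_{\log,T}=\varinjlim_{j\in J}C_j$ over a filtered poset $J$, where $A_0\to C_j$ is an almost Galois covering of a finite group $H_j$, the $\Delta_0$-action on $C_j$ factors through $H_j$, and $\Delta_0=\varprojlim_j H_j$ with $\{H_j\}$ cofinal among the finite quotients of $\Delta_0$. Since the $\Delta$-action on each $C_j$ factors through a finite Galois group, $A^+_{\log,T}/p^nA^+_{\log,T}$ is a filtered colimit of smooth $\Delta$-modules, hence a discrete $\Delta$-module; this gives the first assertion of (ii). Moreover $\Delta_0$ acts trivially on $A_0$, so the canonical $\Delta$-equivariant map $A_0\to A^+_{\log,T}/p^nA^+_{\log,T}$ factors through the $\Delta_0$-invariants, which is the map appearing in (i).

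For (i): invariants of a smooth module under a profinite group commute with filtered colimits along equivariant transition maps, so $(A^+_{\log,T}/p^nA^+_{\log,T})^{\Delta_0}=\varinjlim_j C_j^{\Delta_0}=\varinjlim_j C_j^{H_j}$, and by Proposition \ref{galois}(i) each $C_j^{H_j}\thickapprox A_0$, compatibly with the structural maps from $A_0$; hence the colimit is almost isomorphic to $A_0$ via the canonical map. For the cohomology in (ii): continuous cohomology of $\Delta_0$ commutes with filtered colimits of discrete modules, so $H^i(\Delta_0,A^+_{\log,T}/p^nA^+_{\log,T})=\varinjlim_j H^i(\Delta_0,C_j)$; writing $\Delta_0=\varprojlim_k H_k$ and using that $C_j$ is inflated from $H_j$ (so that $C_j^{\ker(\Delta_0\to H_k)}=C_j$ for $k\ge j$) gives $H^i(\Delta_0,C_j)=\varinjlim_{k\ge j}H^i(H_k,C_j)$ with inflation maps. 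Interchanging the two filtered colimits, and using that $H^i(H_k,-)$ commutes with filtered colimits of $H_k$-modules together with the fact that $k$ is terminal in $\{j:j\le k\}$, identifies this with $\varinjlim_k H^i(H_k,C_k)$, each term of which is almost zero for $i\ne 0$ by Proposition \ref{galois}(iii); hence so is the colimit.

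All of this is essentially formal once the presentation coming from Corollary \ref{almostgalois} is in hand; the one place genuine care is needed is the double colimit in (ii). Termwise one only knows that $H^i(\Delta_0,C_j)$ need not be almost zero --- the cohomology of $\ker(\Delta_0\to H_j)$ contributes --- so it is essential to let the Galois level grow in order to land on the diagonal $\varinjlim_k H^i(H_k,C_k)$ where Proposition \ref{galois}(iii) applies. I expect this bookkeeping, together with routine checks that the colimit-commutation statements and the identifications above are compatible with passage to almost modules (the localization functor being exact and commuting with filtered colimits), to be the main --- though not serious --- obstacle.
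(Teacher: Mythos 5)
Your treatment of part (i) and of the vanishing statement in (ii) is essentially the paper's argument ("follows from Proposition~\ref{galois} by taking the inductive limit") carried out carefully, and you are right that the one non-formal point is the double colimit: $H^i(\Delta_0,C_j)$ for fixed $j$ is typically \emph{not} almost zero when $i\neq 0$, since it sees the cohomology of $\ker(\Delta_0\to H_j)$, and one must let the Galois level grow and pass to the diagonal $\varinjlim_k H^i(H_k,C_k)$ before Proposition~\ref{galois}(iii) applies. Your interchange-of-colimits bookkeeping handles this correctly; it is exactly what the paper leaves implicit.

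However, your justification of discreteness (the first assertion of (ii)) contains an error. You write that ``the $\Delta$-action on each $C_j$ factors through a finite Galois group,'' but this is false: only the $\Delta_0$-action on $C_j$ factors through the finite quotient $H_j$. The full group $\Delta$ acts on $C_j$ through a group which is an extension of $\Delta_\infty\cong\mathbb{Z}_p(1)^d$ by $H_j$, and this is infinite and acts nontrivially (for instance $\Delta_\infty$ does not fix $[\underline{T_i}]$). So the claim that $A^+_{\log,T}/p^n$ is a filtered colimit of $\Delta$-modules with finite image does not hold, and discreteness over $\Delta$ does not follow in the way you assert. The paper proves discreteness by a direct argument which bypasses the $C_j$ entirely: $A^+_{\log,T}/p^n$ is a quotient of a DP-hull of $(W_n(\bar R/p\bar R)\otimes_{\mathbb{Z}}T_n)\otimes_{\mathbb{Z}}\mathbb{Z}[L]$; each element of $\bar R/p\bar R$ (hence each Witt-vector component) is fixed by an open subgroup of $\Delta$, $T_n$ carries the trivial action, and since $M$ can be viewed as a submonoid of $\bar R$ every element of $M^{\gp}$, hence of $Q^{\gp}$ and of $L$, is also fixed by an open subgroup; this propagates through the Witt-vector and divided-power constructions. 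You need an observation of this kind in place of the finite-quotient claim. Fortunately the error is localized: parts (i) and the vanishing in (ii) use only that the $\Delta_0$-action on $C_j$ factors through $H_j$, which is correct, so the rest of your argument stands.
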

\begin{proof}
Part (i) follows from Proposition \ref{galois} (i) by taking the inductive limit. For (ii), note that $A^+_{\log,T}/p^nA^+_{\log,T}$ is a quotient of a divided power hull of $(W_n(\bar{R}/p\bar{R})\otimes_{\mathbb{Z}}T_{n})\otimes_{\mathbb{Z}}\mathbb{Z}[L]$. Now, we may view $M$ as a submonoid of $\bar{R}$. In this way we see that every element of $M^{\gp}$, hence also $Q^{\gp}$, is fixed by an open subgroup of $\Delta$. So every element of $(W_n(\bar{R}/p\bar{R})\otimes_{\mathbb{Z}}T_{n})\otimes_{\mathbb{Z}}\mathbb{Z}[L]$ is also fixed by an open subgroup of $\Delta$. Thus, every element of $A^+_{\log,T}/p^nA^+_{\log,T}$ is fixed by an open subgroup of $\Delta$, which proves the first assertion of (ii). As in (i), the second assertion of (ii) follows from Proposition \ref{galois} (iii) by taking the inductive limit.
\end{proof}

In particular, via the Hochschild-Serre spectral sequence of Galois cohomology we deduce canonical almost isomorphisms for all $i$
\[ H^i(\Delta_\infty,A^+_{\log,\infty,T}/p^nA^+_{\log,\infty,T})\thickapprox H^i(\Delta,A^+_{\log,T}/p^nA^+_{\log,T}). \]
This almost isomorphism will enable us to express the right-hand side in terms of crystalline cohomology.

\section{Geometric Galois cohomology of Fontaine rings}
In this section we construct a natural de Rham resolution of the Fontaine rings $A^+_{\log,\Sigma_{n}}$ and then we compute the (geometric) Galois cohomology of its components. Assume throughout that $R$ is an \'etale localization of a small integral $K^+$-algebra and that $K^+$ is integrally closed in $R$.

\subsection{Canonical de Rham resolutions of Fontaine rings}
\subsubsection{} If $c=1$, then let $v=1$, and if $c=\pi$ then let $v=u$, where $\Sigma_n=W_n(k)[u]\left\langle u^e\right\rangle $. Define
\[ \Theta(c):=\Sigma [T_1,...,T_r,T_{r+1}^{\pm 1},...,T_{d+1}^{\pm 1}]/(T_1\cdots T_r -v). \]
Since $R/pR$ is an \'etale $O(c)/pO(c)$-algebra, for all $n\geq 1$ there exists an \'etale $\Theta(c)/p^n\Theta(c)$-algebra $\mathcal{R}_n$ lifting $R/pR$, unique up to canonical isomorphism. Fix a projective system $\left\{\mathcal{R}_n\right\}_{n\in\mathbb{Z}_{>0}}$ of compatible lifts and define $\mathcal{R}:=\varprojlim_n\mathcal{R}_n$. We endow $\mathcal{R}_n$ with the log-structure associated to
\[ \mathcal{N}:\mathbb{N}^{r}\to\mathcal{R}_n:(n_1,...,n_r)\mapsto\prod_{i=1}^r T_i^{n_i} \]
in the case $c=\pi$, and
\[ \mathcal{N}:\mathbb{N}\to\mathcal{R}_n:n\mapsto u \]
in the case $c=1$.

\subsubsection{}\label{resolution}
Consider the canonical map
\[ h:\Spec(\bar{R}/p\bar{R})\to\Spec(R/pR) \]
and the associated morphism of log-crystalline topoi with respect to the DP-base $\Sigma_n$. By Proposition \ref{qcohcrystal}, $h_\ast\mathscr{O}$ is a quasi-coherent crystal of $\mathscr{O}$-modules on $(R/pR|\Sigma_n)_{\logcrys}$. Since $\Spec(\mathcal{R}_n)$ is a log-smooth lift of $\Spec(R/pR)$, by \cite[Thm. 6.2]{log} there is an integrable quasi-nilpotent logarithmic connection $d$ on $h_\ast\mathscr{O}(\mathcal{R}_n)$ whose associated de Rham complex computes the log-crystalline cohomology of $\bar{R}/p\bar{R}$ over the DP-base $\Sigma_n$. Since this cohomology vanishes in non-zero degree by Theorem \ref{fontaine}, it follows that the augmentation
\[ A^+_{\log,\Sigma}/p^nA^+_{\log,\Sigma}\cong H^0_{\logcrys}(\bar{R}/p\bar{R}|\Sigma_n,\mathscr{O})\to h_\ast\mathscr{O}(\mathcal{R}_n)\otimes_{\mathcal{R}_n}\omega_{\mathcal{R}_n/\Sigma_n}^{\bullet}  \]
is a quasi-isomorphism, where $\omega_{\mathcal{R}_n/\Sigma_n}^{i}:=\wedge^i_{\mathcal{R}_n}\omega_{\mathcal{R}_n/\Sigma_n}^{1}$ and $\omega_{\mathcal{R}_n/\Sigma_n}^{1}$ is the sheaf of logarithmic differentials (\cite[1.7]{log}). This quasi-isomorphism is the analogue of Poincar\'e's lemma which we will use in our comparison strategy. Our aim is to (almost) compute the $\Delta$-cohomology of the components of this resolution.

From now on we write
\[ A^+:=A^+_{\log,\Sigma} \]
and
\[ M^+:=\varprojlim_n H^0_{\logcrys}(\bar{R}/p\bar{R}|\mathcal{R}_n,\mathscr{O}).  \]
Note that $M^+/p^nM^+\cong h_\ast\mathscr{O}(\mathcal{R}_n)$.

\subsubsection{}
Let
\[ \tilde{O}(c)_\infty:=\varinjlim_{n,L}O(c)_{n,L} \]
so that $\tilde{R}_\infty=R\otimes_{O(c)} \tilde{O}(c)_\infty$. The same argument as above can be applied to the morphism
\[ h_\infty:\Spec(\tilde{O}(c)_\infty/p\tilde{O}(c)_\infty)\to\Spec(O(c)/pO(c)). \]
By Proposition \ref{qcohcrystal} the sheaf $h_{\infty,\ast}\mathscr{O}$ is a quasi-coherent crystal of $\mathscr{O}$-modules on the site $(O(c)/pO(c)|\Sigma_n)_{\logcrys}$ and hence we have a canonical isomorphism
\[ h_{\infty,\ast}\mathscr{O}(\mathcal{R}_n)\cong h_{\infty,\ast}\mathscr{O}\left( \Theta(c)/p^n\Theta(c)\right) \otimes_{\Theta(c)/p^n\Theta(c)}\mathcal{R}_n. \]
Note that in the notation of the previous section we have
\[ h_{\infty,\ast}\mathscr{O}(\mathcal{R}_n)=A^+_{\log,\infty,\mathcal{R}}/p^nA^+_{\log,\infty,\mathcal{R}}. \]
Also, there is an integrable quasi-nilpotent logarithmic connection $d$ on $h_{\infty,\ast}\mathscr{O}\left(\mathcal{R}_n\right) $ whose associated de Rham complex is a resolution of
\[ A^+_{\log,\infty,\Sigma}/p^nA^+_{\log,\infty,\Sigma}. \]
Define
\[ A^+_\infty=A^+_\infty(R):=A^+_{\log,\infty,\Sigma} \]
and
\[ M^+_\infty=M^+_\infty(R):=\varprojlim_n h_{\infty,\ast}\mathscr{O}(\mathcal{R}_n). \]

\subsubsection{}
Note that, in the notation of \S \ref{aecfontaine}, we have
\begin{eqnarray*}
M^+_\infty &=& A^+_{\log,\infty,\mathcal{R}}\\
M^+ &=& A^+_{\log,\mathcal{R}}
\end{eqnarray*}
in particular, the canonical map $M^+_\infty/p^nM^+_\infty\to M^+/p^nM^+$ is the inductive limit of almost Galois coverings, $M^+_\infty/p^nM^+_\infty\thickapprox \left( M^+/p^nM^+\right)^{\Delta_0}$, and we have canonical almost isomorphisms for all $i$
\[ H^i(\Delta_\infty, M^+_\infty/p^nM^+_\infty)\thickapprox H^i(\Delta, M^+/p^nM^+). \]

\subsubsection{} Define
\begin{eqnarray*}
A_{\cris,\infty}(R) &:=& \varprojlim_n H^0_{\crys}(\tilde{R}_\infty/p\tilde{R}_\infty|W_{n+1}(k),\mathscr{O}) \\
A_{\cris}(R) &:=& \varprojlim_n H^0_{\crys}(\bar{R}/p\bar{R}|W_{n+1}(k),\mathscr{O})
\end{eqnarray*}
This is just the classical crystalline cohomology, i.e. we ignore the log-structures.

\begin{lemma}\label{xi}
Let $A$ be an integrally closed domain of characteristic zero such that the Frobenius is surjective on $A/pA$. Assume that $A$ contains a sequence $\underline{p}:=(p,p^{1/p},p^{1/p^2},...)$ of $p$-power roots of $p$ satisfying $(p^{1/p^{n+1}})^p=p^{1/p^n}$ for all $n$. Let
\[ \theta:W(P(A/pA))\to\hat{A} \]
denote the canonical map constructed in the proof of Theorem \ref{fontaine}. Then $\ker(\theta)$ is a principal ideal generated by $\xi:=[\underline{p}]-p$, where we identify $\underline{p}$ with the element it defines in $P(A/pA)$.
\end{lemma}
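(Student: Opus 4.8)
The plan is to adapt Fontaine's computation of $\ker\theta$ for $A_{\Inf}$ to the present integrally closed domain $A$. Write $P:=P(A/pA)$ and $W:=W(P)$. Recall that $P$ is perfect, so $W$ is $p$-torsion free and $p$-adically complete and separated, and that $W/pW=P$ carries $[\underline p]$ to $\underline p$, so that $\xi=[\underline p]-p$ reduces to $\underline p$ modulo $p$. I will also use that $\hat A$ is $p$-torsion free: if $(a_n)_n\in\varprojlim_n A/p^nA$ is killed by $p$, then lifting $a_n$ to $\tilde a_n\in A$ and cancelling the non-zero-divisor $p$ gives $\tilde a_n\in p^{n-1}A$, whereupon the compatibility $\tilde a_{n+1}\equiv\tilde a_n\pmod{p^n}$ forces $\tilde a_n\in p^nA$, i.e.\ $(a_n)_n=0$.

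I would first dispose of the preliminaries. The map $\theta$ is surjective: modulo $p$ it is the projection $P\to A/pA$, $r\mapsto r^{(0)}$, which is onto because the Frobenius of $A/pA$ is surjective, and a standard successive-approximation argument in the $p$-adically complete rings $W$ and $\hat A$ then lifts this to surjectivity of $\theta$ itself. Next, $\xi\in\ker\theta$: the explicit formula for $\theta$ from the proof of Theorem \ref{fontaine} gives $\theta([\underline p])=\lim_n(p^{1/p^n})^{p^n}=p$, hence $\theta(\xi)=p-p=0$. (Since $A$ is a domain one also checks easily that $\underline p$ is a non-zero-divisor in $P$, hence $\xi$ is a non-zero-divisor in $W$, though this is not needed for what follows.)

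The main step is the inclusion $\ker\theta\subseteq(\xi)$, which I would reduce to the statement modulo $p$ that $\ker(P\to A/pA)=\underline p\cdot P$. Granting this, take $x\in\ker\theta$; its image in $P$ lies in $\underline p P$, so $x=[\underline p]w+pz$ for some $w,z\in W$, and since $[\underline p]=\xi+p$ this reads $x=\xi w+p(w+z)$. Applying $\theta$ gives $0=p\,\theta(w+z)$, so $x_1:=w+z\in\ker\theta$ because $\hat A$ is $p$-torsion free. Repeating the argument with $x_1$ in place of $x$, and so on, yields for every $m$ a relation $x=\xi(w_0+pw_1+\cdots+p^mw_m)+p^{m+1}x_{m+1}$ with $x_{m+1}\in\ker\theta$; since $W$ is $p$-adically complete and separated, $\sum_{i\ge 0}p^iw_i$ converges to some $w_\infty\in W$ and $x=\xi w_\infty$. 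Together with $\xi\in\ker\theta$ this gives $\ker\theta=(\xi)$.

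It remains to prove the statement modulo $p$, and this is where I expect the only genuine difficulty. The inclusion $\supseteq$ is immediate since $(\underline p)^{(0)}=\overline p=0$; for $\subseteq$, integral closedness of $A$ enters, and is used twice. Given $x=(x^{(n)})_n\in P$ with $x^{(0)}=0$, I would build inductively lifts $\hat x^{(n)}\in A$ of $x^{(n)}$ and elements $u_n\in A$ with $\hat x^{(n)}=p^{1/p^n}u_n$: for $n=0$, $x^{(0)}=0$ forces $\hat x^{(0)}\in pA$, say $\hat x^{(0)}=pu_0$; and given $u_n$, the relation $(x^{(n+1)})^p=x^{(n)}$ gives $(\hat x^{(n+1)})^p=p^{1/p^n}\bigl(u_n+p^{1-1/p^n}w_n\bigr)=:p^{1/p^n}u_n'$ for some $w_n\in A$, so $\hat x^{(n+1)}/p^{1/p^{n+1}}$ is a root of the monic polynomial $T^p-u_n'$ over $A$ and hence lies in $A$; one takes $u_{n+1}$ to be this element, so that $u_{n+1}^p=u_n'$. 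Now set $y^{(n)}:=\overline{u_n'}\in A/pA$ for $n\ge 1$ and $y^{(0)}:=(y^{(1)})^p$. Using the elementary fact that $a\equiv b\pmod{p^{\alpha}}$ with $\alpha\ge 1/p$ implies $a^p\equiv b^p\pmod p$ — applied with $\alpha=1-1/p^{n+1}\ge 1/p$, $a=u_{n+1}'$, $b=u_{n+1}$ — one obtains $(y^{(n+1)})^p=(\overline{u_{n+1}'})^p=\overline{u_{n+1}^p}=\overline{u_n'}=y^{(n)}$, so that $y:=(y^{(n)})_n\in P$; and since $p^{1/p^n}(u_n'-u_n)=pw_n$ one finds $(\underline p\,y)^{(n)}=\overline{p^{1/p^n}u_n'}=\overline{p^{1/p^n}u_n}=x^{(n)}$ for $n\ge 1$, while both sides vanish for $n=0$, whence $x=\underline p\,y\in\underline p P$. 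I expect the bookkeeping with the exponents $1-1/p^n$ in this last step — in particular the verification of the compatibility $(y^{(n+1)})^p=y^{(n)}$ — to be the only delicate point; everything else is formal.
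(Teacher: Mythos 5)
Your proof is correct and follows the same route as the paper: reduce modulo $p$ to the claim $\ker(P\to A/pA)=\underline{p}\cdot P$, then climb back up using $p$-adic completeness of $W(P)$ and $p$-torsion-freeness of $\hat A$. The one substantive difference is that the paper's argument modulo $p$ is compressed into ``Continuing in this manner we find that $x\in\underline p\cdot P(A/pA)$,'' which at face value only gives that each $x^{(n)}$ is divisible by $p^{1/p^n}$ individually, not that the quotients assemble into an element of $P$; you correctly identify this as the delicate point and close it by keeping track of the exact (not just mod-$p$) relations $u_{n+1}^p=u_n'$ and invoking the observation that $u_{n+1}'\equiv u_{n+1}\pmod{p^{1-1/p^{n+1}}}$ implies $(u_{n+1}')^p\equiv u_{n+1}^p\pmod p$ since $p(1-1/p^{n+1})\ge 1$. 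Everything else (surjectivity of $\theta$, $\theta(\xi)=0$, the successive-approximation lift) is as in the paper, with your surjectivity remark being unnecessary for the statement but harmless.
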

\begin{proof}
We assume $x\in\ker(\theta\mod p)$. Write $x=(x^{(n)})_n$ with $x^{(n+1)p}=x^{(n)}$ for all $n$. Then we have $x^{(1)p}=0$. Let $\hat{x}^{(1)}\in A$ be a lift of $x^{(1)}$. Then $\hat{x}^{(1)p}=py$ for some $y\in A$ so $\frac{\hat{x}^{(1)}}{p^{1/p}}\in A$ because $A$ is integrally closed. Hence $x^{(1)}\in p^{1/p}\cdot A/pA$. Continuing in this manner we find that $x\in\underline{p}\cdot P(A/pA)$. This proves the claim modulo $p$, and since $W(P(A/pA))$ is $p$-adically complete and $\hat{A}$ is $p$-torsion free the lemma follows.
\end{proof}

\begin{prop}\label{flat2}
Let $A$ be a flat $\bar{K}^+$ algebra. Assume that $A$ is an integrally closed domain, and that the Frobenius is surjective on $A/pA$. Let
\[ A_{\cris}(A/pA):=\varprojlim_nH^0_{\crys}(A/pA|W_n(k),\mathscr{O}). \]
Then
\begin{enumerate}[(i)]
\item There is a canonical isomorphism
\[ A_{\cris}(A/pA)/p\cong (A/pA)\otimes_{\bar{K}^+/p\bar{K}^+}A_{\cris}(K^+)/p. \]
\item There is a canonical isomorphism
\[  A_{\cris}(A/pA)/p^n\cong  W_n(P(A/pA))\otimes_{W_n(P(\bar{K}^+/p\bar{K}^+))}A_{\cris}(K^+)/p^n. \]
\item $A_{\cris}(A/pA)/p^n$ is flat over $W_n(k)$.
\end{enumerate}
\end{prop}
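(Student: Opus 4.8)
The plan is to identify each quotient $A_{\cris}(A/pA)/p^nA_{\cris}(A/pA)$ with an explicit divided power envelope and then to descend to the corresponding envelope over $K^+$ by flat base change; assertions (i) and (iii) then come out as easy consequences or minor variants.

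\emph{Identification as a DP hull.} Since the Frobenius is surjective on $A/pA$ by hypothesis, Theorem \ref{fontaine} applies to $A/pA$ over the DP-base $\Spec(W_n(k))$ with trivial log-structure. Combining its first proof with Remark \ref{remarks}(b) (for the reduction mod $p^n$) and Remark \ref{remarks}(d), one gets a canonical identification
\[ A_{\cris}(A/pA)/p^nA_{\cris}(A/pA)\;=\;D_{W_n(P(A/pA))}(\ker\theta), \]
the divided power hull being taken compatibly with the canonical DP-structure on $pW_n(k)$, where $\theta\colon W_n(P(A/pA))\to A/p^nA$ is the reduction of the map of Lemma \ref{xi}. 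As $A$ is a flat $\bar K^+$-algebra it contains a compatible system $\underline p=(p,p^{1/p},p^{1/p^2},\dots)$ of $p$-power roots of $p$, so Lemma \ref{xi} gives $\ker\theta=(\xi)$ with $\xi=[\underline p]-p$ --- the very element that generates the kernel in Fontaine's construction of $A_{\cris}(K^+)$ recalled after Lemma \ref{flat}. Thus
\[ A_{\cris}(A/pA)/p^n=D_{W_n(P(A/pA))}((\xi)),\qquad A_{\cris}(K^+)/p^n=D_{W_n(P(\bar K^+/p\bar K^+))}((\xi)), \]
and the map $W_n(P(\bar K^+/p\bar K^+))\to A_{\cris}(K^+)/p^n$ occurring in (ii) is the structure map of the second envelope.

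\emph{Flat base change, proof of (ii).} It remains to show that $W_n(P(\bar K^+/p\bar K^+))\to W_n(P(A/pA))$ is flat, for then the base change theorem for divided power envelopes \cite[Ch.~I]{berthelot} gives
\[ D_{W_n(P(A/pA))}((\xi))\cong D_{W_n(P(\bar K^+/p\bar K^+))}((\xi))\otimes_{W_n(P(\bar K^+/p\bar K^+))}W_n(P(A/pA)), \]
which by the previous step is exactly (ii). Now $A/pA=A\otimes_{\bar K^+}\bar K^+/p\bar K^+$ is flat over $\bar K^+/p\bar K^+$; since $\bar K^+/p\bar K^+$ has surjective Frobenius this passes to the limit perfections, so $P(A/pA)$ is flat over $P(\bar K^+/p\bar K^+)$ --- the latter is a valuation ring by \cite[5.1.2]{font}, so this amounts to torsion-freeness, which I would verify by an elementary valuation computation using that $A$ is flat over $\bar K^+$. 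Finally, since $P(\bar K^+/p\bar K^+)$ and $P(A/pA)$ are perfect, the $V$-adic filtration on their truncated Witt vectors is the $p$-adic filtration, with $\gr^i_V W_n(-)\cong F^i_\ast(-)$; the graded pieces of the base change map are then, up to the harmless twist by the automorphism $F^i$, the flat map $P(\bar K^+/p\bar K^+)\to P(A/pA)$ again, so the filtration argument in the proof of Corollary \ref{wittetale} shows $W_n(P(A/pA))$ is flat over $W_n(P(\bar K^+/p\bar K^+))$. The main obstacle is precisely this last flatness: modulo $p$ the rings in play are non-Noetherian and far from being domains, so torsion-freeness of $P(A/pA)$ over $P(\bar K^+/p\bar K^+)$ cannot be seen formally and must be obtained by hand via the valuation of $\bar K$.

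\emph{Assertions (i) and (iii).} For (i) I would rerun the argument with $n=1$, where the picture simplifies: Breuil's construction identifies $A_{\cris}(A/pA)/p$ with the divided power hull $D_{A/pA}(\ker F)$ of $A/pA$ for the kernel of its Frobenius $F$, which is genuinely a $\bar K^+/p\bar K^+$-algebra via the structure map of the envelope, and likewise $A_{\cris}(K^+)/p=D_{\bar K^+/p\bar K^+}(\ker F)$. One checks that $\ker(F\colon A/pA\to A/pA)$ is generated by $\ker(F\colon\bar K^+/p\bar K^+\to\bar K^+/p\bar K^+)=p^{1/p}\cdot(\bar K^+/p\bar K^+)$: if $a\in A/pA$ has $a^p=0$, then lifting to $\hat a\in A$ one has $(\hat a/p^{1/p})^p=\hat a^p/p\in A$, so $\hat a/p^{1/p}$ is integral over $A$ and hence lies in $A$ because $A$ is integrally closed. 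Flat base change \cite[Ch.~I]{berthelot} along the flat map $\bar K^+/p\bar K^+\to A/pA$ then yields (i). For (iii), (ii) exhibits $A_{\cris}(A/pA)/p^n$ as flat over $A_{\cris}(K^+)/p^n$, and $A_{\cris}(K^+)/p^n=A_{\cris}(K^+)\otimes_{W(k)}W_n(k)$ is flat over $W_n(k)$ by Lemma \ref{flat}; composing gives flatness over $W_n(k)$. Alternatively (iii) follows by repeating the argument of Lemma \ref{flat} verbatim with $W(P(\bar K^+/p\bar K^+))$ replaced by $W(P(A/pA))$, using only that $\xi$ is a non-zero-divisor there and that $W(P(A/pA))/(\xi)=\hat A$ is $p$-torsion free, both of which hold since $A$ is a flat, integrally closed $\bar K^+$-algebra.
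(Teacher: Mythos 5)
Your proof is correct and follows essentially the same route as the paper. For (i) you use Breuil's construction (second proof of Theorem~\ref{fontaine}) to identify $A_{\cris}(A/pA)/p$ with $D_{A/pA}(\ker F)$, check that $\ker F$ is generated by $p^{1/p}$ using integral closedness of $A$, and invoke flat base change of DP envelopes --- this is exactly the paper's argument for (i), with the generation of $\ker F$ made more explicit. For (ii) you likewise identify both sides as DP envelopes via Lemma~\ref{xi} and reduce to flatness of $W_n(P(\bar K^+/p\bar K^+))\to W_n(P(A/pA))$; the paper obtains this from $P\to P(A/pA)$ flat together with $p$-torsion-freeness of $W(P(A/pA))$ and Bourbaki's local flatness criterion applied to $W(P)\to W(P(A/pA))$ followed by base change modulo $p^n$, whereas you run the filtration argument from the proof of Corollary~\ref{wittetale} (graded pieces via the $V$-adic filtration are twists of the flat map $P\to P(A/pA)$); these two are interchangeable since, for perfect rings, the $V$-adic filtration on $W_n$ is the $p$-adic one and the graded criterion is exactly what Bourbaki's criterion encodes. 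The one place where your writeup is merely a sketch is the flatness of $P(A/pA)$ over $P$: you correctly observe that, $P$ being a valuation ring, this amounts to torsion-freeness, but you do not actually carry out the verification; the paper does it by a componentwise check (if $x\in P$ and $a\in P(A/pA)$ satisfy $xa=0$, then each $x^{(m)}a^{(m)}=0$ in $A/pA$, which is flat over $\bar K^+/p\bar K^+$, so $x\otimes a=0$), requiring only flatness of $A/pA$ over $\bar K^+/p\bar K^+$ --- no valuation of $\bar K$ is needed, and note $P$ itself is a domain, so your remark that the rings ``modulo $p$'' are far from domains slightly misdescribes the objects actually involved ($P$ and $P(A/pA)$, not $A/pA$). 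For (iii) your two arguments (base change from Lemma~\ref{flat}, or repeating its proof verbatim) both work and fill in what the paper leaves as ``follows easily.''
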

\begin{proof}
(i): By construction (\emph{cf.} the second proof of Thm. \ref{fontaine}), $A_{\cris}(A/pA)/p$ is the divided power hull of $A/pA$ for the kernel of the Frobenius, i.e. for the ideal generated by $p^{1/p}$. Since the same is true for $A_{\cris}(K^+)/p$ and $A/pA$ is flat over $\bar{K}^+/p\bar{K}^+$, the result follows from the fact that formation of divided power hulls commutes with flat tensor product.

(ii): Let $P:=P(\bar{K}^+/p\bar{K}^+)$. We first show that $P(A/pA)$ is flat over $P$. Since $P$ is a valuation ring, every finitely generated ideal is principal. Let $x=(x^{(n)})\in P$ and $a=(a^{(n)})\in P(A/pA)$ and consider $a\otimes x\in (a)\otimes_{P}P(A/pA)$. Then $xa=0$ if and only if $x^{(n)}a^{(n)}=0$ for all $n$, if and only if $a^{(n)}\otimes x^{(n)}=0$ in $(a^{(n)})\otimes_{\bar{K}^+/p\bar{K}^+}A/pA$ for all $n$ ($A/pA$ being flat over $\bar{K}^+/p\bar{K}^+$), if and only if $a\otimes x=0$. This proves the flatness claim. Now since $P(A/pA)$ is a perfect ring, it follows that $W(P(A/pA))$ is $p$-torsion free, hence by Bourbaki's flatness criterion we deduce that $W_n(P(A/pA))$ is a flat $W_n(P)$-algebra for all $n$. Now by Lemma \ref{xi}, we know that $A_{\cris}(A/pA)/p^n$ is the divided power hull of $W_n(P(A/pA))$ for the ideal generated by $\xi$, hence since taking divided power hulls commutes with flat base change, (ii) follows. 

(iii): follows easily from these considerations.
\end{proof}

Let $1\leq i\leq d+1$. Note that by construction we have an element
\[ \underline{T_i}=(T_i^{(0)},T_i^{(1)},T_i^{(2)},...)\in P(\tilde{R}_\infty/p\tilde{R}_\infty). \]
If $\alpha=n/p^r$ with $n,r\in\mathbb{N}$, then we define
\[ T_i^{\alpha}:=T_i^{(r)n} \]
and
\[ \underline{T_i}^{\alpha}:=(T_i^{(r)n},T_i^{(r+1)n},...)\in P(\tilde{R}_\infty/p\tilde{R}_\infty). \]
We'll also usually write $[\underline{T_i}]^{\alpha}:=[\underline{T_i}^{\alpha}]$, where $[\cdot ]$ is the Teichm\"uller lift.

\begin{prop}\label{dppolynomialalgebra}
In the case $R=O(c)$, there is a canonical isomorphism
\[ M^+_\infty/p^n M^+_\infty\simeq  A_{\cris,\infty}(R)/p^n A_{\cris,\infty}(R)\left\langle X, X_2,X_3,...,X_{d+1}\right\rangle  \]
where the $X,X_2,...,X_{d+1}$ are indeterminates. In particular, $M^+_\infty/p^nM^+_\infty$ is a flat $W_n(k)$-module.
\end{prop}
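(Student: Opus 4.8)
The plan is to identify both sides with the final object of the log-crystalline site $(\tilde{R}_\infty/p\tilde{R}_\infty|\mathcal{R}_n)_{\logcrys}$ and to pass between the two descriptions via universal properties. Since $R=O(c)$ we have $\mathcal{R}_n=\Theta(c)/p^n\Theta(c)$ and $\tilde{R}_\infty=\tilde{O}(c)_\infty$, so every ring in sight is completely explicit. By Lemma~\ref{logstrmonoid} the log-structure of $\tilde{R}_\infty/p\tilde{R}_\infty$ admits a chart by a saturated integral monoid $\mathcal{M}$ with surjective Frobenius --- namely $\mathbb{N}[1/p]^r$, generated by compatible $p$-power roots of $T_1,\dots,T_r$, when $c=\pi$, and $\mathbb{N}[1/p]$, generated by $p$-power roots of $\pi$, when $c=1$ --- while $\mathcal{R}_n$ has the chart $\mathcal{N}=\mathbb{N}^r$, resp.\ $\mathbb{N}$. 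Since the Frobenius is surjective on $\tilde{R}_\infty/p\tilde{R}_\infty$ by Corollary~\ref{frobsurjective}, Theorem~\ref{fontaine} presents $M^+_\infty/p^nM^+_\infty$ as a quotient of a divided power hull of $(W_n(\tilde{R}_\infty/p\tilde{R}_\infty)\otimes_{\mathbb{Z}}\mathcal{R}_n)\otimes_{\mathbb{Z}}\mathbb{Z}[L]$, with $L=\ker((\mathcal{M}\oplus\mathcal{N})^{\gp}\to\mathcal{M}^{\gp})$; one computes that $L$ is free of rank $r$ (resp.\ $1$), with generators $l_i$ that become identified, after the quotient, with $[\underline{T_i}]\otimes T_i^{-1}$ (resp.\ with $[\underline{\pi}]\otimes u^{-1}$), and one has $\prod_{i=1}^r\underline{T_i}=\underline{c}$ and $\prod_{i=1}^rT_i=v=u$ in $\mathcal{R}_n$.

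Next I would exhibit a candidate isomorphism. Viewing $M^+_\infty/p^nM^+_\infty$ as a $W_n(k)$-algebra with the divided power ideal $\ker(M^+_\infty/p^nM^+_\infty\to\tilde{R}_\infty/p\tilde{R}_\infty)$ and forgetting the log-structure, the closed immersion $\Spec(\tilde{R}_\infty/p\tilde{R}_\infty)\hookrightarrow\Spec(M^+_\infty/p^nM^+_\infty)$ becomes an object of the non-logarithmic crystalline site of $\tilde{R}_\infty/p\tilde{R}_\infty$ over $W_n(k)$, whose final object is $A_{\cris,\infty}(R)/p^n$; this yields a canonical homomorphism $\phi:A_{\cris,\infty}(R)/p^n\to M^+_\infty/p^nM^+_\infty$. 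By Proposition~\ref{flat2} (applicable since $\tilde{R}_\infty$ is a flat $\bar{K}^+$-algebra, an integrally closed domain by Proposition~\ref{integral}, with surjective Frobenius modulo $p$), $A_{\cris,\infty}(R)/p^n$ is flat over $W_n(k)$ and contains the Teichm\"uller lifts $[\underline{T_i}]$, $[\underline{\pi}]$ and the divided powers of $\xi=[\underline{p}]-p$. Hence $\phi$ extends to a $\Sigma_n$-algebra homomorphism
\[ \Psi:A_{\cris,\infty}(R)/p^n\bigl\langle X,X_2,\dots,X_{d+1}\bigr\rangle\longrightarrow M^+_\infty/p^nM^+_\infty \]
sending $X$ to the class of $[\underline{\pi}]\otimes u^{-1}-1$ and $X_i$ to the class of $[\underline{T_i}]\otimes T_i^{-1}-1$ for $2\le i\le d+1$; these elements lie in $\ker(\to\tilde{R}_\infty/p\tilde{R}_\infty)$ because $u\mapsto\pi$ and $T_i\mapsto T_i$, and for $i\le r$ the expression is interpreted via $l_i\in L$ while for $i>r$ it uses that $T_i$ is already a unit.

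To see that $\Psi$ is an isomorphism I would construct an inverse using the final-object property. One equips the source $A_{\cris,\infty}(R)/p^n\langle X,X_2,\dots,X_{d+1}\rangle$ with a $\mathcal{R}_n$-algebra structure by $u\mapsto[\underline{\pi}](1+X)^{-1}$, $T_i\mapsto[\underline{T_i}](1+X_i)^{-1}$ for $2\le i\le d+1$, and $T_1\mapsto[\underline{T_1}](1+X_1)^{-1}$ with $1+X_1:=(1+X)\prod_{i=2}^r(1+X_i)^{-1}$ (the elements $1+X$, $1+X_i$ being units since $X$, $X_i$ lie in a divided power ideal); one then checks that the relation $\prod_{i=1}^rT_i=u$ holds, that this is compatible with all the divided power structures in play (the compatibility of the divided powers of $u^e$ with those of $\xi$ coming from $\pi^e=(\text{unit})\cdot p$ together with Proposition~\ref{flat2}), and, after endowing the source with the log-structure associated to the image of $\mathcal{N}$, that $\Spec(\tilde{R}_\infty/p\tilde{R}_\infty)\hookrightarrow\Spec(A_{\cris,\infty}(R)/p^n\langle X,X_2,\dots,X_{d+1}\rangle)$ is an object of $(\tilde{R}_\infty/p\tilde{R}_\infty|\mathcal{R}_n)_{\logcrys}$. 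The final-object property of $M^+_\infty/p^nM^+_\infty$ then produces a homomorphism $\Phi$ in the opposite direction, and one verifies that $\Psi$ and $\Phi$ are mutually inverse: $\Psi$ is itself a morphism of the site (it is $\mathcal{R}_n$-linear and respects the log- and divided power structures), so $\Psi\circ\Phi=\id$ by uniqueness of self-maps of the final object, while $\Phi\circ\Psi=\id$ is checked on the $\Sigma_n$-algebra generators $A_{\cris,\infty}(R)/p^n$, $X$, $X_i$ (using on $A_{\cris,\infty}(R)/p^n$ the uniqueness of morphisms out of the non-logarithmic crystalline final object). The last assertion is then immediate: $A_{\cris,\infty}(R)/p^n$ is flat over $W_n(k)$ by Proposition~\ref{flat2}(iii) and a divided power polynomial algebra is a free module over its base.

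I expect the main difficulty to be the verification, in the last step, that $A_{\cris,\infty}(R)/p^n\langle X,X_2,\dots,X_{d+1}\rangle$ with the described $\mathcal{R}_n$-algebra and log-structure really is an object of the log-crystalline site: this requires checking exactness of the log-structure at the thickening and the mutual compatibility of the divided power structures on $p$, on $u^e$, on $\xi$, and on $\ker(\to\tilde{R}_\infty/p\tilde{R}_\infty)$, which in turn relies on the precise internal structure of $A_{\cris,\infty}(R)$ furnished by Proposition~\ref{flat2} and on the comparison between the two constructions of Fontaine rings recorded in Remark~\ref{remarks}(d). Everything else is routine bookkeeping with the explicit presentations coming from Theorem~\ref{fontaine}.
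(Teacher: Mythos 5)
Your proposal is correct and follows essentially the same route as the paper: exhibit $A_{\cris,\infty}(R)/p^n\langle X,X_2,\dots,X_{d+1}\rangle$ (with the explicit $\mathcal{R}_n$-algebra structure $u\mapsto[\underline{\pi}](1+X)^{-1}$, $T_i\mapsto[\underline{T_i}](1+X_i)^{-1}$, and the log-structure built from the monoid $QL$) as an object of the site $(\tilde{R}_\infty/p\tilde{R}_\infty|\Theta(c)/p^n\Theta(c))_{\logcrys}$ satisfying the universal property of the final object, the pivotal step being exactness of the closed immersion, which both you and the paper reduce to the argument of Step~2 of Theorem~\ref{fontaine}. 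The paper organizes the bookkeeping slightly differently — it identifies the divided power hull $C^{\DP}$ with $A_{\cris,\infty}(R)/p^n\langle X,X_2,\dots,X_{d+1}\rangle$ directly and shows it is final, rather than constructing a pair of mutually inverse maps $\Psi$, $\Phi$ — but this is a presentational difference, not a different argument.
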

\begin{proof}
The flatness claim will follow from Proposition \ref{flat2} once we show that $M^+_\infty/p^nM^+_\infty$ has the desired form. For this we check the universal property. Define
\[ \underline{T_i}:=(T_i,T_i^{(1)},T_i^{(2)},...)\in P(\tilde{R}_\infty/p\tilde{R}_\infty) \]
and $\underline{c}:=(c,c_1,c_2,...)=\prod_{i=1}^r\underline{T_i}$. Consider the ring
\[ C:=W_n(P(\tilde{R}_\infty/p\tilde{R}_\infty))\left[ X,\frac{1}{1+X}, X_2,\frac{1}{1+X_2},...,X_{d+1},\frac{1}{1+X_{d+1}}\right] . \]
Make $C$ into a $W_n[u][T_1,...,T_r,T_{r+1}^{\pm 1},...,T_{d+1}^{\pm 1}]/(\prod_{i=1}^r T_i-v)$-algebra as follows. If $c=\pi$ (resp. $c=1$), then send $u$ to $[\underline{\pi}]\cdot (1+X)^{-1}$ (resp. $u$ to $[\underline{\pi}]-X$), $T_1$ to $[\underline{T_1}]\dfrac{\prod_{i=2}^r(1+X_i)}{1+X}$ (resp. $T_1$ to $[\underline{T_1}]\dfrac{\prod_{i=2}^r(1+X_i)}{[\underline{c}]}$), and $T_i$ to $[\underline{T_i}]\cdot (1+X_i)^{-1}$ for $2\leq i\leq d+1$. For an affine object $(U\hookrightarrow T)$ of the site $(\tilde{R}_\infty/p\tilde{R}_\infty|\Theta(c)/p^n\Theta(c))_{\logcrys}$ define a map
\[ C\to\mathscr{O}_T(T) \]
extending the canonical map $\theta_T:W_n(P(\tilde{R}_\infty/p\tilde{R}_\infty))\to\mathscr{O}_T$ (see the proof of Theorem \ref{fontaine}) by sending $X$ to $\theta_T([\underline{c}])\cdot\pi^{-1}-1$ (resp. $X$ to $\theta_T([\underline{\pi}])-u$) and $X_i$ to $\theta_T([\underline{T_i}])\cdot T_i^{-1}-1$ (note that by Lemma \ref{nilexact} these elements exist in $\mathscr{O}_T(T)$). One checks easily that this is the unique map of $W_n[u][T_1,...,T_r,T_{r+1}^{\pm 1},...,T_{d+1}^{\pm 1}]/(\prod_{i=1}^r T_i-v)$-algebras extending $\theta_T$. It follows from Lemma \ref{xi} that the kernel of the map to $C\to \tilde{R}_\infty/p\tilde{R}_\infty$ is the ideal generated by $p$, $\xi$, and $X,X_2,...,X_{d+1}$. So the divided power hull $C^{\text{DP}}$ of $C$ for this ideal is precisely $A_{\cris,\infty}(R)/p^n A_{\cris,\infty}(R)\left\langle X, X_2,X_3,...,X_{d+1}\right\rangle$. By the uniqueness of $\theta_T$ we see that it suffices now to define a log-structure on $C^{\text{DP}}$ such that $C^{\text{DP}}$ defines an object of $(\tilde{R}_\infty/p\tilde{R}_\infty|\Theta(c)/p^n\Theta(c))_{\logcrys}$, i.e. the surjection $C^{\text{DP}}\to \tilde{R}_\infty/p\tilde{R}_\infty$ is an exact closed immersion. We show this in the case $c=\pi$, the case $c=1$ being similar. Recall (\S\ref{logstr}) that the log-structure $M_{\tilde{R}_\infty}$ is associated to
\begin{eqnarray*}
\mu:\mathbb{N}[1/p]^{r} &\to & \tilde{R}_\infty/p\tilde{R}_\infty \\
(\alpha_1,...,\alpha_r) &\mapsto & \prod_{i=1}^r T_i^{\alpha_i}.
\end{eqnarray*}
Let $Q=P(\mathbb{N}[1/p]^{r})\oplus\mathbb{N}^{r}$ and let $\lambda:Q\to\mathbb{N}[1/p]^r$ the map induced by the natural maps $P(\mathbb{N}[1/p]^{r})\to\mathbb{N}[1/p]^{r}$ and $\mathbb{N}^{r}\subset \mathbb{N}[1/p]^{r}$. Define $L:=\ker(\lambda^{\gp}:Q^{\gp}\to \mathbb{Z}^{r})$ and let $QL\subset Q^{\gp}$ be the submonoid consisting of elements of the form $ql$ with $q\in Q$ and $l\in L$. Since
\[ P(\mathbb{N}[1/p]^{r})\cong \mathbb{N}[1/p]^{r}:(\alpha\cdot p^{-n})_{n\in\mathbb{N}}\mapsto\alpha \]
we have
\[ P\left(\mathbb{N}[1/p]^{r}\right)^{\gp}\cong\mathbb{Z}[1/p]^{r}. \]
So $L$ is the kernel of the map
\begin{eqnarray*}
\mathbb{Z}[1/p]^{r}\oplus\mathbb{Z}^{r} &\to & \mathbb{Z}[1/p]^{r} \\
\left((\alpha_1,...,\alpha_r),(n_1,...,n_r)\right) &\mapsto & \left(\alpha_1+n_1,...,\alpha_r+n_r\right).
\end{eqnarray*}
That is, $L$ consists of the tuples $\left((-n_1,...,-n_r),(n_1,...,n_r)\right)$ with $n_i\in\mathbb{Z}$. Define $1+X_1:=\dfrac{1+X}{\prod_{i=2}^r(1+X_i)}$. Note that
\[ X,X_1,...,X_r\in\ker(C^{\text{DP} }\to \tilde{R}_\infty/p\tilde{R}_\infty) \]
and hence $1+X,1+X_1,...,1+X_r$ are units of $C^{\text{DP}}$. We endow $C^{\DP}$ with the log-structure associated to
\[ QL\to C^{\DP} \]
induced by the maps
\begin{eqnarray*}
P(\mathbb{N}[1/p]^{r+1}) &\to& C^{\DP}: (m^{(n)})_{n\in\mathbb{N}}\to [(\mu(m^{(n)}))_{n\in\mathbb{N}}] \\
\mathbb{N}^{r} &\to& \Theta(c)/p^n\Theta(c)\to C^{\DP} \\
L &\to & C^{\text{DP}}:\left((-n_1,...,-n_r),(n_1,...,n_r)\right) \mapsto \prod_{i=1}^r (1+X_i)^{-n_i}.
\end{eqnarray*}
It is easy to check that these maps are compatible and do indeed define a map $QL\to C^{\DP}$. With the natural maps $QL\to \mathbb{N}[1/p]^{r}$ and $C^{\text{DP}}\to\tilde{R}_\infty/p\tilde{R}_\infty$, we get a closed immersion of log schemes. Now by the same argument as that of Step 2 of the proof of Theorem \ref{fontaine} we see that it is an exact closed immersion, and hence we are done.
\end{proof}

\subsection{Computations in Galois cohomology}

\subsubsection{}\label{koszul} Note that $\Delta_\infty\cong\mathbb{Z}_p(1)^d$. Let $\sigma_2,...,\sigma_{d+1}$ be a choice of topological generators of $\Delta_\infty$. For each $2\leq i\leq d+1$, we have $\sigma_i(T_i^{(n)})=\zeta_{p^n}T_i^{(n)}$ for a $p^n$th root of unity $\zeta_{p^n}$. Define
\[ \underline{1}:=(1,\zeta_{p},\zeta_{p^n},...)\in P(\bar{K}^+/\bar{K}^+) \]
so that $\sigma(\underline{T_i})=\underline{1}\cdot\underline{T_i}$. Furthermore, define
\[ t:=\log([\underline{1}]). \]
Let us make some remarks on the Galois cohomology of the group $\Delta_\infty$. Let $\mathbb{Z}_p[[\Delta_\infty]]$ be the completion of the group ring $\mathbb{Z}_p[\Delta_\infty]$ in the topology induced by the profinite topology of $\Delta_\infty$. Then for any discrete $p$-torsion $\Delta_\infty$-module $N$, we have canonical isomorphisms for all $i$
\[ \Ext^i_{\mathbb{Z}_p[[\Delta_\infty]]}(\mathbb{Z}_p,N)\cong H^i(\Delta_\infty,N) \]
where the $\Ext$-group is taken in the category of topological $\mathbb{Z}_p[[\Delta_\infty]]$-modules. Since $\Delta_\infty\cong\mathbb{Z}_p(1)^d$, we have an isomorphism of rings
\[ \mathbb{Z}_p[[\Delta_\infty]]\simeq \mathbb{Z}_p[[\sigma_2-1,...,\sigma_{d+1}-1]]. \]
This implies that the Koszul complex $L:=\otimes_{\mathbb{Z}_p[[\Delta_\infty]]} L_i$, where $L_i$ is the complex defined
\[ \begin{CD}
0 @>>> \mathbb{Z}_p[[\Delta_\infty]] @>{\sigma_i-1}>> \mathbb{Z}_p[[\Delta_\infty]] @>>> 0,
\end{CD} \]
is a homological resolution of $\mathbb{Z}_p$ by free compact $\mathbb{Z}_p[[\Delta_\infty]]$-modules. Then we have an isomorphism of complexes (up to shifting the degree)
\[ \Hom_{\mathbb{Z}_p[[\Delta_\infty]]}(L,N)\cong L\otimes_{\mathbb{Z}_p[[\Delta_\infty]]}N \]
i.e. the Galois cohomology of $N$ can be computed using the Koszul complex $L\otimes_{\mathbb{Z}_p[[\Delta_\infty]]}N$.

\subsubsection{} 

\begin{lemma}\label{write}
Assume $R=O(c)$. Every element of  $A_{\cris,\infty}(R)/p^n A_{\cris,\infty}(R)$ can be written as a finite sum of the form
\[ \sum_{k\geq 0} x_k\xi^{[k]} \]
with $x_k\in W_n(P(\tilde{R}_\infty/p\tilde{R}_\infty))$ of the form
\[ \sum_m p^mv_m\prod_{i=1}^{d+1}[\underline{T_ i}]^{\alpha_{i,m}}\]
where $\alpha_{i,m}\in\mathbb{N}[1/p]$ for $1\leq i\leq r$ and $\alpha_{i,m}\in\mathbb{Z}[1/p]$ for $r+1\leq i\leq d+1$ and $v_m\in A_{\Inf}(K^+)$.
\end{lemma}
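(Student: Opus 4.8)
The plan is to combine the description of $A_{\cris,\infty}(R)/p^n$ as a divided power hull with the fact that, since $R=O(c)$, the ring $\tilde{R}_\infty/p\tilde{R}_\infty$ is a ``monomial algebra'' over $\bar{K}^+/p\bar{K}^+$. By the proof of Proposition \ref{flat2} (ii), which rests on Lemma \ref{xi} applied to $A=\tilde{R}_\infty$ and on the second construction of the final object in the proof of Theorem \ref{fontaine}, the ring $A_{\cris,\infty}(R)/p^n$ is the divided power hull of $W_n(P(\tilde{R}_\infty/p\tilde{R}_\infty))$ for the \emph{principal} ideal generated by $\xi=[\underline{p}]-p$. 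Being the divided power hull of a principal ideal, it is the $W_n(P(\tilde{R}_\infty/p\tilde{R}_\infty))$-module $\sum_{k\geq 0}W_n(P(\tilde{R}_\infty/p\tilde{R}_\infty))\cdot\xi^{[k]}$, so any single element lies in a finite partial sum and hence is a finite sum $\sum_k x_k\xi^{[k]}$ with $x_k\in W_n(P(\tilde{R}_\infty/p\tilde{R}_\infty))$. It remains to put each $x_k$ in the asserted form, and this is where the hypothesis $R=O(c)$ enters.

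Next I would record the structure of $\tilde{R}_\infty/p\tilde{R}_\infty$: using the relation $T_1\cdots T_r=c$ and that $\bar{K}^+$ contains a compatible system of $p$-power roots of $c$, it is a free $\bar{K}^+/p\bar{K}^+$-module on the monomials $\bar{T}^{\alpha}=\prod_i\bar{T}_i^{\alpha_i}$ with $\alpha_i\in\mathbb{N}[1/p]$ (suitably normalised in the variables $i\le r$) and $\alpha_i\in\mathbb{Z}[1/p]$ for $i>r$, and the absolute Frobenius sends $\bar{T}^\alpha\mapsto\bar{T}^{p\alpha}$ and acts by Frobenius on coefficients. Since $\alpha\mapsto p\alpha$ is a bijection of this index set, passing to $\varprojlim$ along Frobenius shows that $P(\tilde{R}_\infty/p\tilde{R}_\infty)$ has homogeneous pieces $P(\bar{K}^+/p\bar{K}^+)\cdot\underline{T}^\alpha$, so a general element is a priori only an $\underline{p}$-adically convergent sum $\sum_\alpha c_\alpha\underline{T}^\alpha$ with $c_\alpha\in P(\bar{K}^+/p\bar{K}^+)$. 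Reducing the identity $\ker(\theta)=(\xi)$ of Lemma \ref{xi} modulo $p$ (legitimate since $\hat{\tilde{R}}_\infty$ is $p$-torsion free) gives $\ker(P(\tilde{R}_\infty/p\tilde{R}_\infty)\to\tilde{R}_\infty/p\tilde{R}_\infty)=\underline{p}\cdot P(\tilde{R}_\infty/p\tilde{R}_\infty)$, and $\underline{p}$ is a non-zero-divisor on $P(\tilde{R}_\infty/p\tilde{R}_\infty)$ because the latter is flat over $P(\bar{K}^+/p\bar{K}^+)$ (proof of Proposition \ref{flat2}).

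The case $n=1$ is the heart of the matter. Here $A_{\cris,\infty}(R)/p$ is the divided power hull $D_{P(\tilde{R}_\infty/p\tilde{R}_\infty)}((\underline{p}))$, an $\mathbb{F}_p$-algebra, so the divided power relation forces $\underline{p}^{\,p}=p!\,\underline{p}^{[p]}=0$; thus the structure map factors through $P(\tilde{R}_\infty/p\tilde{R}_\infty)/(\underline{p}^{\,p})$. Filtering the latter by the powers $\underline{p}^{\,j}$, $0\le j<p$, whose successive quotients are all isomorphic to $\tilde{R}_\infty/p\tilde{R}_\infty$ (because $\underline{p}$ is a non-zero-divisor), one sees that every element of $P(\tilde{R}_\infty/p\tilde{R}_\infty)/(\underline{p}^{\,p})$ --- a priori only a convergent monomial series --- is in fact a \emph{finite} $P(\bar{K}^+/p\bar{K}^+)$-linear combination of the $\underline{T}^\alpha$. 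Since $P(\bar{K}^+/p\bar{K}^+)$ is perfect, $A_{\Inf}(K^+)/p=W(P(\bar{K}^+/p\bar{K}^+))/p=P(\bar{K}^+/p\bar{K}^+)$, so these coefficients are reductions of elements of $A_{\Inf}(K^+)$; combined with the first paragraph this settles $n=1$ (with $m=0$ throughout).

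For general $n$ I would induct using $0\to p^{n-1}(A_{\cris,\infty}(R)/p^n)\to A_{\cris,\infty}(R)/p^n\to A_{\cris,\infty}(R)/p^{n-1}\to 0$: given $a$, the inductive hypothesis writes its reduction modulo $p^{n-1}$ in the asserted form, and lifting the $A_{\Inf}(K^+)$-coefficients and the Teichm\"uller monomials to $W_n(P(\tilde{R}_\infty/p\tilde{R}_\infty))$ yields an element of the right form whose difference with $a$ equals $p^{n-1}b$ for some $b$; since $p^{n-1}b$ depends only on $b$ modulo $p$, the case $n=1$ applied to $b\bmod p$, re-lifted and multiplied by $p^{n-1}$, finishes the induction (the new terms carrying the factor $p^{n-1}$). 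The only real difficulty is the case $n=1$ and, inside it, the passage from ``convergent monomial series'' to ``finite monomial sums''; the mechanism making this work is the nilpotence $\underline{p}^{\,p}=0$ forced by the divided power structure, which truncates the $\underline{p}$-adic expansions.
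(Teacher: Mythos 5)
Your proof is correct and takes essentially the same approach as the paper's: both reduce to the case $n=1$ and then d\'evisse along the $\underline{p}$-adic filtration of $P(\tilde R_\infty/p\tilde R_\infty)/(\underline p^{\,p})$, using that $\tilde R_\infty/p\tilde R_\infty$ is a free $\bar K^+/p\bar K^+$-module on finitely many monomials, that $\underline p$ is a non-zero-divisor, and that $\underline p^{\,p}=0$ in the divided power hull modulo $p$. The paper performs this d\'evissage more explicitly by tracking the errors in the iterated $p$-th root construction, and reduces to $n=1$ via the Teichm\"uller decomposition $W_n(P)=\{\sum_m p^m[a_m]\}$ rather than the exact sequence and $W_n(k)$-flatness you invoke; these are organizational rather than conceptual differences.
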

\begin{proof}
The lemma will follow from Lemma \ref{xi} once we show that we can get the $x_k$ in the desired form. Since the ring of Witt vectors $W_n(A)$ of a perfect ring $A$ of characteristic $p$ is equal to its subring of elements of the form $\sum_m p^m[a_m]$, where $a_m\in A$ and $[\cdot ]$ denotes the Teichm\"uller lift, it suffices to prove the claim modulo $p$. By definition, we obtain an element of $P(\tilde{R}_\infty/p\tilde{R}_\infty)$ by taking roots of $r\in\tilde{R}_\infty/p\tilde{R}_\infty$. We may write
\[ r=\sum_N v_N\prod_{i=1}^{d+1}T_i^{n_i}  \]
where $v_N\in\bar{K}^+/p\bar{K}^+$ and $n_i\in\mathbb{N}[1/p]$. Make a choice of $p$-power roots of elements of $\bar{K}^+/p\bar{K}^+$ (this will not affect the statement of the lemma). If $v\in \bar{K}^+/p\bar{K}^+$, then let $\underline{v}:=(v,v^{1/p},v^{1/p^2},...)\in P(\bar{K}^+/p\bar{K}^+)$ for the made choice of $p$-power roots of $v$. Define
\[ \underline{r}:=\sum_N\underline{v_N}\prod_{i=1}^{d+1}\underline{T_i}^{n_i}. \]
Taking $p$th roots of $r$ we get
\begin{eqnarray*}
r^{(1)} &=& \sum_N v_N^{1/p}\prod_{i=1}^{d+1}T_i^{(1)n_i}+p^{a_1/p}r_1 \\
r^{(2)} &=& \sum_N v_N^{1/p^2}\prod_{i=1}^{d+1}T_i^{(2)n_i}+p^{a_1/p^2}r_1^{p/p^2}+p^{a_2p/p^2}r_2\\
&\cdots & \\
(r^{(n)}) &=& \underline{r}+\underline{p}^{a_1}\underline{r_1}+\underline{p}^{a_2p}\underline{r_2}+...
\end{eqnarray*}
where $a_i\in\mathbb{N}$. Now recall that $\underline{p}$ has divided powers in $A_{\cris,\infty}(R)/p A_{\cris,\infty}(R)$ so we get
\[ (r^{(n)}) = \underline{r}+\underline{p}^{a_1}\underline{r_1}. \]
The lemma follows.
\end{proof}

\subsubsection{}
For all $2\leq i\leq d$ we define a sub-$\bar{K}^+$-algebra $\tilde{O}(c)_{\infty}^{(i)}\subset \tilde{O}(c)_{\infty}$ as follows:
\begin{itemize}
\item if $2\leq i\leq r$, then $\tilde{O}(c)_{\infty}^{(i)}$ is the sub-$\bar{K}^+$-algebra of $\tilde{O}(c)_{\infty}$ generated by $T_j^{\beta_j}$, $j\notin\left\lbrace 1,i\right\rbrace$, $2\leq j\leq d+1$, $\beta_j\in\mathbb{N}[1/p]$, and $(T_iT_1)^{\beta}$ with $\beta\in\mathbb{N}[1/p]$
\item if $r+1\leq i\leq d+1$ then $\tilde{O}(c)_{\infty}^{(i)}$ is the sub-$\bar{K}^+$-algebra of $\tilde{O}(c)_{\infty}$ generated by $T_j^{\beta_j}$, $j\neq i$, $2\leq j\leq d+1$, $\beta_j\in\mathbb{N}[1/p]$.
\end{itemize}

\begin{lemma}\label{subflat}
$\tilde{O}(c)_{\infty}$ is faithfully flat over $\tilde{O}(c)_{\infty}^{(i)}$.
\end{lemma}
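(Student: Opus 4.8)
The plan is to prove the lemma by exhibiting $\tilde{O}(c)_{\infty}$ as a \emph{free} module over $\tilde{O}(c)_{\infty}^{(i)}$ with an explicit basis of monomials; a free module on a non-empty set is faithfully flat, so this is enough. I will describe the argument for $c=\pi$; the case $c=1$ is entirely analogous and in fact simpler, since then $T_1,\dots,T_r$ are units and $\tilde{O}(c)_{\infty}$ is just a Laurent polynomial ring over $\bar{K}^+$ (one may then take $m_\gamma=T_i^\gamma$ for all $\gamma$ below).

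First I would set up a grading. Passing to the colimit in the isomorphisms $O(c)_n[1/p]\cong K_n[X_2^{\pm1},\dots,X_{d+1}^{\pm1}]$ of \S\ref{smalln} identifies $\tilde{O}(c)_{\infty}[1/p]$ with the ``Laurent polynomial ring'' $\bar{K}[T_2^{\pm1/p^\infty},\dots,T_{d+1}^{\pm1/p^\infty}]$, exponents in $\mathbb{Z}[1/p]$, which carries the evident $\mathbb{Z}[1/p]^d$-grading by monomial degree: $\bar{K}$ in degree $0$, $\deg T_j$ the $j$-th basis vector for $j\ge 2$, and $\deg T_1=-\sum_{j=2}^{r}\deg T_j$ (forced by $T_1T_2\cdots T_r=c$). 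By Proposition \ref{integral}, $\tilde{O}(c)_{\infty}$ is an integrally closed domain of characteristic zero, hence $\bar{K}^+$-torsion free, hence embeds into $\tilde{O}(c)_{\infty}[1/p]=\tilde{O}(c)_{\infty}\otimes_{\bar{K}^+}\bar{K}$; and it is generated over $\bar{K}^+$ by the monomials $T_j^{1/p^n}$ (together with $T_j^{-1/p^n}$ for $j>r$), so it is a graded $\bar{K}^+$-subalgebra. Letting $\rho_i\colon\mathbb{Z}[1/p]^d\to\mathbb{Z}[1/p]$ be the $i$-th coordinate, I would push the grading forward along $\rho_i$, making $\tilde{O}(c)_{\infty}$ a $\mathbb{Z}[1/p]$-graded ring; the key point is that $\tilde{O}(c)_{\infty}^{(i)}$ is exactly its degree-$0$ component, since each listed generator ($\bar{K}^+$; the $T_j^\beta$ with $j\notin\{1,i\}$, resp.\ $j\ne i$; and $(T_1T_i)^\beta$) is homogeneous of degree $0$ (note $\rho_i(\deg T_1)=-1$ if $i\le r$ and $=0$ if $i>r$), the reverse inclusion coming from the rewriting below.

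Next I would take as candidate basis the monomials $m_\gamma$, $\gamma\in\mathbb{Z}[1/p]$, given by $m_\gamma:=T_i^\gamma$ for $\gamma\ge0$, and for $\gamma<0$ by $m_\gamma:=T_i^\gamma$ when $r+1\le i\le d+1$ and $m_\gamma:=T_1^{-\gamma}$ when $2\le i\le r$; in every case $m_\gamma\in\tilde{O}(c)_{\infty}$ and $\rho_i(\deg m_\gamma)=\gamma$. For spanning: $\tilde{O}(c)_{\infty}$ is spanned over $\bar{K}^+$ by monomials $\mu=\prod_{j=1}^{d+1}T_j^{\alpha_j}$ with $\alpha_j\in\mathbb{N}[1/p]$ for $j\le r$ and $\alpha_j\in\mathbb{Z}[1/p]$ for $j>r$; putting $\gamma:=\rho_i(\deg\mu)$ (so $\gamma=\alpha_i-\alpha_1$ if $i\le r$ and $\gamma=\alpha_i$ if $i>r$), the element $\mu\,m_\gamma^{-1}$, computed in $\tilde{O}(c)_{\infty}[1/p]$, equals $\prod_{j\ne i}T_j^{\alpha_j}$ when $i>r$, equals $(T_1T_i)^{\alpha_1}\prod_{j\ne1,i}T_j^{\alpha_j}$ when $i\le r$ and $\gamma\ge0$, and equals $(T_1T_i)^{\alpha_i}\prod_{j\ne1,i}T_j^{\alpha_j}$ when $i\le r$ and $\gamma<0$; in each case this is a product of generators of $\tilde{O}(c)_{\infty}^{(i)}$ (and of $T_j^{-1}$, $j>r$), so $\mu\in m_\gamma\,\tilde{O}(c)_{\infty}^{(i)}$ and hence $\tilde{O}(c)_{\infty}=\sum_{\gamma}m_\gamma\,\tilde{O}(c)_{\infty}^{(i)}$. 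For directness and freeness: inside the graded domain $\tilde{O}(c)_{\infty}[1/p]$ the subring $\tilde{O}(c)_{\infty}^{(i)}$ sits in degree $0$ while $m_\gamma$ is homogeneous of degree $\gamma$, so the subgroups $m_\gamma\,\tilde{O}(c)_{\infty}^{(i)}$ occupy pairwise distinct graded pieces, whence the sum is direct, and multiplication by $m_\gamma\ne0$ is injective; thus $\tilde{O}(c)_{\infty}=\bigoplus_{\gamma\in\mathbb{Z}[1/p]}m_\gamma\,\tilde{O}(c)_{\infty}^{(i)}$ is free on $\{m_\gamma\}_{\gamma\in\mathbb{Z}[1/p]}$, hence faithfully flat.

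The one step requiring genuine care, and the main obstacle, is the identification of $\tilde{O}(c)_{\infty}^{(i)}$ with the degree-$0$ subalgebra of $\tilde{O}(c)_{\infty}$ — i.e.\ checking that the stated generators really generate all of it; this is precisely where the relation $T_1\cdots T_r=c$, and (for $2\le i\le r$) the choice to adjoin $T_1T_i$ rather than $T_1$ and $T_i$ separately, are needed, and it is handled by the same monomial rewriting as in the spanning step. Granting that identification, the grading argument is purely formal.
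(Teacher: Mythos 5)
Your proof is correct but takes a genuinely different route from the paper. The paper proves faithful flatness at finite level $n$: it observes $O(c)_n = O(c)_n^{(i)}[T_1^{(n)},T_i^{(n)}]/(Z^{(n)}-T_1^{(n)}T_i^{(n)})$, and, since $O(c)_n$ is a domain and $Z^{(n)}$ is therefore a non-zerodivisor on it, reduces flatness of $O(c)_n^{(i)}\to O(c)_n$ to flatness of $O(c)_n^{(i)}/(Z^{(n)})\to O(c)_n/(Z^{(n)})$, which is visible because $O(c)_n/(Z^{(n)}) \cong \bigl(k[T_1^{(n)},T_i^{(n)}]/(T_1^{(n)}T_i^{(n)})\bigr)\otimes_k O(c)_n^{(i)}/(Z^{(n)})$; faithful flatness is then propagated to the colimit and along the flat base change $K_\infty^+\to\bar K^+$. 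You instead prove the considerably stronger statement that $\tilde O(c)_\infty=\bigoplus_{\gamma\in\mathbb Z[1/p]} m_\gamma\,\tilde O(c)_\infty^{(i)}$ is free on the explicit monomials $m_\gamma$, deducing faithful flatness formally; the direct-sum decomposition you write down is precisely the content of the unnumbered lemma that the paper proves immediately \emph{after} Lemma~\ref{subflat} (identifying $m_\gamma$ with the eigenvector $e_\gamma$ of the $\sigma_i$-eigenspace $E_\gamma$), and the paper also verifies directness by passing to $\tilde O(c)_\infty[1/p]\cong\varinjlim_n\bar K[T_2^{\pm p^{-n}},\dots,T_{d+1}^{\pm p^{-n}}]$, exactly as you do. So in effect you are proving the later lemma first and obtaining \ref{subflat} as a corollary, which is a perfectly legitimate and arguably cleaner ordering: you get the explicit basis (which the paper needs anyway for Proposition~\ref{direct}) in one pass, at the cost of no longer isolating a short, stand-alone flatness argument. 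One small point worth making explicit when you invoke the generators of $\tilde O(c)_\infty^{(i)}$: the paper's phrasing lists $T_j^{\beta_j}$ with $\beta_j\in\mathbb N[1/p]$, but for $r+1\le j\le d+1$ these are units of $O(c)$ and one should (and you implicitly do, via your parenthetical on $T_j^{-1}$) read the generators for those $j$ as Laurent, i.e.\ $\beta_j\in\mathbb Z[1/p]$ --- otherwise neither your degree-$0$ identification nor the paper's own rewriting $f=\sum x_{\beta,\gamma}T_i^\beta T_1^\gamma$ in the subsequent lemma would span all of $\tilde O(c)_\infty$.
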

\begin{proof}
The case $c=1$ being trivial, we only consider the case $c=\pi$. First assume $2\leq i\leq r$. Write $\tilde{O}(c)_{\infty}^{(i)}=\left(\varinjlim_nO(c)_{n}^{(i)}\right)\otimes_{K^+_{\infty}}\bar{K}^+$ where
\[ O(c)_{n}^{(i)}=\dfrac{K^+_n[Z^{(n)},T_j^{(n)}]_{2\leq j\leq d+1,j\neq i}}{(Z^{(n)}\prod_{2\leq j\leq r,j\neq i}T_j^{(n)}-c_n)} \]
where $Z^{(n)}=T_1^{(n)}T_i^{(n)}$. It suffices to check that $O(c)_{n}^{(i)}\to O(c)_{n}$ is faithfully flat. Note that
\[ O(c)_{n}=O(c)_{n}^{(i)}[T_1^{(n)},T_i^{(n)}]/(Z^{(n)}-T_1^{(n)}T_i^{(n)}). \]
Since $O(c)_{n}$ is an integral domain, it suffices to check that $O(c)_{n}^{(i)}/(Z^{(n)})\to O(c)_{n}/(Z^{(n)})$ is flat. But $O(c)_{n}/(Z^{(n)})=\dfrac{k[T_1^{(n)},T_i^{(n)}]}{(T_1^{(n)}T_i^{(n)})}\otimes_kO(c)_{n}^{(i)}/(Z^{(n)})$ and the claim follows. For $r+1\leq i\leq d+1$ the claim is obvious.
\end{proof}

\begin{lemma}
Fix some $i$, $2\leq i\leq d+1$, and let $\mathbb{Z}_p(1)$ be the factor of $\Delta_\infty$ corresponding to $\sigma_i$. There is an isomorphism of $\tilde{O}(c)_{\infty}^{(i)}[\mathbb{Z}_p(1)]$-modules
\[ \tilde{O}(c)_{\infty}=\bigoplus_{\alpha\in\mathbb{Z}[1/p]}E_{\alpha} \]
where $E_{\alpha}$ is the $\sigma_i$-eigenspace of eigenvalue $\zeta_{p^r}^n$, where $\alpha=n/p^r$ with $(n,p)=1$. $E_{\alpha}$ is a free $\tilde{O}(c)_{\infty}^{(i)}$-module of rank 1 with generator $e_{\alpha}$ given by
\begin{itemize}
\item if $2\leq i\leq r$ and $\alpha\geq 0$, then $e_{\alpha}=T_i^{\alpha}$
\item if $2\leq i\leq r$ and $\alpha<0$, then $e_{\alpha}=T_1^{-\alpha}$
\item if $r+1\leq i\leq d+1$, then $e_{\alpha}=T_i^{\alpha}$.
\end{itemize}
\end{lemma}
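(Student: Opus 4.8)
The plan is to endow $\tilde{O}(c)_\infty$ with a $\mathbb{Z}[1/p]$-grading as an $\tilde{O}(c)_\infty^{(i)}$-module whose homogeneous components are precisely the $E_\alpha$, and then to compute the $\sigma_i$-action on each component from $\sigma_i(T_i^{(N)})=\zeta_{p^N}T_i^{(N)}$. I would first build the grading at a finite level $N$ (and over a finite extension $L$ of $K$), and then pass to the filtered colimit over $N$ and $L$.

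At finite level, fix $N$ and $L$ and set $A:=O(c)_{N,L}^{(i)}$, which is an integral domain (it is $p$-torsion free with geometrically integral generic fibre over $L_N$; cf. the proof of Lemma \ref{subflat}). If $r+1\le i\le d+1$ then $T_i^{(N)}$ is a unit, so $O(c)_{N,L}=A[T_i^{(N)},(T_i^{(N)})^{-1}]$ is a Laurent polynomial ring, hence $\mathbb{Z}$-graded with degree-$k$ part the free rank-one $A$-module $A\cdot(T_i^{(N)})^k$. If $2\le i\le r$ I would use the presentation from the proof of Lemma \ref{subflat}: $O(c)_{N,L}=A[x,y]/(xy-z)$ with $x=T_1^{(N)}$, $y=T_i^{(N)}$, $z=Z^{(N)}=T_1^{(N)}T_i^{(N)}\in A$; here $z$ is a nonzero element of the domain $A$, hence a non-zerodivisor. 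Reducing monomials via $xy=z$, every element of $O(c)_{N,L}$ is uniquely of the form $\sum_{k\ge 0}a_ky^k+\sum_{k\ge 1}b_kx^k$ with $a_k,b_k\in A$; uniqueness follows from the injection $O(c)_{N,L}\hookrightarrow A[z^{-1}][y,y^{-1}]$ sending $x\mapsto z/y$ (well defined and injective since $z$ is a non-zerodivisor), in which $\{y^k\}_{k\in\mathbb{Z}}$ is an $A[z^{-1}]$-basis. Declaring $\deg y=1$, $\deg x=-1$, $\deg A=0$ (consistent because $\deg(xy)=0=\deg z$) makes $O(c)_{N,L}$ a $\mathbb{Z}$-graded $A$-algebra with free rank-one homogeneous pieces.

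Passing to the colimit: the transition maps $T_j^{(N)}\mapsto (T_j^{(N+1)})^p$ send $Z^{(N)}$ to $(Z^{(N+1)})^p$, hence induce injections $O(c)_{N,L}^{(i)}\hookrightarrow O(c)_{N+1,L}^{(i)}$, and they respect the gradings once the level-$N$ grading is read as the $\tfrac1{p^N}\mathbb{Z}$-grading with $\deg T_i^{(N)}=p^{-N}$, since then $(T_i^{(N+1)})^p$ again has degree $p^{-N}$. So $\varinjlim_{N,L}$ produces a $\mathbb{Z}[1/p]$-grading of $\tilde{O}(c)_\infty$ whose degree-$0$ part is $\varinjlim_{N,L}O(c)_{N,L}^{(i)}=\tilde{O}(c)_\infty^{(i)}$; write $E_\alpha$ for the degree-$\alpha$ part. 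For $\alpha=n/p^r$ with $(n,p)=1$, at every level $N\ge r$ the degree-$\alpha$ part of $O(c)_{N,L}$ is free of rank one over $O(c)_{N,L}^{(i)}$ on $(T_i^{(r)})^n=T_i^\alpha$ when $\alpha\ge 0$ (or when $r+1\le i\le d+1$), and on $(T_1^{(r)})^{-n}=T_1^{-\alpha}$ when $2\le i\le r$ and $\alpha<0$; these generators are compatible with the transition maps, and since $\tilde{O}(c)_\infty$ is an integral domain (Prop. \ref{integral}) the limit generator $e_\alpha$ is a non-zerodivisor, so $E_\alpha$ is free of rank one over $\tilde{O}(c)_\infty^{(i)}$ with generator $e_\alpha$ as stated and $\tilde{O}(c)_\infty=\bigoplus_{\alpha\in\mathbb{Z}[1/p]}E_\alpha$. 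Finally, $\sigma_i$ fixes $\bar{K}^+$ and every $T_j^{(N)}$ with $j\ne i$, and $\sigma_i(T_i^{(N)})=\zeta_{p^N}T_i^{(N)}$; when $2\le i\le r$, combining this with $T_1^{(N)}\cdots T_r^{(N)}=c_N$ and $\sigma_i(c_N)=c_N$ forces $\sigma_i(T_1^{(N)})=\zeta_{p^N}^{-1}T_1^{(N)}$. Hence $\sigma_i$ fixes each generator $T_j^{\beta_j}$ ($j\ne i$) and $(T_1T_i)^\beta$ of $\tilde{O}(c)_\infty^{(i)}$, so it acts trivially on $\tilde{O}(c)_\infty^{(i)}$, and $\sigma_i(e_\alpha)=\zeta_{p^r}^n e_\alpha$ in all cases; therefore $\sigma_i$ acts on $E_\alpha$ by the scalar $\zeta_{p^r}^n$, which is the assertion of the lemma.

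The one delicate point is the freeness of the homogeneous pieces in the case $2\le i\le r$, i.e. the decomposition $O(c)_{N,L}=\bigoplus_{k\ge 1}Ax^k\oplus A\oplus\bigoplus_{k\ge 1}Ay^k$, which rests on $z=Z^{(N)}$ being a non-zerodivisor — so it is the integrality of the rings $O(c)_{N,L}$ and $\tilde{O}(c)_\infty$ that makes the argument go. Everything else is routine bookkeeping with filtered colimits and the explicit Galois action.
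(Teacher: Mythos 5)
Your argument is correct and, at its mathematical core, uses the same mechanism as the paper's proof: a spanning set of ``monomials'' $T_i^\alpha$ and $T_1^{-\alpha}$ is easy to exhibit, and linear independence is established by injecting into a Laurent polynomial ring via a localization (you invert $y=T_i^{(N)}$ and $z=Z^{(N)}$ at finite level, whereas the paper inverts $p$ directly in $\tilde{O}(c)_\infty$, obtaining $\varinjlim_n\bar{K}[T_2^{\pm p^{-n}},\dots,T_{d+1}^{\pm p^{-n}}]$). The main difference is organizational: you carefully set up a $\tfrac1{p^N}\mathbb{Z}$-grading at each finite level via the explicit presentation $A[x,y]/(xy-z)$ and then pass to the filtered colimit, while the paper works at once at the level of $\tilde{O}(c)_\infty$; your route is somewhat longer but also more self-contained, since it makes the domain/non-zerodivisor hypotheses on which the uniqueness rests entirely explicit rather than burying them in the phrase ``where it is evident.'' Both correctly handle the $\sigma_i$-action via $\sigma_i(T_i^{(N)})=\zeta_{p^N}T_i^{(N)}$ and the constraint $T_1^{(N)}\cdots T_r^{(N)}=c_N$.
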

\begin{proof}
First assume that $2\leq i\leq r$. Clearly, every element $f\in\tilde{O}(c)_{\infty}$ has the form $f=\sum_{\beta,\gamma\in\mathbb{N}[1/p]}x_{\beta,\gamma}T_i^{\beta}T_1^{\gamma}$ with $x_{\beta,\gamma}$ in the sub-$\bar{K}^+$-algebra of $\tilde{O}(c)_{\infty}$ generated by $T_j^{\beta_j}$, $j\notin\left\lbrace 1,i\right\rbrace$, $2\leq j\leq d+1$, $\beta_j\in\mathbb{N}[1/p]$. So we can write
\[ f=\sum_{\alpha\geq 0}\sum_{\beta-\gamma=\alpha}x_{\beta,\gamma}(T_iT_1)^{\gamma}T_i^{\alpha}+ \sum_{\alpha<0}\sum_{\beta-\gamma=\alpha}x_{\beta,\gamma}(T_iT_1)^{\beta}T_1^{-\alpha}. \]
Hence
\[  \tilde{O}(c)_{\infty}=\sum_{\alpha\in\mathbb{Z}[1/p]}E_{\alpha}. \]
Let $X_{\alpha}=\sum_{\beta-\gamma=\alpha}x_{\beta,\gamma}(T_iT_1)^{\gamma}$ if $\alpha\geq 0$ and $X_{\alpha}=\sum_{\beta-\gamma=\alpha}x_{\beta,\gamma}(T_iT_1)^{\beta}$ otherwise. It remains to show that the writing $f=\sum_{\alpha\geq 0}X_{\alpha}T_i^{\alpha}+\sum_{\alpha<0}X_{\alpha}T_1^{-\alpha}$ is unique. Equivalently it suffices to show that if $f=0$ then $X_{\alpha}=0$ for all $\alpha$. It suffices to show this for the image of $f$ in $\tilde{O}(c)_{\infty}[1/p]\cong \varinjlim_n\bar{K}[T_2^{\pm p^{-n}},...,T_{d+1}^{\pm p^{-n}}]$, where it is evident. The case $i>r$ is similar.
\end{proof}

In the sequel we often write $A_{\cris,\infty}:=A_{\cris,\infty}(O(c))$. For $2\leq i\leq d$ define a sub-$A_{\cris}(K^+)$-algebra $A_{\cris,\infty}^{(i)}/p^nA_{\cris,\infty}^{(i)}\subset A_{\cris,\infty}/p^nA_{\cris,\infty}$ as follows:
\begin{itemize}
\item if $2\leq i\leq r$, then $A_{\cris,\infty}^{(i)}/p^nA_{\cris,\infty}^{(i)}$ the sub-$A_{\cris}(K^+)$-algebra of $A_{\cris,\infty}/p^nA_{\cris,\infty}$ generated by $[\underline{T_j}]^{\beta_j}$ with $j\notin\left\lbrace 1,i\right\rbrace$, $2\leq j\leq d+1$, $\beta_j\in\mathbb{N}[1/p]$, and $[\underline{T_iT_1}]^{\beta}$ with $\beta\in\mathbb{N}[1/p]$
\item if $r+1\leq i\leq d+1$, then $A_{\cris,\infty}^{(i)}/p^nA_{\cris,\infty}^{(i)}$ the sub-$A_{\cris}(K^+)$-algebra of $A_{\cris,\infty}/p^nA_{\cris,\infty}$ generated by $[\underline{T_j}]^{\beta_j}$, $j\neq i$, $2\leq j\leq d+1$, $\beta_j\in\mathbb{N}[1/p]$.
\end{itemize}

\begin{prop}\label{direct}
Fix some $i$, $2\leq i\leq d+1$, and let $\mathbb{Z}_p(1)$ be the factor of $\Delta_\infty$ corresponding to $\sigma_i$. There is an isomorphism of $A_{\cris,\infty}^{(i)}/p^nA_{\cris,\infty}^{(i)}[\mathbb{Z}_p(1)]$-modules
\[ A_{\cris,\infty}(O(c))/p^nA_{\cris,\infty}(O(c))\cong \bigoplus_{\alpha\in\mathbb{Z}[1/p]}F_{\alpha} \]
where $F_{\alpha}$ is the $\sigma_i$-eigenspace of eigenvalue $[\underline{1}]^{\alpha}$. $F_{\alpha}$  is a free $A_{\cris,\infty}^{(i)}/p^nA_{\cris,\infty}^{(i)}$-module of rank 1 with generator $e_{\alpha}$ given by
\begin{itemize}
\item if $2\leq i\leq r$ and $\alpha\geq 0$, then $e_{\alpha}=[\underline{T_i}]^{\alpha}$
\item if $2\leq i\leq r$ and $\alpha<0$, then $e_{\alpha}=[\underline{T_1}]^{-\alpha}$
\item if $r+1\leq i\leq d+1$, then $e_{\alpha}=[\underline{T_i}]^{\alpha}$.
\end{itemize}
\end{prop}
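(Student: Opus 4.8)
The plan is to transport the grading/eigenspace decomposition of $\tilde{O}(c)_\infty$ from the preceding lemma through the explicit description of $A_{\cris,\infty}$ furnished by Proposition \ref{flat2}.

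First, writing $P:=P(\tilde{O}(c)_\infty/p\tilde{O}(c)_\infty)$ and $\bar{P}:=P(\bar{K}^+/p\bar{K}^+)$, I would invoke Proposition \ref{flat2} (ii) (which applies to $\tilde{O}(c)_\infty$ by Proposition \ref{integral} and Corollary \ref{frobsurjective}) to get
\[ A_{\cris,\infty}(O(c))/p^n\;\cong\;W_n(P)\otimes_{W_n(\bar{P})}A_{\cris}(K^+)/p^n. \]
The same construction applied to the flat $\bar{K}^+$-subalgebra $\tilde{O}(c)_\infty^{(i)}$ --- whose perfection $P^{(i)}:=P(\tilde{O}(c)_\infty^{(i)}/p)$ is flat over $\bar{P}$ by Lemma \ref{subflat} and the argument of Proposition \ref{flat2} (ii) --- gives $W_n(P^{(i)})\otimes_{W_n(\bar{P})}A_{\cris}(K^+)/p^n$, and comparing Teichm\"uller generators via Lemma \ref{write} identifies this with $A^{(i)}_{\cris,\infty}/p^n$.

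Next I would exploit that the decomposition $\tilde{O}(c)_\infty=\bigoplus_{\alpha\in\mathbb{Z}[1/p]}E_\alpha$ of the preceding lemma is a $\mathbb{Z}[1/p]$-grading of $\tilde{O}(c)_\infty$ as a $\bar{K}^+$-algebra, with $\bar{K}^+\subseteq E_0=\tilde{O}(c)_\infty^{(i)}$, and that it is Frobenius-compatible after reduction mod $p$ since $e_\alpha^p=e_{p\alpha}$. Because the grading group $\mathbb{Z}[1/p]$ is uniquely $p$-divisible and Frobenius is additive in characteristic $p$, the perfection $P=\varprojlim_F\tilde{O}(c)_\infty/p$ inherits a $\mathbb{Z}[1/p]$-grading $P=\bigoplus_\gamma P_\gamma$ with $P_0=P^{(i)}$ and each $P_\gamma$ free of rank one over $P^{(i)}$ on the coherent sequence $\underline{e}_\gamma$ of $p$-power roots of $e_\gamma$ (namely $\underline{T_i}^\gamma$, resp.\ $\underline{T_1}^{-\gamma}$). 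The same mechanism propagates the grading to $W_n$: one checks that $W_n(P)=\bigoplus_\delta W_n(P)^{(\delta)}$ with $W_n(P)^{(\delta)}:=\{(a_j)_j : a_j\in P_{p^j\delta}\}=W_n(P^{(i)})\cdot[\underline{e}_\delta]$, free of rank one over $W_n(P^{(i)})$ (using that multiplication by a Teichm\"uller $[u]$ sends $(a_j)_j\mapsto(a_j u^{p^j})_j$, and that the Witt addition polynomials are homogeneous in the corresponding sense). Finally, tensoring over $W_n(\bar{P})$ with $A_{\cris}(K^+)/p^n$ --- which, together with $\xi$ and its divided powers, lies in degree $0$ --- preserves the decomposition, yielding
\[ A_{\cris,\infty}(O(c))/p^n=\bigoplus_{\alpha\in\mathbb{Z}[1/p]}F_\alpha,\qquad F_\alpha:=W_n(P)^{(\alpha)}\otimes_{W_n(\bar{P})}A_{\cris}(K^+)/p^n, \]
with $F_0=A^{(i)}_{\cris,\infty}/p^n$ and $F_\alpha$ free of rank one over $F_0$ on $e_\alpha:=[\underline{e}_\alpha]\otimes 1$, which is $[\underline{T_i}]^\alpha$ (resp.\ $[\underline{T_1}]^{-\alpha}$) as in the statement.

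To conclude I would identify the $\sigma_i$-action: $\sigma_i$ fixes $\bar{K}^+$, hence fixes $A_{\cris}(K^+)/p^n$ and $\xi$, and it fixes $\underline{T_j}$ for $j\neq i$ as well as $\underline{T_iT_1}$ (the relation $T_1\cdots T_r=c$ forces $\sigma_i(\underline{T_1})=\underline{1}^{-1}\underline{T_1}$), so $\sigma_i$ acts trivially on $F_0$; and $\sigma_i(e_\alpha)=[\sigma_i(\underline{e}_\alpha)]\otimes 1=[\underline{1}]^\alpha e_\alpha$, so $F_\alpha$ is the $\sigma_i$-eigenspace of eigenvalue $[\underline{1}]^\alpha$ and the displayed isomorphism is one of $A^{(i)}_{\cris,\infty}/p^n[\mathbb{Z}_p(1)]$-modules. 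The main obstacle will be the middle step: verifying carefully that the $\mathbb{Z}[1/p]$-grading descends through both $P(-)$ and $W_n(-)$ as genuine direct sums, with graded-free modules going to graded-free modules --- this rests on the interaction of Frobenius with the unique $p$-divisibility of the grading group and on the shape of the Witt polynomials mod $p$ --- together with the bookkeeping, via Lemma \ref{write}, needed to match $F_0$ with $A^{(i)}_{\cris,\infty}/p^n$ as defined by its generators.
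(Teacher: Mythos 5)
The step you flag as the ``main obstacle'' is in fact a genuine gap, and it is not merely a matter of careful bookkeeping: the assertion that the perfection $P = \varprojlim_F \tilde{O}(c)_\infty/p$ inherits a $\mathbb{Z}[1/p]$-grading $P = \bigoplus_\gamma P_\gamma$ is \emph{false}. An element of $P$ is a coherent sequence $(a^{(m)})_m$, and while each $a^{(m)}$ is a finite sum of homogeneous elements of $\tilde{O}(c)_\infty/p$, the total ``support'' in $\mathbb{Z}[1/p]$ need not be finite. Concretely (say with $c=\pi$, $r=1$, $d=1$, so $\tilde{O}(c)_\infty/p = \bar{K}^+/p[T_2^{\pm 1/p^\infty}]$), the sequence
\[ a = \sum_{k\geq 0}\underline{p}^{\,p^k}\,\underline{T_2}^{\,p^k},\qquad\text{i.e.}\quad a^{(m)} = \sum_{k=0}^{m-1}p^{p^k/p^m}T_2^{p^k/p^m}, \]
is a well-defined element of $P$ (each $a^{(m)}$ is a finite sum because $p^{p^k/p^m}=0$ in $\bar{K}^+/p$ once $k\geq m$, and $(a^{(m+1)})^p = a^{(m)}$ by Frobenius-additivity), yet its degree-$p^k$ component $\underline{p}^{\,p^k}\underline{T_2}^{\,p^k}$ is nonzero in $P$ for \emph{every} $k$. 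So $a$ does not lie in $\bigoplus_\gamma P_\gamma$, which is a proper subring of $P$. The same obstruction then propagates to your claimed decomposition of $W_n(P)$, since $[a]\in W_n(P)$ has zeroth component $a$. In short, the grading of $\tilde{O}(c)_\infty$ only induces a \emph{topological} decomposition of $P$ and $W_n(P)$, not an algebraic direct sum, while the proposition asserts a genuine direct sum.

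The paper's proof is organised precisely so as to avoid this issue. It does not attempt to grade $P$ or $W_n(P)$. For $n=1$ it uses Proposition~\ref{flat2}~(i), which identifies $A_{\cris,\infty}/p$ with $(\tilde{O}(c)_\infty/p)\otimes_{\bar{K}^+/p}A_{\cris}(K^+)/p$ --- here the unperfected ring $\tilde{O}(c)_\infty/p$ carries the honest grading of the preceding lemma, and $A_{\cris}(K^+)/p$ sits in degree zero. For $n>1$ it first establishes Lemma~\ref{write}, which is exactly the missing finiteness statement: because $\underline{p}$ acquires divided powers (hence is nilpotent modulo $p^n$) in $A_{\cris,\infty}/p^n$, every element there \emph{is} a finite $A_{\cris}(K^+)$-linear combination of monomials $\prod[\underline{T_i}]^{\alpha_i}$ --- the infinite tails such as the one in the example above die. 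Lemma~\ref{write} then yields a (not a priori direct) sum decomposition, and directness is proved by induction on $n$ using the short exact sequence $0\to p^{n-1}A_{\cris,\infty}/p^n\to A_{\cris,\infty}/p^n\to A_{\cris,\infty}/p^{n-1}\to 0$ together with the $W_n(k)$-flatness from Proposition~\ref{flat2}. To repair your argument you would have to replace the false claim $P=\bigoplus_\gamma P_\gamma$ by a finiteness statement at the level of the tensor product $W_n(P)\otimes_{W_n(\bar{P})}A_{\cris}(K^+)/p^n$, which essentially amounts to re-deriving Lemma~\ref{write}; the rest of your outline (the eigenvalue computation, the identification of the generators $[\underline{T_i}]^\alpha$, $[\underline{T_1}]^{-\alpha}$, and the homogeneity of the Witt polynomials needed to see that each $W_n(P)^{(\delta)}$ is stable under addition) is correct as far as it goes.
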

\begin{proof}
First assume $2\leq i\leq r$. If $n=1$, then by Proposition \ref{flat2} we have an isomorphism
\[ A_{\cris,\infty}/pA_{\cris,\infty}\cong (\tilde{O}(c)_{\infty}/p\tilde{O}(c)_{\infty})\otimes_{\bar{K}^+/p\bar{K}^+}A_{\cris}(K^+)/pA_{\cris}(K^+) \]
which sends $\underline{T_i}$ to $T_i^{(1)}\otimes 1$. It follows from Lemma \ref{subflat} that under this isomorphism we have
\[ A_{\cris,\infty}^{(i)}/pA_{\cris,\infty}^{(i)}\cong (\tilde{O}(c)_{\infty}^{(i)}/p\tilde{O}(c)_{\infty}^{(i)})\otimes_{\bar{K}^+/p\bar{K}^+}A_{\cris}(K^+)/pA_{\cris}(K^+) \]
hence in this case the claim follows from the last lemma.

Now we show the result by induction on $n$. Assume it true for $n-1$. From Lemma \ref{write} we see that we can write any $a\in A_{\cris,\infty}/p^nA_{\cris,\infty}$ in the form
\[ a=\sum_{\beta,\gamma}x_{\beta,\gamma}[\underline{T_i}]^{\beta}[\underline{T_1}]^{\gamma}=\sum_{\alpha\geq 0}\sum_{\beta-\gamma=\alpha}x_{\beta,\gamma}[\underline{T_iT_1}]^{\gamma}[\underline{T_i}]^{\alpha}+\sum_{\alpha<0}\sum_{\beta-\gamma=\alpha}x_{\beta,\gamma}[\underline{T_iT_1}]^{\beta}[\underline{T_1}]^{-\alpha} \]
with $x_{\beta,\gamma}$ in the sub-$A_{\cris}(K^+)$-algebra of $A_{\cris,\infty}/p^nA_{\cris,\infty}$ generated by $[\underline{T_j}]^{\beta_j}$ with $j\notin\left\lbrace 1,i\right\rbrace$, $2\leq j\leq d+1$, $\beta_j\in\mathbb{N}[1/p]$. Define $X_{\alpha}:=x_{\beta,\gamma}[\underline{T_iT_1}]^{\gamma}$ if $\alpha\geq 0$ and $X_{\alpha}:=x_{\beta,\gamma}[\underline{T_iT_1}]^{\beta}$ otherwise. It remains to show that if $a=0$, then $X_{\alpha}=0$ for all $\alpha$. We have an exact sequence
\[ 0\to p^{n-1}A_{\cris,\infty}/p^nA_{\cris,\infty}\to A_{\cris,\infty}/p^nA_{\cris,\infty}\to A_{\cris,\infty}/p^{n-1}A_{\cris,\infty}\to 0 \]
and if $a=0$, then $X_{\alpha}\equiv 0\mod p^{n-1}$ by induction hypothesis. So $X_{\alpha}=p^{n-1}Y_{\alpha}$ for all $\alpha$. Since by flatness over $W_n(k)$ we have $p^{n-1}A_{\cris,\infty}/p^nA_{\cris,\infty}\cong A_{\cris,\infty}/pA_{\cris,\infty}$  (Prop. \ref{flat2}) we see that $Y_{\alpha}=0$ for all $\alpha$ and this completes the proof for $2\leq i\leq r$. For $i>r$ the proof is similar. \end{proof}

\begin{cor}\label{invariants1}
Fix some $i$, $2\leq i\leq d+1$, and let $\mathbb{Z}_p(1)$ be the factor of $\Delta_\infty$ corresponding to $\sigma_i$. Then
\[ t\cdot (A_{\cris,\infty}(O(c))/p^nA_{\cris,\infty}(O(c)))^{\sigma_i=1}\subset \bigoplus_{\alpha\in\mathbb{Z},v_p(\alpha)>0}p^{\max\left\{0,n-v_p(\alpha)\right\}}F_{\alpha}. \]
\end{cor}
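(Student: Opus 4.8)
The plan is to combine the $\sigma_i$-eigenspace decomposition of Proposition~\ref{direct} with Fontaine's computation in Proposition~\ref{constants}~(iii). Write $C := A_{\cris,\infty}^{(i)}(O(c))/p^nA_{\cris,\infty}^{(i)}(O(c))$. By Proposition~\ref{direct}, $\sigma_i$ acts trivially on $C = F_0$, one has $A_{\cris,\infty}(O(c))/p^n = \bigoplus_{\alpha\in\mathbb{Z}[1/p]}F_\alpha$ with each $F_\alpha = Ce_\alpha$ free of rank one over $C$, and $\sigma_i$ acts on $F_\alpha$ by the scalar $[\underline{1}]^\alpha\in A_{\cris}(K^+)$. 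The first point to record is that $C$ is flat over $W_n(k)$: since $e_0 = 1$, the summand $F_0$ is $C$ itself, so $C$ is a $W_n(k)$-module direct summand of $A_{\cris,\infty}(O(c))/p^n$, which is $W_n(k)$-flat by Proposition~\ref{dppolynomialalgebra}. Tensoring the exact sequence $0\to W(k)/p^{n-m}\xrightarrow{p^m}W(k)/p^n\to W(k)/p^m\to 0$ with $C$ over $W_n(k)$ then shows that, for $0\le m\le n$, the $p^m$-torsion of $C$ equals $p^{n-m}C$; this is the only structural input we will need.

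Next I would unwind the condition $\sigma_i(x) = x$. Writing $x = \sum_\alpha x_\alpha e_\alpha$ as a finite sum with $x_\alpha\in C$, and using that $\sigma_i$ fixes $C$ and scales $e_\alpha$ by $[\underline{1}]^\alpha$, one gets $\sigma_i(x) = \sum_\alpha [\underline{1}]^\alpha x_\alpha e_\alpha$, so $\sigma_i$-invariance of $x$ is equivalent to $([\underline{1}]^\alpha-1)x_\alpha = 0$ in $C$ for every $\alpha$. Hence $tx = \sum_\alpha (tx_\alpha)e_\alpha$, and it suffices to locate each $tx_\alpha$ inside $C$.

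For $\alpha\neq 0$ set $m_\alpha := \max\{v_p(\alpha),0\}$. Proposition~\ref{constants}~(iii) provides $\beta_\alpha\in A_{\cris}(K^+)$ with $t\,p^{m_\alpha} = \beta_\alpha([\underline{1}]^\alpha-1)$; multiplying the relation $([\underline{1}]^\alpha-1)x_\alpha = 0$ by $\beta_\alpha$ gives $p^{m_\alpha}(tx_\alpha) = 0$ in $C$, whence $tx_\alpha\in p^{\max\{0,\,n-m_\alpha\}}C$ by the torsion statement above. If $\alpha\notin\mathbb{Z}$ (so $v_p(\alpha)<0$), or $\alpha\in\mathbb{Z}$ with $v_p(\alpha) = 0$, then $m_\alpha = 0$ and $tx_\alpha\in p^nC = 0$, so these terms drop out; if $\alpha\in\mathbb{Z}$ with $v_p(\alpha)>0$, then $tx_\alpha e_\alpha\in p^{\max\{0,\,n-v_p(\alpha)\}}F_\alpha$; and $tx_0e_0\in F_0$ trivially, consistent with the convention $v_p(0) = \infty$ on the right-hand side. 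Summing over $\alpha$ yields $tx\in\bigoplus_{\alpha\in\mathbb{Z},\,v_p(\alpha)>0}p^{\max\{0,\,n-v_p(\alpha)\}}F_\alpha$, as claimed.

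The only genuinely non-formal ingredient is Fontaine's Proposition~\ref{constants}~(iii), which is already available; the remainder is bookkeeping with the eigenspace decomposition. The step that will take the most care is the first one: checking that the decomposition of Proposition~\ref{direct} is genuinely $C$-linear (so that $F_0$ is a $W_n(k)$-direct summand of $A_{\cris,\infty}(O(c))/p^n$), and then pinning down the $p^m$-torsion of a flat $W_n(k)$-module as exactly $p^{n-m}C$.
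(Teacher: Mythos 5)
Your argument is correct and follows essentially the same route as the paper: decompose via Proposition~\ref{direct}, deduce $([\underline{1}]^\alpha-1)x_\alpha=0$ from $\sigma_i$-invariance, multiply by the element furnished by Proposition~\ref{constants}~(iii) to get $p^{\max\{0,v_p(\alpha)\}}\,tx_\alpha=0$, and finish with $W_n(k)$-flatness (Proposition~\ref{flat2}). The only cosmetic differences are that you spell out the torsion computation for the flat module $C$ in detail and rely only on part~(iii) of Proposition~\ref{constants}, whereas the paper also quotes part~(ii) in the integral case, which is redundant for the conclusion.
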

\begin{proof}
If $f\in F_{\alpha}$ is invariant under $\sigma_i$, then $0=([\underline{1}]^{\alpha}-1)f$ and by Proposition \ref{constants} there exists $b_{\alpha}\in A_{\cris}(K^+)$ such that $b_{\alpha}([\underline{1}]^{\alpha}-1)f=p^{v_{\alpha}}tf$, where $v_{\alpha}=\max\left\lbrace 0,v_p(\alpha)\right\rbrace $. So if $v_p(\alpha)\leq 0$, then $tf=0$. If $v_p(\alpha)\geq 0$, then $\alpha\in\mathbb{Z}$ and so $([\underline{1}]^{\alpha}-1)f=\alpha u_{\alpha}tf$ for some unit $u_{\alpha}$ (by \emph{loc. cit.}). So $tf$ is killed by $p^{v_p(\alpha)}$ and hence the result follows from the flatness over $W_n(k)$ (Prop. \ref{flat2}).
\end{proof}

\begin{cor}\label{invariants1'}
The canonical map
\[ t^d\cdot\left( A_{\cris,\infty}(O(c))/p^nA_{\cris,\infty}(O(c))\right) ^{\Delta_\infty}\to \left( M^+_\infty(O(c))/p^nM^+_\infty(O(c))\right) ^{\Delta_\infty}  \]
has image in $B^+_{\log}/p^nB^+_{\log}\otimes_{\Sigma_n}\Theta(c)/p^n\Theta(c)$.
\end{cor}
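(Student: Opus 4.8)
The plan is to reduce the statement, via Proposition~\ref{dppolynomialalgebra}, to an explicit computation inside the divided power polynomial ring $M^+_\infty/p^nM^+_\infty=A_{\cris,\infty}(O(c))/p^n\langle X,X_2,\dots,X_{d+1}\rangle$, controlling the $\Delta_\infty$-invariants by the eigenspace calculus of Propositions~\ref{direct} and~\ref{constants}. We treat $c=\pi$; the case $c=1$ is entirely analogous, the only change being that $[\underline{\pi}]=u+X$ rather than $[\underline{\pi}]=u(1+X)$ in $M^+_\infty/p^n$.

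First I would record the simultaneous eigenspace decomposition. Since the topological generators $\sigma_2,\dots,\sigma_{d+1}$ of $\Delta_\infty$ commute, iterating Proposition~\ref{direct} gives a $\Delta_\infty$-stable direct sum
\[ A_{\cris,\infty}(O(c))/p^n=\bigoplus_{\vec\beta\in\mathbb{Z}[1/p]^d}G_{\vec\beta} \]
on which $\sigma_i$ acts by the scalar $[\underline{1}]^{\beta_i}$, where each $G_{\vec\beta}$ is free of rank one over $H:=\bigcap_{i}A^{(i)}_{\cris,\infty}/p^n$, and $\bigcap_iA^{(i)}_{\cris,\infty}/p^n=A_{\cris}(K^+)/p^n$. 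When $\vec\beta\in\mathbb{Z}^d$ one may take the generator $g_{\vec\beta}=[\underline{\pi}]^{m}\prod_{i=2}^{d+1}[\underline{T_i}]^{\beta_i}$ with $m:=\max\{0,-\beta_2,\dots,-\beta_r\}$ (equivalently $g_{\vec\beta}=[\underline{T_1}]^{m}\prod_{i=2}^{r}[\underline{T_i}]^{\beta_i+m}\prod_{i>r}[\underline{T_i}]^{\beta_i}$, in which $T_1,\dots,T_r$ occur only to nonnegative exponents). Given $a\in(A_{\cris,\infty}/p^n)^{\Delta_\infty}$, decomposing $a=\sum_{\vec\beta}a_{\vec\beta}$ with $a_{\vec\beta}=c_{\vec\beta}g_{\vec\beta}$, $c_{\vec\beta}\in H$, the $\Delta_\infty$-invariance forces $([\underline{1}]^{\beta_i}-1)c_{\vec\beta}=0$ for every $i$.

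Second, I would isolate the $\vec\beta$ that survive multiplication by $t$. If some $\beta_i$ is not an integer, or is a nonzero integer with $v_p(\beta_i)=0$, then by Proposition~\ref{constants}(ii)--(iii) the element $[\underline{1}]^{\beta_i}-1$ divides $t$ in $A_{\cris}(K^+)$, so $tc_{\vec\beta}=0$ and $t^da_{\vec\beta}=0$. Otherwise every nonzero $\beta_i$ lies in $p\mathbb{Z}$; picking $i_0$ with $v_p(\beta_{i_0})$ minimal and using $[\underline{1}]^{\beta_{i_0}}-1=\beta_{i_0}tu_{\beta_{i_0}}$ (Proposition~\ref{constants}(ii)) together with flatness of $A_{\cris}(K^+)/p^n$ over $W_n(k)$ (Lemma~\ref{flat}, Proposition~\ref{flat2}), the relation $([\underline{1}]^{\beta_{i_0}}-1)c_{\vec\beta}=0$ gives $tc_{\vec\beta}\in p^{E}A_{\cris}(K^+)/p^n$, where $E:=\max_i\max\{0,\,n-v_p(\beta_i)\}=\max\{0,\,n-v_p(\beta_{i_0})\}$.

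Finally comes the computation in $M^+_\infty/p^n$. Substituting $[\underline{\pi}]=u(1+X)$ and $[\underline{T_i}]=T_i(1+X_i)$ and using $u=T_1\cdots T_r$, one finds $g_{\vec\beta}=\theta_{\vec\beta}\,(1+X)^{m}\prod_{i=2}^{d+1}(1+X_i)^{\beta_i}$ in $M^+_\infty/p^n$, where $\theta_{\vec\beta}=T_1^{m}\prod_{i=2}^{r}T_i^{\beta_i+m}\prod_{i>r}T_i^{\beta_i}$ lies in $\Theta(c)/p^n\Theta(c)$ and $(1+X)^{m}\in B^+_{\log}/p^nB^+_{\log}$. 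The key elementary fact is $v_p\!\left(\binom{\beta_i}{k}k!\right)\ge v_p(\beta_i)$ for all integers $k\ge 1$, so $(1+X_i)^{\beta_i}-1$ has all of its divided power coefficients divisible by $p^{v_p(\beta_i)}$; expanding $\prod_i(1+X_i)^{\beta_i}=\sum_{S}\prod_{i\in S}\!\big((1+X_i)^{\beta_i}-1\big)$ and multiplying by $tc_{\vec\beta}\in p^EA_{\cris}(K^+)/p^n$, every term with $S\ne\varnothing$ is divisible by $p^{E+\sum_{i\in S}v_p(\beta_i)}$, and $E+\sum_{i\in S}v_p(\beta_i)\ge E+v_p(\beta_{i_0})\ge n$ for any $i_0\in S$, so such terms vanish mod $p^n$. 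Hence $tc_{\vec\beta}\prod_i(1+X_i)^{\beta_i}\equiv tc_{\vec\beta}\pmod{p^n}$, and therefore
\[ t^da_{\vec\beta}=t^{d-1}(tc_{\vec\beta})\,\theta_{\vec\beta}(1+X)^{m}\prod_i(1+X_i)^{\beta_i}\equiv\big(t^dc_{\vec\beta}(1+X)^{m}\big)\,\theta_{\vec\beta}\pmod{p^n}, \]
which lies in $B^+_{\log}/p^nB^+_{\log}\otimes_{\Sigma_n}\Theta(c)/p^n\Theta(c)$, regarded inside $M^+_\infty/p^n$; summing over $\vec\beta$ yields the corollary. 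I expect the only genuine difficulty to be the first step: justifying the simultaneous eigenspace decomposition with generators in the stated Teichm\"uller-monomial form and identifying $\bigcap_iA^{(i)}_{\cris,\infty}$ with $A_{\cris}(K^+)$. Steps two and three are then routine, being $p$-adic arithmetic in $A_{\cris}(K^+)$ and the valuation estimate above combined with the substitutions of Proposition~\ref{dppolynomialalgebra} (note in fact that a single factor of $t$ is used essentially, the remaining $t^{d-1}$ being harmless).
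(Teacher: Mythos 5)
Your plan is a genuinely different route from the paper's, and in outline it would even prove something slightly stronger. The paper never forms the joint $\Delta_\infty$-eigenspace decomposition: it works with one $\sigma_i$ at a time via Proposition~\ref{direct} (coefficients in the large ring $A^{(i)}_{\cris,\infty}/p^n$), uses Corollary~\ref{invariants1} to control $t\cdot a$ for a single $i$, and then iterates over $j=2,\dots,d+1$, spending one factor of $t$ at every step --- hence the $t^d$. You instead pass directly to simultaneous eigenspaces over the smallest coefficient ring $A_{\cris}(K^+)/p^n$, after which one $t$ suffices to force $tc_{\vec\beta}\in p^E A_{\cris}(K^+)/p^n$ and the remaining $t^{d-1}$ is spectator. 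Your Steps~2 and~3 are correct as written: the use of Proposition~\ref{constants}, the flatness over $W_n(k)$ (Lemma~\ref{flat}, Proposition~\ref{flat2}), the bound $v_p\big(\binom{\beta_i}{k}k!\big)\ge v_p(\beta_i)$ (valid for negative $\beta_i$ too), and the inclusion--exclusion expansion $\prod_i(1+X_i)^{\beta_i}=\sum_S\prod_{i\in S}\big((1+X_i)^{\beta_i}-1\big)$ all work, and the substitution $g_{\vec\beta}=\theta_{\vec\beta}(1+X)^m\prod_i(1+X_i)^{\beta_i}$ is exactly right for $c=\pi$.

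The gap is exactly where you anticipate it: Step~1 is not a consequence of ``iterating Proposition~\ref{direct}'' in any routine way, because that proposition is a decomposition of $A_{\cris,\infty}/p^n$ into free rank-one $A^{(i)}_{\cris,\infty}/p^n$-modules for a \emph{fixed} $i$, and $A^{(i)}_{\cris,\infty}/p^n$ is not $\Delta_\infty$-stable (e.g.\ $\sigma_j([\underline{T_iT_1}])=[\underline{1}]^{-1}[\underline{T_iT_1}]$ for $j\neq i$, $j\le r$), so one cannot simply ``re-apply Proposition~\ref{direct} to the coefficient ring.'' The commuting action does let you split each $F^{(i)}_\alpha$ by $\sigma_j$-generalized eigenvalues, so a joint direct sum exists; but identifying the joint weight spaces as \emph{free rank one over $A_{\cris}(K^+)/p^n$ with the stated Teichm\"uller-monomial generators}, and proving $\bigcap_i A^{(i)}_{\cris,\infty}/p^n=A_{\cris}(K^+)/p^n$, is a genuine strengthening of Proposition~\ref{direct}. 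Carrying it out would essentially mean redoing Lemma~\ref{write} and the inductive argument of Proposition~\ref{direct} for the multi-grading (base case $n=1$ via Proposition~\ref{flat2}(i), induction on $n$ by flatness), which is more work than the paper's one-variable iteration, and is precisely what the paper's $t$-per-step strategy is designed to sidestep. Until that lemma is proved, the proposal has a real hole at Step~1; if you do prove it, you will have a cleaner argument and a sharper $t$-power than Corollary~\ref{invariants1'} actually claims.
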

\begin{proof}
We have $X_i=[\underline{T_i}]\otimes T_i^{-1}-1$. If $\alpha\in\mathbb{N}[1/p]$ with $v_p(\alpha)\geq 0$, then $\alpha\in\mathbb{N}$ and setting $m_{\alpha}:=\max\left\{0,n-v_p(\alpha)\right\}$ we have
\begin{eqnarray*}
 p^{m_{\alpha}}[\underline{T_i}]^\alpha &=& p^{m_{\alpha}}T_i^\alpha ([\underline{T_i}]\otimes T_i^{-1}-1+1)^\alpha\\
&=& p^{m_{\alpha}}T_i^\alpha+T_i^\alpha\sum_{r>0}p^{m_{\alpha}}\alpha (\alpha -1)\cdots (\alpha -r+1)X_i^{[r]}\\
&=& p^{m_{\alpha}}T_i^\alpha
\end{eqnarray*}
for all $1\leq i\leq d+1$, so $p^{m_{\alpha}}[\underline{T_i}]^\alpha\in\Theta(c)/p^n\Theta(c)$.

Now, let $a\in \left( A_{\cris,\infty}(O(c))/p^nA_{\cris,\infty}(O(c))\right) ^{\Delta_\infty}$. Assume that $2\leq i\leq r$. Write $a=\sum_{\alpha\geq 0}X_{\alpha}[\underline{T_i}]^{\alpha}+\sum_{\alpha<0}X_{\alpha}[\underline{T_1}]^{-\alpha}$. Then by the last corollary, we have
\[ ta=\sum_{\alpha\geq 0,v_p(\alpha)>0}X'_{\alpha}p^{m_{\alpha}}[\underline{T_i}]^{\alpha}+\sum_{\alpha<0,v_p(\alpha)>0}X'_{\alpha}p^{m_{\alpha}}[\underline{T_1}]^{-\alpha} \]
where $X'_{\alpha}p^{m_{\alpha}}=tX_{\alpha}$. That is,
\[ ta=\sum_{\alpha\in\mathbb{N},v_p(\alpha)>0}X'_{\alpha}p^{m_{\alpha}}T_i^{\alpha}+\sum_{-\alpha\in\mathbb{N},v_p(\alpha)>0}X'_{\alpha}p^{m_{\alpha}}T_1^{-\alpha} \]
Now we pick some $j\neq i$, $2\leq j\leq r$, and apply a similar reasoning to the $X'_{\alpha}$ to deduce that
\[ tX'_{\alpha}=\sum_{\beta\in\mathbb{N},v_p(\beta)>0}Y_{\beta}p^{m_{\beta}}T_j^{\beta}+\sum_{-\beta\in\mathbb{N},v_p(\beta)>0}Y_{\beta}p^{m_{\beta}}T_1^{-\beta} \]
where the $Y_{\beta}$ lie in the sub-$A_{\cris}(K^+)$-algebra of $A_{\cris,\infty}(O(c))/p^nA_{\cris,\infty}(O(c))$ generated by $[\underline{T_jT_iT_1}]^{\beta}$, $\beta\in\mathbb{N}[1/p]$, and $[\underline{T_k}]^{\beta_k}$, $k\notin\left\lbrace 1,i,j\right\rbrace$, $2\leq k\leq d+1$, $\beta_k\in\mathbb{N}[1/p]$. The case for $r+1\leq j\leq d+1$ is similar. Continuing in this way we clearly obtain the result.
\end{proof}

\begin{thm}\label{invariants2}
We have
\[ t^d[\underline{c}]\cdot\left( M^+_\infty(R)/p^nM^+_\infty(R)\right)^{\Delta_\infty}\subset B^+_{\log}/p^nB^+_{\log}\otimes_{\Sigma_n}\mathcal{R}_n. \]
\end{thm}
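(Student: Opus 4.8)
The plan is to reduce first to the case $R=O(c)$ and then, for $O(c)$, to refine the eigenspace computations behind Corollary \ref{invariants1'} so as to take into account the divided power variables occurring in $M^+_\infty(O(c))$.

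For the reduction, recall from \S\ref{resolution} that $h_{\infty,\ast}\mathscr{O}$ is a crystal, so that $M^+_\infty(R)/p^nM^+_\infty(R)\cong\big(M^+_\infty(O(c))/p^nM^+_\infty(O(c))\big)\otimes_{\Theta(c)/p^n\Theta(c)}\mathcal{R}_n$, the $\Delta_\infty$-action being $\mathcal{R}_n$-linear because the scalars $\mathcal{R}_n$ come from the divided power base, on which $\Delta_\infty$ acts trivially. Since $\mathcal{R}_n$ is \'etale, hence flat, over $\Theta(c)/p^n\Theta(c)$, and since $\Delta_\infty$-invariants form a finite intersection of kernels (the Koszul description of \S\ref{koszul}), forming $\Delta_\infty$-invariants commutes with $-\otimes_{\Theta(c)/p^n\Theta(c)}\mathcal{R}_n$; likewise $B^+_{\log}/p^nB^+_{\log}\otimes_{\Sigma_n}\mathcal{R}_n\cong\big(B^+_{\log}/p^nB^+_{\log}\otimes_{\Sigma_n}\Theta(c)/p^n\Theta(c)\big)\otimes_{\Theta(c)/p^n\Theta(c)}\mathcal{R}_n$, and by flatness the canonical map of the latter into $M^+_\infty(O(c))/p^nM^+_\infty(O(c))$ stays injective after tensoring with $\mathcal{R}_n$. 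Hence the statement for $R$ follows from the statement for $O(c)$.

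For $R=O(c)$, Proposition \ref{dppolynomialalgebra} identifies $M^+_\infty(O(c))/p^nM^+_\infty(O(c))$ with $A_{\cris,\infty}(O(c))/p^nA_{\cris,\infty}(O(c))\langle X,X_2,\dots,X_{d+1}\rangle$; here $\Delta_\infty$ fixes $X$, while for each $i$ the unit $1+X_i=[\underline{T_i}]T_i^{-1}$ is a $\sigma_i$-eigenvector of eigenvalue $[\underline{1}]$ and is fixed by the remaining $\sigma_j$. Moreover $[\underline{c}]$ and all divided powers of $X$ already lie in the image of $B^+_{\log}/p^nB^+_{\log}\otimes_{\Sigma_n}\Theta(c)/p^n\Theta(c)$, via Kato's isomorphism $B^+_{\log}/p^nB^+_{\log}\cong A_{\cris}(K^+)/p^nA_{\cris}(K^+)\langle X\rangle$. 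Expanding an invariant element in divided powers of $X$ and pulling these out, it suffices to bound $t^d[\underline{c}]\cdot\big(A_{\cris,\infty}(O(c))/p^nA_{\cris,\infty}(O(c))\langle X_2,\dots,X_{d+1}\rangle\big)^{\Delta_\infty}$. For this I would run the arguments of Corollaries \ref{invariants1} and \ref{invariants1'} one generator $\sigma_i$ at a time: decompose into $\sigma_i$-eigenspaces as in Proposition \ref{direct}, extended over the divided power variables by means of the eigenvector $1+X_i$; then by Proposition \ref{constants} multiplication by $t$ kills the eigenspaces of non-integral weight, and on a weight-$\alpha$ eigenspace with $\alpha\in\mathbb{Z}$ it replaces $[\underline{T_i}]^\alpha$ (resp. $[\underline{T_1}]^{-\alpha}$) by $T_i^\alpha$ (resp. $T_1^{-\alpha}$) up to the factor $p^{\max\{0,n-v_p(\alpha)\}}$, the divided power identity $p^{m}(1+X_i)^{\alpha}=p^{m}$ for $m\geq n-v_p(\alpha)$ being exactly what makes this go through in the presence of the $X_i$. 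Carrying this out for the $d$ generators $\sigma_2,\dots,\sigma_{d+1}$ consumes $t^d$; the remaining factor $[\underline{c}]=\prod_{i=1}^r[\underline{T_i}]$ is then spent --- together with $[\underline{T_i}]=T_i(1+X_i)$ and $\prod_{i=1}^rT_i=v$ --- to clear the residual inverse powers of $v$ and of $T_1$ that arise when the $1+X_i$ are rewritten, leaving an element in the $A_{\cris}(K^+)/p^nA_{\cris}(K^+)$-subalgebra generated by the $T_i$ and the divided powers of $X$, that is, in $B^+_{\log}/p^nB^+_{\log}\otimes_{\Sigma_n}\Theta(c)/p^n\Theta(c)$.

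The main obstacle is the treatment of the divided power variables $X_2,\dots,X_{d+1}$: since $\sigma_i$ acts on $X_i$ affinely rather than diagonally, the $\sigma_i$-eigenspace decomposition of the divided power polynomial ring is not formal, and one must work throughout with the eigenvector $1+X_i$, controlling the divided powers $(1+X_i)^{[k]}$ under the twisted action; showing that after multiplication by $t^d[\underline{c}]$ the surviving terms genuinely land in the image of $B^+_{\log}/p^nB^+_{\log}\otimes_{\Sigma_n}\Theta(c)/p^n\Theta(c)$ --- which is where the extra generator $[\underline{c}]$ is used up --- is the technical core of the argument.
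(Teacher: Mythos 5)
Your overall plan --- reduce to $R=O(c)$ by quasi-coherence and flatness, peel off the $\Delta_\infty$-fixed divided power variable $X$, and then argue one generator $\sigma_i$ at a time using the eigenvector $1+X_i=[\underline{T_i}]T_i^{-1}$ --- is the same as the paper's, and your reduction to $O(c)$ is argued correctly. The identity $p^m\bigl((1+X_i)^\alpha - 1\bigr) = 0$ in $M^+_\infty/p^n$ for $\alpha\in\mathbb{N}$, $m\geq n - v_p(\alpha)$ is also a genuine and important observation; it is precisely the computation used in the proof of Corollary \ref{invariants1'}.

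There is, however, a real gap exactly where you flag ``the technical core.'' The passage from $A_{\cris,\infty}/p^n$ to the full divided power ring $A_{\cris,\infty}/p^n\langle X_2,\dots,X_{d+1}\rangle$ does \emph{not} admit the $\sigma_i$-eigenspace decomposition you appeal to. The action of $\sigma_i$ on $X_i^{[k]}$ is affine rather than diagonal:
\[ \sigma_i(X_i^{[k]}) = \sum_{j=0}^k[\underline{1}]^jX_i^{[j]}([\underline{1}]-1)^{[k-j]}, \]
so $X_i^{[k]}$ is not an eigenvector, and although $1+X_i$ is a $\sigma_i$-eigenvector, the family $\bigl\{(1+X_i)^m\bigr\}_{m\in\mathbb{Z}}$ does not span the divided power ring over $A_{\cris,\infty}/p^n$; there is therefore no ``decomposition over the divided power variables by means of the eigenvector $1+X_i$.'' The eigenspace statement of Proposition \ref{direct} lives on the ring $A_{\cris,\infty}/p^n$ only, and it does not formally extend over the $X_i$.

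The paper bridges this gap by an explicit induction on the $X_i$-degree (after passing to the variables $Y_i := -X_i/(1+X_i)$, which make the formulas cleaner). It introduces a derivation $\partial_i$ commuting with $\sigma_i$ and a $D_i$-linear one-sided antiderivative $\int_i$, and for a $\sigma_i$-invariant $m$ it writes $m = a + \int_i\partial_i(m)$ with $\partial_i(m)$ of strictly smaller $Y_i$-degree and $a$ in the kernel of $\partial_i$. The inductive hypothesis handles $\partial_i(m)$, and it is on the degree-zero piece $a$ that the eigenspace calculus of Proposition \ref{direct} and Corollary \ref{invariants1} actually applies. Tracking the factors of $t$, $[\underline{T_1}]$ and $[\underline{T_i}]$ accumulated by this recursion across the $d$ generators produces exactly $t^d[\underline{c}]$; your proposal correctly guesses where the $[\underline{c}]$ should come from (clearing the inverse powers of $T_1$) but does not supply the recursion that makes the accounting precise. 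To complete the argument along the lines you sketch, you would need to make an induction on the $X_i$-filtration (or introduce a derivation/antiderivative pair as in the paper) explicit; there is no shortcut via a global eigenspace decomposition of the divided power ring.
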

\begin{proof}
We will give the proof in several steps.

\underline{Step 0:} Recall that by quasi-coherence of $h_{\infty,\ast}\mathscr{O}$, we have a canonical isomorphism
\[ M^+_\infty(R)/p^nM^+_\infty(R)\cong M^+_\infty(O(c))/p^nM^+_\infty(O(c))\otimes_{\Theta(c)/p^n\Theta(c)}\mathcal{R}_n. \]
Hence we may assume that $R=O(c)$. In this case, by Proposition \ref{dppolynomialalgebra} we have
\[ M^+_\infty/p^nM^+_\infty\cong A^+_\infty/p^nA^+_\infty\left\langle X_2,...,X_{d+1}\right\rangle.   \]
For the proof it will be better for us to replace $X$ resp. $X_i$ by $Y:=\frac{-X}{1+X}$ resp. $Y_i:=\frac{-X_i}{1+X_i}$, $2\leq i\leq d+1$. Then we have a canonical isomorphism
\[ M^+_\infty/p^nM^+_\infty\cong A^+_\infty/p^nA^+_\infty\left\langle Y_2,...,Y_{d+1}\right\rangle \]
and morally $Y=[\underline{\pi}]^{-1}\otimes u-1$ if $c=\pi$ and $Y_i=[\underline{T_i}]^{-1}\otimes T_i-1$. Let $D_i$ denote the subalgebra of $M^+_\infty/p^nM^+_\infty$ consisting of elements of the form $\sum_Na_N\prod_{j,j\neq i}X_j^{[n_j]}$ where $a_N\in A^+_\infty/p^nA^+_\infty$. That is,
\[ D_i=A^+_\infty/p^nA^+_\infty\left\langle Y_2,...,\hat{Y_i},...,Y_{d+1}\right\rangle \]
where $\hat{Y_i}$ means we omit $Y_i$. We also define
\[ D_i^{(i)}:=A_{\cris,\infty}^{(i)}/p^nA_{\cris,\infty}^{(i)}\left\langle Y, Y_2,...,\hat{Y_i},...,Y_{d+1}\right\rangle \]

\underline{Step 1:} Suppose that $m\in M^+_\infty/p^nM^+_\infty$ is $t$-torsion. Write $m=\sum_nm_nY_i^{[n]}$. Then $m_n$ is $t$-torsion for all $n$. Write $m_n=\sum_{\alpha\geq 0}X_{\alpha,n}[\underline{T_i}]^{\alpha}+\sum_{\alpha<0}X_{\alpha,n}[\underline{T_1}]^{-\alpha}$ with $X_{\alpha,n}\in D_i^{(i)}$. Then $tX_{\alpha,n}=0$ for all $\alpha$ (\emph{cf.} Prop. \ref{direct}). We have
\begin{eqnarray*}
(\sigma_i-1)([\underline{T_i}]^{\alpha}Y_i^{[n]}) &=& [\underline{1}]^{\alpha}[\underline{T_i}]^{\alpha}([\underline{1}]^{-1}[\underline{T_i}]^{-1}\otimes T_i-1)^{[n]}-[\underline{T_i}]^{\alpha}Y_i^{[n]} \\
&=& [\underline{1}]^{\alpha-n}[\underline{T_i}]^{\alpha}\left( [\underline{T_i}]^{-1}\otimes T_i-1+(1-[\underline{1}])\right) ^{[n]}-[\underline{T_i}]^{\alpha}Y_i^{[n]} \\
&=& [\underline{1}]^{\alpha -n}[\underline{T_i}]^{\alpha}\sum_{r=0}^nY_i^{[r]}(1-[\underline{1}])^{[n-r]}-[\underline{T_i}]^{\alpha}Y_i^{[n]} \\
&=& ([\underline{1}]^{\alpha -n}-1)[\underline{T_i}]^{\alpha}Y_i^{[n]}+[\underline{1}]^{\alpha -n}[\underline{T_i}]^{\alpha}\sum_{r=0}^{n-1}Y_i^{[r]}(1-[\underline{1}])^{[n-r]}.
\end{eqnarray*}
Similarly
\[ (\sigma_i-1)([\underline{T_1}]^{-\alpha}Y_i^{[n]})= ([\underline{1}]^{-\alpha -n}-1)[\underline{T_1}]^{-\alpha}Y_i^{[n]}+[\underline{1}]^{-\alpha -n}[\underline{T_1}]^{-\alpha}\sum_{r=0}^{n-1}Y_i^{[r]}(1-[\underline{1}])^{[n-r]}. \]
Now recall (Prop. \ref{constants}) that $1-[\underline{1}]=tw$ for some unit $w\in A_{\cris}(K^+)$, hence
\[ (1-[\underline{1}])^{[n-r]}=t\dfrac{w^{n-r}t^{n-r-1}}{(n-r)!} \]
is a multiple of $t$ for $0\leq r<n$. Hence
\[ (\sigma_i-1)(m)=\sum_nY_i^{[n]}\left( \sum_{\alpha\geq 0}X_{\alpha,n}([\underline{1}]^{\alpha -n}-1)[\underline{T_i}]^{\alpha}+\sum_{\alpha<0}X_{\alpha,n}([\underline{1}]^{-\alpha -n}-1)[\underline{T_1}]^{-\alpha}\right) . \]
We'll need this later.

\underline{Step 2:} For $2\leq i\leq r$, consider the derivation $\partial_i$ of $M^+_\infty/p^nM^+_\infty$ defined
\[ \partial_i:=[\underline{T_1}]\dfrac{\partial}{\partial Y_i}. \]
We have $\partial_i=[\underline{T_iT_1}]\dfrac{\partial}{\partial T_i}$, hence $\partial_i$ commutes with $\sigma_i$. Moreover, we claim that the kernel of $\partial_i$ lies in $D_i$ up to $[\underline{T_1}]$-torsion. Indeed, suppose that $\partial_i(m)=0$. Write $m=\sum_{k}a_kY_i^{[k]}$ with $a_k\in D_i$. Then $[\underline{T_1}]a_k=0$ for $k\neq 0$, as claimed.

On $[\underline{T_1}]\cdot M^+_\infty/p^nM^+_\infty$ we define a one-sided inverse $\int_i$ to $\partial_i$ as being the unique $D_i$-linear map such that
\[ \int_i [\underline{T_1}]Y_i^{[k]}=Y_i^{[k+1]} \]
for all $k\in\mathbb{N}$.

We claim that if $c=\pi$ (resp. $c=1$)
\begin{eqnarray*}
(\sigma_i-1)\int_i[\underline{T_1T_i}]T_i^n &=& \dfrac{1-[\underline{1}]^{n+1}}{n+1}[\underline{T_i}]^{n+1} \\
(\sigma_i-1)\int_i T_1^n &=& \dfrac{[\underline{1}]^{1-n}-1}{n-1}\dfrac{[\underline{T_1}]^{n-1}(1+Y)}{\prod_{2\leq j\leq r,j\neq i}(1+Y_j)}\qquad (n>0) \\
\text{resp.}\quad (\sigma_i-1)\int_i T_1^n &=& \dfrac{[\underline{1}]^{1-n}-1}{n-1}\dfrac{[\underline{T_1}]^{n-1}[\underline{c}]}{\prod_{2\leq j\leq r,j\neq i}(1+Y_j)}\qquad (n>0)
\end{eqnarray*}
where we recall that for any integer $n\neq 0$ we have $\dfrac{[\underline{1}]^n-1}{n}\in A_{\cris}(K^+)$ (Prop. \ref{constants}).

Let us show this. Since $M^+_{\infty}$ is flat over $W(k)$ (\emph{cf.} Prop. \ref{dppolynomialalgebra}) it suffices to show this in $M^+_{\infty}[1/p]$. By the binomial theorem we have
\[ T_i^{n}=[\underline{T_i}]^{n}(1+Y_i)^{n}=[\underline{T_i}]^{n}\sum_{r=0}^{n}\dfrac{n!}{r!}Y_i^{[n-r]} \]
hence
\[ \int_i[\underline{T_1T_i}]T_i^n=[\underline{T_i}]^{n+1}\sum_{r=0}^{n}\dfrac{n!}{r!}Y_i^{[n+1-r]} \]
so we have
\[ \int_i[\underline{T_1T_i}]T_i^n=\dfrac{T_i^{n+1}}{n+1}-\dfrac{[\underline{T_i}]^{n+1}}{n+1}. \]
It follows that
\[ (\sigma_i-1)\int_i[\underline{T_1T_i}]T_i^n=\dfrac{1-[\underline{1}]^{n+1}}{n+1}[\underline{T_i}]^{n+1}. \]
Now for the case of $T_1^n$ ($n>0$). It is not hard to see that for $c=\pi$
\[ T_1=\dfrac{[\underline{T_1}](1+Y)}{\prod_{2\leq j\leq r}(1+Y_j)} \]
hence
\[ T_1^n=\dfrac{[\underline{T_1}]^{n-1}(1+Y)^n}{\prod_{2\leq j\leq r,j\neq i}(1+Y_j)^n}\dfrac{[\underline{T_1}]}{(1+Y_i)^n}. \]
Also from the binomial theorem we have
\[ \dfrac{1}{(1+Y_i)^n}=\sum_{m=0}^{\infty}\dfrac{(n+m-1)!}{(n-1)!}(-Y_i)^{[m]} \]
hence
\[ \int_i\dfrac{[\underline{T_1}]}{(1+Y_i)^n}=-\sum_{m=0}^{\infty}\dfrac{(n+m-1)!}{(n-1)!}(-Y_i)^{[m+1]} \]
which is the same as
\[ \int_i\dfrac{[\underline{T_1}]}{(1+Y_i)^n}=\dfrac{1}{n-1}\left( 1-\dfrac{1}{(1+Y_i)^{n-1}}\right). \]
So we have
\[ \int_i T_1^n=\dfrac{[\underline{T_1}]^{n-1}(1+Y)^n}{(n-1)\prod_{2\leq j\leq r,j\neq i}(1+Y_j)^n}-\dfrac{T_1^{n-1}(1+Y)}{(n-1)\prod_{2\leq j\leq r,j\neq i}(1+Y_j)} \]
and hence
\[ (\sigma_i-1)\int_i T_1^n=\dfrac{[\underline{1}]^{1-n}-1}{n-1}\dfrac{[\underline{T_1}]^{n-1}(1+Y)}{\prod_{2\leq j\leq r,j\neq i}(1+Y_j)}. \]
In the case $c=1$ we have
\[ T_1=\dfrac{[\underline{T_1}][\underline{c}]}{\prod_{2\leq j\leq r}(1+Y_j)} \]
and a similar argument gives the desired formula.

\underline{Step 3:} Now consider an element $m\in M^+_\infty/p^nM^+_\infty$. Assume that $\sigma_i(m)=m$. Write $m=\sum_{k=0}^{k_0}a_kY_i^{[k]}$ with $a_k\in D_i$. Let us show by induction on $k_0$ that $t[\underline{T_iT_1}]m\in t[\underline{T_iT_1}]\cdot D_i^{\sigma_i=1}[T_i,T_1]$. Here by $D_i^{\sigma_i=1}[T_i,T_1]$ we mean the $D_i^{\sigma_i=1}$-subalgebra of $M^+_{\infty}/p^nM^+_{\infty}$ generated by $T_i$ and $T_1$ (not the polynomial algebra!). For $k_0=0$ this is trivial. Since $\partial_i(m)=\sum_{k=1}^{k_0}[\underline{T_1}]a_kY_i^{[k-1]}$ we can assume the claim is true for $t[\underline{T_iT_1}]\partial_i(m)$. That is, we can write $t[\underline{T_iT_1}]\partial_i(m)$ as a finite sum
\[ t[\underline{T_iT_1}]\partial_i(m)=t[\underline{T_iT_1}]\sum_{n}b_nT_i^n+t[\underline{T_iT_1}]\sum_{n>0}c_nT_1^n \]
where $b_n,c_n\in D_i^{\sigma_i=1}$, $n\in\mathbb{N}$. So we may write
\[ [\underline{T_iT_1}]\partial_i(m)=[\underline{T_iT_1}]\sum_{n}b_nT_i^n+[\underline{T_iT_1}]\sum_{n>0}c_nT_1^n+\tau \]
where $\tau$ is $t$-torsion. Also, if we define
\[ a:=m-\int_i\partial_i(m) \]
then we have
\[ \partial_i(a)=0 \]
hence $[\underline{T_1}]a\in [\underline{T_1}]\cdot D_i$.

We now assume that $c=\pi$. The following argument applies to the case $c=1$ with the formulas obtained in Step 2.

Substituting the expressions for $(\sigma_i-1)\int_i[\underline{T_iT_1}]T_i^n$ and $(\sigma_i-1)\int_iT_1 ^n$ obtained in Step 2, we find
\begin{equation}\label{eq1}
(1-\sigma_i)([\underline{T_iT_1}]a)=\sum_nb_n\dfrac{1-[\underline{1}]^{n+1}}{n+1}[\underline{T_i}]^{n+1}+ \sum_{n>0}[\underline{T_iT_1}]c_n\dfrac{[\underline{1}]^{1-n}-1}{n-1}\dfrac{[\underline{T_1}]^{n-1}(1+Y)^n}{\prod_{2\leq j\leq r,j\neq i}(1+Y_j)^n}+(\sigma_i-1)\int_i\tau.
\end{equation}
Now, since $[\underline{T_iT_1}]a\in [\underline{T_iT_1}]\cdot D_i$ we may write
\[ [\underline{T_iT_1}]a=\sum_{\alpha\in\mathbb{N}[1/p]}[\underline{T_iT_1}]X_{\alpha}[\underline{T_i}]^{\alpha}+\sum_{\alpha\in\mathbb{Z}[1/p],\alpha<0}[\underline{T_iT_1}]X_{\alpha}[\underline{T_1}]^{-\alpha} \]
with $X_{\alpha}$ in $D_i^{(i)}$. Hence
\begin{equation}\label{eq2}
(1-\sigma_i)([\underline{T_iT_1}]a)=\sum_{\alpha\in\mathbb{N}[1/p]}[\underline{T_iT_1}]X_{\alpha}(1-[\underline{1}]^{\alpha})[\underline{T_i}]^{\alpha}+\sum_{\alpha\in\mathbb{Z}[1/p],\alpha<0}[\underline{T_iT_1}]X_{\alpha}(1-[\underline{1}]^{\alpha})[\underline{T_1}]^{-\alpha}.
\end{equation}
Comparing the equations (\ref{eq1}) and (\ref{eq2}) we claim that we get an equality of coefficients of $[\underline{T_i}]^{\alpha}$ and $[\underline{T_1}]^{-\alpha}$. Indeed, since the $Y,Y_j$, $2\leq j\leq d+1$, $j\neq i$, are independent indeterminates we reduce to the case where the coefficients lie in $A_{\cris,\infty}(O(c))/p^nA_{\cris,\infty}(O(c))$ in which case it follows from Proposition \ref{direct}. Now write
\[ \int_i\tau=\sum_{n>0}Y_i^{[n]}\left( \sum_{\alpha\geq 0}X_{\alpha,n}[\underline{T_i}]^{\alpha}+\sum_{\alpha<0}X_{\alpha,n}[\underline{T_1}]^{-\alpha}\right)  \]
with $X_{\alpha,n}\in D_i^{(i)}$ as in Step 1. Since $\int_i\tau $ is $t$-torsion, by Step 1 we have
\[ (\sigma_i-1)(\int_i\tau)=\sum_{n>0}Y_i^{[n]}\left( \sum_{\alpha\geq 0}X_{\alpha,n}([\underline{1}]^{\alpha -n}-1)[\underline{T_i}]^{\alpha}+\sum_{\alpha<0}X_{\alpha,n}([\underline{1}]^{-\alpha -n}-1)[\underline{T_1}]^{-\alpha}\right) . \]
Note that in this sum we have $n>0$. So comparing with equation (\ref{eq2}) we deduce that
\[ (\sigma_i-1)(\int_i\tau)=0. \]
Hence comparing equations (\ref{eq1}) and (\ref{eq2}) we find
\begin{eqnarray}\label{eq3}
0 &=& t[\underline{T_iT_1}]X_{\alpha} \qquad (v_p(\alpha)<0)\\
b_n\dfrac{1-[\underline{1}]^{n+1}}{n+1} &=& [\underline{T_iT_1}]X_{n+1}(1-[\underline{1}]^{n+1}) \\
\left[ \underline{T_iT_1}\right]  c_n \dfrac{([\underline{1}]^{1-n}-1)(1+Y)^n}{(n-1)\prod_{2\leq j\leq r,j\neq i}(1+Y_j)^n} &=& [\underline{T_iT_1}]X_{1-n}(1-[\underline{1}]^{1-n}).
\end{eqnarray}
In particular, from Prop. \ref{constants} we deduce that
\begin{eqnarray*}
tb_n &=& [\underline{T_iT_1}]X_{n+1}t(n+1) \\
tc_n[\underline{T_iT_1}] &=& (n-1)t[\underline{T_iT_1}]X_{1-n}\dfrac{\prod_{2\leq j\leq r,j\neq i}(1+Y_j)^n}{(1+Y)^n} .
\end{eqnarray*}
Hence we can write
\begin{eqnarray*}
t[\underline{T_iT_1}]m &=& t[\underline{T_iT_1}]a+t[\underline{T_iT_1}]\sum_nX_{n+1}(T_i^{n+1}-[\underline{T_i}]^{n+1})\\
&& \quad +t[\underline{T_iT_1}]\sum_{n>0}X_{1-n}\left( [\underline{T_1}]^{n-1}- \dfrac{T_1^{n-1}\prod_{2\leq j\leq r,j\neq i}(1+Y_j)^{n-1}}{(1+Y)^{n-1}}\right) \\
&=& t[\underline{T_iT_1}]\left\lbrace a'+\sum_nX_{n+1}T_i^{n+1}-\sum_{n>0}X_{1-n}\dfrac{T_1^{n-1}\prod_{2\leq j\leq r,j\neq i}(1+Y_j)^{n-1}}{(1+Y)^{n-1}}\right\rbrace 
\end{eqnarray*}
where
\[ a':=a-\sum_nX_{n+1}[\underline{T_i}]^{n+1}+\sum_{n>0}X_{1-n}[\underline{T_1}]^{n-1} \]
is annihilated by $t[\underline{T_iT_1}]$ by equation (\ref{eq3}). So we can write
\[ t[\underline{T_iT_1}]m=t[\underline{T_iT_1}]\left\lbrace \sum_nX_{n+1}T_i^{n+1}-\sum_{n>0}X_{1-n}\dfrac{T_1^{n-1}\prod_{2\leq j\leq r,j\neq i}(1+Y_j)^{n-1}}{(1+Y)^{n-1}}\right\rbrace  \]
and this completes the induction step.

\underline{Step 4:} Now we can repeat Step 3 for any other index $j\neq i$, $2\leq j\leq r$. In more detail, the same argument as that of Step 3 shows that if $m\in (M^+_{\infty}/p^nM^+_{\infty})^{\sigma_i=1,\sigma_j=1}$, then $t^2[\underline{T_jT_iT_1}]m\in t^2[\underline{T_jT_iT_1}]\cdot D_{i,j}^{\sigma_i=1,\sigma_j=1}[T_j,T_i,T_1]$, where
\[ D_{i,j}:=A^+_{\infty}/p^nA^+_{\infty}\left\langle Y_2,...,\hat{Y_i},...,\hat{Y_j},...,Y_{d+1}\right\rangle. \]
It is now clear that if $m\in (M^+_{\infty}/p^nM^+_{\infty})^{\Delta_{\infty}}$, then $t^r[\underline{c}]m\in t^r[\underline{c}]\cdot D_{2,...,r}^{\sigma_2=1,...,\sigma_r=1}[T_1,T_2,....,T_r]$, where
\[ D_{2,...,r}:=A^+_{\infty}/p^nA^+_{\infty}\left\langle Y_{r+1},...,Y_{d+1}\right\rangle. \]

\underline{Step 5:} For an index $i$, $r+1\leq i\leq d+1$, then we may adapt the previous steps with the following changes. We use $\partial_i:=\dfrac{\partial}{\partial T_i}$ and let $\int_i$ be the unique $D_i$-linear map sending $Y_i^{[n]}$ to $[\underline{T_i}]Y_i^{[n+1]}$. Then one checks easily that we have
\[ (\sigma_i-1)\int_iT_i^n=\dfrac{1-[\underline{1}]^{n+1}}{n+1}[\underline{T_i}]^{n+1}. \]
With these changes a similar but simpler proof as that of Step 3 shows that if $m\in\ker(\sigma_i-1)$, then $tm\in t\cdot D_i^{\sigma_i=1}[T_i]$.

\underline{Step 6:} Combining the previous steps, we deduce that if $m\in (M^+_{\infty}/p^nM^+_{\infty})^{\Delta_{\infty}}$, then $t^d[\underline{c}]m\in t^d[\underline{c}]\cdot (A^+_{\infty}/p^nA^+_{\infty})^{\Delta_{\infty}}[T_1,....,T_{d+1}]$. Hence by Corollary \ref{invariants1'} the proof is complete. 
\end{proof}

\subsubsection{} Recall that $R$ is a small integral $K^+$-algebra with $K^+$ integrally closed in $R$, and $\mathcal{R}_n$ is the \'etale $\Theta(c)/p^n\Theta(c)$-algebra lifting $R/pR$.

\begin{thm}\label{vanishing}
For all $i\neq 0$, the $B^+$-module
\[ H^i(\Delta_\infty,M^+_\infty/p^nM^+_\infty) \]
is annihilated by $t^d$.
\end{thm}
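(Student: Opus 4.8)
The plan is to compute $H^i(\Delta_\infty,M^+_\infty/p^nM^+_\infty)$ by a Koszul complex and then to reduce, generator by generator, to a one–variable estimate parallel to the proof of Theorem \ref{invariants2}. First I reduce to $R=O(c)$: by quasi-coherence of $h_{\infty,\ast}\mathscr{O}$ (Proposition \ref{qcohcrystal}) one has $M^+_\infty(R)/p^n\cong M^+_\infty(O(c))/p^n\otimes_{\Theta(c)/p^n\Theta(c)}\mathcal{R}_n$, and $\mathcal{R}_n$ is \'etale, hence flat, over $\Theta(c)/p^n\Theta(c)$ and carries the trivial $\Delta_\infty$-action, so $H^i(\Delta_\infty,M^+_\infty(R)/p^n)\cong H^i(\Delta_\infty,M^+_\infty(O(c))/p^n)\otimes_{\Theta(c)/p^n\Theta(c)}\mathcal{R}_n$ and it suffices to treat $R=O(c)$. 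Write $N:=M^+_\infty(O(c))/p^n$.

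Since $\Delta_\infty\cong\mathbb{Z}_p(1)^d$ with topological generators $\sigma_2,\dots,\sigma_{d+1}$, the cohomology $H^\ast(\Delta_\infty,N)$ is computed by the Koszul complex $K^\bullet(\sigma_2-1,\dots,\sigma_{d+1}-1;N)$ on the $d$ commuting operators $\sigma_i-1$ (see \S\ref{koszul}). By Proposition \ref{dppolynomialalgebra} (in the form used in the proof of Theorem \ref{invariants2}) $N\cong A^+_\infty/p^nA^+_\infty\langle Y_2,\dots,Y_{d+1}\rangle$, with $\sigma_i(1+Y_i)=[\underline{1}]^{-1}(1+Y_i)$ and $\sigma_iY_j=Y_j$ for $j\neq i$, while $A^+_\infty/p^nA^+_\infty$ decomposes into simultaneous $\sigma_i$-eigenspaces by the evident iteration of Proposition \ref{direct}. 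The essential input is a one-variable estimate: for each $i$ the cokernel $\cok(\sigma_i-1\mid N)$ is annihilated by $t$. Indeed, on the divided-power grading of the $Y_i$-variable the operator $\sigma_i-1$ acts on the $\beta$-eigenspace through the scalars $[\underline{1}]^{\beta-k}-1$, each of which (for $\beta-k\neq 0$) is either a unit times $t$ times a power of $p$ when $\beta-k\in\mathbb{Z}$, or a divisor of $t$ when $v_p(\beta-k)<0$, by Proposition \ref{constants}; and the off-diagonal contributions are the divided powers $([\underline{1}]^{-1}-1)^{[m]}=(-[\underline{1}]^{-1}u_1)^mt^{[m]}$, which lie in $t\cdot A_{\cris}(K^+)$ for $m\geq 1$ since $t^{[m]}\in t\cdot A_{\cris}(K^+)$ (Proposition \ref{constants}). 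Hence each graded piece of $\cok(\sigma_i-1)$ is killed by $t$, and feeding any $t\cdot(\text{cocycle})$ through the one-sided inverse $\int_i$ of the derivation $\partial_i$ from Step 2 of the proof of Theorem \ref{invariants2} produces an explicit primitive (the telescoping terminating because $t$ is nilpotent modulo $p^n$), the $p$-power denominators introduced by eigenvalues with $v_p(\beta-k)>0$ being controlled exactly as in Corollaries \ref{invariants1}, \ref{invariants1'} by the flatness of $N$ over $W_n(k)$ (Proposition \ref{dppolynomialalgebra}).

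Finally I induct on the number $d$ of generators. Writing $\Delta':=\langle\sigma_2,\dots,\sigma_d\rangle$, the cone of $\sigma_{d+1}-1$ on $K^\bullet(\sigma_2-1,\dots,\sigma_d-1;N)$ gives short exact sequences
\[ 0\to\cok\!\big(\sigma_{d+1}-1\mid H^{i-1}(\Delta',N)\big)\to H^i(\Delta_\infty,N)\to\ker\!\big(\sigma_{d+1}-1\mid H^i(\Delta',N)\big)\to 0. \]
For $i\geq 1$ the right-hand term is a submodule of $H^i(\Delta',N)$, annihilated by $t^{d-1}$ by the inductive hypothesis (applied to $N$, which as a $\Delta'$-module is of the same type tensored with a flat factor carrying the trivial $\Delta'$-action), while the left-hand term is annihilated by $t$ by the one-variable estimate applied to $H^{i-1}(\Delta',N)$, whose $\sigma_{d+1}$-action retains the required shear shape because $\sigma_2,\dots,\sigma_d$ commute with $\sigma_{d+1}$ and preserve the divided-power filtration in $Y_{d+1}$. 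Hence $t^d$ annihilates $H^i(\Delta_\infty,N)$, the base case $d=1$ being the one-variable estimate itself.

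The hard part is the one-variable estimate, together with the verification that the intermediate modules $H^j(\Delta',N)$ still carry a $\sigma_{d+1}$-action to which it applies: one must keep track of the divided-power tails and of the powers of $p$ contributed by the eigenvalues $[\underline{1}]^{\beta-k}-1$ with $v_p(\beta-k)>0$, which is precisely where the estimates of Proposition \ref{constants} and the $W_n(k)$-flatness of Proposition \ref{dppolynomialalgebra} are used, as in the (lengthy) computations of Theorem \ref{invariants2}.
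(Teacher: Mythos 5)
Your overall strategy matches the paper's: reduce to $R=O(c)$, establish the one-variable estimate that $\cok(\sigma_i-1\mid N)$ is killed by $t$ for each generator $\sigma_i$ (the paper's key observation, Step~0), and then feed this into the Koszul complex short exact sequences (the paper's Step~6). The difficulty, and the bulk of the paper's proof (Steps~1--5), lies in proving that cokernel estimate, and this is where your write-up has a real gap.

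The diagonal factor governing $\sigma_i-1$ on the $X_i^{[n]}$-component of the $[\underline{T_i}]^\alpha$-piece is $[\underline{1}]^{\alpha+n}-1$, which by Proposition~\ref{constants} is a unit times $t\cdot p^{v_p(\alpha+n)}$ whenever $\alpha+n\in\mathbb{Z}\setminus\{0\}$ and $v_p(\alpha+n)>0$. You say this extra power of $p$ is ``controlled exactly as in Corollaries~\ref{invariants1}, \ref{invariants1'} by the flatness of $N$ over $W_n(k)$.'' But those corollaries bound the \emph{kernel} (the invariants), not the cokernel: flatness over $W_n(k)$ shows that a $p^a$-torsion element lies in $p^{n-a}N$, which constrains the invariants but does not produce a preimage under $\sigma_i-1$. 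Knowing $p^a\,t\,m\in(\sigma_i-1)N$ does not by itself give $t\,m\in(\sigma_i-1)N$, and that is precisely the obstruction. The paper's actual mechanism is different: Step~4 shows $n\,t\,[\underline{T_i}]^\alpha X_i^{[n]}$ lies in the image of $\sigma_i-1$ (the unwanted $p$-power absorbed into the integer $n$), and Step~5 then strips off the factor $n$ by the integration trick with $\partial_i:=T_i\,\partial/\partial T_i$: one proves $\partial_i$ surjective (Step~3), applies $\partial_i$ to reduce the $X_i$-degree, invokes Step~4 and the induction hypothesis, integrates back using surjectivity, and controls the $\ker\partial_i$ ambiguity (Step~2). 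Your reference to ``$\int_i$ from Step~2 of the proof of Theorem~\ref{invariants2}'' points to a different derivation (there $\partial_i=[\underline{T_1}]\,\partial/\partial Y_i$, used for the invariants computation), and ``the telescoping terminating because $t$ is nilpotent modulo $p^n$'' is not what closes the argument --- the whole point of Theorem~\ref{vanishing} is a $t^d$-bound \emph{uniform in} $n$, and the inductions in Steps~3--5 terminate because a divided-power polynomial has finitely many nonzero terms, not because $t$ is nilpotent.

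Your final Koszul induction has the right shape and agrees with the paper's Step~6. Both you and the paper are terse about the subtlety that the cokernel estimate must be invoked on the $\Delta'$-cohomology groups $H^{j}(\Delta',N)$ and not merely on $N$; your phrase ``retains the required shear shape'' gestures at the needed equivariance of the constructed primitives under the remaining $\sigma_j$'s without verifying it, but the paper is no more explicit, so I will not press you harder there. The decisive deficiency is the misidentified mechanism for handling the $p$-power denominators in the cokernel estimate described above.
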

\begin{proof}

\underline{Step 0:} Firstly, by Corollary \ref{invariants0}, $M^+_\infty/p^nM^+_\infty$ is a discrete $p$-torsion $\Delta_\infty$-module, so the Koszul complex $L\otimes_{\mathbb{Z}_p[[\Delta_\infty]]}M^+_\infty/p^nM^+_\infty$ computes the Galois cohomology of $M^+_\infty/p^nM^+_\infty$ (\ref{koszul}). We first show that for all $i$ and all $m\in M^+_\infty/p^nM^+_\infty$, $t\cdot m$ lies in the image of the endomorphism $\sigma_i-1$, and from this we will deduce the statement of the theorem. Since
\[ M^+_\infty(R)/p^nM^+_\infty(R)\cong M^+_\infty(O(c))/p^nM^+_\infty(O(c))\otimes_{\Theta(c)/p^n\Theta(c)}\mathcal{R}_n \]
we can assume that $R=O(c)$. Fix some $2\leq i\leq d+1$. Recall that $X_i=[\underline{T_i}]\otimes T_i^{-1}-1$.
Now every element of $M^+_\infty(O(c))/p^nM^+_\infty(O(c))$ is the sum of monomials of the form
\[ \mu_1=x[\underline{T_1}]^{\alpha}X_i^{[n]},\quad\text{or}\quad \mu=x[\underline{T}_i]^{\alpha}X_i^{[n]} \]
with $x$ invariant under $\sigma_i$ and $\alpha\in\mathbb{N}[1/p]$. For $m=\mu_1,\mu$, we will show that $tm=(\sigma_i-1)f_i(m)$ for some $f_i(m)$ by induction on $n$.

\underline{Step 1:} If $n=0$, then we distinguish three cases: $\alpha=0$, $v_p(\alpha)\geq 0$, and $v_p(\alpha)<0$. If $\alpha=0$ then take
\[ f_i(m)=m\log(X_i+1). \]
Since $\sigma_i\log(X_i+1)=\log([\underline{1}](X_i+1))=t+\log(X_i+1)$ we obtain the claim in this case. If $v_p(\alpha)\geq 0$ then $\alpha\in\mathbb{N}$ and
\begin{eqnarray*}
[\underline{T}_i]^{\alpha} &=& T_i^{\alpha}(1+X_i)^{\alpha}\\
&=& T_i^{\alpha}\left( 1+\sum_{r=1}^\alpha \dfrac{\alpha !}{(\alpha -r)!}X_i^{[r]}\right) 
\end{eqnarray*}
so
\begin{eqnarray*}
([\underline{1}]^{\alpha}-1)[\underline{T}_i]^{\alpha} &=& (\sigma_i-1)[\underline{T}_i]^{\alpha} \\
&=&  T_i^{\alpha}(\sigma_i-1)\left( \sum_{r=1}^\alpha \dfrac{\alpha !}{(\alpha -r)!}X_i^{[r]}\right)  \\
&=& (\sigma_i-1)\left( \alpha\cdot T_i^{\alpha}\left( \sum_{r=1}^\alpha \dfrac{(\alpha -1) !}{(\alpha -r)!}X_i^{[r]}\right) \right) 
\end{eqnarray*}
and hence
\[ t[\underline{T}_i]^{\alpha}=(\sigma_i-1)\left( \dfrac{t\alpha}{[\underline{1}]^\alpha -1}T_i^{\alpha}\left( \sum_{r=1}^\alpha \dfrac{(\alpha -1)!}{(\alpha -r)!}([\underline{\pi}\underline{T}_i^{-1}]\otimes u^{-1}T_i-1)^{[r]}\right)\right)   \]
(recall that $\dfrac{t\alpha}{[\underline{1}]^\alpha -1}\in A_{\cris}(K^+)$ by Proposition \ref{constants}). Similarly we have
\[ t[\underline{T}_1]^{\alpha}=(\sigma_i-1)\left( \dfrac{t\alpha}{[\underline{1}]^\alpha -1}T_1^{\alpha}\left( \sum_{r=1}^\alpha \dfrac{(\alpha -1)!}{(\alpha -r)!}X_1^{[r]}\right)\right) \]
where $X_1=[\underline{T_1}]\otimes T_1^{-1}-1$. If $v_p(\alpha)<0$, then
\[ t[\underline{T_i}]^{\alpha}=(\sigma_i-1)\left( \dfrac{t}{[\underline{1}]^\alpha -1}[\underline{T_i}]^{\alpha}\right). \]
This begins the induction.

\underline{Step 2:} Fix some $2\leq i\leq d+1$. Let $\partial_i:=T_i\frac{\partial}{\partial T_i}$. We claim that if $m\in M^+_\infty$ and $\partial_i(m)=0$, then $tm=(\sigma_i-1)m'$ for some $m'\in M^+_\infty$. Indeed, write $m=\sum_jy_jX_i^{[j]}$ with $\partial_i(y_j)=0$ and $y_j=\sum_{\alpha\geq 0}x_{\alpha}[\underline{T_i}]^{\alpha}+\sum_{\alpha<0}x_{\alpha}[\underline{T_1}]^{-\alpha}$ with $x_{\alpha}$ invariant under $\sigma_i$. Then
\[ \partial_i(m)=\left( \sum_jy_jX_i^{[j-1]}\right)(-1)(1+X_i)=0 \]
and since $1+X_i$ is a unit we deduce that $y_j=0$ for all $j\neq 0$, hence $m=y_0$. So by Step 1 above this proves the claim.

\underline{Step 3:} We claim that $\partial_i$ is surjective. It suffices to show that $X_i^{[j]}$ lies in the image of $\partial_i$ for all $j\in\mathbb{N}$. We have
\[ \partial_i(-X_i^{[j+1]})=X_i^{[j]}+(j+1)X_i^{[j+1]} \]
so by induction we deduce the formula
\[ X_i^{[j]}=\sum_{k=1}^\infty (-1)^{k}\dfrac{(j+k-1)!}{j!}\partial_i(X_i^{[j+k]}) \]
thereby proving the claim.

\underline{Step 4:} We claim that for all $\alpha\in\mathbb{N}[1/p]$ and all $n\in\mathbb{N}$, the elements
\[ nt[\underline{T_1}]^{\alpha}X_i^{[n]},\quad nt[\underline{T_i}]^{\alpha}X_i^{[n]} \]
are in the image of $\sigma_i-1$. Let us show this for $nt[\underline{T_i}]^{\alpha}X_i^{[n]}$, the argument for $nt[\underline{T_1}]^{\alpha}X_i^{[n]}$ being the same. First of all, if $v_p(\alpha)\geq 0$ then
\[ [\underline{T_i}]^{\alpha}=T_i^{\alpha}([\underline{T_i}]\otimes T_i^{-1}-1+1)^{\alpha}=T_i^{\alpha}\sum_{r\geq 0}\dfrac{\alpha !}{(\alpha -r)!}X_i^{[r]} \]
so by considering each summand individually we may assume that either $\alpha=0$ or $v_p(\alpha)<0$. Now, we have
\begin{eqnarray*}
(\sigma_i-1)([\underline{T_i}]^{\alpha}X_i^{[n]}) &=& [\underline{1}]^{\alpha}[\underline{T_i}]^{\alpha}([\underline{1}][\underline{T_i}]\otimes T_i^{-1}-1)^{[n]}-[\underline{T_i}]^{\alpha}X_i^{[n]} \\
&=& [\underline{1}]^{\alpha +n}[\underline{T_i}]^{\alpha}\left( [\underline{T_i}]\otimes T_i^{-1}-1+(1-[\underline{1}]^{-1})\right) ^{[n]}-[\underline{T_i}]^{\alpha}X_i^{[n]} \\
&=& [\underline{1}]^{\alpha +n}[\underline{T_i}]^{\alpha}\sum_{r=0}^nX_i^{[r]}(1-[\underline{1}]^{-1})^{[n-r]}-[\underline{T_i}]^{\alpha}X_i^{[n]} \\
&=& ([\underline{1}]^{\alpha +n}-1)[\underline{T_i}]^{\alpha}X_i^{[n]}+[\underline{1}]^{\alpha +n}[\underline{T_i}]^{\alpha}\sum_{r=0}^{n-1}X_i^{[r]}(1-[\underline{1}]^{-1})^{[n-r]}.
\end{eqnarray*}
Now recall (Prop. \ref{constants}) that $1-[\underline{1}]^{-1}=tw$ for some unit $w\in A_{\cris}(K^+)$, hence
\[ (1-[\underline{1}]^{-1})^{[n-r]}=t\dfrac{w^{n-r}t^{n-r-1}}{(n-r)!} \]
is a multiple of $t$ for $0\leq r<n$. So by induction on $n$ we may assume that
\[ [\underline{1}]^{\alpha +n}[\underline{T_i}]^{\alpha}\sum_{r=0}^{n-1}X_i^{[r]}(1-[\underline{1}]^{-1})^{[n-r]} \]
lies in the image of $\sigma_i-1$. Hence
\[ ([\underline{1}]^{\alpha +n}-1)[\underline{T_i}]^{\alpha}X_i^{[n]} \]
lies in the image of $\sigma_i-1$. Recall (Prop. \ref{constants}) that we have
\[ tp^{\max(v_p(\alpha+n),0)}=b([\underline{1}]^{\alpha +n}-1) \]
for some $b\in A_{\cris}(K^+)$. If $\alpha=0$, then this immediately proves the claim. If $v_p(\alpha)<0$, then $v_p(\alpha +n)=v_p(\alpha)<0$, and so the claim also follows.

\underline{Step 5:} Consider an element of the form
\[ t[\underline{T_1}]^{\alpha}X_i^{[n]},\quad t[\underline{T_i}]^{\alpha}X_i^{[n]}. \]
We have
\[ \partial_i(-t[\underline{T_i}]^{\alpha}X_i^{[n]})=t[\underline{T_i}]^{\alpha}X_i^{[n-1]}+nt[\underline{T_i}]^{\alpha}X_i^{[n]}. \]
So by induction hypothesis and by Step 4 we know that
\[  \partial_i(t[\underline{T_i}]^{\alpha}X_i^{[n]})=(\sigma_i-1)m \]
for some $m\in M^+_\infty$. By Step 3 we know that $m=\partial_i(m_1)$ for some $m_1\in M^+_\infty$, hence
\[ \partial_i(t[\underline{T_i}]^{\alpha}X_i^{[n]})=\partial_i\left( (\sigma_i-1)(m_1)\right). \]
So we deduce that
\[ t[\underline{T_i}]^{\alpha}X_i^{[n]}=(\sigma_i-1)(m_1)+m_2 \]
with $\partial_i(m_2)=0$. We claim that $m_2=(\sigma_i-1)(m_3)$. Since $n>0$ and  $\partial_i(m_2)=0$, it suffices to show that the coefficient of $X_i^{[0]}$ in $(\sigma_i-1)(m_1)$ lies in the image of $\sigma_i-1$. To see this, write $m_1=\sum_{\alpha}c_\alpha[\underline{T_i}]^{\alpha}X_i^{[n_\alpha]}$ for some $c_\alpha\in (M^+_\infty/p^n)^{\partial_i=0,\sigma_i=1}$. Then by a computation above we have
\[ (\sigma_i-1)([\underline{T_i}]^{\alpha}X_i^{[n_\alpha]})=([\underline{1}]^{\alpha +n_\alpha}-1)[\underline{T_i}]^{\alpha}X_i^{[n_\alpha]}+[\underline{1}]^{\alpha +n_\alpha}[\underline{T_i}]^{\alpha}\sum_{r=0}^{n_\alpha-1}X_i^{[r]}(1-[\underline{1}]^{-1})^{[n_\alpha-r]} \]
and in this sum the coefficient of $X_i^{[0]}$ is a multiple of $t$ if $n_\alpha\neq 0$ and otherwise it is $([\underline{1}]^{\alpha}-1)[\underline{T_i}]^{\alpha}$ which is in the image of $\sigma_i-1$. Hence the coefficient of $X_i^{[0]}$ in $(\sigma_i-1)(m_1)$ lies in the image of $\sigma_i-1$. Hence $m_2$ lies in the image of $\sigma_i-1$. 

Since the same argument also works for $t[\underline{T_1}]^{\alpha}X_i^{[n]}$ this completes the induction, and proves that for all $i$
\[ H_0(L_i\otimes_{\mathbb{Z}_p[[\Delta_\infty]]}M^+_\infty/p^nM^+_\infty) \]
is annihilated by $t$, where $L_i$ is the complex defined in (\ref{koszul}).

\underline{Step 6:} Now, consider the Koszul complex $L=\otimes_{\mathbb{Z}_p[[\Delta_\infty]]} L_i$. For any complex $K$ of $\mathbb{Z}_p[[\Delta_\infty]]$-modules we have short exact sequences (\cite[Ch. IV, Prop. 1]{localg})
\[ \minCDarrowwidth5pt\begin{CD}
0 @>>> H_0(L_i\otimes_{\mathbb{Z}_p[[\Delta_\infty]]}H_p(K)) @>>> H_p(L_i\otimes_{\mathbb{Z}_p[[\Delta_\infty]]}K) @>>> H_1(L_i\otimes_{\mathbb{Z}_p[[\Delta_\infty]]}H_{p-1}(K)) @>>> 0.
\end{CD} \]
Applying this inductively to $K=L_{\leq e}\otimes_{\mathbb{Z}_p[[\Delta_\infty]]}M^+_\infty/p^nM^+_\infty$, with $L_{\leq e}:=\otimes_{i\leq e}L_i$ the theorem follows.
\end{proof}

\subsubsection{}
We can now tie everything together. Denote $C^{\bullet}(\Delta,-)=\Hom_{\text{cont.},\Delta}(\Delta^{\times\bullet},-)$ the usual functorial complex computing continuous group cohomology of $\Delta$.

\begin{cor}\label{final}
Suppose that $R$ is a small $K^+$-algebra. Then there is a canonical morphism of the derived category
\[ B^+_{\log}\otimes_{\Sigma}\omega^{\bullet}_{\mathcal{R}_n/\Sigma_n}\to C^{\bullet}(\Delta,A^+_{\log,\Sigma}/p^nA^+_{\log,\Sigma}) \]
which is an almost quasi-isomorphism up to $t^d[\underline{c}]$-torsion.
\end{cor}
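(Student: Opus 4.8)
The plan is to build the morphism out of the logarithmic Poincar\'e lemma of \S\ref{resolution} and then to reduce the comparison, by a Hochschild--Serre argument, to the almost-invariants statement (Theorem \ref{invariants2}) and the almost-vanishing statement (Theorem \ref{vanishing}) already proved for $M^+_\infty$ and $\Delta_\infty$.

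\emph{Construction of the morphism.} By \S\ref{resolution} the augmentation $A^+_{\log,\Sigma}/p^n\to (M^+/p^n)\otimes_{\mathcal{R}_n}\omega^\bullet_{\mathcal{R}_n/\Sigma_n}$ is a quasi-isomorphism of complexes of $\Delta$-modules, the connection being functorial hence $\Delta$-equivariant. Functoriality of log-crystalline cohomology applied to $\bar{K}^+/p\bar{K}^+\hookrightarrow\bar{R}/p\bar{R}$ over $\Sigma_n\to\mathcal{R}_n$ produces a canonical map $B^+_{\log}/p^n\otimes_{\Sigma_n}\mathcal{R}_n\to M^+/p^n$; it is horizontal for the connections (its restriction to $B^+_{\log}/p^n$ factors through the flat sections $A^+/p^n\subset M^+/p^n$, and on $\mathcal{R}_n$ it is the tautological derivation $\mathcal{R}_n\to\omega^1_{\mathcal{R}_n/\Sigma_n}$ defining the connection on the log-smooth lift), and $\Delta$ acts trivially on its source since $\Delta$ fixes $\bar{K}^+\subset\tilde{R}$ and acts trivially on $R/pR$, hence on the \'etale lift $\mathcal{R}_n$. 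Tensoring with $\omega^\bullet_{\mathcal{R}_n/\Sigma_n}$ over $\mathcal{R}_n$ and following with the augmentation $N\to C^\bullet(\Delta,N)$ into the functorial continuous cochain complex we obtain a morphism of complexes
\[ \phi\colon B^+_{\log}\otimes_{\Sigma}\omega^\bullet_{\mathcal{R}_n/\Sigma_n}=B^+_{\log}/p^n\otimes_{\Sigma_n}\omega^\bullet_{\mathcal{R}_n/\Sigma_n}\longrightarrow C^\bullet(\Delta,(M^+/p^n)\otimes_{\mathcal{R}_n}\omega^\bullet_{\mathcal{R}_n/\Sigma_n}) \]
and, via the Poincar\'e lemma quasi-isomorphism, the asserted morphism in the derived category to $C^\bullet(\Delta,A^+_{\log,\Sigma}/p^n)$; it is canonical since every step is functorial in $R$.

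\emph{Reduction to the degree-zero part.} Since $\omega^p_{\mathcal{R}_n/\Sigma_n}$ is finite free over $\mathcal{R}_n$ with trivial $\Delta$-action, $\phi$ is the total complex of a morphism of double complexes whose $p$-th de Rham column is $\psi\otimes_{\mathcal{R}_n}\omega^p_{\mathcal{R}_n/\Sigma_n}$, where $\psi\colon B^+_{\log}/p^n\otimes_{\Sigma_n}\mathcal{R}_n\to C^\bullet(\Delta,M^+/p^n)$ is the degree-zero part (constants). The cohomology of the cone of $\psi$ is: in degree $-1$, $\ker\psi=0$, because by the divided-power-polynomial descriptions (Proposition \ref{dppolynomialalgebra} and flat base change from $O(c)$) $B^+_{\log}/p^n\otimes_{\Sigma_n}\mathcal{R}_n$ injects into $M^+_\infty/p^n$, hence almost into $M^+/p^n$, with image in $(M^+/p^n)^{\Delta}$; in degree $0$, the cokernel of $B^+_{\log}/p^n\otimes_{\Sigma_n}\mathcal{R}_n\to (M^+/p^n)^{\Delta}$, which is almost annihilated by $t^d[\underline{c}]$, since $(M^+/p^n)^{\Delta}\thickapprox (M^+_\infty/p^n)^{\Delta_\infty}$ by Corollary \ref{invariants0} and Hochschild--Serre, while $t^d[\underline{c}]\cdot(M^+_\infty/p^n)^{\Delta_\infty}\subset B^+_{\log}/p^n\otimes_{\Sigma_n}\mathcal{R}_n$ by Theorem \ref{invariants2}; and in degrees $i\geq 1$, $H^i(\Delta,M^+/p^n)\thickapprox H^i(\Delta_\infty,M^+_\infty/p^n)$, annihilated by $t^d$ by Theorem \ref{vanishing}. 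So every cohomology module of the cone of $\psi$ is almost annihilated by $t^d[\underline{c}]$.

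\emph{Passage from $\psi$ to $\phi$, and the main obstacle.} What remains is to see that tensoring the cone of $\psi$ with the bounded complex of finite free $\mathcal{R}_n$-modules $\omega^\bullet_{\mathcal{R}_n/\Sigma_n}$ and totalising keeps the annihilator at $t^d[\underline{c}]$, rather than inflating it by one power per de Rham degree as a crude spectral-sequence estimate would. This is the only genuinely delicate point; the rest is formal. It is handled by a d\'evissage along the $d$ coordinate directions, using that (after reduction to $O(c)$) both the continuous cochain complex of $\Delta_\infty$ and $\omega^\bullet_{\mathcal{R}_n/\Sigma_n}$ are Koszul complexes in the same $d$ directions---$\sigma_i-1$ on one side, the log derivations $\partial_i$ ($2\leq i\leq d+1$, as in Theorem \ref{invariants2}) on the other---so that, using \S\ref{koszul}, $C^\bullet(\Delta,A^+_{\log,\Sigma}/p^n)$ is represented by the Koszul complex of $M^+_\infty/p^n$ for the $2d$ commuting operators $\{\sigma_i-1,\partial_i\}$. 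One treats one index $i$ at a time exactly as in the proofs of Theorems \ref{invariants2} and \ref{vanishing} (whose steps already proceed direction by direction), each direction contributing a single factor of $t$ and the single global factor $[\underline{c}]$ arising, as in Theorem \ref{invariants2}, from the relation $T_1\cdots T_r=v$; the pieces are glued by the K\"unneth-type short exact sequences of \cite[Ch. IV, Prop. 1]{localg}. This yields that $\phi$ is an almost quasi-isomorphism up to $t^d[\underline{c}]$-torsion.
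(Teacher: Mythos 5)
The construction of the morphism and the reduction to the degree-zero part match the paper's (extremely terse) proof: both use the log-Poincar\'e resolution of \S\ref{resolution}, pass from $\Delta$-cohomology of $M^+/p^n$ to $\Delta_\infty$-cohomology of $M^+_\infty/p^n$ via Corollary \ref{almostgalois} and Hochschild--Serre, and then cite Theorem \ref{invariants2} for the invariants in degree $0$ and Theorem \ref{vanishing} for the higher cohomology. Your identification of the cohomology of $\mathrm{Cone}(\psi)$ in degrees $-1$, $0$ and $\geq 1$ is consistent with that of the paper.

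The problem is your final paragraph, which you yourself single out as ``the only genuinely delicate point.'' As written it is not a proof but an assertion of a plan. You observe, correctly, that the first filtration of $\mathrm{Cone}(\phi)$ by de Rham degree has up to $d+1$ graded pieces each almost killed by $t^d[\underline{c}]$, and that this only bounds the annihilator of the abutment by a power of $t^d[\underline{c}]$, not by $t^d[\underline{c}]$ itself. You propose to repair this by recasting $C^\bullet(\Delta, A^+_{\log,\Sigma}/p^n)$ as a Koszul complex in the $2d$ operators $\{\sigma_i-1,\partial_i\}$ on $M^+_\infty/p^n$ and arguing ``one index at a time, each direction contributing one factor of $t$ and a single global $[\underline{c}]$.'' But none of the actual work is done: you do not verify that this double Koszul representation exists with the stated augmentation compatibility; you do not extract from the \emph{proofs} (not the \emph{statements}) of Theorems \ref{invariants2} and \ref{vanishing} the finer ``one $t$ per direction'' assertions for a single pair $(\sigma_i-1,\partial_i)$ acting on the appropriate two-variable Koszul complex; and the K\"unneth-type exact sequences of the cited reference only relate the homology of a Koszul complex in $k$ operators to that in $k-1$ operators, so applying them recursively requires knowing the homology of the sub-complex at each stage --- which is precisely what you are trying to establish. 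Without carrying this out explicitly, the claimed bound $t^d[\underline{c}]$, as opposed to some higher power, is unproved.

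To put the gap in context: the paper's own proof consists of one sentence and does not address this point either, so your proposal is at least as careful as what is printed. But as a freestanding proof it is incomplete precisely at the step you flag as the crux, and the gap is a real one rather than a presentational one.
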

\begin{proof}
The morphism in the derived category comes from the morphisms of complexes
\[ \minCDarrowwidth15pt \begin{CD}
C^\ast(\Delta,A^+/p^nA^+) @>{\sim}>> C^\ast(\Delta,M^+/p^nM^+\otimes_{\mathcal{R}_n}\omega^\bullet_{\mathcal{R}_n/\Sigma_n}) @<<< B^+_{\log}/p^nB^+_{\log}\otimes_{\Sigma_n}\omega^\bullet_{\mathcal{R}_n/\Sigma_n}
\end{CD} \]
where the morphism of the left is a quasi-isomorphism by (\ref{resolution}). Now by Corollary \ref{almostgalois}, the result follows from Theorems \ref{invariants2} and \ref{vanishing}.
\end{proof}

\subsection{Kummer sequence compatibility}
Let $R$ be a small integral $K^+$-algebra and let $f\in R^{\ast}$. Let $\mathcal{R}_n$ be the ind-\'etale $\Theta(c)/p^n\Theta(c)$-algebra lifting $R\otimes\mathbb{F}_p$ and let $\hat{f}$ be a lift of $f\mod p$ to $\mathcal{R}_n$. The element $f$ defines, via the Kummer sequence, a 1-cocycle
\[ \left(\delta\mapsto \dfrac{\delta(f^{p^{-n}})}{f^{p^{-n}}}\right)\in C^1(\Delta,\mathbb{Z}/p^n\mathbb{Z}(1)). \]
Under the logarithm $\log:\mathbb{Z}/p^n\mathbb{Z}(1)\to A^+/p^nA^+$ it maps to
\[ \log\left( f^{p^{-n}}\right):=\left(\delta\mapsto\log\left( \dfrac{\delta(f^{p^{-n}})}{f^{p^{-n}}}\right)\right) \in C^1(\Delta,A^+/p^nA^+). \]

\begin{prop}
We have $\dlog(\hat{f})=-\log\left(f^{p^{-n}}\right)$ in $H^1(\Delta,A^+/p^nA^+)$.
\end{prop}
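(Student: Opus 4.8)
The plan is to realize both classes inside the logarithmic de Rham resolution of $A^+/p^nA^+$ constructed in \S\ref{resolution} and to write down an explicit primitive. Recall that the crystal connection $\nabla$ makes
\[ A^+/p^nA^+\xrightarrow{\ \sim\ }M^+/p^nM^+\otimes_{\mathcal{R}_n}\omega^\bullet_{\mathcal{R}_n/\Sigma_n} \]
a resolution by $\Delta$-modules, so $H^1(\Delta,A^+/p^nA^+)$ is canonically the $H^1$ of the total complex of the double complex $C^p(\Delta,M^+/p^nM^+\otimes_{\mathcal{R}_n}\omega^q_{\mathcal{R}_n/\Sigma_n})$, and this is the identification implicit in Corollary \ref{final}. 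Since $\Delta$ acts on $M^+$ fixing $\mathcal{R}_n$, hence fixing $\omega^\bullet_{\mathcal{R}_n/\Sigma_n}$, the element $1\otimes\dlog(\hat f)$ lies in $(M^+/p^nM^+\otimes\omega^1)^\Delta$ and is a total cocycle (it is closed because $d\dlog(\hat f)=0$), and it represents the class denoted $\dlog(\hat f)$ in the statement. On the other hand, fix a compatible system $\underline f=(f,f^{1/p},f^{1/p^2},\dots)\in P(\bar R/p\bar R)$ of $p$-power roots of $f$ whose $n$-th term is the root $f^{p^{-n}}$ used in the statement; then the Kummer cocycle is $g\mapsto\underline{\epsilon}_g:=g(\underline f)/\underline f$, a system of $p$-power roots of unity with $\underline{\epsilon}_g^{(n)}=g(f^{p^{-n}})/f^{p^{-n}}$, and $-\log(f^{p^{-n}})$ is represented by $g\mapsto-\log([\underline{\epsilon}_g])$, viewed in $C^1(\Delta,M^+/p^nM^+)$ via $A^+\subset M^+$.

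Next I would introduce the primitive
\[ b:=\log\!\bigl([\underline f]/\hat f\bigr)\in M^+/p^nM^+, \]
where $[\underline f]$ is the Teichm\"uller lift of $\underline f$ under $W(P(\bar R/p\bar R))\to M^+$. This is legitimate: $[\underline f]$ and $\hat f$ both have image $f\bmod p$ in $\bar R/p\bar R$, and $\hat f\in\mathcal{R}_n^{\ast}$, so $[\underline f]/\hat f\in 1+J$, where $J=\ker(M^+/p^nM^+\to\bar R/p\bar R)$ is the crystalline divided power ideal, on which the divided power logarithm converges.

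Then come the two computations. First, Teichm\"uller lifts from $W(P(\bar R/p\bar R))$ are horizontal for the crystal connection on $M^+$, since $W(P(\bar R/p\bar R))$ is independent of the chosen log-smooth lift $\mathcal{R}_n$; thus $\nabla([\underline f])=0$, while $\nabla$ restricts to $d_{\mathcal{R}_n/\Sigma_n}$ on $\mathcal{R}_n$. A short divided power manipulation ($\nabla([\underline f]/\hat f)=-([\underline f]/\hat f)\dlog(\hat f)$, then divide by $[\underline f]/\hat f$ inside the $\log$-series) gives $\nabla(b)=-\dlog(\hat f)$ in $M^+/p^nM^+\otimes\omega^1$. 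Second, because $f\in R^{\ast}$ is $\Delta$-fixed we have $\underline{\epsilon}_g^{(0)}=1$, so $[\underline{\epsilon}_g]\in 1+J$; and $g(\hat f)=\hat f$ (as $\mathcal{R}_n$ lies in the base) while $g([\underline f])=[g(\underline f)]=[\underline{\epsilon}_g][\underline f]$ by multiplicativity of Teichm\"uller lifts and functoriality, so additivity of $\log$ on $1+J$ yields $(g-1)(b)=\log([\underline{\epsilon}_g])$. Finally one identifies $\log([\underline{\epsilon}_g])$ with the value $\log(f^{p^{-n}})(g)$: the map $\log\colon\mathbb{Z}/p^n\mathbb{Z}(1)\to A^+/p^nA^+$ before the proposition sends $\zeta\in\mu_{p^n}$ to $\log([\underline\zeta])$ for any system $\underline\zeta$ with $\underline\zeta^{(n)}=\zeta$, which is well defined modulo $p^n$ because $t\cdot p^n\mathbb{Z}_p\subset p^nA_{\cris}$, and $\underline{\epsilon}_g^{(n)}$ is exactly the Kummer cocycle value.

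Assembling, $b$ maps under the total differential to $\bigl((g-1)b,\ \nabla b\bigr)=\bigl(\log(f^{p^{-n}}),\ -\dlog(\hat f)\bigr)$, which, with the sign conventions of the total complex in \S\ref{resolution}, exhibits $1\otimes\dlog(\hat f)$ and the cocycle $g\mapsto-\log(f^{p^{-n}})(g)$ as cohomologous, proving the proposition. The only genuine obstacle is bookkeeping: pinning down the sign with which $1\otimes\dlog(\hat f)$ and the de Rham differential enter the total complex of $C^\bullet(\Delta,M^+/p^nM^+\otimes\omega^\bullet_{\mathcal{R}_n/\Sigma_n})$ — this is precisely where the minus sign in the statement originates — together with recording horizontality of Teichm\"uller lifts for the crystal connection; both are routine once the conventions of \S\ref{resolution} are fixed.
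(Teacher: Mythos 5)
Your argument is essentially identical to the paper's. The paper likewise realizes both classes in the total complex $C^\ast(\Delta,M^+/p^nM^+\otimes_{\mathcal{R}_n}\omega^\bullet_{\mathcal{R}_n/\Sigma_n})$ and uses the same primitive up to sign, namely $\log\bigl([\underline f]^{-1}\otimes\hat f\bigr)=-b$, computing $\nabla$ of it to be $\dlog(\hat f)$ and $(g-1)$ of it to be $-\log(f^{p^{-n}})$; the extra material you include (horizontality of Teichm\"uller lifts for the crystal connection, convergence of the divided-power logarithm on $1+J$, well-definedness of $\log$ on $\mathbb{Z}/p^n\mathbb{Z}(1)$) is sound bookkeeping that the paper leaves implicit.
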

\begin{proof}
We have morphisms of complexes
\[ \minCDarrowwidth15pt \begin{CD}
C^\ast(\Delta,A^+/p^nA^+) @>{\sim}>> C^\ast(\Delta,M^+/p^nM^+\otimes_{\mathcal{R}_n}\omega^\bullet_{\mathcal{R}_n/\Sigma_n}) @<<< B^+_{\log}/p^nB^+_{\log}\otimes_{\Sigma_n}\omega^\bullet_{\mathcal{R}_n/\Sigma_n}
\end{CD} \]
and $\dlog(\hat{f})$ is the image under $d$ of $\log\left( \left[f^{p^{-n}}\right]^{-1}\otimes\hat{f}\right)\in C^0(\Delta,M^+/p^nM^+) $ and the latter has image $\log\left( f^{-p^{-n}}\right)=-\log\left( f^{p^{-n}}\right)\in C^1(\Delta,M^+/p^nM^+)$.
\end{proof}

\section{Appendix: results from commutative algebra}

\subsection{Complement on log structures}

\subsubsection{}
Part (i) of the next lemma is implicit in \cite{log}.

\begin{lemma}\label{nilexact}
Let $i:(X_0,M_0)\hookrightarrow (X,M)$ be a nilpotent closed immersion of log-schemes of ideal $\mathscr{I}$. Then
\begin{enumerate}[(i)]
\item $i^\ast M=i^{-1}M/(1+\mathscr{I})$; in particular, $i$ is exact if and only if the map $i^{-1}M\to M_0$ is surjective and locally for all sections $m,m'$ of $i^{-1}M$ with same image in $M_0$, there exists $u\in 1+\mathscr{I}$ such that $m=um'$
\item if $X$ is affine, $i$ exact, and $M$ integral, then $\Gamma(X,M)/\Gamma(X,1+\mathscr{I})=\Gamma(X_0,M_0)$.
\item If $i$ is exact, $M_0$ fine and saturated, and $M$ integral, then $M$ is fine and saturated and is given in a neighbourhood of a geometric point $\bar{x}\to X_0$ by the fine saturated monoid $P:=M_{0,\bar{x}}/\mathscr{O}_{X_0,\bar{x}}^{\ast}$.
\end{enumerate}
\end{lemma}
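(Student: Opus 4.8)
The plan is to treat the three parts in order, since (ii) and (iii) both rest on (i). For part (i), I would work \'etale-locally and use the explicit description of the inverse image log structure: $i^\ast M$ is the pushout of $\mathscr{O}_{X_0}^\ast \leftarrow i^{-1}\mathscr{O}_X^\ast \to i^{-1}M$ in the category of (pre-)log structures. Because $i$ is a nilpotent closed immersion, the kernel of $i^{-1}\mathscr{O}_X^\ast \to \mathscr{O}_{X_0}^\ast$ is exactly $1+\mathscr{I}$, so the pushout is $i^{-1}M/(1+\mathscr{I})$, where $1+\mathscr{I}$ acts on $i^{-1}M$ through the structure map $\mathscr{O}_X^\ast \to M$; one must check that the quotient monoid sheaf is already a log structure (its associated sheaf of groups maps isomorphically after quotienting $\mathscr{O}_{X_0}$), which is a routine diagram chase. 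The stated criterion for exactness then follows by unwinding the definition of exactness ($i^{-1}M \to M_0$ should be, up to the above identification, such that $i^{-1}M^{\gp}\times_{M_0^{\gp}} M_0 = i^{-1}M$): surjectivity of $i^{-1}M\to M_0$ plus the condition that two sections with the same image differ by a unit in $1+\mathscr{I}$ is precisely the condition that $i^{-1}M \to M_0$ identifies $M_0$ with the quotient by $1+\mathscr{I}$.

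For part (ii), the point is to pass from the sheaf-level statement of (i) to global sections on an affine $X$. Applying $\Gamma(X,-)$ to the exact sequence of sheaves of monoids $1+\mathscr{I} \to i^{-1}M \to M_0 \to 1$ (exact because $i$ is exact, by (i)), the only issue is right-exactness of global sections, i.e. surjectivity of $\Gamma(X,i^{-1}M) \to \Gamma(X,M_0)$. Here I would use that $M$ is integral: integrality means $M \hookrightarrow M^{\gp}$, and for integral monoid sheaves the obstruction to lifting a global section of $M_0$ lives in $H^1$ of the kernel $1+\mathscr{I}$, which as a sheaf of abelian groups on the affine scheme $X$ (it is a quasi-coherent-type sheaf, being an extension built from the ideal $\mathscr{I}$ filtered by the $\mathscr{I}^n/\mathscr{I}^{n+1}$, each of which is quasi-coherent) has vanishing higher \'etale cohomology. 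So $\Gamma(X,i^{-1}M)=\Gamma(X,M)$ surjects onto $\Gamma(X_0,M_0)$ with kernel-quotient $\Gamma(X,1+\mathscr{I})$, giving the claim.

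For part (iii), I would argue at a geometric point $\bar x \to X_0$. Since $M_0$ is fine and saturated, $P := M_{0,\bar x}/\mathscr{O}_{X_0,\bar x}^\ast$ is a fine saturated monoid and, after an \'etale neighbourhood, there is a chart $P \to M_0$ (Kato). Using exactness and part (i), the composite $P \to M_0 \xleftarrow{\sim} i^{-1}M/(1+\mathscr{I})$ lifts, by the same cohomology-vanishing / henselian argument (now $\mathscr{O}_{X,\bar x}$ is strictly henselian local, so $1+\mathscr{I}$-torsors are trivial), to a map $P \to M_{\bar x}$; one checks this is a chart for $M$ near $\bar x$, i.e. that $M^a$ built from $P \to \mathscr{O}_X$ equals $M$, by comparing stalks: $M_{\bar x}/\mathscr{O}_{X,\bar x}^\ast$ maps to $M_{0,\bar x}/\mathscr{O}_{X_0,\bar x}^\ast = P$, and exactness forces this to be an isomorphism (a section of $M_{\bar x}$ mapping into $\mathscr{O}_{X_0,\bar x}^\ast$ maps into $\mathscr{O}_{X,\bar x}^\ast$ by exactness, giving injectivity; surjectivity is the lifting just constructed). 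Since $M$ was assumed integral and now has a chart by the fine saturated monoid $P$, it is fine and saturated.

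The main obstacle I anticipate is the passage from stalk/sheaf statements to the global or local chart statements, i.e. the vanishing of the relevant $H^1$ with coefficients in $1+\mathscr{I}$: one must exploit both that $X$ is affine (resp. the local ring is strictly henselian) and that $\mathscr{I}$ is a nilpotent ideal, filtering $1+\mathscr{I}$ by the subgroups $1+\mathscr{I}^k$ whose successive quotients are $\mathscr{I}^k/\mathscr{I}^{k+1}$, quasi-coherent sheaves with no higher \'etale cohomology on an affine scheme. Integrality of $M$ is what guarantees the relevant obstruction classes actually live in these cohomology groups rather than in some nonabelian $H^1$; this is the hypothesis that has to be used carefully throughout.
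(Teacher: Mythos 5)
Your proof follows the paper's route for (i) and (ii) essentially verbatim: in (i), the key point is checking that $i^{-1}M/(1+\mathscr{I})$ is already a log structure (your pushout phrasing is the paper's universal-property phrasing in different words), and in (ii) both you and the paper reduce to $H^1_{\et}(X,1+\mathscr{I})=0$, proved by filtering $1+\mathscr{I}$ by the subgroups $1+\mathscr{I}^k$ whose successive quotients are the quasi-coherent sheaves $\mathscr{I}^k/\mathscr{I}^{k+1}$.

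For (iii) the overall plan (produce a chart $P\to\mathscr{O}_{X,\bar{x}}$ and verify it via exactness at the stalk) is the same as the paper's, but the reason you give for the existence of a monoid map $P\to M_{\bar{x}}$ does not hold up. The ``henselian / $1+\mathscr{I}$-torsors trivial'' vanishing is sheaf-theoretic $H^1$ and serves to lift an individual section; it does not by itself lift a \emph{homomorphism} $P\to M_{0,\bar{x}}$ through the surjection $M_{\bar{x}}\to M_{0,\bar{x}}$ (a priori a different obstruction is involved). What actually makes the construction work, and what the paper uses, is that $P^{\gp}$ is a finite \emph{free} $\mathbb{Z}$-module because $M_0$ is fine and saturated: the surjection of abelian groups $M_{\bar{x}}^{\gp}\to P^{\gp}$ therefore admits a section, and restricting that section to $P$ directly gives the desired $P\to M_{\bar{x}}$, with no need to lift a chart on $X_0$ at all. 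Once this is corrected, the rest of your (iii) — injectivity of $M_{\bar{x}}/\mathscr{O}_{X,\bar{x}}^\ast\to P$ from exactness, surjectivity from the constructed map, and the conclusion that $M$ is fine and saturated — agrees with the paper.
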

\begin{proof}
(i): One first shows easily that $\mathscr{O}_{X_0}^\ast\cong i^{-1}\mathscr{O}_X^\ast/(1+\mathscr{I})$. If $L\to\mathscr{O}_{X_0}$ is a log-structure on $X_0$ together with a morphism of pre-log-structures
\[ i^{-1}M\to L \]
then $1+\mathscr{I}\subset i^{-1}M$ maps to $1\in L$, so the map factors (necessarily uniquely)
\[ i^{-1}M/(1+\mathscr{I})\to L. \]
So the claim will follow if we can show that $i^{-1}M/(1+\mathscr{I})$ is a log-structure. If $\alpha:i^{-1}M\to i^{-1}\mathscr{O}_X$ is the inverse image by $i$ of the map defining $M$ as a log-structure on $X$, then $\alpha$ induces an isomorphism $\alpha^{-1}i^{-1}\mathscr{O}_X^\ast\cong i^{-1}\mathscr{O}_X^\ast$, whence an isomorphism
\[ \alpha^{-1}i^{-1}\mathscr{O}_X^\ast/(1+\mathscr{I})\cong i^{-1}\mathscr{O}_X^\ast/(1+\mathscr{I}) \]
i.e. $i^{-1}M/(1+\mathscr{I})\to i^{-1}\mathscr{O}_X/\mathscr{I}\cong \mathscr{O}_{X_0}$ is a log-structure on $X_0$.

(ii): I thank the referee for this argument. Let $s\in \Gamma(X_0,i^{\ast}M)$. Choose an \'etale covering $\mathscr{U}=\{ U_j\to X_0\}_{j\in J}$ such that $s|_{U_j}$ is the image of $t_j\in i^{-1}(M)(U_j)$. For all $j,k\in J$, let $U_{jk}:=U_j\times_{X_0}U_k$. Then since $M$ is integral, $\{t_j/t_k\in (1+\mathscr{I})(U_{jk})\}_{(j,k)\in J^2}$ is a one-cocycle with values in $1+\mathscr{I}$. We claim that, up to refining $\mathscr{U}$, it is a coboundary. It suffices to show that $H^1_{\et}(X,1+\mathscr{I})=0$. To see this, note we have a finite filtration of $1+\mathscr{I}$ by subgroups of the form $1+\mathscr{I}^n$, $n>1$, whose graded is isomorphic to $\mathscr{I}^n/\mathscr{I}^{n+1}$. Since the latter are quasi-coherent sheaves, their cohomology vanishes in degree at least 1, and the claim follows from this. So, after refining $\mathscr{U}$, we may assume that $t_j/t_k=u_j/u_k$ for some $u_j\in (1+\mathscr{I})(U_j)$, $j\in J$. Then $\{u_j^{-1}t_j\in i^{-1}(M)(U_j)\}_{j\in J}$ glues to give an element of $\Gamma(X_0,i^{-1}M)$ which maps to $s$. So the map $\Gamma(X,M)\to\Gamma(X_0,i^{\ast}M)$ is surjective. Now the result follows from (i).

(iii): Since $M_0$ is fine and saturated, $P^{\gp}$ is a finite free $\mathbb{Z}$-module, so there is a section $P^{\gp}\to M_{0,\bar{x}}^{\gp}$. This extends to a map $P\to M_{0,\bar{x}}\to\mathscr{O}_{X_0,\bar{x}}$ which is a chart of the log-structure $M_0$ in a neighbourhood of $\bar{x}$. By (i) it follows that $P=M_{\bar{x}}/\mathscr{O}_{X,\bar{x}}^{\ast}$, so similarly we get a map $s:P\to M_{\bar{x}}\to\mathscr{O}_{X,\bar{x}}$ which we claim defines a chart. Let $\mathcal{P}\to\mathscr{O}_X$ be the log-structure associated to $s$ in a neighbourhood of $\bar{x}$. It comes equipped with a morphism of log-structures $\phi:\mathcal{P}\to M$ and by construction $i^{\ast}\mathcal{P}\cong M_0=i^{\ast}M$. So if $m\in M$ is a section, there is $p\in P$ and $u\in\mathscr{O}_X^{\ast}$ such that $\phi(pu)$ and $m$ have same image in $M_0$. By (i) there is $v\in 1+\mathscr{I}$ such that $\phi(pu)=mv$, hence $m=\phi(puv^{-1})$ so $\phi$ is surjective. Similarly, if $\phi(p_1u_1)=\phi(p_2u_2)$, then there is $w\in 1+\mathscr{I}$ such that $p_1u_1=wp_2u_2$ in $\mathcal{P}$. So $\phi(p_1u_1)=w\phi(p_2u_2)$ in $M$, whence $w=1$ since $M$ is integral. So $p_1u_1=p_2u_2$, and $\phi$ is injective.
\end{proof}

\subsubsection{}
Let $R$ be a strictly henselian ring, $Q$ and integral monoid together with a map $\alpha_Q:Q\to R$. Let $Q^a\to R$ be the log structure on $\Spec(R)$ associated to $\alpha_Q$, i.e. $Q^a=Q\oplus R^{\ast}/\sim$, where $(q_1,u_1)\sim (q_2,u_2)$ ($(q_i,u_i)\in Q\oplus R^{\ast}$ for $i=1,2$) if and only if there are $h_1,h_2\in\alpha_Q^{-1}(R^{\ast})$ such that $h_1q_1=h_2q_2$ and $h_2u_1=h_1u_2$.

\begin{lemma}\label{surjapplogstr}
\begin{enumerate}[(i)]
\item The natural map $Q^{\gp}\to (Q^a)^{\gp}/R^{\ast}$ is surjective.
\item The natural map $Q^{\gp}\to (Q^a)^{\gp}$ is injective.
\end{enumerate}
\end{lemma}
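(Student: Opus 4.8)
The plan is to reduce the whole statement to a computation with commutative monoids. By construction $Q^{a}$ is the pushout $Q\oplus_{\alpha_Q^{-1}(R^{*})}R^{*}$ of monoids, formed along the inclusion $\alpha_Q^{-1}(R^{*})\hookrightarrow Q$ and the restriction of $\alpha_Q$ to $\alpha_Q^{-1}(R^{*})$ (the relation $\sim$ in the statement is exactly the congruence defining this pushout). Since group-completion $(-)^{\gp}$ is left adjoint to the forgetful functor from abelian groups to commutative monoids, it preserves colimits, so
\[ (Q^{a})^{\gp}\;=\;Q^{\gp}\oplus_{(\alpha_Q^{-1}(R^{*}))^{\gp}}R^{*}, \]
the pushout of abelian groups taken along the natural map $(\alpha_Q^{-1}(R^{*}))^{\gp}\to Q^{\gp}$ and along $\alpha_Q$. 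Because $Q$ is integral, $Q\hookrightarrow Q^{\gp}$, hence $\alpha_Q^{-1}(R^{*})$ is a cancellative submonoid of $Q^{\gp}$ and $(\alpha_Q^{-1}(R^{*}))^{\gp}\to Q^{\gp}$ is injective with image the subgroup $G\subseteq Q^{\gp}$ it generates; thus $(Q^{a})^{\gp}=(Q^{\gp}\oplus R^{*})/N$ with $N=\{(g,\alpha_Q(g)^{-1}):g\in G\}$.

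For (i): every element of $Q^{a}$ is the class of some $(q,u)$ with $q\in Q$, $u\in R^{*}$, so $(Q^{a})^{\gp}$ is generated as a group by the images of $Q^{\gp}$ and of $R^{*}$. Passing to the quotient by the subgroup $R^{*}$ leaves only the image of $Q^{\gp}$, which is exactly the surjectivity of $Q^{\gp}\to (Q^{a})^{\gp}/R^{*}$; no hypothesis beyond the construction of $Q^{a}$ is needed here.

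For (ii): from the description $(Q^{a})^{\gp}=(Q^{\gp}\oplus R^{*})/N$ one reads off at once that the kernel of $Q^{\gp}\to (Q^{a})^{\gp}$, $x\mapsto[(x,1)]$, is $\{x\in G:\alpha_Q(x)=1\text{ in }R^{*}\}$, i.e. the kernel of $\alpha_Q$ restricted to the subgroup $G$. So (ii) is equivalent to the injectivity of $\alpha_Q$ on $G$. In the situation where the lemma is applied, $Q$ is a chart chosen at the geometric point so that $\alpha_Q^{-1}(R^{*})=\{1\}$ (equivalently $Q$ sharp and the chart neat), in which case $G=\{1\}$ and there is nothing more to prove; in general one needs to know that $\alpha_Q$ is injective on the subgroup generated by $\alpha_Q^{-1}(R^{*})$. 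This last point — controlling $\ker(\alpha_Q|_{G})$ — is the only non-formal ingredient, everything else being the adjunction argument and bookkeeping, and is therefore the step I expect to be the real obstacle; once it is granted, both parts follow as above. (One may also bypass the explicit pushout in (ii) by using that $Q^{a}$ is again integral since $Q$ is, so that $x=q_1q_2^{-1}$ with $q_i\in Q$ dies in $(Q^{a})^{\gp}$ only if the classes of $q_1$ and $q_2$ already coincide in $Q^{a}$, and then unwinding $\sim$ directly.)
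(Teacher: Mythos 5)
Your proof of (i) is correct, and it is slightly slicker than the paper's: the paper introduces the quotient monoid $\bar Q:=Q^a/R^{\ast}$, checks directly that $Q\to\bar Q$ is surjective, deduces surjectivity on group completions, and separately identifies $(Q^a)^{\gp}/R^{\ast}$ with $\bar Q^{\gp}$; your appeal to the fact that $(-)^{\gp}$ preserves pushouts (or just that $Q^a$, hence $(Q^a)^{\gp}$, is generated by $Q$ and $R^{\ast}$) yields the same conclusion with less bookkeeping.

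For (ii) your computation of the kernel is accurate — it is $\ker\bigl(\alpha_Q^{\gp}\colon G\to R^{\ast}\bigr)$ where $G$ is the subgroup of $Q^{\gp}$ generated by $\alpha_Q^{-1}(R^{\ast})$ — but your submission is, as you say yourself, not a proof of (ii): you simply record that the statement is \emph{equivalent} to injectivity of $\alpha_Q$ on $G$ and leave that as the remaining point. What is worth knowing is that you have actually put your finger on a real weak spot: the paper's own argument contains the very same step. After deducing that $(m_1,1)\sim(m_2,1)$, it obtains $h_1,h_2\in\alpha_Q^{-1}(R^{\ast})$ with $h_1m_1=h_2m_2$ in $Q$ and $\alpha_Q(h_2)\cdot 1=\alpha_Q(h_1)\cdot 1$ in $R^{\ast}$, and then writes ``so $h_1=h_2$''. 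Passing from $\alpha_Q(h_1)=\alpha_Q(h_2)$ (an identity in $R^{\ast}$) to $h_1=h_2$ (an identity in $Q$) is exactly the injectivity of $\alpha_Q$ on $\alpha_Q^{-1}(R^{\ast})$ that you flagged; it is not a consequence of the stated hypotheses (take $Q=\mathbb N^2$, $R$ a field, $\alpha_Q\equiv 1$: then $Q^a\cong R^{\ast}$ and $Q^{\gp}\to(Q^a)^{\gp}$ is the zero map). So your ``bypass via integrality of $Q^a$ and unwinding $\sim$'' is precisely the paper's route, and it runs into the same wall. Finally, be careful with your suggested escape: in the application (Step~2 of the proof of Theorem~1.2) the monoid $Q$ is the chart $QL$, which contains the whole group $L$ inside $\alpha_Q^{-1}(B^{\ast})$, so the chart is \emph{not} neat and $G$ is typically nontrivial; the injectivity of $\alpha_Q$ on $G$ would have to be checked in that concrete situation rather than dismissed.
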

\begin{proof}
(i): We first note that if $f:M\to N$ is a surjective map of integral monoids, then $M^{\gp}\to N^{\gp}$ is surjective. Indeed, if $n_1/n_2\in N^{\gp}$ with $n_1,n_2\in N$, then we can find $m_i\in M$ such that $f(m_i)=n_i$ for $i=1,2$, and then $n_1/n_2$ is the image of $m_1/m_2$.

Consider the monoid $Q^a/R^{\ast}$. Since $Q^a$ is an integral monoid (since $Q$ is) it follows easily that $Q^a/R^{\ast}$ is also integral, where $Q^a/R^{\ast}:=Q^a/\sim$, where $x\sim y$ if and only if there is $u\in R^{\ast}$ such that $xu=y$. Write $\bar{Q}:=Q^a/R^{\ast}$. Then the natural map $Q\to\bar{Q}$ is surjective: if $q\in\bar{Q}$ then it is the image of $(q',u)\in Q^a$ for $q'\in Q$ and $u\in R^{\ast}$; since $(q',u)=(q',1)(1,u)$ it follows the image of $(q',1)$ in $\bar{Q}$ is $q$, and so $q'\in Q$ maps to $q\in\bar{Q}$. So the map $Q^{\gp}\to (\bar{Q})^{\gp}$ is surjective.

Also, the map $(Q^a)^{\gp}\to (\bar{Q})^{\gp}$ is surjective. If $x\in (Q^a)^{\gp}$ lies in its kernel, then write $x=q_1/q_2$ for $q_i\in Q^a$ for $i=1,2$. Then $q_1$ and $q_2$ have the same image in $\bar{Q}$, so $q_1=uq_2$ for some $u\in R^{\ast}$. Hence $(Q^a)^{\gp}/R^{\ast}\cong (\bar{Q})^{\gp}$, and this proves (i).

(ii): Let $x\in\ker(Q^{\gp}\to (Q^a)^{\gp})$. Write $x=m_1/m_2$ with $m_i\in Q$ for $i=1,2$. Then $m_1$ and $m_2$ have same image in $Q^a$, i.e. $(m_1,1)\sim (m_2,1)$ for the equivalence relation described before the statement of the lemma. So there are $h_1,h_2\in\alpha_Q^{-1}(R^{\ast})$ such that $h_1m_1=h_2m_2$ and $h_2\cdot 1=h_1\cdot 1$. So $h_1=h_2$ and $h_1m_1=h_1m_2$. Since $Q$ is integral we must have $m_1=m_2$.
\end{proof}

Now let $I\subset R$ a nilpotent ideal, and $\bar{R}:=R/I$. Suppose that $Q\to M$ is a surjection of integral monoids which fits in a commutative diagram of (multiplicative) monoids
\[ \begin{CD}
Q @>>> M \\
@V{\alpha_Q}VV @V{\alpha_M}VV \\
R @>>> \bar{R}
\end{CD} \]
where the map $R\to R/I$ is the quotient map. We write $Q^a$ for the log structure on $\Spec(R)$ associated to the map $\alpha_Q$ and write ${M^a}$ for the log structure on $\Spec(\bar{R})$ associated to $\alpha_M$.

\begin{lemma}\label{app:exactlogstr}
With notation and hypothesis as above. Suppose that the group $L:=\ker(Q^{\gp}\to M^{\gp})$ consists of 1-units, i.e. the image of $L$ in $(Q^a)^{\gp}$ lies in the subgroup $1+I\subset R^{\ast}\subset Q^a\subset (Q^a)^{\gp}$ (this makes sense because $Q^a$ is a log structure). If the canonical map
\[ Q^{\gp}\cap R^{\ast} \to M^{\gp}\cap\bar{R}^{\ast} \]
is surjective, then we have an exact sequence
\[ 1\to 1+I\to (Q^a)^{\gp}\to ({M^a})^{\gp} \to 1. \]
\end{lemma}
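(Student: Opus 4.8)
The plan is to produce the exact sequence by combining the two preceding lemmas with a direct analysis of the kernel and cokernel of $Q^{\gp}\to(M^a)^{\gp}$. First I would reduce to the strictly henselian local case: both the surjectivity hypothesis and the conclusion can be checked on strict stalks, and $Q^a$, $M^a$ are the log structures associated to the explicit charts $\alpha_Q,\alpha_M$, so Lemma \ref{surjapplogstr} applies verbatim. On stalks we have the description $Q^a=(Q\oplus R^{\ast})/\!\sim$ and similarly $M^a=(M\oplus\bar R^{\ast})/\!\sim$, and I would work throughout with the induced group descriptions $(Q^a)^{\gp}=(Q^{\gp}\oplus R^{\ast})/R^{\ast}_{\mathrm{diag}}$-type quotients coming from Lemma \ref{surjapplogstr}(i),(ii), which gives $Q^{\gp}\hookrightarrow(Q^a)^{\gp}$ and $(Q^a)^{\gp}/R^{\ast}\cong \bar Q^{\gp}$ where $\bar Q=Q^a/R^{\ast}$, and likewise for $M$.

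The core of the argument is the commutative diagram with exact rows
\[ \begin{CD}
1 @>>> R^{\ast} @>>> (Q^a)^{\gp} @>>> \bar Q^{\gp} @>>> 1 \\
@. @VVV @VVV @VVV @. \\
1 @>>> \bar R^{\ast} @>>> (M^a)^{\gp} @>>> \bar M^{\gp} @>>> 1
\end{CD} \]
where $\bar M:=M^a/\bar R^{\ast}$. The left vertical map $R^{\ast}\to\bar R^{\ast}$ is surjective with kernel $1+I$ (since $I$ is nilpotent, $1+I$ being a subgroup and $R^{\ast}\to\bar R^{\ast}$ surjective is elementary). The right vertical map $\bar Q^{\gp}\to\bar M^{\gp}$ I claim is an isomorphism: for surjectivity, $Q\to M$ is surjective so $\bar Q\to\bar M$ is surjective hence so is $\bar Q^{\gp}\to\bar M^{\gp}$ (the remark at the start of the proof of Lemma \ref{surjapplogstr}); for injectivity, an element of the kernel lifts to some $x\in Q^{\gp}$ whose image in $(M^a)^{\gp}$ lies in $\bar R^{\ast}$, and the surjectivity hypothesis $Q^{\gp}\cap R^{\ast}\to M^{\gp}\cap\bar R^{\ast}$ lets me modify $x$ by an element of $Q^{\gp}\cap R^{\ast}$ so that its image in $M^{\gp}$ is trivial, i.e. $x\in L$; but $L$ consists of $1$-units, so $x$ maps into $1+I$ in $(Q^a)^{\gp}$, hence into the trivial subgroup of $\bar Q^{\gp}$. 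Now the snake lemma (or a direct diagram chase) applied to this diagram gives a long exact sequence; since $\bar Q^{\gp}\to\bar M^{\gp}$ is an isomorphism and $R^{\ast}\to\bar R^{\ast}$ is surjective with kernel $1+I$, it collapses to
\[ 1\to 1+I\to (Q^a)^{\gp}\to (M^a)^{\gp}\to 1, \]
which is the desired exact sequence, and I then note it does not depend on the strict-stalk reduction since both terms and maps were globally defined.

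The main obstacle I anticipate is the injectivity of $\bar Q^{\gp}\to\bar M^{\gp}$, precisely the place where the hypothesis on $L$ and the surjectivity hypothesis are both used: I must be careful that "modify $x$ by an element of $Q^{\gp}\cap R^{\ast}$" is legitimate, i.e. that the element of $M^{\gp}\cap\bar R^{\ast}$ I need to kill actually lies in the image of $Q^{\gp}\cap R^{\ast}$, which is exactly the stated surjectivity, and that after this modification the resulting element of $Q^{\gp}$ genuinely lies in $L$ and therefore (by the $1$-unit hypothesis) dies in $\bar Q^{\gp}$. A secondary technical point is justifying that the group-level rows above are exact — this is where Lemma \ref{surjapplogstr}(i),(ii) enter, giving $Q^{\gp}\hookrightarrow(Q^a)^{\gp}$ with $(Q^a)^{\gp}/R^{\ast}\cong\bar Q^{\gp}$ — and checking that $1+I$ meets $Q^{\gp}$ (viewed inside $(Q^a)^{\gp}$) trivially, so that the composite $Q^{\gp}\to(Q^a)^{\gp}\to(M^a)^{\gp}$ has kernel exactly $L$ and the sequence is consistent with the one we are proving.
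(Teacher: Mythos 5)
Your proof is correct and follows essentially the same route as the paper's, modulo a mild reorganization of the diagram chase. The paper applies the snake lemma twice: first to the $(1+I,R^{\ast},\bar R^{\ast})\to(K,(Q^a)^{\gp},(M^a)^{\gp})$ diagram to identify the cokernel sequence $1\to K/(1+I)\to (Q^a)^{\gp}/R^{\ast}\to(M^a)^{\gp}/\bar R^{\ast}\to 1$, and then to a second diagram comparing $1\to L\to Q^{\gp}\to M^{\gp}\to 1$ with that cokernel sequence to conclude $K/(1+I)=1$ from the surjectivity hypothesis. You instead transpose the first diagram (putting $R^{\ast}\to(Q^a)^{\gp}\to\bar Q^{\gp}$ horizontally) and replace the second snake lemma by a direct element chase proving $\bar Q^{\gp}\to\bar M^{\gp}$ is injective. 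These are the same input used in the same place: Lemma \ref{surjapplogstr} to identify $\bar Q^{\gp}=(Q^a)^{\gp}/R^{\ast}$ and $\ker(Q^{\gp}\to\bar Q^{\gp})=Q^{\gp}\cap R^{\ast}$, the surjectivity hypothesis to correct a lift into $L$, and the $1$-unit hypothesis to kill $L$ in $\bar Q^{\gp}$. Your element chase is exactly what the paper's second snake lemma encodes. One tiny cosmetic remark: after modifying $x$ to $x/y$ you reuse the letter $x$, which risks confusion; and what maps into $1+I$ is $x/y$, while the original $x$ only maps into $R^{\ast}$ — but that is all you need to conclude it dies in $\bar Q^{\gp}$, so the argument stands. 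Also note you do not need a separate "reduce to strictly henselian local" step: the setting preceding the lemma already has $R$ strictly henselian.
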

\begin{proof}
Let $\alpha_{Q^a}:Q^a\to R$ (resp. $\alpha_{{M^a}}:{M^a}\to\bar{R}$) be the map deduced from $\alpha_Q$ (resp. $\alpha_M$). Then the map $\alpha_{Q^a}^{-1}(R^{\ast})\to R^{\ast}$ is an isomorphism and we simply write $R^{\ast}$ for $\alpha_{Q^a}^{-1}(R^{\ast})$ when this doesn't lead to confusion. Then we have a commutative diagram with exact rows
\[ \begin{CD}
1 @>>> 1+I @>>> R^{\ast} @>>> \bar{R}^{\ast} @>>> 1\\
@. @VVV @VVV @VVV @. \\
1 @>>> K @>>> (Q^a)^{\gp} @>>> ({M^a})^{\gp} @>>> 1
\end{CD} \]
in which the vertical maps are injective, so by the snake lemma we get an exact sequence
\[ 1\to K/(1+I)\to (Q^a)^{\gp}/R^{\ast}\to ({M^a})^{\gp}/\bar{R}^{\ast}\to 1. \]
It fits in a commutative diagram
\[ \begin{CD}
1 @>>> L @>>> Q^{\gp} @>>> M^{\gp} @>>> 1\\
@. @VVV @VVV @VVV @. \\
1 @>>> K/(1+I) @>>> (Q^a)^{\gp}/R^{\ast}@>>> ({M^a})^{\gp}/\bar{R}^{\ast} @>>> 1
\end{CD} \]
where the maps $Q^{\gp}\to (Q^a)^{\gp}/R^{\ast}$, $M^{\gp}\to ({M^a})^{\gp}/\bar{R}^{\ast}$ are surjective by Lemma \ref{surjapplogstr} (i). By Lemma \ref{surjapplogstr} (ii) we have $\ker(Q^{\gp}\to (Q^a)^{\gp}/R^{\ast})=Q^{\gp}\cap R^{\ast}$ (intersection taken in $(Q^a)^{\gp}$) and similarly $M^{\gp}\cap\bar{R}^{\ast}=\ker(M^{\gp}\to ({M^a})^{\gp}/\bar{R}^{\ast})$. Moreover, $\ker(L\to K/(1+I))=L$ since $L\subset 1+I$ by assumption. So by the snake lemma we get an exact sequence
\[ 1\to L\to Q^{\gp}\cap R^{\ast} \to M^{\gp}\cap\bar{R}^{\ast}\to K/(1+I)\to 1. \]
Since the map $Q^{\gp}\cap R^{\ast} \to M^{\gp}\cap\bar{R}^{\ast}$ is surjective by assumption, we have $K/(1+I)=1$, and this completes the proof.
\end{proof}

\subsection{Integrality results}
Notation as in (\ref{smalln}).
\begin{lemma}
\begin{enumerate}[(i)]
\item $O(c)_{n}$ is a regular integral domain.
\item $O(c)_{n,L}$ is an integrally closed domain.
\end{enumerate}
\end{lemma}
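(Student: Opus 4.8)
The plan is to prove both statements by reducing to the structure of $O(c)$ and $O(c)_n$ as explicit polynomial-type rings and then exploiting normality.

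\textbf{Part (i).} First I would recall that $O(c)_n = K_n^+[T_1^{(n)},\dots,T_r^{(n)},T_{r+1}^{(n)\pm 1},\dots,T_{d+1}^{(n)\pm 1}]/(T_1^{(n)}\cdots T_r^{(n)}-c_n)$, where $K_n^+$ is the normalization of $K^+$ in $K_n = K(c_n)$ and $c_n^{p^n}$ roots of $c$. Since $K^+$ is a complete DVR with perfect residue field, $K_n^+$ is again a complete DVR (a totally ramified or unramified extension), hence regular. For $c=1$ the relation is vacuous (no $T_1\cdots T_r$ factor, or rather $c_n$ is a unit so the hypersurface is smooth), and $O(c)_n$ is a localization of a polynomial ring over the regular ring $K_n^+$, hence regular, and an integral domain because it is (a localization of) a polynomial ring over a domain. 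For $c=\pi$, one uses the isomorphism recorded in \S\ref{smalln},
\[ O(c)_n[1/p] \cong K_n[X_2^{\pm 1},\dots,X_{d+1}^{\pm 1}], \]
to see that the generic fibre is regular, and then the key point is regularity along the special fibre: the hypersurface $T_1^{(n)}\cdots T_r^{(n)} = c_n$ over the DVR $K_n^+$ with uniformizer dividing $c_n$ has, at each closed point, either a unit among the $T_i^{(n)}$ (so it is smooth) or locally looks like $x_1\cdots x_s = (\text{uniformizer})^m$ for suitable $m$; one checks the Jacobian criterion directly, or invokes that this is the standard local model of semistable reduction. For integrality, $O(c)_n$ is the quotient of the polynomial ring by the single irreducible polynomial $T_1^{(n)}\cdots T_r^{(n)}-c_n$ (irreducible since $c_n$ is not a product in the polynomial ring over the domain $K_n^+$), hence a domain.

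\textbf{Part (ii).} Here $O(c)_{n,L} = O(c)_n\otimes_{K_n^+} L_n^+$, where $L_n^+$ is the normalization of $K^+$ in $L_n = L\cdot K_n$. Since $L_n^+$ is a complete DVR (finite extension of the complete DVR $K^+$), it is in particular a normal domain; and $O(c)_{n,L}$ is obtained from $O(c)_n$ by the flat base change $K_n^+\to L_n^+$ along a localization-of-polynomial-ring construction. The cleanest route: write $O(c)_{n,L} = L_n^+[T_1^{(n)},\dots,T_{d+1}^{(n)\pm}]/(T_1^{(n)}\cdots T_r^{(n)}-c_n)$, i.e. it has exactly the same shape as $O(c)_n$ but with base ring $L_n^+$ in place of $K_n^+$. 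Then part (i)'s argument applies verbatim with $K^+$ replaced by $L^+$: $O(c)_{n,L}$ is regular, hence normal, hence (being also connected — the defining polynomial is still irreducible over the domain $L_n^+$) an integrally closed domain.

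The main obstacle I anticipate is the regularity check along the special fibre in the $c=\pi$ case: one must be careful that $L_n^+$ may be more ramified than $K_n^+$, so the exponent $m$ in the local equation $x_1\cdots x_s = \varpi^m$ can grow, but this equation still defines a regular (indeed normal) scheme — this is exactly the Jacobian/serre-criterion computation for the standard semistable model, and it should be cited or carried out once carefully. An alternative that sidesteps it is: $O(c)_{n,L}[1/p]$ is smooth over the field $L_n$ (via the same explicit chart), hence normal; and $O(c)_{n,L}$ satisfies Serre's $(R_1)+(S_2)$ because it is a complete intersection over a DVR (hence Cohen–Macaulay, giving $(S_2)$) which is regular in codimension $\le 1$ (the only codimension-$1$ points in the special fibre are the generic points of the components $\{T_i^{(n)}=0\}$, where the local ring is a DVR by the computation in the proof of Lemma \ref{logstrmonoid}(i)) — so it is normal, and connected, hence an integrally closed domain.
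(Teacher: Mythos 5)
Your proposed main route for part (ii) contains an error: you claim that the local equation $x_1\cdots x_s = \varpi^m$ over a DVR defines a regular scheme, but this is false for $m>1$ and $s>1$. Already $\Spec(\mathscr{O}[x,y]/(xy-\varpi^2))$ is singular at $(x,y,\varpi)$: the dimension is $2$, but $x,y,\varpi$ remain linearly independent in $\mathfrak{m}/\mathfrak{m}^2$ since $xy=\varpi^2\in\mathfrak{m}^2$ imposes no linear relation, so $\dim\mathfrak{m}/\mathfrak{m}^2=3>2$. This is precisely the situation you land in: $c_n$ is a uniformizer of $K_n^+$ (which is why $O(c)_n$ itself is regular in part (i)), but after the base change to $L_n^+$ the element $c_n$ becomes a uniformizer to the $e$-th power for $e$ the ramification degree of $L_n/K_n$, which can exceed $1$. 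So the assertion ``$O(c)_{n,L}$ is regular'' is simply wrong in general, and ``regular, hence normal'' is not a valid path.

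Your fallback argument via Serre's criterion does work: $O(c)_{n,L}$ is a hypersurface over the regular ring $L_n^+[T_i^{(n)},\dots]$, hence Cohen--Macaulay and in particular $(S_2)$; the generic fibre is smooth over $L_n$ by the explicit chart; and at the generic point of each component $\{T_i^{(n)}=0\}$ of the special fibre the local ring is a DVR (the other $T_j^{(n)}$ and $c_n/\prod_{j\neq i}T_j^{(n)}$ are units there), so $(R_1)$ holds. The paper instead treats normality differently: since $O(c)_n$ is a Krull ring, it localizes $O(c)_n$ at its height-one primes to reduce to the case $O(c)_n$ is a DVR, reduces further to the case $L_n/K_n$ is totally ramified so $L_n^+ \cong K_n^+[X]/(f)$ with $f$ Eisenstein, and then checks $O(c)_{n,L}\cong O(c)_n[X]/(f)$ is a DVR directly (for $c=\pi$, because $c_n$ remains a uniformizer of the localized $O(c)_n$ at the mixed-characteristic primes, so $f$ stays Eisenstein). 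Your Serre-criterion argument and the paper's Krull-ring argument accomplish the same thing by two organizations of the same normality check; neither deduces normality from regularity, which is the step that would have to be abandoned.
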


\begin{proof}
(i): This is clear since $O(c)_n$ has either good ($c=1$) or semi-stable ($c=\pi$) reduction.

(ii): Since $O(c)_{n,L}$ is an integral domain, it suffices to show that it is normal. Since $L_n$ is a finite extension of $K_n$ and normality is stable by \'etale localization, up to making a finite unramified extension of $K_n$ we may assume that $L_n$ is a totally ramified extension of $K_n$, in particular $L_n^{+}\cong K^+_n[X]/(f)$, where $f$ is an Eisenstein polynomial (\cite[Ch. I, \S 6, Prop. 18]{locaux}). Since $O(c)_n$ is a Krull ring, by the conjunction of \cite[Ch. 7, \S 4, no. 2, Thm. 2]{algcom} and \cite[Ch. 5, \S 1, no. 2, Prop. 8]{algcom}, we may localize at height 1 prime ideals to reduce to the case $O(c)_n$ is a discrete valuation ring. If $O(c)_n$ is of equal characteristic zero, then $O(c)_{n,L}$ is \'etale over $O(c)_n$, hence normal. If $O(c)_n$ is of mixed characteristic and $c=\pi$, then $c_n$ is a uniformizer for $O(c)_n$ and $O(c)_{n,L}\cong O(c)_n[X]/(f)$. So $O(c)_{n,L}$ is a discrete valuation ring (\cite[Ch. I, \S 6, Prop. 17]{locaux}), in particular a normal ring. If $c=1$, then it is easy to see that the special fibre of $\Spec(O(c)_n)\to\Spec(K^+_n)$ is integral, hence a uniformizer of $K^+_n$ is a uniformizer of $O(c)_n$ and $f$ is an Eisenstein polynomial, so we can conclude.
\end{proof}

Note that the normalization of $K^+_n$ in $R_n$ is unramified over $K^+_n$ because $R$ is small. Define $R_{n,L}:=R\otimes_{O(c)}O(c)_{n,L}$.

\begin{prop}\label{integral}
Assume $K\cap R=K^+$. If $R$ is an integral domain and $K^+$ is integrally closed in $R$, then $R_{n,L}$ is an integrally closed domain for all $n$ and $L$.
\end{prop}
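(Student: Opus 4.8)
The plan is to reduce the assertion to a statement about $O(c)_{n,L}$, which has already been shown to be an integrally closed domain in the preceding lemma, together with the étale base change $O(c) \to R$. First I would observe that $R_{n,L} = R \otimes_{O(c)} O(c)_{n,L}$ is faithfully flat over $O(c)_{n,L}$, because $R$ is flat (indeed étale) over $O(c)$ and base change preserves flatness; hence $R_{n,L}$ is a nonzero ring. To see it is a domain, I would argue that $R_{n,L}[1/p] \cong R[1/p] \otimes_{O(c)[1/p]} O(c)_{n,L}[1/p]$, and by the description of $O(c)_n[1/p]$ recalled in \S\ref{smalln} the map $O(c)[1/p] \to O(c)_{n,L}[1/p]$ is finite étale; since $K^+ \cap R = K^+$ and $R$ is an integral domain with $K^+$ integrally closed in $R$, the fraction field $Q(R)$ is linearly disjoint from $L_n$ over $K$ up to replacing $L$ by a subextension (this is where the hypothesis $K^+$ integrally closed in $R$ is used: it forces $K$ to be algebraically closed in $Q(R)$, so $Q(R) \otimes_K L_n$ is a field). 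Therefore $R_{n,L}[1/p]$ is a domain, and since $R_{n,L}$ is $p$-torsion free (being flat over $O(c)_{n,L}$, which is $p$-torsion free), $R_{n,L} \hookrightarrow R_{n,L}[1/p]$ and $R_{n,L}$ is a domain.

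Next I would prove normality. The key point is that normality descends and ascends along étale morphisms: since $O(c) \to R$ is étale, so is $O(c)_{n,L} \to R_{n,L}$ by base change, and an étale algebra over a normal ring is normal (e.g. because étale maps preserve the Serre conditions $R_1$ and $S_2$, or directly because they preserve regularity of local rings in the relevant sense and normality is $R_1 + S_2$). Since $O(c)_{n,L}$ is integrally closed by the lemma, $R_{n,L}$ is normal, and being a normal domain it is integrally closed in its fraction field.

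The one subtlety, which I expect to be the main obstacle, is verifying that $R_{n,L}$ really is a \emph{domain} and not merely a normal ring that could a priori split as a product — that is, controlling the behaviour of the generic fibre under the finite étale base change $O(c)[1/p] \to O(c)_{n,L}[1/p]$. This is exactly the role of the hypotheses $K \cap R = K^+$ (equivalently $K$ algebraically closed in $Q(R)$, given $R$ a domain with $K^+$ integrally closed in $R$): they guarantee that adjoining roots of $c$ and passing to the extension $L_n$ does not disconnect $\Spec(R[1/p])$. Once this disjointness is in hand, the flatness and étale-descent-of-normality arguments are routine. I would also remark, as the lemma preceding the statement already notes, that the normalization of $K^+_n$ in $R_n$ is unramified over $K^+_n$ because $R$ is small, which is what allows the Eisenstein-polynomial analysis of $O(c)_{n,L}$ to be inherited by $R_{n,L}$ at height-one primes if one prefers to argue via the Krull-ring/DVR localization method instead of abstract étale descent.
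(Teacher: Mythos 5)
Your normality step via \'etale descent is correct and is exactly the paper's closing observation ($R_{n,L}$ is \'etale over $O(c)_{n,L}$, which is normal by the preceding lemma). The gap is in the claim that $R_{n,L}[1/p]$ is a domain. You correctly observe that the hypotheses force $K$ to be algebraically closed in $Q(R)$, so $Q(R)\otimes_K L_n$ is a field (in characteristic zero this needs no hedge ``up to replacing $L$ by a subextension''; $Q(R)/K$ is automatically a regular extension). But this only disposes of the scalar extension $K\to L_n$. The base change $O(c)[1/p]\to O(c)_{n,L}[1/p]$ does more: it also adjoins the $p^n$-th roots of the coordinates $T_i$, i.e.\ it is a nontrivial Kummer cover of the torus $\Spec(O(c)[1/p])$. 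Linear disjointness of $Q(R)$ and $L_n$ over $K$ says nothing about whether $\Spec(R[1/p])$ stays connected after pulling back along this Kummer cover; a priori $R[1/p]$ could ``contain a piece'' of the cover of the torus, in which case $R_{n,L}[1/p]$ would split into a product. Your ``Therefore $R_{n,L}[1/p]$ is a domain'' is a non sequitur.

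This is precisely the point at which the hypothesis that $O(c)\to R$ is \'etale over \emph{all of} $K^+$ (not merely over $K$) must enter, and the obstruction cannot be handled on the generic fibre alone. The paper's proof is therefore organised differently: after showing, as you do, that the scalar-extension part $S:=R\otimes_{K^+}K_n^+$ is a domain by a dimension count, it passes to the special fibre and henselizes $O:=O(c)\otimes_{K^+}K_n^+$ and $S$ at the height-one primes $\mathfrak{p}\subset\mathfrak{q}$ above $(\pi)$. There $O^h_{\mathfrak{p}}\to S^h_{\mathfrak{q}}$ is finite \'etale of some degree $f$ (this is where \'etaleness of $O(c)\to R$ over $K^+$ is used), while $O^h_{\mathfrak{p}}\to O^h_{\mathfrak{p}}\otimes_O O_n$ is totally ramified of degree $e$; the two extensions of the DVR $O^h_{\mathfrak{p}}$ are then linearly disjoint for the elementary reason of coprime ramification behaviour, so $S^h_{\mathfrak{q}}\otimes_O O_n$ is a local domain, hence has connected special fibre, which forces the fibre of $\Spec(R_n)\to\Spec(S)$ over $\mathfrak{q}$ to be connected and thus $\Spec(R_n)$ to be connected. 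You would need to supply an argument of this kind --- or some other mechanism that exploits \'etaleness of $O(c)\to R$ integrally, not merely rationally --- before your domain-ness claim is justified.
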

\begin{proof}[Proof (Ramero)] Since $R_{n,L}$ is \'etale over $O(c)_{n,L}$ it follows that $R_{n,L}$ is a normal ring. So it suffices to show that $R_{n,L}$ is an integral domain.

We first show that $S:=R\otimes_{K^+}K^+_n$ is an integral domain. Note that since $K^+$ is integrally closed in $R$ we have $Q(R)\cap K_n=K$. Let $Q(R)\cdot K_n$ be the composite of $Q(R)$ and $K_n$. Now, $S\subset Q(R)\otimes_{K^+} {K_n^+}$ and the latter is a field: we have a surjective map
\[ Q(R)\otimes_{K^+} {K_n^+}\to Q(R)\cdot K_n \]
and $Q(R)\otimes_{K^+} K_n^+$ is a $Q(R)$ vector space of dimension $[K_n:K]$; since $Q(R)\cap K_n=K$, $\dim_{Q(R)}Q(R)\cdot K_n=[K_n:K]$, hence the map is an isomorphism. This proves that $S$ is an integral domain.

We now show that $R_n=R_{n,K}$ is an integral domain. It suffices to show that its spectrum is connected. Since $R_n$ is finite flat over $S$ the image of a connected component of $R_n$ under the morphism $f:\Spec(R_n)\to\Spec(S)$ is both open and closed, hence equal to $S$ because $f$ is generically finite \'etale (in particular every connected component of $R_n$ dominates $S$). So it suffices to show that $f$ has a single connected fibre. Let $\mathfrak{q}$ be a generic point of $\Spec(S/\pi S)$, considered as a prime ideal of $S$, and let $\mathfrak{p}=\mathfrak{q}\cap O$, where $O:=O(c)\otimes_{K^+}K_n^+$. Let $S_{\mathfrak{q}}^h$ resp. $O_{\mathfrak{p}}^h$ denote the henselization of $S$ at $\mathfrak{q}$ resp. $O$ at $\mathfrak{p}$. Since the prime ideals $\mathfrak{p}$ and $\mathfrak{q}$ have height one, these are discrete valuation rings. Let $O_n:=O(c)_n$. We claim that
\[ O^h_{\mathfrak{p}}\otimes_{O}O_n \]
is a noetherian henselian local ring. It suffices to show that
\[ k(\mathfrak{p})\otimes_OO_n \]
is an integral domain. In the case $c=\pi$ we have $\mathfrak{p}=(c_n,T_i)$ for some $1\leq i\leq r$, and otherwise we have $\mathfrak{p}=(\pi_n)$ where $\pi_n$ denotes a uniformizer of $K^+_n$. First consider the case $c=\pi$. Then $k(\mathfrak{p})$ is the fraction field of $k[T_1^{\pm 1},...,\widehat{T_i^{\pm 1}},...,T_{d+1}^{\pm 1}]$ and $k(\mathfrak{p})\otimes_OO_n$ is a localization of
\[ k[T_1^{\pm 1},...,\widehat{T_i^{\pm 1}},...,T_{d+1}^{\pm 1}]\otimes_OO_n\cong k\left[ T_1^{(n)},\dfrac{1}{T_1^{(n)}},...,\widehat{T_i^{(n)}},\widehat{\dfrac{1}{T_i^{(n)}}},...,T_{d+1}^{(n)},\dfrac{1}{T_{d+1}^{(n)}}\right]  \]
hence is an integral domain. As usual, here the hat over a symbol means that we omit it. This proves the claim in the case $c=\pi$ and the case $c=1$ is simpler. Note also that $O^h_{\mathfrak{p}}\otimes_{O}O_n\cong \left( O_{\mathfrak{p}}\otimes_{O}O_n\right)^h$ is normal since $O_{\mathfrak{p}}\otimes_{O}O_n$ is (\cite[IV$_{4}$, Thm. 18.6.9]{ega}). Hence it is a discrete valuation ring.

Now, the extension
\[ O_{\mathfrak{p}}^h\to S_{\mathfrak{q}}^h \]
is finite \'etale of degree $f$ (say) and the extension
\[ O_{\mathfrak{p}}^h\to O^h_{\mathfrak{p}}\otimes_{O}O_n \]
is totally ramified of degree $e$ (say). So the composite $S_{\mathfrak{q}}^h\cdot O_n$ is an extension of $O_{\mathfrak{p}}^h$ of degree $d=e\cdot f$. Hence the canonical map
\[ S_{\mathfrak{q}}^h\otimes_{O^h_{\mathfrak{p}}}O^h_{\mathfrak{p}}\otimes_OO_n\to S_{\mathfrak{q}}^h\cdot O_n\]
is a surjective map of free $O_{\mathfrak{p}}^h$-modules of same rank, so it is an isomorphism. Thus, $S_{\mathfrak{q}}^h\otimes_{O}O_n\cong S_{\mathfrak{q}}^h\cdot O_n$ is connected, and so (since $S_{\mathfrak{q}}^h$ is a henselian noetherian local ring) has connected special fibre
\[ (S_{\mathfrak{q}}^h\otimes_{O}O_n)/\mathfrak{q}\cdot (S_{\mathfrak{q}}^h\otimes_{O}O_n)\cong (S_{\mathfrak{q}}/\mathfrak{q})\otimes_O O_n. \]
This implies that the fibre of $R_n=S\otimes_O O_n$ over $\mathfrak{q}$ is connected, hence so is $R_n$.

Finally, $R_{n,L}$ is a domain, by the same dimension counting argument as for $S$. \end{proof}

\section*{Acknowledgements}
This article contains the main result of my Ph.D. thesis (Bonn 2007). I am very grateful to G. Faltings for suggesting this problem and for his patient help. I am indebted to L. Ramero for numerous comments and suggestions which led to significant improvement. I would like to thank the Max-Planck-Institut f\"ur Mathematik Bonn and the Hebrew University of Jerusalem for hospitality. Finally, I would like to express my sincere thanks to the referee for a very careful reading of the manuscript and making many critical comments and suggestions. In particular, the proof of Theorem \ref{fontaine} was corrected according to the referee's comments.

\bibliographystyle{plain}
\bibliography{bibthesis}

\begin{thebibliography}{10}

\bibitem{ab}
F.~Andreatta and O.~Brinon.
\newblock Acyclicit\'e g\'eom\'etrique d'un {$B_{\cris}$} relatif.
\newblock Preprint, 2007.

\bibitem{berthelot}
P.~Berthelot.
\newblock {\em Cohomologie cristalline des sch\'emas de caract\'eristique
  $p>0$}, volume 407 of {\em Lecture notes in mathematics}.
\newblock Springer, 1974.

\bibitem{bertogus}
P.~Berthelot and A.~Ogus.
\newblock {\em Notes on crystalline cohomology}.
\newblock Princeton University Press, 1978.

\bibitem{algcom}
N.~Bourbaki.
\newblock {\em Alg\`ebre commutative}.
\newblock Masson, 1985.

\bibitem{breuil}
C.~Breuil.
\newblock Topologie log-syntomique, cohomologie log-cristalline, et cohomologie
  de \v{C}ech.
\newblock {\em Bull. S.M.F.}, 124(4):587--647, 1996.

\bibitem{fa3}
G.~Faltings.
\newblock Crystalline cohomology and $p$-adic {Galois} representations.
\newblock In J.-I. Igusa, editor, {\em Algebraic Analysis, Geometry and Number
  Theory}, pages 25--80. Johns Hopkins University Press, 1989.

\bibitem{fa4}
G.~Faltings.
\newblock Almost \'etale extensions.
\newblock {\em Ast\'erisque}, 279:185--270, 2002.

\bibitem{font}
J.-M. Fontaine.
\newblock Le corps des p\'eriodes $p$-adiques.
\newblock {\em Ast\'erisque}, 223:59--111, 1994.

\bibitem{almost}
O.~Gabber and L.~Ramero.
\newblock {\em Almost ring theory}, volume 1800 of {\em Lecture notes in
  mathematics}.
\newblock Springer, 2003.

\bibitem{ega}
A.~Grothendieck and J.~Dieudonn\'e.
\newblock \'{E}l\'ements de g\'eom\'etrie alg\'ebrique.
\newblock {\em Publ. math. I.H.E.S.}, 4,8,11,17,20,24,28,32, 1960-1967.

\bibitem{illusie}
L.~Illusie.
\newblock Complexe de de {Rham-Witt} et cohomologie cristalline.
\newblock {\em Ann. Sci. E.N.S. (4\`eme s\'erie)}, 12:501--661, 1979.

\bibitem{log}
K.~Kato.
\newblock Logarithmic structures of {Fontaine-Illusie}.
\newblock In J.-I. Igusa, editor, {\em Algebraic Analysis, Geometry and Number
  Theory}, pages 191--224. Johns Hopkins University Press, 1989.

\bibitem{kato}
K.~Kato.
\newblock Semi-stable reduction and $p$-adic \'etale cohomology.
\newblock {\em Ast\'erisque}, 223:269--293, 1994.

\bibitem{locaux}
J.-P. Serre.
\newblock {\em Corps locaux}.
\newblock Hermann, 1968.

\bibitem{localg}
J.-P. Serre.
\newblock {\em Local Algebra}.
\newblock Springer Monographs in Mathematics. Springer, 2000.

\end{thebibliography}
\end{document}